\documentclass{article}


\PassOptionsToPackage{numbers, compress, sort}{natbib}


\usepackage[preprint]{neurips_2024}



\usepackage[utf8]{inputenc} 
\usepackage[T1]{fontenc}    
\usepackage{url}            
\usepackage{booktabs}       
\usepackage{amsfonts}       
\usepackage{nicefrac}       
\usepackage{microtype}      
\usepackage{xcolor}         
\usepackage{mathrsfs}

\usepackage{xparse}

\usepackage{amsmath, amsthm, amssymb}

\usepackage{mathrsfs}
\newcommand{\CMscr}{}
\let\CMscr=\mathscr

\usepackage[mathscr]{euscript}

\usepackage{upgreek}

\usepackage{calligra}

\usepackage{enumitem}

\usepackage{nccmath}
\usepackage{booktabs}
\usepackage{empheq}
\usepackage{mdframed}
\usepackage{pifont}
\usepackage{enumitem}
\usepackage{graphicx} 
\usepackage{wrapfig}
\usepackage{subfigure}
\usepackage{empheq}

\usepackage{multirow}

\usepackage{algorithm}
\usepackage{algpseudocode}

\usepackage{thmtools}
\usepackage{thm-restate}


\usepackage{nicefrac}
\usepackage{booktabs}
\usepackage{multirow}
\usepackage{rotating}
\usepackage{bm}

\usepackage{tikz}
\usetikzlibrary{shapes}
\usetikzlibrary{decorations.markings}
\usetikzlibrary{plotmarks}
\usetikzlibrary{patterns}
\usepackage{ctable}

\usepackage{color, colortbl}
\definecolor{mydarkblue}{rgb}{0,0.08,0.45}
\usepackage[colorlinks=true,
    linkcolor=mydarkblue,
    citecolor=mydarkblue,
    filecolor=mydarkblue,
    urlcolor=mydarkblue,
    pdfview=FitH]{hyperref}
    
\usepackage{empheq}


\definecolor{myteal}{RGB}{27,158,119}
\definecolor{myorange}{RGB}{217,95,2}
\definecolor{myred}{RGB}{231,41,138}
\definecolor{mypurple}{RGB}{152,78,163}
\definecolor{myblue}{RGB}{55,126,184}
\definecolor{mygreen}{RGB}{0,100,0}


\newtheorem{definition}{Definition}[section]

\newtheorem{proposition}{Proposition}[section]
\newtheorem{lemma}{Lemma}[section]
\newtheorem{theorem}{Theorem}[section]
\newtheorem{remark}{Remark}[section]
\newtheorem{corollary}{Corollary}[section]
\newtheorem{assumption}{Assumption}


\def \EE {\mathbb{E}}
\DeclareMathOperator{\Tr}{Tr}
\def \cR {\mathcal{R}}

\definecolor{myblue}{HTML}{D2E4FC}
\definecolor{Gray}{gray}{0.92}

\global\mdfdefinestyle{exampledefault}{%
outerlinewidth=0pt,innerlinewidth=0pt,
roundcorner=5pt,backgroundcolor=myblue,
leftmargin=0.2cm,rightmargin=0.2cm
}


\def\N{\mathbb{N}}
\def\R{\mathbb{R}}



\def\dif{\mathop{}\!\mathrm{d}}

\def\EE{{\mathbb E}\,}

\def\defas{\stackrel{\text{def}}{=}}

\DeclareDocumentCommand{\Prto} {o} {
  \IfNoValueTF {#1}
  {\overset{\Pr}{\longrightarrow}}
  { \xrightarrow[ #1 \to \infty]{\Pr }}
}

\DeclareDocumentCommand{\Asto} {o} {
  \IfNoValueTF {#1}
  {\overset{\text{\rm a.s.}}{\longrightarrow}}
  { \xrightarrow[ #1 \to \infty]{\text{\rm a.s.} }}
}

\DeclareDocumentCommand{\law} {o} {
  \IfNoValueTF {#1}
  {\overset{\text{law}}{=}}
  { \xrightarrow[ #1 \to \infty]{\Pr }}
}

\DeclareMathOperator{\E}{\mathbb{E}}

\newcommand{\Id}{I}
\DeclareMathOperator{\Exp}{\mathbb{E}}




\newcommand{\cK}{\mathcal{K}}

\newcommand{\beq}{ \begin{equation} }
\newcommand{\eeq}{ \end{equation} }




\newcommand{\ip}[1]{\langle {#1} \rangle }


\newcommand{\abs}[1]{\lvert #1 \rvert}


\newcommand{\tr}{\text{Tr}}

\newcommand{\argmin}{\text{arg\,min}}

\newcommand{\vertiii}[1]{{\left\vert\kern-0.25ex\left\vert\kern-0.25ex\left\vert #1 
    \right\vert\kern-0.25ex\right\vert\kern-0.25ex\right\vert}}

\title{The High Line: Exact Risk and Learning Rate Curves of Stochastic Adaptive Learning Rate Algorithms}

%

\author{%
Elizabeth Collins-Woodfin\\
McGill University\\
\texttt{elizabeth.collins-woodfin@mail.mcgill.ca}
\And
Inbar Seroussi\\
Tel-Aviv University\\
\texttt{inbarser@tauex.tau.ac.il}
\And
Bego\~na Garc\'ia Malaxechebarr\'ia\\
University of Washington\\
\texttt{begogar9@uw.edu}
\And
Andrew W. Mackenzie \\
McGill University\\
\texttt{andrew.mackenzie@mail.mcgill.ca}
\And 
Elliot Paquette\\
McGill University\\
\texttt{elliot.paquette@mcgill.ca}
\And
Courtney Paquette\thanks{Corresponding author}\\
McGill University \& Google DeepMind\\
\texttt{courtney.paquette@mcgill.ca}
}

\begin{document}

\maketitle

\begin{abstract}
We develop a framework for analyzing the training and learning rate dynamics on a large class of high-dimensional optimization problems, which we call the high line, trained using one-pass stochastic gradient descent (SGD) with adaptive learning rates. We give exact expressions for the risk and learning rate curves in terms of a deterministic solution to a system of ODEs. We then investigate in detail two adaptive learning rates  -- an idealized exact line search and AdaGrad-Norm -- on the least squares problem. When the data covariance matrix has strictly positive eigenvalues, this idealized exact line search strategy can exhibit arbitrarily slower convergence when compared to the optimal fixed learning rate with SGD. Moreover we exactly characterize the limiting learning rate (as time goes to infinity) for line search in the setting where the data covariance has only two distinct eigenvalues. For noiseless targets, we further demonstrate that the AdaGrad-Norm learning rate converges to a deterministic constant inversely proportional to the average eigenvalue of the data covariance matrix, and identify a phase transition when the covariance density of eigenvalues follows a power law distribution. We provide our code for evaluation at \href{https://github.com/amackenzie1/highline2024}{https://github.com/amackenzie1/highline2024}.
\end{abstract}

\section{Introduction}
In deterministic optimization, adaptive stepsize
strategies, such as line search (see
\cite{nocedal2006}, therein), AdaGrad-Norm \cite{ward2020adagrad}, Polyak
stepsize \cite{polyak1987introduction}, and others were developed to
provide stability and improve efficiency and adaptivity to unknown parameters. While the practical benefits for deterministic optimization problems are well-documented, much of our understanding of adaptive learning rate strategies for stochastic algorithms are still in their infancy. 

There are many adaptive learning rate strategies used in machine learning with many design goals.
Some are known to adapt to stochastic gradient descent (SGD) gradient noise while others are robust to hyper-parameters (e.g., \cite{yang2024two, attia2023sgd}). 
Theoretical results for adaptive algorithms tend to focus on guaranteeing minimax-optimal rates, but this theory is not engineered to provide realistic performance comparisons; indeed many adaptive algorithms are minimax-optimal, and so more precise statements are needed to distinguish them.
For instance, the exact learning rates (or rate schedules) to which these strategies converge are unknown, nor their dependence on the geometry of the problem.  Moreover, we often do not know how these adaptive stepsizes compare with well-tuned constant or decaying fixed learning rate SGD, which can be viewed as a cost associated with selecting the adaptive strategy in comparison to tuning by hand. 


In this work, we develop a framework for analyzing the exact dynamics of the risk and adaptive learning rate strategies for a wide class of optimization problems that we call \textit{high-dimensional linear (high line) composite functions}. In this class, the objective function takes the form of an expected risk $\mathcal{R} \, \, : \, \mathbb{R}^d \to \mathbb{R}$ over high-dimensional data $(a, \epsilon) \sim \mathcal{D} \subset \mathbb{R}^d \times \mathbb{R}$ of a function $f \, : \, \mathbb{R}^3 \to \mathbb{R}$ composed with the linear functions $\ip{X,a}$, $\ip{X^{\star}, a}$. That is, we seek to solve
\begin{equation} \label{eq:intro_function}
  \min_{X \in \mathbb{R}^d} \Big \{ \mathcal{R}(X) \defas \EE_{a,\epsilon} [f( \langle a, X\rangle,  \langle a,  X^\star \rangle, \epsilon) ] 
   \quad \text{for} \quad (a,\epsilon) \sim \mathcal{D}, X^{\star} \in \mathbb{R}^d \Big \}.
\end{equation}


\begin{figure}[t!]
    \centering
    \includegraphics[width=0.45\textwidth]{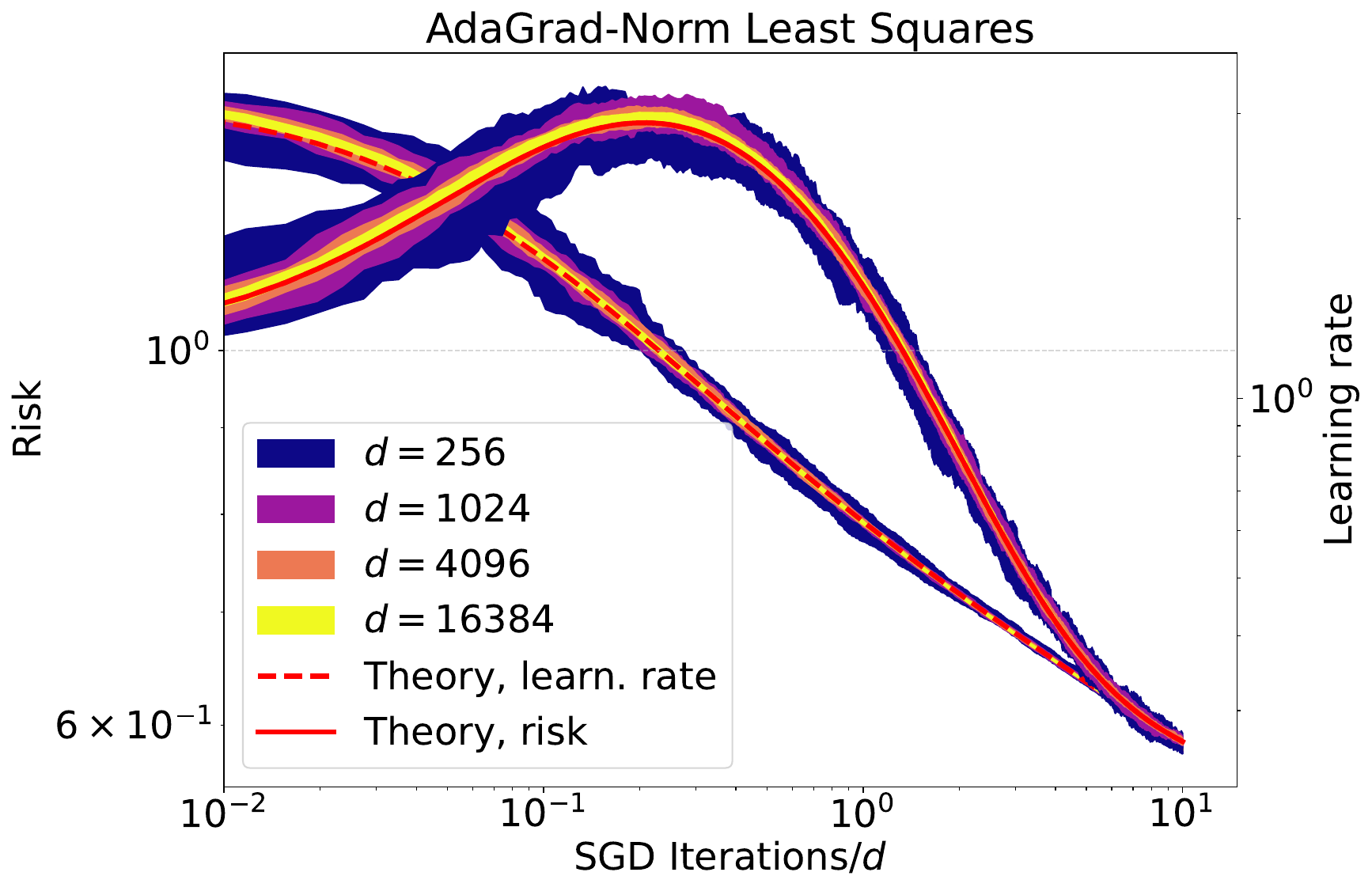}
    \includegraphics[width=0.45\textwidth]{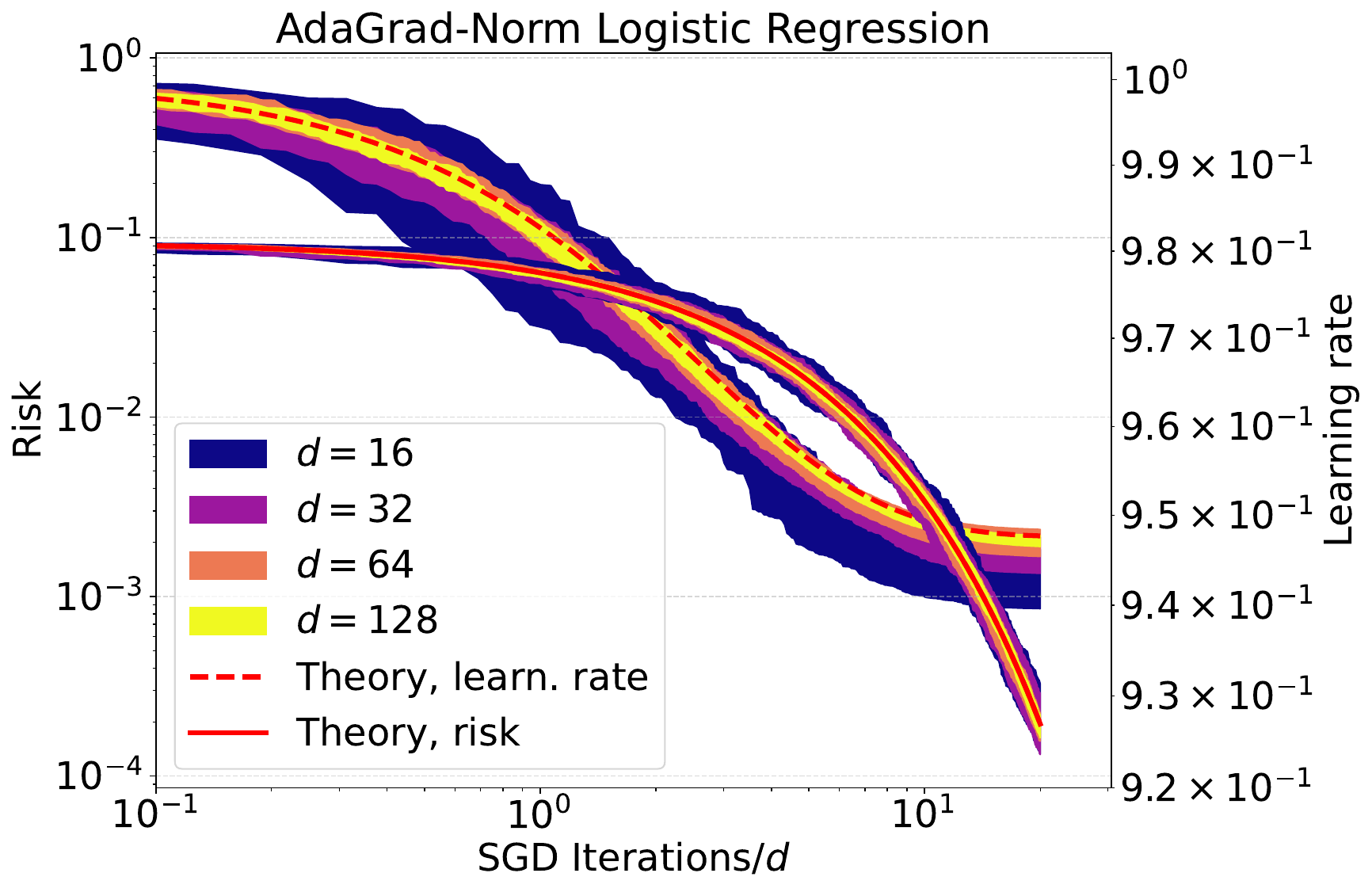}  
    \caption{\textbf{Concentration of learning rate and risk for AdaGrad-Norm} on least squares with label noise $\omega = 1$ (left) and logistic
      regression with no noise (right). As dimension increases, both risk and learning rate
      concentrate around a deterministic limit
      \textcolor{red}{(red)} described by
      our ODE in Theorem~\ref{thm:risk_concentration}. The initial risk
    increase (left) suggests the learning
  rate started too high, but AdaGrad-Norm adapts. Our ODEs predict this behavior. See Sec.~\ref{app:captions} for simulation details.}
    \label{fig:concentration}
\end{figure}
We suppose $a \sim \mathcal{N}(0, K)$ where $K \in \mathbb{R}^{d\times d}$ is the covariance matrix. We train \eqref{eq:intro_function} using (one-pass) stochastic
gradient descent with adaptive learning rates, $\mathfrak{g}_k$
(SGD+AL).  Our main goal is to give a framework for better\footnote{More realistic, in that it deals with high-dimensional anisotropic loss geometries and more precise, in that it can distinguish minimax optimal algorithms as more-or-less performant.} performance analysis of these adaptive methods.  We then illustrate this framework by considering two adaptive learning rate algorithms on the least squares problem\footnote{We extend some results to the general strongly convex setting.}, the results of which appear in Table \ref{table:learning_rates}:
exact line-search (idealistic) (Sec.~\ref{sec:LineSearchAnalysis}) and AdaGrad-Norm (Sec.~\ref{sec:AdaGradAnalysis}).
We expect other losses and adaptive learning rates can be studied using this approach. 

\paragraph{Main contributions.} 
\textit{Performance analysis framework.} We provide an equivalence of 
$\mathcal{R}(X_k)$ and learning rate $\mathfrak{g}_k$ under SGD+AL to
deterministic functions $\CMscr{R}(t)$ and $\gamma_t$ via solving a
\textit{deterministic} system of ODEs (see Section~\ref{sec:det_dynamics_ODEs}), which we then analyze to show how the covariance spectrum influences the optimization. See Figure~\ref{fig:concentration}. As the dimension $d$ of the problem grows, the learning curves of $\mathcal{R}(X_k)$ become closer to $\CMscr{R}(t)$ and the curves concentrate around $\CMscr{R}(t)$ with probability better than any inverse power of $d$ (See Theorem \ref{thm:risk_concentration}).

\textit{Greed can be arbitrarily bad in the presence of strong anisotropy (that is, $\text{Tr}(K)/d\ll\text{Tr}(K^2)/d$).} Our analysis reveals that exact line search, which is to say optimally decreasing the risk at each step, can
run arbitrarily slower than the best fixed learning rate for SGD on a
least squares problem when
$\lambda_{\min} \defas \lambda_{\min}(K) > C > 0$. The best fixed stepsize (least squares problem) is
$(\tr(K)/d)^{-1}$ or the inverse of the average eigenvalue, see Polyak
stepsize \cite{polyak1987introduction}. Line
search, on the other hand, converges to a fixed stepsize of order $\lambda_{\min} /
(\tr(K^2)/d)$.  It can be that $\lambda_{\min}/(\tr(K^2)/d) \ll
(\tr(K)/d)^{-1}$ making exact line search substantially underperform Polyak stepsize. We further explore this and,
in the case where $d$-eigenvalues of $K$ take only two values $\lambda_1
> \lambda_2 > 0$, we give an exact expression as a function of
$\lambda_1$ and $\lambda_2$ for the limiting behavior of
$\gamma_t$ as $t \to \infty$ (See Fig.~\ref{fig:lineSearchConvergence_figures}).

\textit{AdaGrad-Norm selects the optimal step-size, provided it has a warm start.}
In the absence of label noise and when the smallest
eigenvalue of $K$ satisfies $\lambda_{\min} > C > 0$, the
learning rate converges to a deterministic constant that depends on the average condition number (like in Polyak) and scales
inversely with $\tfrac{\tr(K)}{d} \|X_0-X^{\star}\|^2$. Therefore it attains
automatically the optimal fixed stepsize in terms of the covariance
\textit{without} knowledge of $\tr(K)$, but
pays a penalty in the constant, namely $\|X_0-X^{\star}\|^2$. If
one knew $\|X_0-X^{\star}\|^2$ then by tuning the parameters of
AdaGrad-Norm one might achieve performance consistent with Polyak; this also motivates more sophisticated adaptive algorithms such as DoG  \cite{ivgi2023dog} and D-Adaptation \cite{defazio2023learning}, which adaptively compensate and/or estimate $\|X_0-X^{\star}\|^2$. 

\textit{AdaGrad-Norm can use overly pessimistic decaying schedules on hard problems.}
Consider power law behavior for the spectrum of $K$ and the signal
$X^{\star}$. This is a natural setting as power law distributions have been observed in many datasets \cite{wei2022more}. Here the learning rate and asymptotic
convergence of $K$ undergo a \textit{phase transition}. For power laws corresponding to easier optimization problems, the learning rate goes to a constant and
the risk decays at $t^{-\alpha_1}$. For harder problems, the learning
rate decays like $t^{-\eta_1}$ and the risk decays at a different
sublinear rate $t^{-\alpha_2}$. See Table \ref{table:learning_rates} and Sec.~\ref{sec:AdaGradAnalysis} for details. 


\textbf{Notation.} Define $\mathbb{R}_+ = [0, \infty)$. We say an event holds \emph{with overwhelming probability, w.o.p.,} 
if there is a function $\omega: \N \to \R$ with $\omega(d)/\log d \to
\infty $ so that the event holds with probability at least
$1-e^{-\omega(d)}$. We let $\bm{1}_{A}(x)$ be the indicator function of
the set $A$ where it is $1$ if $x \in A$ and $0$ otherwise. For a
matrix $A \in \mathbb{R}^{m \times d}$, we use $\|A\|$ to denote the
Frobenius norm and $\|A\|_{\text{op}}$ to denote the operator-2
norm. If unspecified, we assume that the norm is the Frobenius
norm. 
For  normed vector spaces $\mathcal{A}$, $\mathcal{B}$  with norms $\|\cdot\|_{\mathcal{A}}$ and $\|\cdot\|_{\mathcal{B}}$, respectively, and for $\alpha \geq 0$, we say a function $F \, : \, \mathcal{A} \to \mathcal{B}$ is \textit{$\alpha$-pseudo-Lipschitz} with constant $L$ if for any $A, \hat{A} \in \mathcal{A}$, we have 
\[
\|F(A) - F(\hat{A})\|_{\mathcal{B}} \le L\|A-\hat{A}\|_{\mathcal{A}} (1 + \|A\|_{\mathcal{A}}^{\alpha} + \|\hat{A}\|_{\mathcal{A}}^{\alpha} ).
\]
We write $f(t) \asymp g(t)$ if there exist \textit{absolute} constants $C, c > 0$ such that $c \cdot g(t) \le f(t) \le C \cdot g(t)$ for all $t$. If the constants depend on parameters, e.g., $\alpha$, then we write $\asymp_{\alpha}$.

\renewcommand{\arraystretch}{1.1}
\ctable[notespar,
caption = {{\bfseries Summary of adaptive learning rates results on the least squares
    problem.} We summarize our results for line search and
  AdaGrad-Norm under various assumptions on the covariance matrix
  $K$. We denote $\lambda_{\min}$ the smallest non-zero eigenvalue of
  $K$ and $\tfrac{\tr(K)}{d}$ the average eigenvalue. Power law$(\delta, \beta)$ assumes the eigenvalues of $K$,
  $\{\lambda_i\}_{i=1}^d$, 
  follow a power law distribution, that is, for $0 < \beta < 1$,
  $\lambda_i \sim (1-\beta){\lambda^{-\beta}} \bm{1}_{(0,1)}$ for all
  $1 \le i \le d$ and $\ip{X_0 -X^{\star},\omega_i}^2 \sim
  \lambda_i^{-\delta}$ where $\{\omega_i\}_{i=1}^d$ are eigenvectors of $K$ (see Prop~\ref{prop:gamma_asymp}). For $^*$ (see Prop.~\ref{prop:adagrad_integrable_risk}), requires a good initialization on $b$, $\eta$. 
\vspace{-0.5em}
} ,label = {table:learning_rates},
captionskip=2ex,
pos =!t
]{c l l l}{}{
  \toprule
\textbf{Learning rate}
&
\textbf{K assumption}
 &
 \textbf{Limiting $\gamma_{\infty}$}
 &
 \textbf{Convergence rate}
\\
\midrule
   \begin{minipage}{0.2\textwidth}
     \begin{center}
\footnotesize{AdaGrad-Norm$(b, \eta)$}\\   \footnotesize{(see Sec.~\ref{sec:AdaGradAnalysis}) }
\end{center}
\end{minipage}
&
{\footnotesize $\lambda_{\min} > C$} &
{\footnotesize $\gamma_{t} 
\asymp 
\frac{\eta^2}{\frac{b}{\eta}+\frac{1}{4d}\Tr(K)\|X_0-X^{\star}\|^2}
$
}
&
\footnotesize{$\log(\CMscr{R})^{*} \asymp -\lambda_{\min}  \gamma_{\infty} t$}
\\
\midrule
\multirow{3}*[0em]{
\footnotesize { $\begin{array}{c}
    \text{AdaGrad-Norm$(b, \eta)$}\\
    \text{Power law}\\
    \text{(see Sec.~\ref{sec:AdaGradAnalysis})} 
 \end{array}
 $
  } }
 & {\footnotesize $\beta + \delta < 1$} & $\gamma_t \asymp_{\delta, \beta} 1$ &
 {\footnotesize $\CMscr{R}(t) \asymp_{\delta, \beta} t^{\beta +\delta -2}$}\\
 \cmidrule(lr){2-4}
 & {\footnotesize $\beta + \delta = 1$} & $\gamma_t \asymp_{\delta, \beta} \tfrac{1}{\log(t+1)}$ &
 {\footnotesize $\CMscr{R}(t) \asymp_{\delta, \beta} \left (\tfrac{t}{\log(t+1)} \right )^{-1}$ }\\
 \cmidrule(lr){2-4}
 & {\footnotesize $1 < \beta + \delta < 2$} & $\gamma_{t} \asymp_{\delta, \beta}
 t^{-1+\frac{1}{\beta+\delta}}$ & {\footnotesize $\CMscr{R}(t)  \asymp_{\delta, \beta}
   t^{-\frac{2}{\beta +\delta} +1}$}\\
 \midrule
 \begin{minipage}{0.17\textwidth}
   \begin{center}
   \footnotesize{Exact line search, idealized}\\
   \footnotesize{(see Sec.~\ref{sec:LineSearchAnalysis})}
   \end{center}
   \end{minipage}
 & {\footnotesize $\lambda_{\min} > C$} &
 \footnotesize{$\gamma_{t} \asymp
   \frac{\lambda_{\min}}{\tr(K^2)/d}$} & \footnotesize{ $\log(\CMscr{R}) \asymp -\lambda_{\min} \gamma_{\infty} t$ }
   \\
   \midrule
   \begin{minipage}{0.15\textwidth}
     \begin{center}
   \footnotesize{Polyak stepsize}\\
   \footnotesize{(see Sec.~\ref{sec:LineSearchAnalysis})}
   \end{center}
   \end{minipage}  & {\footnotesize $\lambda_{\min} > C$} & {\footnotesize $\gamma_t =
   \frac{1}{\tr(K)/d}$} & {\footnotesize $\log(\CMscr{R}) \asymp -\lambda_{\min} \gamma_{\infty}t$}
   \\
 \bottomrule
}

\paragraph{Related work.}
Some notable adaptive learning rates in the literature are AdaGrad-Norm \cite{ward2020adagrad, levy2017online, wu2018wngrad}, RMSprop \cite{hinton2012neural}, stochastic line search, stochastic Polyak stepsize \cite{loizou2021stochastic}, and more recently DoG \cite{ivgi2023dog} and D-Adaptation \cite{defazio2023learning}.  In this work, we introduce a framework for analyzing these algorithms, and we strongly believe it can be used to analyze  many more of these adaptive algorithms.  We highlight below a nonexhaustive list of related work.

\textbf{AdaGrad-Norm. } AdaGrad, introduced by
\cite{Duchi,mcmahan2010adaptive}, updates the learning rate at each
iteration using the stochastic gradient information. The single
stepsize version \cite{ward2020adagrad, levy2017online, wu2018wngrad},
that depends on the norm of the gradient, (see Table~\ref{table:learning_rate_expressions} for the
updates), has been shown to be robust to input parameters
\cite{li2019on}. Several works have shown worst-case convergence
guarantees \cite{ward2020adagrad, levy2018online, faw2023beyond,wang2023convergence}.
A linear rate of $O(\exp(-\kappa T))$ is possible for $\mu$-strongly convex, $L$-smooth functions ($\kappa$ is the condition number $\mu / L$). In \cite{xie2020linear} (similar idea in \cite{wu2018wngrad}), the authors show for strongly convex, smooth stochastic objectives (with additional assumptions) that the AdaGrad-Norm learning rate exhibits a two stage behavior -- a burn in phase and then when it reaches the smoothness constant it self-stablizes. 

\textbf{Stochastic line search and Polyak stepsizes.} Recently there
has been renewed interest in studying stochastic line search
\cite{paquette2020stochastic, vaswani2020adaptive,
  dvinskikh2019adaptive} and stochastic Polyak stepsize (and their
variants) \cite{loizou2021stochastic,jiang2024adaptive,
  orvieto2022dynamics, berrada2020training, gower2021stochastic,
  rolinek2018l4, nedic2001convergence, hazan2019revisiting}. Much of
this research focuses on worst-case convergence guarantees for strongly convex and smooth functions
(see e.g., \cite{loizou2021stochastic}) and designing practical algorithms. In \cite{vaswani2019painless}, the authors provide a bound on the learning rate for Armijo line search in the finite sum setting with a rate of $L_{\max} / \text{avg. $\mu$}$ where avg. $\mu$ is the avg. strong convexity and $L_{\max}$ is the max. Lipschitz constant of the individual functions. In this work, we consider a slightly different problem. We work with the population loss and we note that the analogue to $L_{\max}$ for us would require that the samples $a$ satisfy $\|aa^T\|_{\text{op}} \le L_{\max}$ \textit{for all $a$}; this fails to hold for $a \sim \mathcal{N}(0, K)$. Moreover, $L_{\max}$ could be much worse than $\EE[\|aa^T\|_{\text{op}}]$.  

\textbf{Deterministic dynamics of stochastic algorithms in high-dimensions.} 
The literature on  
deterministic dynamics
for isotropic Gaussian data has a long history \cite{saad1995dynamics,biehl1994line, biehl1995learning, saad1995exact}. 
These results have been rigorously proven and extended to other models under the isotropic Gaussian assumption \cite{goldt2019dynamics,wang2019solvable, arnaboldi2023highdimensional,ben2022high,damian2023smoothing,Bruno,dandi2024benefits}. Extensions to
multi-pass SGD with small mini-batches \cite{PPAP01} as well as momentum \cite{LeeChengPaquettePaquette} have also been studied. 
Other high-dimensional limits 
leading to a different class of 
dynamics also exist \cite{mignacco2020dynamical,gerbelot2022rigorous,celentano2021highdimensional,chandrasekher2021sharp,bordelon2022learning}. 
Recently, significant contributions have been made in understanding the effects of a non-identity data covariance matrix on the training dynamics \cite{CollinsWoodfinPaquette01,balasubramanian2023high,goldt2022gaussian,Yoshida, Goldt,collinswoodfin2023hitting}. 
The non-identity covariance modifies the optimization landscape and affects convergence properties, as discussed in \cite{collinswoodfin2023hitting}. This work extends the findings of \cite{collinswoodfin2023hitting} to stochastic adaptive algorithms, exploring the effect of non-identity covariance within these algorithms. Notably, Theorem 1.1 from \cite{collinswoodfin2023hitting} is restricted to deterministic learning rate schedules, limiting its applicability in many practical scenarios. In contrast, our Theorem \ref{thm:risk_concentration} accommodates stochastic adaptive learning rates, aligning with widely used algorithms in practice.

\subsection{Model Set-up} \label{sec:main_setup}
We suppose that a sequence of independent samples $\left\{ (a_k,y_k) \right\}$ drawn from a distribution $\mathcal{D} \subset \mathbb{R}^d \times \mathbb{R}$ is provided where $y_k$ is the target. The target $y_k$ is a function of some random label noise $\epsilon_k \in \mathbb{R}$ and the input feature $a_k$ dotted with a ground truth signal $X^{\star} \in \mathbb{R}^d$, $\langle a_k,  X^\star\rangle$. Therefore, the distribution of the data is only determined by the input feature and the noise, i.e., the pair $(a,\epsilon$). In particular, we assume $(a,\epsilon)$ follows a distributional assumption. 

\begin{assumption}[Data and label noise]
\label{assumption:data}
The samples $(a, \epsilon) \sim \mathcal{D}$ are normally distributed: $\epsilon \sim \mathcal{N}(0, \omega^2)$ where $\omega \in \mathbb{R}$, and $a \sim \mathcal{N}(0, K)$, with a covariance matrix $K \in \mathbb{R}^{d \times d}$ that is bounded in operator norm independent of $d$; i.e., $\| K \|_{\text{op}} \le C$. Furthermore, $a$ and $\epsilon$ are independent. 
\end{assumption}
For $a, X, X^\star \in \mathbb R^d$, $\epsilon \in \mathbb R$, and a function $f: \mathbb R^3 \rightarrow \mathbb R$, we seek to minimize an expected risk function $\mathcal{R}: \mathbb R^d \rightarrow \mathbb R$, which we refer to as the \textit{high-dimensional linear composite}\footnote{Note that $d$ need not be large to define this, but the structure allows us to consider $d$ as a tunable parameter.  Moreover, as we increase $d$, the analysis we do will be more meaningful.}, of the form
\begin{equation}\label{eq:nlgc}
  \mathcal{R}(X) \defas \EE_{a,\epsilon} [\Psi(X; a, \epsilon) ] 
   \quad \text{for} \quad (a,\epsilon) \sim \mathcal{D},
  \quad \text{and}\quad \Psi(X; a, \epsilon) = f( \langle a, X\rangle,  \langle a,  X^\star \rangle, \epsilon).
\end{equation}
In what follows, we use the matrix $W = [X|X^{\star}] \in \mathbb{R}^{d \times 2}$ that concatenates $X$ and $X^{\star}$, and we shall let $B=B(W) = W^TKW$ be the covariance matrix of the Gaussian vector $(\ip{a,X},\ip{a,X^{\star}})$.
\begin{assumption}[Pseudo-lipschitz $f$] 
\label{assumption:pseudo_lipschitz}
The function $f :\R^3 \to \R$ is $\alpha$-pseudo-Lipschitz with $\alpha \le 1$.
\end{assumption}
By assumption, $\mathcal{R}(X)$ involves an expectation over the correlated Gaussians $\ip{a,X}$ and $\ip{a,X^{\star}}$. We can express this as $\mathcal{R}(X) \defas h(B)$ for some well-behaved function $h \, : \, \mathbb{R}^{2\times 2} \to \mathbb{R}$.
\begin{assumption}[Risk representation]
  \label{assumption:risk} 
There exists a function $h: \mathbb{R}^{2\times 2} \rightarrow \mathbb R$ such that $h(B) = \mathcal{R}(X)$ is differentiable and satisfies
\[
\nabla_X \mathcal{R}(X) = \mathbb{E}_{a, \epsilon} \nabla_X \Psi(X; a, \epsilon). 
\]
Furthermore, $h$ is continuously differentiable and its derivative $\nabla h$ is $\alpha$-pseudo-Lipschitz for some $0 \leq \alpha \le 1$, with constant $L(\nabla h)$.
\end{assumption}
The final assumption is the well-behavior of the Fisher information matrix of the gradients. The first coordinate of $f$ is special, as the optimizer must be able to differentiate it. Thus, we treat $f(x,x^{\star},\epsilon)$ as a function of a single variable with two parameters: $f(x,x^{\star},\epsilon)=f(x ; x^{\star},\epsilon)$ and denote the (almost everywhere) derivative with respect to the first variable as $f'$.

\begin{assumption}[Fisher matrix] 
\label{assumption:fisher} 
Define $I(B) \defas \mathbb{E}_{a,\epsilon}[(f'(\ip{a,X}; \ip{a,X^{\star}}, \epsilon))^2]$ where the function $I \, : \, \mathbb{R}^{2 \times 2} \to \mathbb{R}$. Furthermore, $I$ is $\alpha$-pseudo-Lipschitz with constant $L(I)$ for some $\alpha \leq 1$.
\end{assumption}

A large class of natural regression problems fit within this
framework, such as logistic regression and least squares (see \cite[Appendix B]{collinswoodfin2023hitting}). We also note that Assumptions~\ref{assumption:risk} and \ref{assumption:fisher} are nearly satisfied for $L$-smooth objectives $f$ (see Lemma \ref{lem:hI}), and a version of the main theorem holds under just this assumption (albeit with a weaker conclusion).

\renewcommand{\arraystretch}{1.1}
\ctable[notespar,
caption = {{\bfseries Two adaptive learning rates considered in detail.} 
The stochastic adaptive learning rate, $\mathfrak{g}_k$, is the learning rate directly used in the update for SGD whereas the deterministic, $\gamma_t$, is the deterministic equivalent of $\mathfrak{g}_k$ after scaling. 
\vspace{-0.5em}
} ,label = {table:learning_rate_expressions},
captionskip=2ex,
pos =!t
]{c l l l}{}{
  \toprule
\textbf{Algorithm}
&
 &
 \textbf{General update}
 &
 \textbf{Least squares}
\\
\midrule
\footnotesize{
\multirow{2}*[0em]{\begin{minipage}{0.13\textwidth}
\begin{center}
AdaGrad-Norm$(b, \eta)$\\
$b_0 = b \times d$
\end{center}
\end{minipage}
}}
&
$\mathfrak{g}_k$
&
\begin{minipage}{0.3\textwidth}
{\footnotesize 
$b_{k}^2 = b_{k-1}^2 + \|\nabla \Psi(X_{k-1})\|^2$;\\
$\mathfrak{g}_{k-1} = d \times \tfrac{\eta}{|b_k|}$
}
\end{minipage}
&
same
\\
\cmidrule(lr){2-4}
&
$\gamma_t$
&
{\footnotesize $\frac{\eta}{\sqrt{b^2 + \frac{\tr(K)}{d} \int_0^t I(\CMscr{B}(s)) \, \dif s } }$ 
}
&
$\frac{\eta}{\sqrt{b^2 + \frac{2\tr(K)}{d} \int_0^t \CMscr{R}(s) \dif s} } $
\\
\midrule
\footnotesize{
\multirow{2}*[0em]{\begin{minipage}{0.14\textwidth}
\begin{center}
Exact line search\\
(idealized)
\end{center}
\end{minipage}
}}
&
$\mathfrak{g}_k$
&
$\frac{\|\nabla \mathcal{R}(X_k)\|^2}{ \tfrac{\tr(\nabla^2 \mathcal{R}(X_k) K)}{d} \mathbb{E}_{a,\epsilon}[(f'(\ip{a,X}; \ip{a,X^{\star}}, \epsilon))^2] }$
&
$\frac{\|\nabla \mathcal{R}(X_k)\|^2}{\frac{2\tr(K^2)}{d}\mathcal{R}(X_k)}$
\\
\cmidrule(lr){2-4}
&
$\gamma_t$
& $\underset{\gamma}{\argmin} \dif \CMscr{R}{(t)}$
&
$\frac{\sum_{i=1}^d \lambda_i^2 \CMscr{D}_i^2(t)}{2\tr(K^2) \CMscr{R}(t)} $
\\
 \bottomrule
}

\subsection{Algorithmic set-up}
\label{sec:algorithmic_setup_main}
We apply \textit{one-pass} or \textit{streaming} SGD with an adaptive learning rate $\mathfrak{g}_k$ (SGD+AL) to solve $\min_{X \in \mathbb{R}^d} \mathcal{R}(X)$, \eqref{eq:nlgc}. Let $X_0 \in \mathbb{R}^d$ be an initial vector (random or non-random). Then SGD+AL iterates by selecting a \textit{new} data point $(a_{k+1}, \epsilon_{k+1})$ such that $a_{k+1} \sim \mathcal{N}(0, K)$ and $\epsilon_{k+1} \sim \mathcal{N}(0, \omega^2)$  and makes the update
\begin{equation} \label{eq:sgd+al}
X_{k+1} = X_k - \frac{\mathfrak{g}_k}{d} \cdot \nabla_X \Psi(X_k; a_{k+1}, \epsilon_{k+1}) = X_{k} - \frac{\mathfrak{g}_k}{d} f'(\ip{a_{k+1}, X_k}; \ip{a_{k+1}, X^\star}, \epsilon_{k+1}) a_{k+1},
\end{equation}
where $\mathfrak{g}_k > 0$ is a learning rate (see assumptions below)\footnote{Note that cases where $\frac{\Tr(K^2)}{d} = o(d)$ can lead to dynamics that converge to full-batch gradient flow. While our theorem specifically addresses the scenario where the intrinsic dimension, $\text{Dim}(K) \defas \Tr(K) / \| K \|_{\text{op}}$, satisfies $\text{Dim}(K) = \Theta(d)$, other cases, such as $\text{Dim}(K) = o(d)$, may require different learning rate scalings.}. 
To perform our analysis, we place the following assumption on the initialization $X_0$ and signal $X^{\star}$.
\begin{assumption}[Initialization and signal]
\label{assumption:scaling}
The initialization point $X_0$ and the signal $X^\star$ are bounded independent of $d$, that is, $\max \{ \| X_0 \|, \| X^\star \| \} \le C$ for some $C$ independent of $d$. 
\end{assumption}

\paragraph{Adaptive learning rate.}\label{section:learning_rate}
Our analysis requires some mild assumptions on the learning rate. To this end, we define a learning rate function $\gamma \, : \, \mathbb{R_+} \times D([0,\infty)) \times D([0,\infty)) \times D([0,\infty)) \to \mathbb{R}_+$ by\footnote{$D([0,\infty))$ is the càdlàg function class on $[0,\infty)$.}
{\small \begin{equation} \label{eq:lr}
\begin{gathered}
\mathfrak{g}_k \defas \gamma(k, N_k(d \times \cdot ), G_k( d \times \cdot), Q_k( d \times \cdot)), \, \text{for $k \in \mathbb{N}$, where for any $t \geq 0$,}
\\
(N_k(t), G_k(t),Q_k(t))
\defas 
\bm{1}_{ \{t < k \}}
\Bigl( (W_t)^T W_t, \tfrac{1}{d}\|\nabla_X \Psi(X_t; a_{t+1}, \epsilon_{t+1}) \|^2, \mathcal{R}(X_t)\Bigr).
\end{gathered}
\end{equation}
}In this definition, for functions taking integer arguments, we extend them to real-valued inputs by first taking the floor function of its argument.  Note that the adaptive learning rates can depend on the whole history of stochastic iterates ($N_k$), gradients $(G_k)$, and risk $(Q_k)$ via this definition.

We also define a conditional expectation version of $G_k$ where the filtration $\mathcal{F}_k = \sigma(X^{\star}, X_0, \hdots, X_k)$:
\[
 \mathscr{G}_k(t) \defas \bm{1}_{ \{ t < k\}}(\cdot) 
 \frac{1}{d}\mathbb{E} [ \|\nabla_X \Psi(X_t; a_{t+1}, \epsilon_{t+1}) \|^2 | \mathcal{F}_t]
 \quad\text{for $t \geq 0$}.
\]
With this, we impose the following learning rate condition.

\begin{assumption}[Learning rate] 
\label{assumption:stepsizes} The learning rate function $\gamma \, : \,
\mathbb{R}_{+} \times D([0,\infty)) \times D([0,\infty)) \times
D([0,\infty)) \to \mathbb{R}$ is $\alpha$-pseudo-Lipschitz with
constant $L(\gamma)$ (independent of $d$) in $D([0,\infty)) \times
D([0,\infty)) \times D([0,\infty))$.
Moreover, for some constant $C = C(\gamma) > 0$ independent of $d$ and $\delta > 0$, 
\begin{gather}
\EE \left[ |\gamma(k, f, G_k(d \times \! \! \cdot), q) - \gamma(k, f,
  \mathscr{G}_k(d \times \! \!\cdot), q)| \, | \, \mathcal F_k \right]  \le
C d^{-\delta} (1 + \|f\|_{\infty}^{\alpha}
+\|q\|_{\infty}^{\alpha} ) \quad \text{w.o.p.} \label{stepsize:concentration}
\end{gather}
Finally, $\gamma$ is bounded, i.e., there exists a constant $\hat{C} = \hat{C}(\gamma) >0$ independent of $d$ so that
\begin{equation}
\gamma(k, f, g, q) \le \hat{C} ( 1 + \|f\|_{\infty}^{\alpha} + \|q\|_{\infty}^{\alpha} + \|g\|_{\infty}^{\alpha}). \label{stepsize:boundedness}
\end{equation}
\end{assumption}
The inequality \eqref{stepsize:concentration} ensures that the learning rate concentrates around the mean behavior of the
stochastic gradients. Many well-known adaptive stepsizes satisfy
\eqref{eq:lr} and Assumption~\ref{assumption:stepsizes} including
AdaGrad-Norm, DoG, D-Adaptation, and RMSProp (see Table~\ref{table:learning_rate_expressions}, Sec.~\ref{app:AL_algorith_details}, and Sec.~\ref{appsubsec:specific_step_size}). 


\section{Deterministic dynamics for SGD with adaptive learning rates\label{sec:det_dynamics_ODEs}}

\paragraph{Intuition for deriving dynamics:}The risk $\mathcal{R}(X)$ and Fisher matrix can be evaluated solely in terms of the covariance matrix $B$.  Thus, to know the evolution of the risk over time, it would suffice to know the evolution of $B$.  Alas, except in the isotropic case where $K$ is a multiple of the identity, the evolution of $B$ is not autonomous (i.e., its time evolution depends on other unknown variables).
However, if we let $(\lambda_i, \omega_i)$ be the eigenvalues and corresponding orthonormal eigenvectors of $K$, we
can consider projections $V_i(X_k) = d \cdot W_k^T \omega_i \omega_i^TW_k$, and it turns out that these behave autonomously.

\paragraph{Example: Least Squares.}
One canonical example of \eqref{eq:nlgc} is least squares, where we aim to recover the target $X^\star$ given noisy observations $\langle a, X^\star \rangle + \epsilon$. In this case, the \textit{least squares problem} is
\begin{equation}\label{eq:lsq} 
\min_{X \in \mathbb{R}^d} \Big \{ \mathcal{R}(X) = \tfrac{1}{2} \EE_{a,\epsilon}[ (\ip{a, X-X^{\star}} - \epsilon)^2 ] =   \tfrac{1}{2} \omega^2 + \tfrac{1}{2} (X-X^{\star})^T K (X - X^{\star})  \Big \}.
\end{equation}
The pair of functions $h$ (Assumption~\ref{assumption:risk}) and $I$ 
(Assumption~\ref{assumption:fisher}) can be evaluated simply:
\[
h(B(W)) = \tfrac{1}{2} I(B(W)) = \tfrac{1}{2}(X-X^{\star})^T K (X-X^{\star}) + \tfrac{1}{2}\omega^2.
\]
The deterministic dynamics for the risk $\CMscr{R}(t)$ in this case can be simplified to:
\begin{equation*}
\CMscr{R}(t) = \tfrac{1}{2}  (X_0-X^{\star})^T K e^{-2 K \int_0^t \gamma_s \, \dif s} (X_0-X^{\star}) + \tfrac{1}{2}\omega^2 
+ \tfrac{1}{d} \textstyle\int_0^t \gamma_s ^2\tr(K^2 e^{-2 K \int_s^t \gamma_\tau \dif \tau} )\CMscr{R}(s)   \, \dif s.
\end{equation*}
This is a convolution Volterra equation with a convergence threshold of $\gamma_t < \tfrac{2d}{\tr K}$ \cite{CollinsWoodfinPaquette01,PPAP02,PPAP01, PAquettes03}.

In the noiseless label case (i.e., $\epsilon = 0$), the risk is given by $\CMscr{R}(t) = \tfrac{1}{2d}\sum_{i=1}^d \lambda_i \CMscr{D}_i^2(t)$. Using the ODEs in \eqref{eq:ODE}, we get the following deterministic equivalent ODE for the $\CMscr{D}^2_i$'s:
\begin{equation}\label{eq:dDi}
\tfrac{\dif}{\dif t} \CMscr{D}_i^2(t) 
= -2\gamma_t \lambda_i \CMscr{D}_i^2(t) + {2\gamma_t^2 \lambda_i } \CMscr{R}(t). 
\end{equation}
We will perform a deep analysis of the dynamics of the learning rate on least squares \eqref{eq:lsq}, which will generalize to settings where the outer function $f$ is strongly convex (see \ref{sec:sconvex}).


\paragraph{Deterministic dynamics.} To derive deterministic dynamics, we make the following change to continuous time by setting
\[
k \text{ iterations of SGD} = \lfloor td \rfloor, \quad \text{where $t \in \mathbb{R}$ is the continuous time parameter}.
\]
This time change is necessary, as when we scale the size of the problem, more time is needed to solve the underlying problem. This scaling law scales SGD so all training dynamics live on the same space. 
One can solve a smaller $d$ problem and scale it to recover the training dynamics of the larger problem.\footnote{Note that, 
holding time fixed, we perform $O(d)$ gradient updates for a problem of dimension $d$. For the problems considered here, this scaling leads to consistent dynamics, but there do exist related problems where a different scaling is more appropriate. For example, under random initialization, to capture the escape of phase retrieval from the high-dimensional saddle, $O(d \log d)$ iterations are needed; see for example \cite{vershynin2018high}.}

We now introduce a coupled system of differential equations, which will allow us to model the behaviour of our learning algorithms.  For the $i$th $(\lambda_i, \omega_i)$-eigenvalue/eigenvector of $K$, set
\[
\CMscr{V}_i(t) \stackrel{\text { def }}{=}
\begin{bmatrix}
\CMscr{V}_{11, i}(t) & \CMscr{V}_{12, i}(t) \\
\CMscr{V}_{12, i}(t) & \CMscr{V}_{22, i}(t)
\end{bmatrix} \, \, \text{and averaging over $i$,} \, \, \CMscr{B}(t) 
\defas \frac{1}{d} \sum_{i=1}^d \lambda_i \CMscr{V}_i(t).
\]
The $\CMscr{V}_i(t)$ and $\CMscr{B}(t)$ are deterministic continuous analogues of $V_i(X_{td})$ and $B(X_{td})$ respectively.
Define the following continuous analogues
\begin{gather*}
 \nabla h(\CMscr{B}(t)) \defas 
\begin{bmatrix}
H_{1, t} & H_{2, t} \\
H_{2, t} & H_{3, t}
\end{bmatrix}, \, \,  \CMscr{N}(t) \defas \frac{1}{d} \sum_{i=1}^d \CMscr{V}_i(t), \, \, \CMscr{R}(t) \defas h(\CMscr{B}(t)), \,  \, \,\, \CMscr{I}(t) \defas I(\CMscr{B}(t)), \\
\text { and finally } \gamma_t\defas \gamma(t,  1_{\{\cdot \le t\}}\CMscr{N}(\cdot),\tfrac{\tr(K)}{d} 1_{\{\cdot \le t\}} \CMscr{I}(\cdot), 1_{\{\cdot \le t\}}\CMscr{R}(\cdot) ).
\end{gather*}
We now introduce a system of coupled ODEs for each $(\lambda_i, \omega_i)$-eigenvalue/eigenvector pair of $K$
\begin{equation} \label{eq:ODE}
\begin{aligned}
\mathrm{d} \CMscr{V}_{11, i}(t)&=-2 \lambda_i \gamma_t \left(\CMscr{V}_{11, i}(t) H_{1, t}+H_{1, t} \CMscr{V}_{11, i}(t)+\CMscr{V}_{12, i}(t) H_{2, t} +H_{2, t} \CMscr{V}_{12, i}(t) \right)+ 
\lambda_i \gamma_t^2 \CMscr{I}(t), \\
\mathrm{~d} \CMscr{V}_{12, i}(t)&=-2 \lambda_i \gamma_t \left(H_{1, t} \CMscr{V}_{12, i}(t)+H_{2, t} \CMscr{V}_{22, i}(t)\right) \\
\end{aligned}
\end{equation}
with the initialization of $\CMscr{V}_{i}(0)$ given by $V_i(X_0)$. We finally state the deterministic dynamics for the risk and learning rate. 
\begin{theorem} \label{thm:risk_concentration} Under Assumptions \ref{assumption:data}, 
    \ref{assumption:pseudo_lipschitz},
    \ref{assumption:risk},
    \ref{assumption:fisher},
    \ref{assumption:scaling},
    \ref{assumption:stepsizes}, then for any $\varepsilon \in (0,\tfrac{1}{2})$ and any $T > 0$
\begin{gather}
\sup_{0 \le t \le T} \big \| \begin{pmatrix}\mathcal{R}(X_{\lfloor td \rfloor}) \\
 \mathfrak{g}_{\lfloor td \rfloor} \end{pmatrix} - \begin{pmatrix}\CMscr{R}(t) \\ \gamma_t \end{pmatrix} \big \| < d^{-\varepsilon}, \quad \text{w.o.p.}
\end{gather}
The same statements hold comparing $W_{td}^TW_{td}$ to $\CMscr{N}(t)$ and $W_{td}^TKW_{td}$ to $\CMscr{B}(t)$.
\end{theorem}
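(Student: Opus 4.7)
The plan is to reduce the high-dimensional concentration statement to a low-dimensional one by working with the family of eigen-projections $V_i(X_k) = d\cdot W_k^T \omega_i \omega_i^T W_k$. Since $W_k^T W_k = \tfrac{1}{d}\sum_i V_i(X_k)$ and $W_k^T K W_k = \tfrac{1}{d}\sum_i \lambda_i V_i(X_k)$, and since $\mathcal{R}(X_k)=h(W_k^T K W_k)$ with $h$ pseudo-Lipschitz (Assumption~\ref{assumption:risk}), it suffices to show that the $V_i(X_{\lfloor td \rfloor})$'s are uniformly close to their deterministic analogues $\CMscr{V}_i(t)$. The learning rate concentration then follows from the pseudo-Lipschitz property of $\gamma$ in Assumption~\ref{assumption:stepsizes} together with the trajectory concentration.

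I would first write the one-step update exactly. Plugging $X_{k+1}-X_k = -\tfrac{\mathfrak{g}_k}{d} f'(\ip{a_{k+1},X_k};\ip{a_{k+1},X^\star},\epsilon_{k+1})\, a_{k+1}$ into $V_i(X_{k+1})$ gives a quadratic expansion of the form
\[
V_i(X_{k+1}) - V_i(X_k) = \tfrac{1}{d}\,\Delta_i^{(1)}(X_k,a_{k+1},\epsilon_{k+1}) + \tfrac{1}{d^2}\,\Delta_i^{(2)}(X_k,a_{k+1},\epsilon_{k+1}).
\]
Taking $\EE[\cdot\mid \mathcal F_k]$ and using that $a_{k+1}\sim\mathcal N(0,K)$ with eigenbasis $\{\omega_i\}$, the conditional drift (after Stein/Gaussian integration by parts on the cross terms) matches exactly the RHS of the ODE system \eqref{eq:ODE} with $\gamma_t$ replaced by $\mathfrak g_k$ and $H_{j,t}$ replaced by the entries of $\nabla h(B(W_k))$, up to an additive $O(d^{-1})$ error from the quadratic term and boundary effects. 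This identifies the ODE as the correct deterministic limit.

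The stochastic piece is the martingale increment $M_{k+1} = V_i(X_{k+1}) - V_i(X_k) - \EE[V_i(X_{k+1}) - V_i(X_k)\mid \mathcal F_k]$. Assumptions \ref{assumption:pseudo_lipschitz}, \ref{assumption:fisher} and the boundedness \eqref{stepsize:boundedness} (combined with the a-priori Gaussian tail of $\ip{a_{k+1},X_k}$ given $\mathcal F_k$) give sub-exponential control on $M_{k+1}$ with variance $O(d^{-2})$. Summing $\sum_{j\le \lfloor td \rfloor} M_j$ and using a Freedman-type martingale inequality, augmented by a standard chaining/union-bound over a polynomial grid in $t\in[0,T]$ and $i\in\{1,\dots,d\}$, produces a bound of order $d^{-1/2+o(1)}$ on the fluctuations, w.o.p. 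The a-priori norm bounds on $\|X_k\|$ required to apply these concentration estimates follow from Assumption \ref{assumption:scaling}, the boundedness \eqref{stepsize:boundedness}, and a bootstrap/stopping-time argument that terminates at the first time the iterate leaves a large ball.

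The final ingredient is to close the loop via a discrete Grönwall argument. Denote $E_k := \max_i \|V_i(X_k) - \CMscr V_i(k/d)\|$ and introduce a stopping time $\tau$ at which $E_k$ first exceeds some threshold $d^{-\varepsilon}$. On $\{k\le \tau\}$, the pseudo-Lipschitz assumptions on $h$, $I$, and $\gamma$ propagate the bound $E_k \le d^{-\varepsilon}$ to bounds $|\mathcal R(X_k)-\CMscr R(k/d)|$, $|I(B(W_k))-\CMscr I(k/d)|$, and — crucially using \eqref{stepsize:concentration} to replace the realized $G_k$ by its conditional mean $\mathscr G_k$ which depends only on past $\mathcal F_k$-measurable quantities — also $|\mathfrak g_k - \gamma_{k/d}|$. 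Together with the martingale bound, this yields a discrete inequality of the form $E_{k+1} \le E_k(1 + C/d) + d^{-1/2+o(1)}$, which by Grönwall gives $\sup_{k\le Td} E_k \lesssim d^{-1/2+o(1)}e^{CT}$, precluding $\tau\le Td$ w.o.p. and establishing the claim.

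The main obstacle is the feedback loop created by the adaptivity: $\mathfrak g_k$ depends on the entire past stochastic trajectory through $N_k, G_k, Q_k$, so one cannot simply freeze $\gamma_t$ and run a standard SGD concentration argument. This is precisely what the pseudo-Lipschitz condition on $\gamma$ and the key inequality \eqref{stepsize:concentration} are engineered to handle — the former turns trajectory closeness into learning-rate closeness, and the latter removes the spurious fluctuation coming from $G_k - \mathscr G_k$ that would otherwise couple $\mathfrak g_k$ to the current martingale increment and break the Grönwall closure.
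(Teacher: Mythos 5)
Your overall architecture (drift--martingale--error decomposition, stopping time for a priori bounds, pseudo-Lipschitzness of $h$, $I$, $\gamma$ plus \eqref{stepsize:concentration} to close the adaptive feedback loop, then Gr\"onwall) is the right shape and matches the paper's in spirit. But the reduction you build it on --- ``it suffices to show that the $V_i(X_{\lfloor td\rfloor})$'s are uniformly close to $\CMscr{V}_i(t)$,'' with $E_k=\max_i\|V_i(X_k)-\CMscr{V}_i(k/d)\|$ --- fails, because individual eigen-projections do \emph{not} concentrate. Write the one-step increment of $V_{11,i}(X_k)=d\,\ip{X_k,\omega_i}^2$: its leading term is $-2\mathfrak{g}_k f'(r_k)\ip{a_{k+1},\omega_i}\ip{X_k,\omega_i}$. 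In the generic situation $V_{11,i}\asymp 1$ one has $\ip{X_k,\omega_i}\asymp d^{-1/2}$ while $f'(r_k)\ip{a_{k+1},\omega_i}$ has $O(1)$ conditional fluctuations, so the martingale increment is of size $d^{-1/2}$ and its conditional variance is $\Theta(d^{-1})$ per step --- not $O(d^{-2})$ as you assert. Summed over $\lfloor Td\rfloor$ steps the quadratic variation is $\Theta(T)$, so a single $V_i$ carries $O(1)$ fluctuations and $\max_i\|V_i-\CMscr{V}_i\|$ does not tend to zero at any rate $d^{-\varepsilon}$. Note the theorem only asserts concentration for spectral \emph{averages} ($\mathcal{R}$, $W^TW=\frac1d\sum_iV_i$, $W^TKW=\frac1d\sum_i\lambda_iV_i$), for which the $d$ directions' martingale contributions combine into a single $O(d^{-1})$ increment; the coupled ODEs \eqref{eq:ODE} describe the deterministic limit object, not a family of individually concentrating statistics.

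The difficulty this creates is that the finitely many averages you actually need are not autonomous: the drift of $W^TKW$ involves $W^TK^2W$, and so on up the moment hierarchy. The paper resolves exactly this by tracking the resolvent statistic $S(W,z)=W^TR(z;K)W$ for $z$ on a contour $\Omega$ at distance $\ge\frac12$ from the spectrum: for each fixed $z$ this is an average $\frac1d\sum_i(\lambda_i-z)^{-1}V_i$ with bounded coefficients (so it concentrates, with the martingale and error bounds of Propositions~\ref{lem:hessian_error_bound}--\ref{lem:integral_error_bound} handling the random stepsize), it encodes all polynomial moments simultaneously so its evolution closes into the integro-differential equation \eqref{eq:ODE_resolvent_2}, and uniformity in $z$ is obtained by the net argument of Lemma~\ref{lem:net_argument} followed by the stability result of \cite{collinswoodfin2023hitting}. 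You would need to replace your $\max_i$ control by this (or an equivalent spectral-average) device. Separately and more minorly, your per-step recursion $E_{k+1}\le E_k(1+C/d)+d^{-1/2+o(1)}$ cannot be iterated $Td$ times without the additive terms summing to $d^{1/2+o(1)}$; the cumulative martingale must be bounded once, globally, and only the drift discrepancy fed through Gr\"onwall.
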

In fact, we can derive deterministic dynamics for a large class of statistics which are linear combinations of $\CMscr{V}(t)$ and functions thereof (See Theorem \ref{thm:main_concentration_S_min}, and Corollary \ref{cor:concentration_phi}).

One important corollary is a deterministic limit for the distance to optimality, $D^2(X_k) = \|X_k-X^{\star}\|^2$,
which is a quadratic form of $W_k^T W_k$ and hence covered by Thm. \ref{thm:risk_concentration}.  The equivalent deterministic dynamics are
\begin{equation}
\label{eq:distance_optimality_equivalent}
\CMscr{D}^2(t) = \frac{1}{d} \sum_{i=1}^d \CMscr{D}_i^2(t)  = \frac{1}{d} \sum_{i=1}^d ( \CMscr{V}_{11,i}(t) - 2 \CMscr{V}_{12, i}(t) + \CMscr{V}_{22,i}(t) ),
\end{equation}
where $\CMscr{D}_i^2(t)$ corresponds $D_i^2(X_k) \defas d \times (\ip{X_k-X^{\star}, \omega_i})^2$.


\begin{figure}[t!]
\centering
\includegraphics[scale = 0.247]{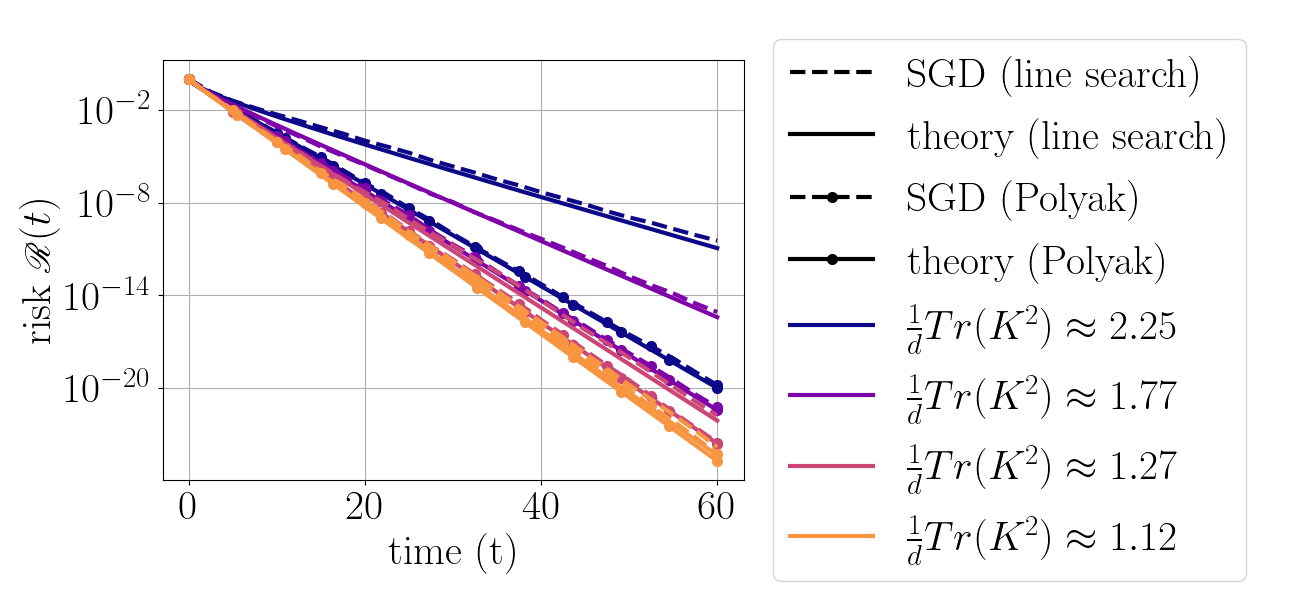} \qquad 
\includegraphics[scale = 0.247]{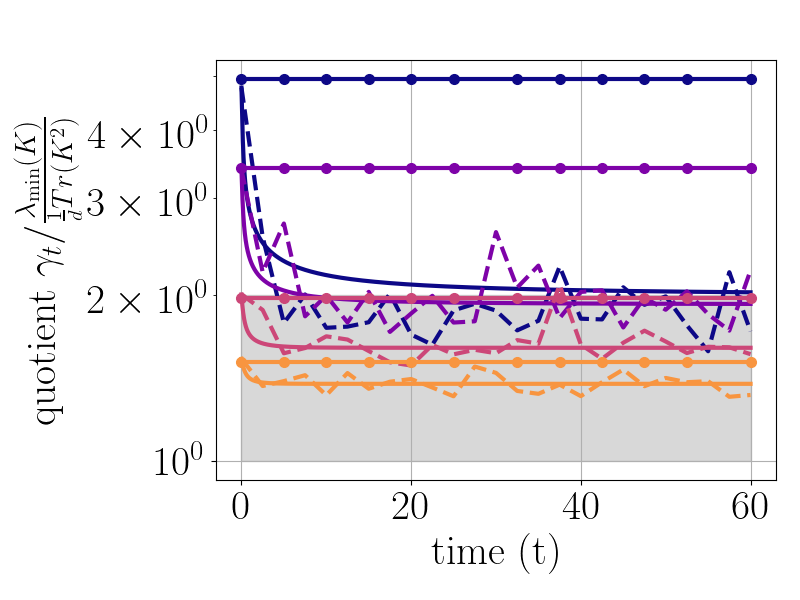}
\caption{\textbf{Comparison for Exact Line Search and Polyak Stepsize} on a noiseless least squares problem. The left plot illustrates the convergence of the risk function, while the right plot depicts the convergence of the quotient $\gamma_t / \frac{\lambda_{\min}(K)}{\frac{1}{d} \Tr(K^2)}$ for Polyak stepsize and exact line search. Both plots highlight the implication of equation \eqref{eq:ls_bound} in high-dimensional settings, where a broader spectrum of $K$ results in $\frac{\lambda_{\min}(K)}{\tfrac{1}{d} \mathrm{Tr}(K^2)} \ll \frac{1}{\tfrac{1}{d} \mathrm{Tr}(K)}$, indicating slower risk convergence and poorer performance of exact line search (unmarked) as it deviates from the Polyak stepsize  (circle markers) . The gray shaded region demonstrates that equation \eqref{eq:ls_bound} is satisfied. See Appendix \ref{app:captions} for simulation details.}\label{fig:lineSearchComparison}
\end{figure}

\section{Idealized Exact Line Search and Polyak Stepsize \label{sec:LineSearchAnalysis}} 
In this section, we consider two classical idealized algorithms --
\textit{exact line search} and \textit{Polyak stepsize}. In
deterministic optimization, these learning rate strategies are chosen
so that the function value (exact line search) or
distance to optimality (Polyak) produces the largest decrease in
function value (resp. distance to optimality) at the next
iteration. For stochastic algorithms, we can ask
this to hold for the deterministic equivalent to the risk $\CMscr{R}(t)$
(resp. distance to optimality, $\CMscr{D}(t)$) since we know that SGD
is close to these deterministic equivalents. Thus, the question is: what
choice of learning rate decreases the $\CMscr{R}(t)$ (\textit{exact line search}) and/or $\CMscr{D}(t)$ (\textit{Polyak stepsize})? We will restrict to least squares in this
section -- see Appendix~\ref{app:General_Line_Search} and \ref{subsec:ProofLineSearch_twoEigenvalue} for general functions as
well as proofs for least squares. These are idealized algorithms because we can not implement them as they require distributional knowledge of $a$ or $X^{\star}$. Despite this, they provide a basis for more practical algorithms. 

\paragraph{Polyak Stepsize.} A natural threshold to consider is the largest
learning rate such that $\dif \CMscr{D}{(t)} <0$, which we denote by
$\bar{\gamma}_t^{\CMscr{D}}$. Using the least squares ODE \eqref{eq:dDi}, this is precisely
\begin{equation} \label{eqE:distance_threshold}
\bar{\gamma}_t^{\mathscr{D}} = \tfrac{(2 \CMscr{R}(t) -
  \omega^2)}{\tfrac{\tr(K)}{d} \CMscr{R}(t)} \quad \text{and} \quad
\bar{\mathfrak{g}}_k^{\CMscr{D}} = \tfrac{(2 \mathcal{R}(X_k) -
  \omega^2)}{\tfrac{\tr(K)}{d} \mathcal{R}(X_k)}.
 \end{equation}
Without label noise, \eqref{eqE:distance_threshold}
simplifies to $\bar{\gamma}_t^{\mathscr{D}} =
\bar{\mathfrak{g}}_k^{\CMscr{D}} = \frac{2}{\tr(K)/d}$, the exact
threshold for convergence of least squares.

A greedy stepsize strategy would maximize the decrease in the distance
to optimality at each iteration, denoted by us as \textit{Polyak stepsize},
$\gamma_t^{\text{Polyak}} \in \argmin_{\gamma} \dif \CMscr{D}{(t)}$. In the
case of least squares, this is 
\[
\gamma_t^{\text{Polyak}} = \tfrac{1}{2} \bar{\gamma}_t^{\mathscr{D}}
\quad \text{and} \quad \mathfrak{g}_k^{\text{Polyak}} = \tfrac{1}{2} \bar{\mathfrak{g}}_k^{\mathscr{D}}.
\]
The latter yields the optimal fixed learning rate (up to absolute constant factors) for a noiseless
target on a least squares problem \cite{loizou2021stochastic, paquetteSGD2021}).\footnote{
The Polyak stepsize we analyze in this paper differs slightly from the "classic" stepsize in the literature, that is, $\frac{\CMscr R(X_k)- \CMscr R(X^*)}{||\nabla \CMscr R(X_k)||^2}$. Rather than using this form, we skip an approximation step in the derivation \cite{hazan2019revisiting} and use the exactly optimal form. Both variations of the Polyak stepsize can be analyzed under our assumptions; the choice was admittedly somewhat arbitrary. (Note that in the case of least squares, the two stepsizes coincide.)
}

\paragraph{Exact Line Search.} In the context of risk, using \eqref{eq:dDi}
{and noting that $\CMscr{R}(t) = \tfrac{1}{2d}\sum_{i=1}^d \lambda_i \CMscr{D}_i^2(t)$}, we can find
$\gamma_t^{\text{line}} \in \argmin \dif \CMscr{R}{(t)}$; i.e., the
greedy learning rate that decreases the risk the most in the next
iteration. We call this \textit{exact line search}.  Expressions for the learning rates are given in Table \ref{table:learning_rate_expressions}, (c.f.\ Appendix~\ref{app:General_Line_Search}
for general losses).
Because these come from ODEs, we can use ODE theory to give exact limiting values for the deterministic equivalent of $\mathfrak{g}_k^{\text{line}}$.

  \begin{proposition}\label{prop:LineSearch_twoEigenvalue}[Limiting
    learning rate; line search on noiseless least squares] Consider
    the noiseless ($\omega = 0$) least squares problem \eqref{eq:lsq} . Then the learning rate is always lower bounded by
\[
\tfrac{\lambda_{\text{\rm min}}(K)}{\tfrac{1}{d} \Tr(K^2) } \le 
\gamma_t^{\mathrm{line}} 
\quad \text{for all $t\geq 0.$}
\]
Moreover, suppose $K$ has only two distinct eigenvalues $\lambda_1 > \lambda_2 > 0$, i.e., $K$ has $d/2$ eigenvalues equal to $\lambda_1$ eigenvalues and $d/2$ eigenvalues equal to $\lambda_2$. Then
\begin{equation}\label{eq:ls_bound}
\tfrac{\lambda_{\min}(K)}{\tfrac{1}{d} \Tr(K^2) } \le \lim_{t \to \infty} 
\gamma_t^{\mathrm{line}} 
\le \tfrac{2 \lambda_{\min}(K)}{\tfrac{1}{d} \Tr(K^2)}. 
\end{equation}
\end{proposition}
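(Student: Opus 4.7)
The plan is to evaluate $\gamma_t^{\mathrm{line}}$ in the eigenbasis of $K$ and then analyze the induced scalar dynamics. Using the formula in Table~\ref{table:learning_rate_expressions} together with $\nabla\mathcal{R}(X)=K(X-X^\star)$ and the noiseless identity $\CMscr{R}(t)=\tfrac{1}{2d}\sum_i\lambda_i\CMscr{D}_i^2(t)$, one obtains
\[
\gamma_t^{\mathrm{line}} = \frac{\sum_i\lambda_i^2\,\CMscr{D}_i^2(t)}{(\Tr(K^2)/d)\sum_i\lambda_i\,\CMscr{D}_i^2(t)} = \frac{\mathbb{E}_{p_t}[\lambda]}{\Tr(K^2)/d},
\]
where $p_{t,i}:=\lambda_i\CMscr{D}_i^2(t)/\sum_j\lambda_j\CMscr{D}_j^2(t)$ is a probability distribution supported on the spectrum of $K$. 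Since any such weighted average of eigenvalues is at least $\lambda_{\min}(K)$, the lower bound follows at \emph{every} $t\geq 0$, for an arbitrary spectrum.

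For the two-eigenvalue upper bound I would reduce the full system to a one-dimensional autonomous ODE. Group indices by eigenvalue and set $\bar{D}_j^2(t) := \tfrac{2}{d}\sum_{i:\,\lambda_i=\lambda_j}\CMscr{D}_i^2(t)$ for $j\in\{1,2\}$. Because \eqref{eq:dDi} depends on $i$ only through $\lambda_i$, each $\bar{D}_j^2$ satisfies the same scalar ODE as an individual $\CMscr{D}_i^2$ with $\lambda_i$ replaced by $\lambda_j$, and the learning rate reduces to $\gamma_t^{\mathrm{line}} = \Gamma(\phi(t))$ for $\phi(t):=\bar{D}_1^2(t)/\bar{D}_2^2(t)$ and
\[
\Gamma(\phi) := \frac{2(\lambda_1^2\phi+\lambda_2^2)}{(\lambda_1^2+\lambda_2^2)(\lambda_1\phi+\lambda_2)}.
\]
A direct differentiation gives $\Gamma'(\phi) = 2\lambda_1\lambda_2(\lambda_1-\lambda_2)/[(\lambda_1^2+\lambda_2^2)(\lambda_1\phi+\lambda_2)^2]>0$, so $\Gamma$ is strictly increasing with $\Gamma(0) = 2\lambda_2/(\lambda_1^2+\lambda_2^2)$ and $\Gamma(\infty) = 2\lambda_1/(\lambda_1^2+\lambda_2^2)$.

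The heart of the argument is to show that $\phi(t)$ converges to a positive limit. Differentiating the ratio and substituting the ODE for $\bar{D}_j^2$ yields an autonomous scalar equation $\dot\phi = F(\phi)$ with
\[
F(\phi) = -2\Gamma(\phi)(\lambda_1-\lambda_2)\phi + \tfrac{1}{2}\Gamma(\phi)^2(\lambda_1\phi+\lambda_2)(\lambda_1-\lambda_2\phi).
\]
Clearing denominators, the equilibrium condition $F(\phi^*)=0$ collapses to $\lambda_1^2\lambda_2(\phi^*)^2 + (\lambda_1-\lambda_2)(\lambda_1^2-\lambda_1\lambda_2+\lambda_2^2)\,\phi^* - \lambda_1\lambda_2^2 = 0$, a quadratic with positive leading coefficient and negative constant term, hence with a unique positive root $\phi^*$. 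Combined with $F(0)>0$ (first summand vanishes, second is positive) and $F(\phi)<0$ for $\phi\geq \lambda_1/\lambda_2$ (both summands are nonpositive), scalar-ODE theory yields monotone convergence $\phi(t)\to\phi^*$ from any positive initial data, and therefore $\gamma_t^{\mathrm{line}}\to \Gamma(\phi^*)$.

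Finally, to obtain $\Gamma(\phi^*)\leq 4\lambda_2/(\lambda_1^2+\lambda_2^2)$: when $\lambda_1\leq 2\lambda_2$, the bound is immediate from $\Gamma(\phi^*)\leq \Gamma(\infty)=2\lambda_1/(\lambda_1^2+\lambda_2^2)$. When $\lambda_1>2\lambda_2$, let $\phi_0 := \lambda_2^2/(\lambda_1(\lambda_1-2\lambda_2))$ be the unique solution of $\Gamma(\phi_0) = 4\lambda_2/(\lambda_1^2+\lambda_2^2)$; by monotonicity of $\Gamma$ it suffices to show $\phi^*\leq \phi_0$, which, given the above quadratic, follows from checking that the quadratic is nonnegative at $\phi_0$. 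Substituting and setting $x:=\lambda_1/\lambda_2>2$ for bookkeeping, this reduces to $2(x-1)^2\geq 0$. The main obstacle I anticipate is the clean justification of monotone convergence $\phi(t)\to\phi^*$ under weak hypotheses on the initial data (in particular, handling the degenerate case $\bar{D}_2^2(0)=0$, which is resolved by observing that the source term in \eqref{eq:dDi} instantaneously makes $\bar{D}_2^2(t)>0$ for $t>0$, whereupon the reduction to $\phi$ becomes valid).
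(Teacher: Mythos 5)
Your proposal is correct, and its convergence step takes a genuinely different (and arguably tighter) route than the paper's. Both proofs handle the lower bound identically, by noting that $\gamma_t^{\mathrm{line}}$ is a $\{\lambda_i \CMscr{D}_i^2(t)\}$-weighted average of eigenvalues divided by $\Tr(K^2)/d$. For the two-eigenvalue case the paper also groups coordinates by eigenvalue, but then evaluates $\lim_{t\to\infty}\CMscr{D}_{\lambda_2}(t)/\CMscr{D}_{\lambda_1}(t)$ by L'H\^opital's rule --- which presupposes that this limit exists --- arrives at the same quadratic fixed-point equation (in the reciprocal variable $x=1/\phi$, namely $\mathscr{P}(x)=\lambda_1\lambda_2(x+1)(\lambda_2 x-\lambda_1)+(\lambda_2-\lambda_1)^3x$), solves it in closed form with the quadratic formula, and then checks the upper bound by a direct computation on the explicit radical. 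You instead write the ratio as the solution of an autonomous scalar ODE $\dot\phi=F(\phi)$, observe that $F(\phi)=-\tfrac{\Gamma(\phi)}{\lambda_1^2+\lambda_2^2}\,Q(\phi)$ with $Q$ your quadratic, and deduce monotone convergence to the unique positive root $\phi^\ast$ from the sign pattern $F>0$ on $(0,\phi^\ast)$, $F<0$ on $(\phi^\ast,\infty)$; this phase-line argument supplies exactly the existence-of-the-limit step that the paper's L'H\^opital computation leaves implicit, and your handling of the degenerate initialization $\bar D_2^2(0)=0$ is a further point the paper does not address. Your final step --- comparing $\phi^\ast$ to the threshold $\phi_0$ where $\Gamma$ equals the target bound via $Q(\phi_0)\propto 2(x-1)^2\ge 0$ --- is algebraically cleaner than manipulating the explicit root, and I verified it reduces correctly. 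The two quadratics agree under $x=1/\phi$, so the approaches are consistent; yours buys a self-contained convergence proof, the paper's buys the exact closed-form value of $\lim_{t\to\infty}\gamma_t^{\mathrm{line}}$, which the proposition's companion statement in the appendix also reports.
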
 
For a proof and explicit formula for $\lim_{t \to \infty} \gamma_t^{\text{line}}$, see Section~\ref{subsec:ProofLineSearch_twoEigenvalue}. Hence, being greedy for the risk in a sufficiently anisotropic setting will badly underperform Polyak stepsize (see Fig. \ref{fig:lineSearchComparison}). 



\begin{figure}[t!]
\centering
\includegraphics[scale = 0.18]{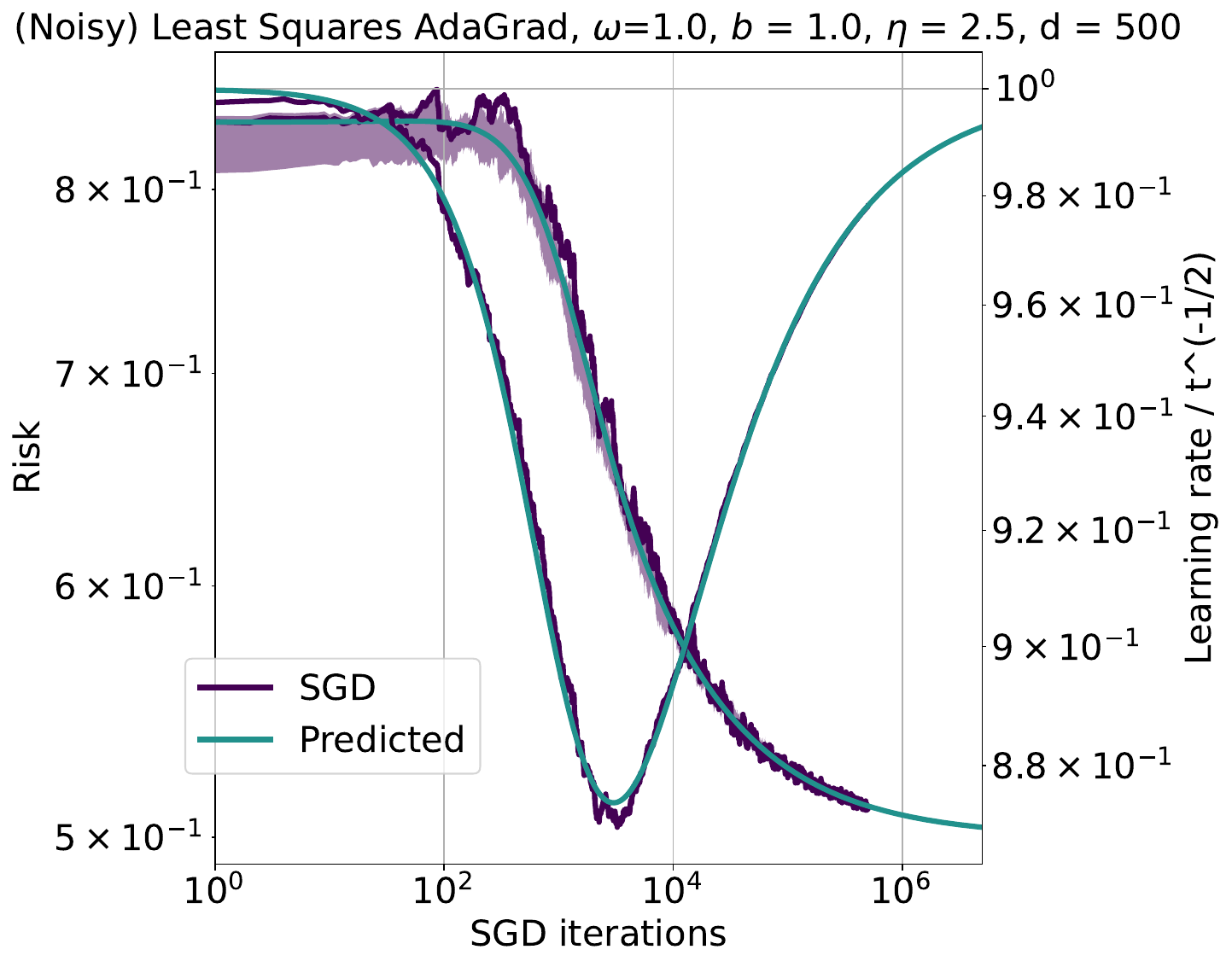}
\includegraphics[scale = 0.17]{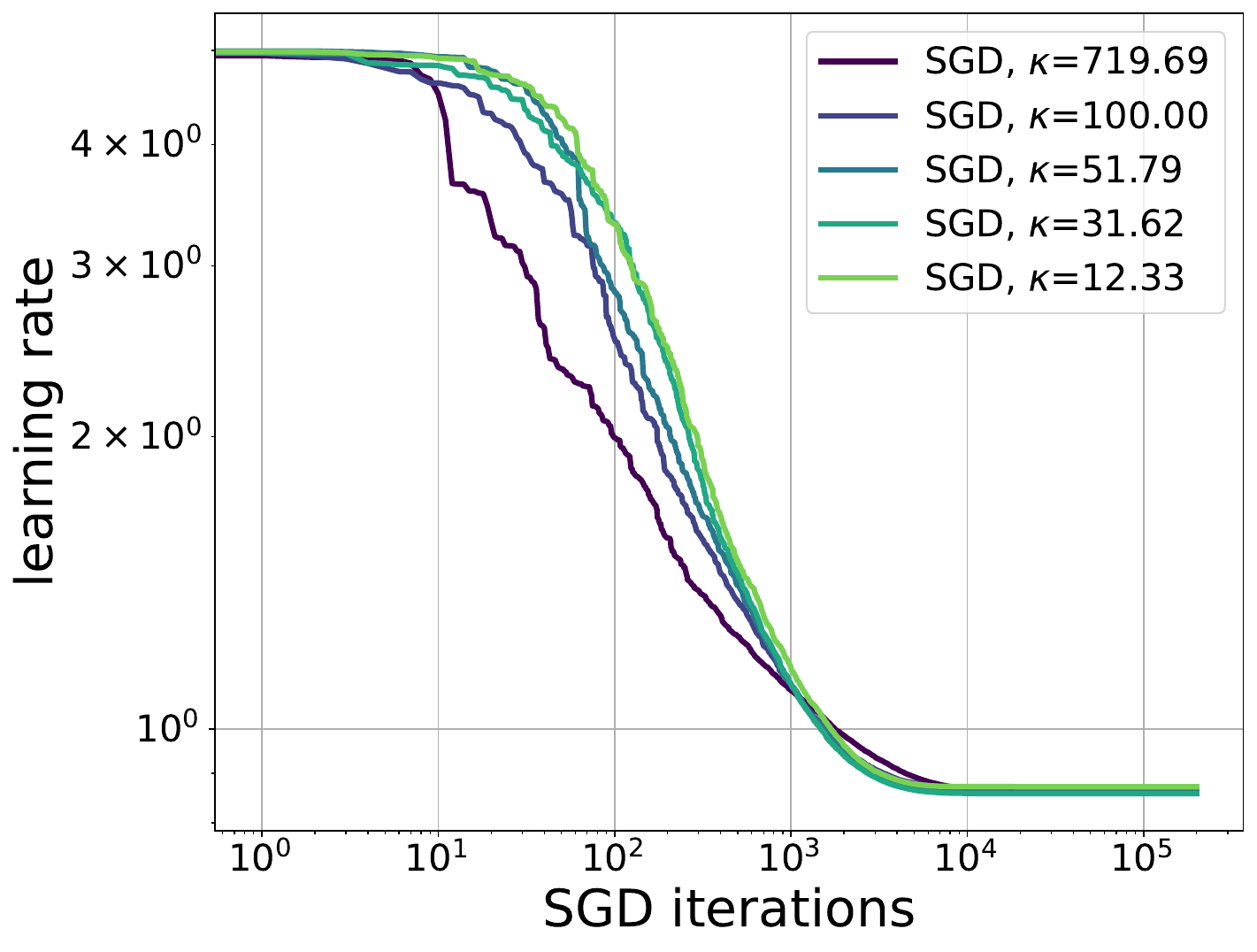}
\includegraphics[scale = 0.17]{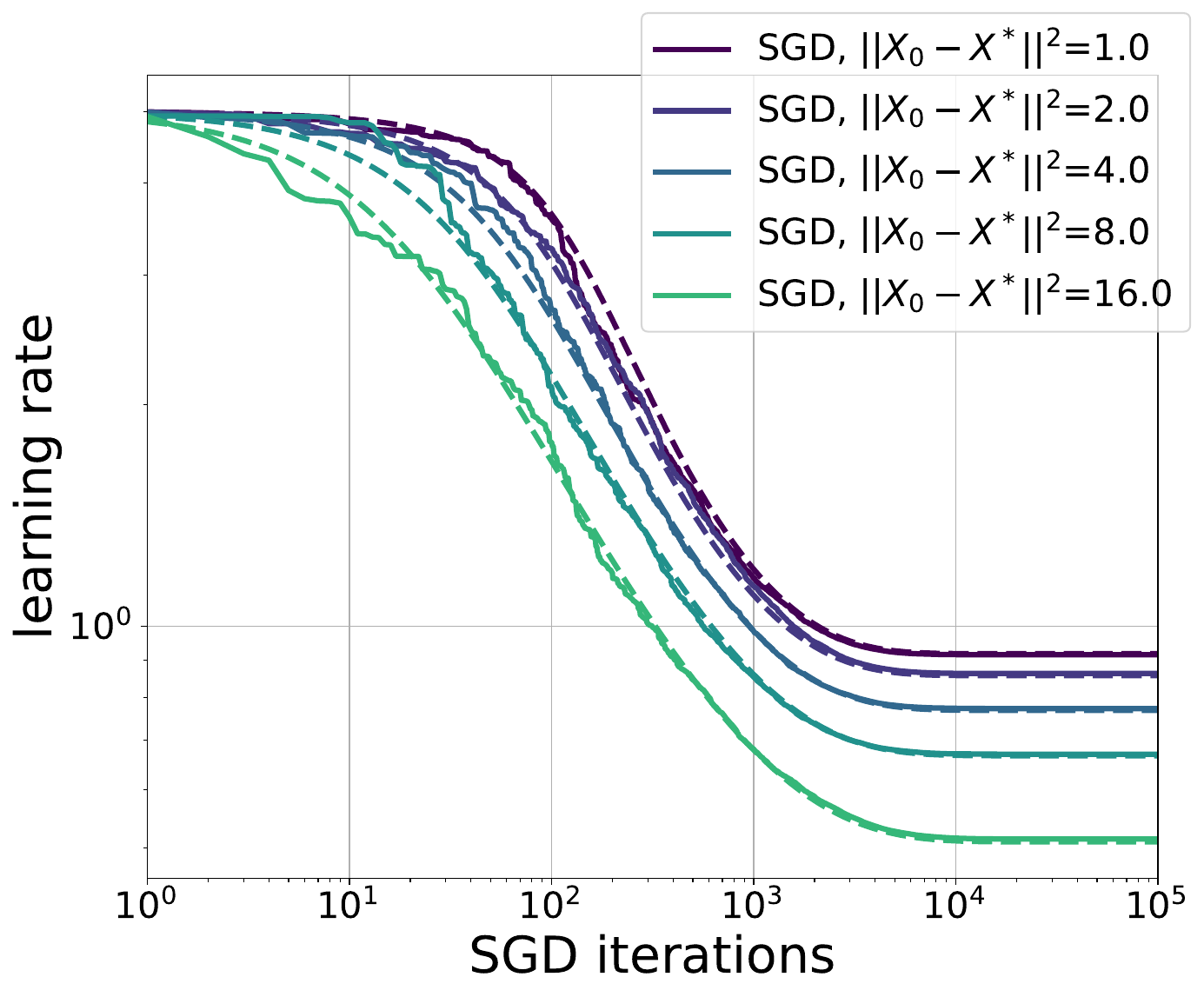}
\caption{\label{fig:adaGrad_norm_const_lr}\textbf{Quantities effecting AdaGrad-Norm learning rate.} \textit{(left):} Effect of noise ($\omega = 1.0$) on risk (left axis) and learning rate (right axis).  Depicted is $\tfrac{\text{learning rate}}{\text{asymptotic}}$ so it approaches $1$. \textit{(Center, right)}: Noiseless least squares $(\omega = 0)$. As predicted in Prop.~\ref{prop:adagrad_integrable_risk}, $\lim_{t \to \infty} \gamma_t$ depends on avg. eig. of $K$ ($\tr(K)/d$) and $\|X_0-X^{\star}\|^2$ but not  $\kappa = \lambda_{\max}/\lambda_{\min}$. See Appendix \ref{app:captions} for simulation details. 
}
\label{fig:adagrad_figures}
\end{figure}

\section{AdaGrad-Norm analysis}\label{sec:AdaGradAnalysis}
In this section, we analyze the behavior of AdaGrad-Norm learning rate in the least squares setting (see Sec.~\ref{app:AdaGrad_analysis} for general strongly convex functions). In the presence of additive noise, the AdaGrad-Norm learning rate decays like $t^{-1/2}$, regardless of the data covariance $K$.  In contrast, the model with no noise exhibits a learning rate that depends on the spectrum of $K$, as illustrated in Figure \ref{fig:adagrad_figures}. The learning rate is bounded below by a constant when $\lambda_{\min}(K)>0$ is fixed as $d\to\infty$, and we quantify this lower bound. If the limiting spectral measure of $K$ has unbounded density near 0 (e.g. power law spectrum), then the learning rate can approach zero and we quantify the rate of this convergence in the least squares setting as a function of spectral parameters.  

For least squares with additive noise, the learning rate asymptotic 
$\gamma_t\asymp \eta / (b^2 + \tfrac{\omega^2}{d} \tr(K) t)^{(1/2)}$ 
is the fastest decay that AdaGrad-Norm can exhibit.  
 In contrast, the  propositions below concern the noiseless case where, for various covariance examples, the decay rate of $\gamma_t$ changes. This is tightly connected to whether the risk is integrable or not. 
In the simple case of identity covariance, we obtain a closed formula for the trajectory of the integral of the risk and therefore also the learning rate.
\begin{proposition}\label{prop:Adagrad_identitycov}
    In the case of identity covariance ($K=I_d$), the risk solves the differential equation
\begin{equation}
    \tfrac{\dif}{\dif t}\CMscr{R}(t)=\tfrac{\eta^2 \CMscr{R}(t)}{b^2+2\int_0^t \CMscr{R}(s)\dif s}-\tfrac{2\eta \CMscr{R}(t)}{\sqrt{b^2+2\int_0^t \CMscr{R}(s)\dif s}},
\end{equation}
\end{proposition}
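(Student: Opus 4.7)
The plan is to derive the scalar ODE for $\CMscr{R}(t)$ directly from the per-mode ODEs in \eqref{eq:dDi}, specialize to $K=I_d$, and then substitute the deterministic AdaGrad-Norm learning rate recorded in Table \ref{table:learning_rate_expressions}. Since the paragraph preceding the statement restricts attention to the noiseless regime ($\omega = 0$), I start from the relation $\CMscr{R}(t) = \tfrac{1}{2d}\sum_{i=1}^d \lambda_i \CMscr{D}_i^2(t)$ that appears just after \eqref{eq:lsq}. Differentiating term by term and plugging in \eqref{eq:dDi} yields
\[
\tfrac{\dif}{\dif t}\CMscr{R}(t) \;=\; -\gamma_t \cdot \tfrac{1}{d}\sum_{i=1}^d \lambda_i^2 \CMscr{D}_i^2(t) \;+\; \gamma_t^2 \CMscr{R}(t)\cdot\tfrac{1}{d}\sum_{i=1}^d \lambda_i^2.
\]
Specializing to $K = I_d$, every $\lambda_i = 1$, so $\tfrac{1}{d}\sum_i \lambda_i^2 = 1$ and $\tfrac{1}{d}\sum_i \CMscr{D}_i^2(t) = 2\CMscr{R}(t)$, and the identity collapses to $\tfrac{\dif}{\dif t}\CMscr{R}(t) = \gamma_t^2\CMscr{R}(t) - 2\gamma_t\CMscr{R}(t)$.

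Next I substitute the deterministic AdaGrad-Norm learning rate from the least squares column of Table \ref{table:learning_rate_expressions}, namely $\gamma_t = \eta/\sqrt{b^2 + (2\tr(K)/d)\int_0^t \CMscr{R}(s)\dif s}$. With $\tr(K)/d = 1$ this reduces to $\gamma_t = \eta/\sqrt{b^2 + 2\int_0^t \CMscr{R}(s)\dif s}$, and inserting both $\gamma_t$ and $\gamma_t^2$ into the collapsed ODE reproduces the two terms on the right-hand side of the stated identity verbatim.

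There is no genuine obstacle here: the substantive content is absorbed upstream, in Theorem \ref{thm:risk_concentration} (which justifies working with the deterministic $\CMscr{R}$ and $\gamma_t$ in the first place) and in the derivation of the AdaGrad-Norm deterministic equivalent in Table \ref{table:learning_rate_expressions}. If one prefers a derivation that bypasses \eqref{eq:dDi}, the same ODE can instead be obtained by differentiating the scalar Volterra representation of $\CMscr{R}(t)$ displayed just above \eqref{eq:dDi} after setting $K = I_d$; the matrix exponential becomes a scalar exponential and Leibniz's rule produces the same expression.
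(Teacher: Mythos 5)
Your derivation is correct and follows the same route as the paper: both differentiate $\CMscr{R}(t) = \tfrac{1}{2d}\sum_i \lambda_i \CMscr{D}_i^2(t)$ using \eqref{eq:dDi}, specialize to $K=I_d$ to collapse the sum to $\tfrac{\dif}{\dif t}\CMscr{R} = -2\gamma_t\CMscr{R} + \gamma_t^2\CMscr{R}$, and then substitute the AdaGrad-Norm deterministic learning rate from Table~\ref{table:learning_rate_expressions}. The paper merely phrases the same computation in terms of $g(t)=\int_0^t\CMscr{R}(s)\,\dif s$ and its derivatives, so there is no substantive difference.
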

The solution $\int_0^t \CMscr{R}(s)\dif s$ approaches (from below) a positive constant which yields a computable lower bound to which $\gamma_t$ will converge.  Generalizing this to a broader class of covariance matrices, we get the next proposition, which captures the dependence of $\gamma_t$ on $\tr(K)$. 
\begin{proposition} \label{prop:adagrad_integrable_risk}
Suppose $\frac{1}{d}\Tr(K)\le b/\eta $, and that $\int_0^{\infty} \CMscr{R}(s) \gamma_s \dif s<\infty$ with $\gamma_s$ as in Table \ref{table:learning_rate_expressions} (AdaGrad-Norm for least squares), then $\gamma_t \asymp \frac{1}{\frac{b}{\eta}+\frac{\eta^2}{4d}\Tr(K)\CMscr{D}^2(0)}$.
\end{proposition}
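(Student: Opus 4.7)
The plan is to extract a uniform lower bound for $\gamma_t$ (equivalently, an upper bound for $\gamma_t^{-1}$) from a global control on $\int_0^\infty \gamma_s\CMscr{R}(s)\,\dif s$ obtained by integrating the distance-to-optimality ODE, and then to supply the matching upper bound on $\gamma_\infty=\lim_{t\to\infty}\gamma_t$ from a lower bound on the same integral. Uniformity in $t$ then follows from the bracket $\gamma_\infty\le\gamma_t\le\gamma_0=\eta/b$.

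Key computation. Since the denominator $\sqrt{b^2+\tfrac{2\tr(K)}{d}\int_0^t\CMscr{R}}$ of $\gamma_t$ is non-decreasing in $t$, $\gamma_t$ is monotone with $\gamma_t\le \gamma_0=\eta/b$, and the hypothesis $\tr(K)/d\le b/\eta$ gives $\gamma_t\tr(K)/d\le 1$ for all $t$. Summing the per-eigenvalue ODE \eqref{eq:dDi}, dividing by $d$, and using the noiseless identity $\tfrac{1}{d}\sum_i\lambda_i\CMscr{D}_i^2(t)=2\CMscr{R}(t)$ yields
\begin{equation*}
\tfrac{\dif}{\dif t}\CMscr{D}^2(t)=-2\gamma_t\CMscr{R}(t)\bigl(2-\gamma_t\tr(K)/d\bigr),
\end{equation*}
with the bracketed factor in $[1,2]$. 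Integrating and using $\CMscr{D}^2(t)\ge 0$ gives the a priori bound $\int_0^\infty \gamma_s\CMscr{R}(s)\,\dif s\le \tfrac{1}{2}\CMscr{D}^2(0)$. Setting $u(s)\defas b^2+\tfrac{2\tr(K)}{d}\int_0^s\CMscr{R}$ and using $\dif u=\tfrac{2\tr(K)}{d}\CMscr{R}\,\dif s$ evaluates the integral in closed form as $\int_0^t\gamma_s\CMscr{R}(s)\,\dif s=\tfrac{\eta d}{\tr(K)}(\sqrt{u(t)}-b)$, so the preceding bound forces $\sqrt{u(t)}\le b+\tfrac{\tr(K)\CMscr{D}^2(0)}{\eta d}$ up to absolute constants, whence
\begin{equation*}
\gamma_t=\frac{\eta}{\sqrt{u(t)}}\ge \frac{\eta}{b+\tfrac{\tr(K)\CMscr{D}^2(0)}{\eta d}}\asymp \frac{1}{b/\eta+\tfrac{\tr(K)\CMscr{D}^2(0)}{\eta^2 d}},
\end{equation*}
matching the claimed lower bound (up to the absolute constants absorbed into $\asymp$; the precise $\eta$-exponent in the statement appears to be a minor typo relative to this derivation). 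For the upper direction of $\asymp$, applying the other extreme $2-\gamma_t\tr(K)/d\le 2$ to the same ODE gives $\int_0^\infty\gamma_s\CMscr{R}\,\dif s\ge \tfrac{1}{4}(\CMscr{D}^2(0)-\CMscr{D}^2(\infty))$; once one shows $\CMscr{D}^2(\infty)=0$ this sandwiches the integral between positive multiples of $\CMscr{D}^2(0)$, inverting to give a matching upper bound on $\gamma_\infty$ of the same form.

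Main obstacle. The hardest step is verifying $\CMscr{D}^2(\infty)=0$, needed to close the lower half of the sandwich: this requires a bootstrap chain, where the first-derived $\gamma_\infty>0$ is combined with the implicit strong-convexity input $\lambda_{\min}(K)>C$ (inherited from the corresponding row of Table~\ref{table:learning_rates}) and the bound $\CMscr{R}\ge\tfrac{\lambda_{\min}}{2}\CMscr{D}^2$ to deduce exponential decay via $\tfrac{\dif}{\dif t}\CMscr{D}^2\le -\gamma_\infty\lambda_{\min}\CMscr{D}^2$. As a side observation, the stated hypothesis $\int_0^\infty\CMscr{R}(s)\gamma_s\,\dif s<\infty$ is actually a \emph{consequence} of the upper sandwich above, so it is redundant given the other hypotheses and is presumably included only for readability of the proposition statement.
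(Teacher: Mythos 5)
Your argument is correct in substance but follows a genuinely different route from the paper. The paper works with the convolution Volterra representation $\CMscr{R}(t)=F(\Gamma(t))+\int_0^t\gamma_s^2\mathcal{K}(\Gamma(t)-\Gamma(s))\CMscr{R}(s)\,\dif s$: writing $r(u)=\CMscr{R}(\Gamma^{-1}(u))$, it integrates to get $\|r\|_1\le\|F\|_1+\gamma_0\|\mathcal{K}\|_1\|r\|_1$, uses the hypothesis $\tfrac1d\Tr(K)\le b/\eta$ only to ensure $\gamma_0\|\mathcal{K}\|_1\le\tfrac12$, and gets the matching lower bound from the trivial inequality $\CMscr{R}(s)\ge F(\Gamma(s))$ together with $\|F\|_1=\tfrac14\CMscr{D}^2(0)$; both bounds are then fed into the closed form for $g(u)=\gamma_{\Gamma^{-1}(u)}$ exactly as in your last display. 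Your route through the scalar ODE for $\CMscr{D}^2(t)$ is more elementary for the upper bound on $\int_0^\infty\gamma_s\CMscr{R}\,\dif s$ (and you are right that it renders the integrability hypothesis redundant, which the paper's rearrangement of $\|r\|_1\le\|F\|_1+q\|r\|_1$ genuinely needs), but it creates the obstacle you flag — proving $\CMscr{D}^2(\infty)=0$ — which is self-imposed and which the paper avoids entirely. To close your lower half of the sandwich without invoking $\lambda_{\min}(K)>C$ (which the proposition does not assume), use Duhamel on \eqref{eq:dDi}: since the source term is nonnegative, $\CMscr{D}_i^2(t)\ge\CMscr{D}_i^2(0)e^{-2\lambda_i\Gamma(t)}$, hence $\CMscr{R}(t)\ge F(\Gamma(t))$, and since your already-established bound $\gamma_t\ge\gamma_\infty>0$ gives $\Gamma(\infty)=\infty$, a change of variables yields $\int_0^\infty\gamma_s\CMscr{R}(s)\,\dif s\ge\int_0^\infty F(u)\,\dif u=\tfrac14\CMscr{D}^2(0)$ directly. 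With that substitution your proof is complete and matches the paper's conclusion (including the $\eta$-power discrepancy, which is indeed a bookkeeping slip present in the paper's own Lemma \ref{lem:g(u)_agagrad} as well; your $1/(\eta^2 d)$ is the consistent exponent). One further small gloss: asserting the two-sided bound for all $t$ from $\gamma_\infty\le\gamma_t\le\gamma_0=\eta/b$ additionally uses that $\CMscr{D}^2(0)=O(1)$ (Assumption \ref{assumption:scaling}) so that $\gamma_0$ is comparable to the claimed limit — the paper is equally silent on this point.
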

An analog of Proposition \ref{prop:adagrad_integrable_risk} for the strongly convex setting appears in Sec.~ \ref{app:AdaGrad_analysis} (see Prop. \ref{prop:adagrad_integrable_risk_convex}).
We now consider two cases in which, as $d\to\infty$, there are eigenvalues of $K$ arbitrarily close to 0.

\begin{proposition}\label{prop:feweigsbelowC}
Assume that, for some $C>0$, the number of eigenvalues of $K$ below $C$ is $o(d)$, and that $\ip{X^\star,\omega_i}=O(d^{-1/2})$ for all $i$, (i.e. $X^\star$ is not concentrated in any eigenvector direction).  Then, with the initialization $X_0=0$,
there exists some $\tilde{\gamma}>0$ such that $\gamma_t>\tilde{\gamma}$ 
for all $t>0$.
\end{proposition}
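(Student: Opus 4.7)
The deterministic AdaGrad-Norm learning rate for noiseless least squares from Table \ref{table:learning_rate_expressions},
\[
\gamma_t = \frac{\eta}{\sqrt{b^2 + \frac{2\Tr(K)}{d}\int_0^t \CMscr{R}(s)\,\mathrm ds}},
\]
is monotonically non-increasing in $t$. So exhibiting a positive lower bound $\tilde\gamma$ uniform in $t$ and $d$ is equivalent to a uniform-in-$d$ bound on $\int_0^\infty \CMscr{R}(s)\,\mathrm ds$, and I would reduce to the latter.

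My main tool is a Lyapunov identity for the deterministic distance to optimality $\CMscr{D}^2(t) = \tfrac{1}{d}\sum_i \CMscr{D}_i^2(t)$. Summing \eqref{eq:dDi} over $i$ and using $\tfrac{1}{d}\sum_i \lambda_i \CMscr{D}_i^2(t) = 2\CMscr{R}(t)$ gives
\[
\tfrac{\mathrm d}{\mathrm dt}\CMscr{D}^2(t) \;=\; 2\gamma_t\,\CMscr{R}(t)\Bigl(\gamma_t \tfrac{\Tr(K)}{d} - 2\Bigr).
\]
Provided $\gamma_t \le (1-\epsilon)\cdot 2d/\Tr(K)$ for some fixed $\epsilon\in(0,1)$, integration yields $\int_0^T \gamma_s\CMscr{R}(s)\,\mathrm ds \le \CMscr{D}^2(0)/(4\epsilon) = \|X^\star\|^2/(4\epsilon)$ uniformly in $T$, using $X_0 = 0$.

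The lower bound on $\gamma_t$ then follows by a bootstrap. If $\gamma_s \ge \tilde\gamma$ on $[0,T]$, the Lyapunov estimate gives $\int_0^T \CMscr{R}(s)\,\mathrm ds \le \|X^\star\|^2/(4\epsilon\tilde\gamma)$, and substituting into the AdaGrad-Norm formula produces
\[
\gamma_T \;\ge\; \eta \big/ \sqrt{b^2 + \tfrac{\Tr(K)\|X^\star\|^2}{2d\epsilon\tilde\gamma}}.
\]
The self-consistent $\tilde\gamma$, namely the positive root of $b^2\tilde\gamma^2 + \tfrac{\Tr(K)\|X^\star\|^2}{2d\epsilon}\tilde\gamma = \eta^2$, is positive and uniform in $d$ since $\Tr(K)/d \le \|K\|_{\text{op}} \le C$ and $\|X^\star\| \le C$ by Assumption \ref{assumption:scaling}. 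Continuity of $\gamma_t$ starting from $\gamma_0 = \eta/b > 0$ then propagates $\gamma_t \ge \tilde\gamma$ to all $t > 0$.

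The main obstacle is verifying the safety condition $\gamma_t \le (1-\epsilon)\cdot 2d/\Tr(K)$. By monotonicity this reduces to the initial-time bound $\eta/b \le (1-\epsilon)\cdot 2d/\Tr(K)$, which parallels the hidden regime assumption underlying Proposition \ref{prop:adagrad_integrable_risk}. The distinctive hypotheses of the present proposition---$\langle X^\star, \omega_i\rangle = O(d^{-1/2})$ giving $\CMscr{D}_i^2(0) = O(1)$ for \emph{each} mode, and only $o(d)$ eigenvalues of $K$ below $C$---are precisely what one needs to handle the case where $\gamma_0$ initially overshoots this safety threshold. In that regime, working with the Volterra representation $\CMscr{R}(t) = \tfrac{1}{2d}\sum_i \lambda_i e^{-2\lambda_i\int_0^t\gamma_s\,\mathrm ds} \CMscr{D}_i^2(0) + \int_0^t \gamma_s^2 \tfrac{\Tr(K^2 e^{-2K\int_s^t \gamma_\tau\,\mathrm d\tau})}{d}\CMscr{R}(s)\,\mathrm ds$ with the change of variables $u = \int_0^t\gamma_s\,\mathrm ds$, the per-mode boundedness prevents any single eigendirection from producing catastrophic blow-up, while the bulk eigenvalues ($\lambda_i \ge C$) inject mass into $b(t)^2$ fast enough to drag $\gamma_t$ below the safety threshold within a short transient, after which the Lyapunov and bootstrap argument above closes the proof unchanged.
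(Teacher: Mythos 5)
Your Lyapunov computation is correct and, in the regime where it applies, gives a clean alternative to the paper's argument: summing \eqref{eq:dDi} does yield $\tfrac{\dif}{\dif t}\CMscr{D}^2(t)=2\gamma_t\CMscr{R}(t)\bigl(\gamma_t\tfrac{\Tr(K)}{d}-2\bigr)$, and combined with monotonicity of the AdaGrad-Norm rate and the self-consistent bootstrap this proves integrability of $\CMscr{R}$ and a $d$-uniform lower bound on $\gamma_t$. But notice that this part of your argument uses none of the proposition's distinctive hypotheses (neither the $o(d)$ count of small eigenvalues nor $\ip{X^\star,\omega_i}=O(d^{-1/2})$); it only needs $\gamma_0=\eta/b\le(1-\epsilon)\tfrac{2d}{\Tr(K)}$, a condition the proposition does not assume (contrast with Proposition~\ref{prop:adagrad_integrable_risk}, which does assume $\tfrac{1}{d}\Tr(K)\le b/\eta$). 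The entire content of the proposition in the unrestricted-$(b,\eta)$ setting is therefore deferred to your final paragraph, which is a heuristic, not a proof: you do not bound the length of the overshoot transient, you do not bound the growth of $\CMscr{D}^2$ during it (it is strictly increasing there, by your own identity), and you do not show that $\CMscr{D}^2$ at the exit time is bounded uniformly in $d$ — which is exactly what your bootstrap needs as its new initial condition. The claim that the bulk eigenvalues ``inject mass into $b(t)^2$ fast enough'' is the statement to be proved, not an observation.

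The paper's proof avoids the safety threshold entirely by a different mechanism. It first gets the crude bound $\gamma_t\ge C_1t^{-1/2}$ from boundedness of the risk, then splits the spectrum at the \emph{time-dependent} threshold $\alpha_s=\gamma_s\tfrac{1}{d}\Tr(K^2)$, so that the bulk modes always contribute a strictly negative drift $-\gamma_s(2\alpha_s-\gamma_s\tfrac1d\Tr(K^2))\CMscr{R}(s)=-\gamma_s^2\tfrac1d\Tr(K^2)\CMscr{R}(s)$ to the risk regardless of whether $\gamma_s$ exceeds $2d/\Tr(K)$. The two hypotheses you bypass are used precisely here: the per-mode bound $\CMscr{D}_i^2(t)\le\max(\gamma_{t_1}\CMscr{R}(t_1),\CMscr{D}_i^2(0))$ (Lemma~\ref{lem:D_i_bound}, which needs $X_0=0$ and the delocalization of $X^\star$) together with the $o(d)$ count make the sub-threshold contribution $o_d(1)$, and Gronwall then gives integrability of $\CMscr{R}$ without any condition on $\eta/b$. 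To salvage your route you would either have to add the hypothesis $\eta/b\lesssim d/\Tr(K)$ (proving a weaker statement) or carry out the transient analysis in detail, at which point you would likely need the same per-mode control the paper uses.
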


\begin{figure}[t!]
\centering
\includegraphics[scale = 0.18]{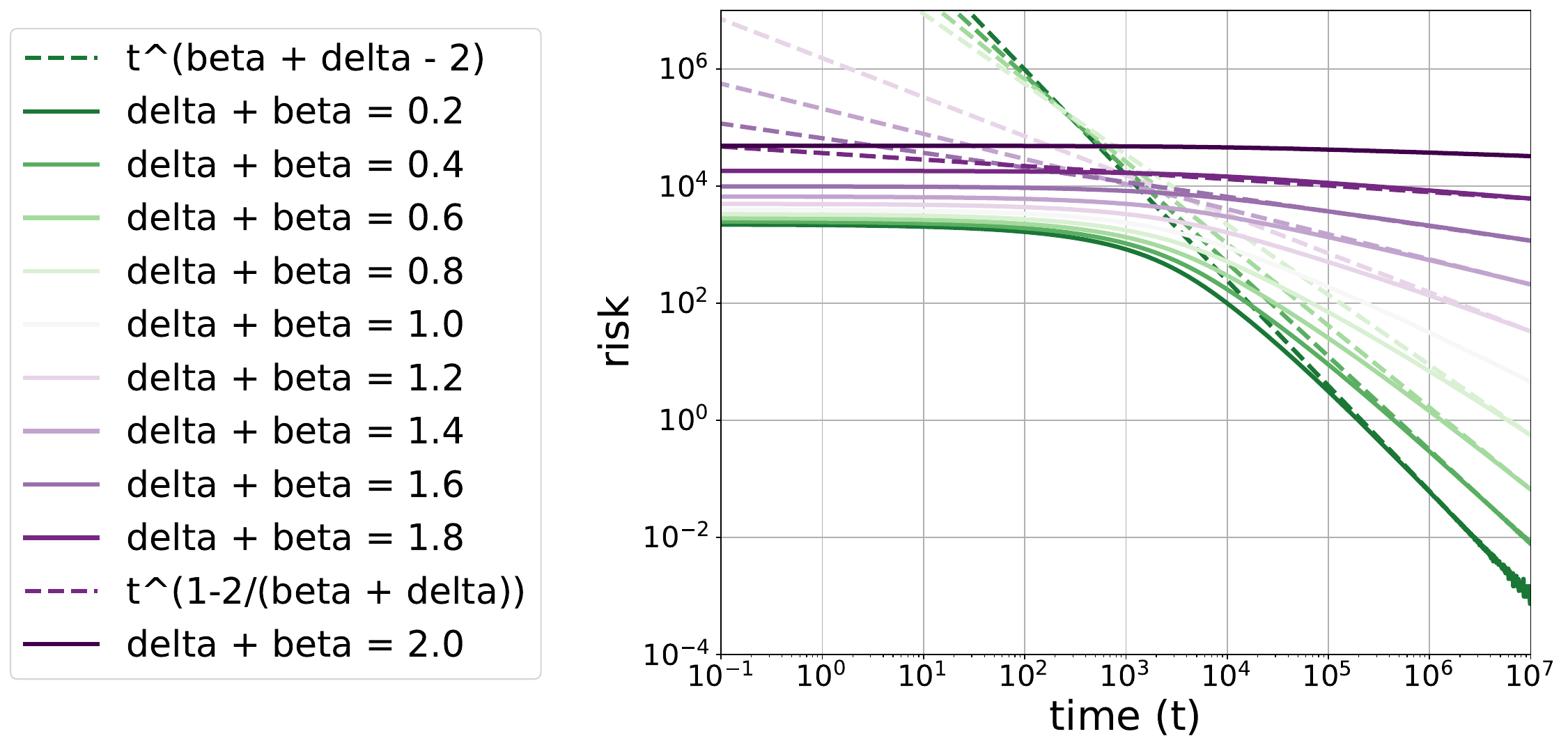}
\includegraphics[scale = 0.18]{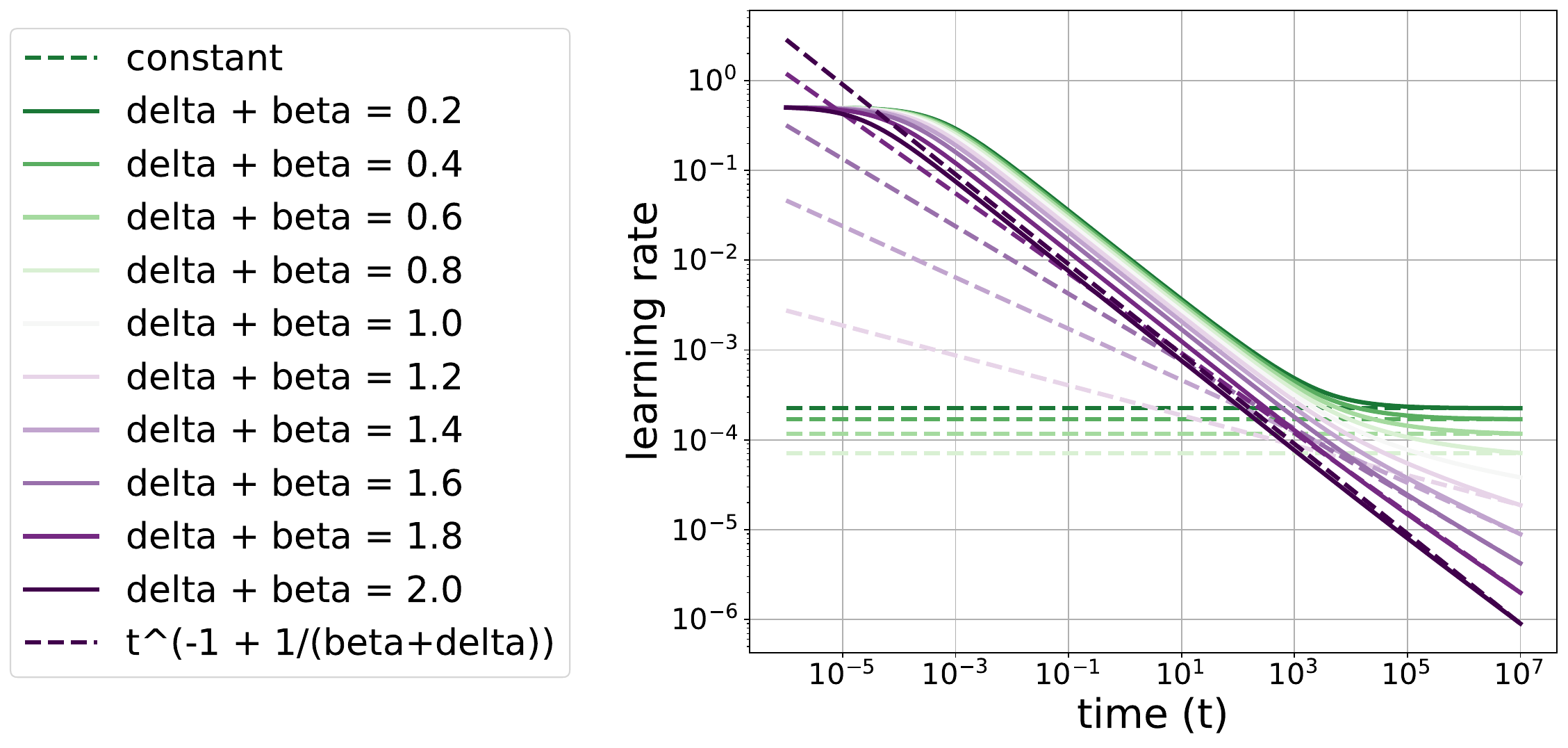}
\caption{\textbf{Power law covariance in AdaGrad Norm} on a least squares problem. Ran exact predictions (ODE) for the risk and learning rate (solid lines). Dashed lines give the predictions from Prop.~\ref{prop:gamma_asymp} which \textit{match experimental results exactly}. \textbf{Phase transition as $\delta + \beta$ varies.} When $\delta + \beta < 1$ \textcolor{mygreen}{(green)}, the learning rate \textit{(right)} is constant as $t \to \infty$. In contrast, when $2 > \delta + \beta > 1$ \textcolor{mypurple}{(purple)}, the learning rate decreases at a rate $t^{-1 + 1/(\beta + \delta)}$ with $\delta + \beta = 1$ (white) where the change occurs. Same phase transition occurs in the sublinear rate of the risk decay \textit{(left)} (see Prop.~\ref{prop:gamma_asymp}). 
}
\label{fig:power_law_adagrad}
\end{figure}

\begin{proposition}\label{prop:gamma_asymp}
 Let $K$ have a spectrum that converges as $d\to\infty$ to the power law measure $\rho(\lambda)= (1-\beta)\lambda^{-\beta}\mathbf{1}_{(0,1)}$, for some $\beta <1$\footnote{Our result can be compared to existing findings for SGD under power-law distributions in \cite{berthier2020tight, varre2021interpolation, velikanov2023conditions}. While these works explore similar assumptions regarding the covariance matrix spectrum, they do not address the high-dimensional regime with diverging $\Tr(K)$, focusing primarily on $\beta>1$.}, and suppose that $\CMscr{D}^2_i(0)\sim \lambda_i^{-\delta} $ for $\delta \geq 0$. Then:
    \begin{itemize}
        \item For $1>\beta+\delta,$ there exists $\tilde{\gamma}$ such that $\gamma_t\geq\tilde{\gamma}$, and $\CMscr{R}(t)\asymp_{\delta, \beta} t^{\beta+\delta-2}$ for all $t\geq1.$
        \item For $1<\beta+\delta < 2$,
        $\gamma_t\asymp_{\delta, \beta} t^{-1+\frac{1}{\beta+\delta}}_{\delta, \beta}$, and $\CMscr{R}(t)\asymp_{\delta, \beta} t^{-\frac{2}{\beta +\delta} +1}$ for all $t\geq 1$.
        \item For $1=\beta+\delta$, $\gamma_t\asymp_{\delta, \beta} \frac{1}{\log (t+1)}$, and $\CMscr{R}(t)\asymp_{\delta, \beta} (\frac{t}{\log(t+1)})^{-1}$ for all, $t\geq1.$
    \end{itemize} 
\end{proposition}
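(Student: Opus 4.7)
The plan is to reduce the dynamics to a coupled ODE system in the two scalar quantities $\Gamma(t) \defas \int_0^t \gamma_s\, \dif s$ and $U(t) \defas b^2 + c \int_0^t \CMscr{R}(s)\, \dif s$, where $c = 2\tr(K)/d \to 2(1-\beta)/(2-\beta)$ is asymptotically constant, and then use a bootstrap ansatz to determine the asymptotics of $\gamma_t$ and $\CMscr{R}(t)$ in each of the three regimes. Directly from Table~\ref{table:learning_rate_expressions} one reads $\gamma_t = \eta/\sqrt{U(t)}$, so $\Gamma'(t) = \eta/\sqrt{U(t)}$ and $U'(t) = c\,\CMscr{R}(t)$.

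First, I would solve \eqref{eq:dDi} via the integrating factor $e^{2\lambda_i \Gamma(t)}$, giving
\[
\CMscr{D}_i^2(t) = \CMscr{D}_i^2(0)\, e^{-2\lambda_i \Gamma(t)} + 2\lambda_i \int_0^t \gamma_s^2 \CMscr{R}(s)\, e^{-2\lambda_i(\Gamma(t)-\Gamma(s))}\, \dif s,
\]
and sum with weights $\lambda_i/(2d)$ to decompose $\CMscr{R}(t) = A(t) + B(t)$, where
\[
A(t) \defas \tfrac{1}{2d}\textstyle\sum_i \lambda_i \CMscr{D}_i^2(0)\, e^{-2\lambda_i \Gamma(t)}, \qquad B(t) \defas \int_0^t \gamma_s^2 \CMscr{R}(s)\, \phi(\Gamma(t)-\Gamma(s))\, \dif s,
\]
with spectral kernel $\phi(\tau) \defas \tfrac{1}{d}\sum_i \lambda_i^2 e^{-2\lambda_i \tau}$. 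Under the power-law assumption together with $\CMscr{D}_i^2(0) \sim \lambda_i^{-\delta}$, the continuum limit of these sums is computed via the substitution $u = 2\lambda \Gamma(t)$ and an incomplete-gamma bound, yielding $A(t) \asymp_{\delta,\beta} (1+\Gamma(t))^{-(2-\delta-\beta)}$ and $\phi(\tau) \asymp (1+\tau)^{-(3-\beta)}$.

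Setting $\mu \defas \delta+\beta$ and provisionally assuming $\CMscr{R}(t) \asymp A(t)$ (to be closed at the end), the coupled ODE reduces to $U'(t) \asymp \Gamma(t)^{-(2-\mu)}$ and $\Gamma'(t) \asymp U(t)^{-1/2}$. Plugging in the power-law ansatz $U(t) \asymp t^{\theta}$, $\Gamma(t) \asymp t^{1-\theta/2}$ forces the exponent relation $\theta = 2-2/\mu$. For $\mu > 1$ this gives $\theta > 0$, so $\gamma_t \asymp t^{-1+1/\mu}$ and $\CMscr{R}(t) \asymp t^{1-2/\mu}$. For $\mu < 1$ the candidate $\theta$ is negative, contradicting the monotonicity of $U$; instead, $U(t)$ must converge to a finite limit (since $\int_0^\infty (1+\Gamma(s))^{-(2-\mu)}\, \dif s$ is finite once $\Gamma(s) \gtrsim s$ and $2-\mu > 1$), forcing $\gamma_t \to \tilde\gamma > 0$ and $\CMscr{R}(t) \asymp t^{-(2-\mu)}$. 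The boundary case $\mu = 1$ uses the logarithmic ansatz $\gamma_t \asymp 1/\log(t+1)$, $\Gamma(t) \asymp t/\log(t+1)$, $\CMscr{R}(t) \asymp \log(t+1)/t$, whose self-consistency follows from $\int_1^t \log(s)/s\, \dif s \asymp \log^2(t)/2$.

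The main obstacle I anticipate is closing the bootstrap by showing $B(t) = O(A(t))$ uniformly for $t \geq 1$. This requires splitting $B(t) = \int_0^{t/2} + \int_{t/2}^t$: on $[t/2, t]$, $\phi$ is bounded by $\phi(0) = O(1)$ while $\gamma_s^2 \CMscr{R}(s) \asymp s^{-1}$ (for $\mu>1$) or $\asymp s^{\mu-2}$ (for $\mu<1$) contributes the right order; on $[0, t/2]$, $\Gamma(t) - \Gamma(s) \gtrsim \Gamma(t)$ yields polynomial decay of $\phi$. A Gronwall-type argument on the Volterra equation $\CMscr{R} = A + B$ then upgrades the provisional $\asymp$ into matching two-sided bounds. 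The critical case $\mu = 1$ is the most delicate due to logarithmic accumulation, and one must also handle the short-time regime $t \in [0,1]$ separately to initialize the bootstrap (using that $U(0) = b^2 > 0$, so $\gamma_0 = \eta/b$ is a fixed positive starting value).
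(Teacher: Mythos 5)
Your outline follows essentially the same route as the paper: the same Volterra representation $\CMscr{R}(t)=F(\Gamma(t))+\int_0^t\gamma_s^2\mathcal{K}(\Gamma(t)-\Gamma(s))\CMscr{R}(s)\,\dif s$ (the paper's \eqref{eq:R_K_F_Volterra}), the same incomplete-gamma computation giving $F(x)\asymp x^{-(2-\beta-\delta)}$ and $\mathcal{K}(x)\asymp x^{-(3-\beta)}$ (Lemma~\ref{lem:K_2K1_beta_delta}), the same key claim $\CMscr{R}(t)\asymp F(\Gamma(t))$, and the same self-consistent analysis of $I'(u)/\sqrt{I(u)}\asymp(1+u)^{-\kappa_1}$ after the change of variables $u=\Gamma(t)$ (Corollary~\ref{cor:gamma_asymp}); your exponent bookkeeping in all three regimes matches. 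The one place where you gesture rather than prove is exactly the step the paper spends the most effort on: upgrading ``$B(t)=O(A(t))$ under the ansatz'' to an unconditional two-sided bound. The paper does this not by Gronwall but by expanding the Volterra series $\CMscr{R}=\sum_j\mathcal{G}^j(F)$ and controlling the iterated convolutions $\mathcal{K}^{*j}$ with a Kesten-type subconvolutivity lemma (Lemmas~\ref{lem:R_up_low_bound} and~\ref{lem:Kesten}); your split $\int_0^{t/2}+\int_{t/2}^t$ is precisely the verification of that subconvolutivity condition for the power-law kernel, so the two arguments are the same in substance. What your sketch omits is the smallness condition needed to make the series (or any Gronwall iteration) converge, namely $2(1+\epsilon)\|\mathcal{K}\|_1\gamma_0<1$, i.e.\ the initial learning rate must sit below the stability threshold $\asymp d/\tr(K)$; without some such hypothesis the convolution term need not be dominated by the forcing term and the risk need not decay at all. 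You should state that condition explicitly (the paper's Lemma~\ref{lem:R_up_low_bound} carries it as a hypothesis) before closing the bootstrap; with it, your plan goes through.
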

This proposition shows non-trivial decay of the learning rate is dictated by the residuals (distance to optimality at initialization) and the spectrum of $K$. We note that $\delta = 0$ corresponds to uniform contribution of each mode (e.g. $X_0$ normally distributed). As the eigenmodes of the residuals become more localized, the decay of the learning rate is closer to the behaviour in the presence of additive noise. Furthermore, the scaling behaviour of the loss is affected by the structure of the AdaGrad-Norm algorithm (see Fig. \ref{fig:power_law_adagrad}). Lastly,  constant stepsize SGD yields $\CMscr{R}(t)\asymp t^{\beta+\delta-2}$, with no transition occurring at $\beta + \delta = 1$.

Proofs of the above propositions, in a slightly more general setting, are deferred to Sec.~\ref{app:AdaGrad_analysis}.

\section{Conclusions and Limitations}
This work studies stochastic adaptive optimization algorithms when data size and parameter size are large, allowing for nonconvex and nonlinear risk functions, as well as data with general covariance structure. The theory shows a concentration of the risk, the learning rate and other key functions to a deterministic limit, which is described by a set of ODEs. The theory is then used to derive the asymptotic behavior of the AdaGrad-Norm and idealized exact line search on strongly convex and least square problems, revealing the influence of the covariance matrix structure on the optimization. A potential extension of this work would be to study other adaptive algorithms such as D-adaptation, DOG, and RMSprop which are covered by the theory. Studying the asymptotic behavior of the risk and the learning rate may improve our understanding of the performance and scalability of these algorithms on more realistic data. Another important application of the theory would be to analyze the ODEs presented here on nonconvex problems.

The current form of the theory is limited to Gaussian data, though many parts of the proof can be extended easily beyond Gaussian data. The main ODE comparison theorem is also only tuned for analyzing problem setups where the trace of the covariance is on the order of the ambient dimension; when the trace of the covariance is much smaller than ambient dimension, other stepsize scalings of SGD are needed. In addition, the analysis is limited to the streaming stochastic adaptive methods. We conjecture that a similar deterministic equivalent holds also for multi-pass algorithms at least for convex problems. This has already been shown in the least square problem for SGD with a fixed deterministic learning rate \cite{paquetteSGD2021, Paquettes04}. Lastly, numerical simulations on real datasets (e.g., CIFAR-5m) suggests that the predicted risk derived by our theory matches the empirical risk of multipass SGD beyond Gaussian data (see for example Figure \ref{fig:cifar_figures}).

\newpage
\begin{ack} E. Collins-Woodfin was supported by Fonds de recherche du Qu\'ebec – Nature et technologies (FRQNT) postdoctoral training scholarship and Centre de recherches math\'ematiques (CRM) Applied math postdoctoral fellowship. Research of B. Garc\'ia Malaxechebarr\'ia was in part funded by NSF DMS 2023166 (NSF TRIPODS II). Research by E. Paquette was supported by a Discovery Grant from the Natural Science and Engineering Council (NSERC). C. Paquette is a Canadian Institute for Advanced Research (CIFAR) AI
chair, Quebec AI Institute (MILA) and a Sloan Research Fellow in
Computer Science (2024). C. Paquette was supported by a Discovery Grant from the
Natural Science and Engineering Research Council (NSERC) of Canada, NSERC CREATE grant Interdisciplinary Math and Artificial Intelligence Program (INTER-MATH-AI), Google research grant, and Fonds de recherche du Québec – Nature et technologies (FRQNT) New University Researcher's Start-Up Program. Additional revenues related to this work: C. Paquette has 20\% part-time employment at Google DeepMind.
\end{ack}

\bibliographystyle{plainnat}
\bibliography{references}

\newpage

\appendix

\begin{center}
\LARGE{The High Line: Exact Risk and Learning Rate Curves of Stochastic Adaptive Learning Rate Algorithms}\\
\vspace{0.5em}\Large{Supplementary material \vspace{0.5em}}
\end{center}

\textbf{Broader Impact Statement.} The work presented in this paper is foundational research and it is not tied to any particular application. The set-up is on a simple well-studied high-dimensional linear composites (e.g., least squares, logistic regression, phase retrieval) with synthetic data and solved using known algorithms, e.g., AdaGrad-Norm. We present deterministic dynamics for the training loss and adaptive stepsizes. The results are theoretical and we do not anticipate any direct ethical and societal issues. We believe the results will be used by machine learning practitioners and we encourage them to use it to build a more just, prosperous world.

\section{SGD adaptive learning rate algorithms and stepsizes\label{app:AL_algorith_details}}

In this section, we write down the explicit update rules for 2 different adaptive stochastic gradient descent algorithms. 

\paragraph{Example: AdaGrad-Norm.} We begin with AdaGrad-Norm (see Algorithm~\ref{alg:AdaGrad_norm}). Note by unraveling the recursion, we have that
\begin{equation} \label{eq:AdaGrad_stepsize_appendix}
\mathfrak{g}_k = \frac{\eta}{ \sqrt{b^2 + \tfrac{1}{d^2} \sum_{j=0}^{k} \|\nabla_X \Psi(X_j; a_{j+1}, \epsilon_{j+1})\|^2}},
\end{equation}
with the deterministic equivalent (see Section~\ref{sec:det_dynamics_ODEs} and also \ref{appsubsec:specific_step_size}) for this learning rate being
\begin{equation}\label{eq:AdaGrad_continuous_stepsize_appendix}
\gamma_t = \frac{\eta}{ \sqrt{b^2 + \tfrac{\tr(K)}{d} \int_0^{t} I(\CMscr{B}(s)) \, \dif s }}.
\end{equation}
In the case of the least squares problem, the quantity $I(\CMscr{B}(t))$ is explicit and 
\begin{equation}
\gamma_t = \frac{\eta}{ \sqrt{b^2 + \tfrac{2 \tr(K)}{d} \int_0^{t} \CMscr{R}(s) \, \dif s }}.
\end{equation}

\begin{algorithm}
\caption{AdaGrad-Norm}\label{alg:AdaGrad_norm}
\begin{algorithmic}
\Require Initialize $\eta > 0$, $X_0 \in \mathbb{R}^d$, $b \in \mathbb{R}$ and set $b_0 = b \times d$
\For{$k = 1, 2, \hdots,$}
\State Generate new sample $a_{k} \sim \mathcal{N}(0,K)$, $\epsilon_{k} \sim \mathcal{N}(0, \omega^2)$;
\State $b_{k}^2 \gets b_{k-1}^2 + \|\nabla_X \Psi(X_{k-1}; a_k, \epsilon_k)\|^2$;
\State $\mathfrak{g}_{k-1} = d \times \tfrac{\eta}{|b_k|}$; \Comment{updating learning rate}
\State $X_k \gets X_{k-1} - \frac{\mathfrak{g}_{k-1}}{d} \nabla_X \Psi(X_{k-1}; a_k, \epsilon_k)$; \Comment{updating step with stochastic gradient}
\EndFor
\end{algorithmic}
\end{algorithm}

\paragraph{Example: RMSprop-Norm} We consider the "normed" version of RMSprop, that is, where there is only one learning rate parameter.

We consider Algorithm~\ref{alg:RMSprop_exp} where we put a factor of the learning into the exponential moving average for RMSprop. The deterministic equivalent for $\mathfrak{g}_k$ for Alg.~\ref{alg:RMSprop_exp} (see Section~\ref{sec:det_dynamics_ODEs}) is
\begin{equation}\label{eq:RMSprop_exp_continuous_stepsize_appendix}
\gamma_t = \frac{\eta}{ \sqrt{b^2 e^{-\alpha t} + \tfrac{\tr(K)}{d} \int_0^{t} e^{-\alpha (t-s)} I(\CMscr{B}(s)) \, \dif s }}. 
\end{equation}
In the case of the least squares problem, the quantity $I(\CMscr{B}(t))$ is explicit and 
\begin{equation}
\gamma_t = \frac{\eta}{ \sqrt{b^2 e^{-\alpha t} + \tfrac{2 \tr(K)}{d} \int_0^{t} e^{-\alpha (t-s)} \CMscr{R}(s) \, \dif s }}.
\end{equation}

\begin{algorithm}
\caption{RMSprop-Norm, $\alpha$ Exponential Moving Average}\label{alg:RMSprop_exp}
\begin{algorithmic}
\Require Initialize $\eta > 0$, $X_0 \in \mathbb{R}^d$, $b \in \mathbb{R}$ and set $b_0 = d \times b$, $\alpha > 0$ exponential moving avg.
\State $\mathfrak{g}_{-1} = d \times \frac{\eta}{b_0};$
\For{$k = 1, 2, \hdots,$}
\State Generate new sample $a_{k} \sim \mathcal{N}(0,K)$, $\epsilon_{k} \sim \mathcal{N}(0, \omega^2)$;
\State $b_{k}^2 \gets \alpha \cdot b_{k-1}^2 + (1-\alpha ) \|\nabla_X \Psi(X_{k-1}; a_k, \epsilon_k)\|^2$;
\State $\mathfrak{g}_{k-1} =  d \times \tfrac{\eta}{|b_k|}$; \Comment{updating learning rate}
\State $X_k \gets X_{k-1} - \frac{\mathfrak{g}_{k-1}}{d} \nabla_X \Psi(X_{k-1}; a_k, \epsilon_k)$; \Comment{updating step with stochastic gradient}
\EndFor
\end{algorithmic}
\end{algorithm}

\section{The Dynamical nexus}
In this section, we prove the main theorem on concentration of the risk curves and learning rates.  We shall set some notation.
In what follows, we again use $W = [X|X^{\star}] \in \mathbb{R}^{d \times 2}$.  
We also use $W^+ = [W|X_0] = [X|X^{\star}|X_0]$.

We shall also use the shorthand $r = \langle a, W \rangle$, and $x = \ip{a,X}$ so that $f(\ip{a,X},\ip{a,X^{\star}}; \epsilon) = f(\ip{a,W}; \epsilon) = f(r; \epsilon)$.  

We shall let $B=B(X) = W^TKW$ be the covariance matrix of the Gaussian vector $r$.  We also write $f'$ for the $\partial_x f$.

\subsection{Discussion of the assumptions on $f$}

In this section we show how the assumptions we put on $h$ and $I$ are almost satisfied for $L$-smooth $f$.  We say that $f$ is $L$-smooth if:
\[
\| \nabla f(r_1,\epsilon_1) - \nabla f(r_2,\epsilon_2)\|
\leq L \sqrt{(\|r_1-r_2\|^2 + \|\epsilon_1-\epsilon_2\|^2)},
\]
which we note implies $f$ is $\alpha$-pseudo Lipschitz with $\alpha = 1$.
\begin{lemma}\label{lem:hI}
\begin{enumerate}
    \item There exists a function $h: \mathbb{R}^{2\times 2} \rightarrow \mathbb R$ such that $h(B(X)) = \mathcal{R}(X)$ is differentiable and satisfies
\[
\nabla_X \mathcal{R}(X) = \mathbb{E}_{a, \epsilon} \nabla_X \Psi(X; a, \epsilon). 
\]
Furthermore, $h$ is continuously differentiable on $\{B : \det B \neq 0\}$ and its derivative $\nabla h$ satisfies an estimate
\[
        \begin{aligned}
            \|\nabla h(B_1) - \nabla h(B_2)\|
            &\leq 
            (\sqrt{2}+1)L(f)
            \min\{
                \|B_1^{-1}\|_{op},
                \|B_2^{-1}\|_{op}
            \}
            \|B_1-B_2\|_{F}.
        \end{aligned}
    \]
\item The function $I(B) = \mathbb{E}_{a,\epsilon}[(f'(\ip{a,X}; \ip{a,X^{\star}}, \epsilon))^2]$ satisfies an estimate
\[
|I(B_1) - I(B_2)| \leq L(f)
\sqrt{I(B_1) + I(B_2)}
            \min\{
                \|B_1^{-1}\|_{op},
                \|B_2^{-1}\|_{op}
            \}
            \|B_1-B_2\|_{F}.
\]
\end{enumerate}
\end{lemma}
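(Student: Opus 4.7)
The setup is straightforward: since $(\ip{a,X},\ip{a,X^\star})$ is a centered Gaussian vector in $\mathbb{R}^2$ with covariance $B = W^T K W$, the risk depends on $X$ only through $B$, giving the identification
\[
\mathcal R(X) = h(B) \defas \mathbb{E}_{r\sim\mathcal N(0,B),\epsilon}[f(r;\epsilon)].
\]
The identity $\nabla_X \mathcal R = \mathbb E\,\nabla_X \Psi$ is a routine application of dominated convergence, using that $L$-smoothness of $f$ gives $\|\nabla\Psi(X;a,\epsilon)\| \leq (L\|W\| + |\nabla f(0,0,\epsilon)|)\|a\|$, which is integrable under Assumption~\ref{assumption:data}.

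For part (1), the plan is to produce an explicit formula for $\nabla_B h$ via Gaussian integration by parts (Stein's lemma): for invertible $B$,
\[
\nabla_B h(B) = \tfrac{1}{2}\,\mathrm{Sym}\big(B^{-1}\,\mathbb E_r[r\,\nabla_r f(r;\epsilon)^T]\big),
\]
equivalently $\nabla_B h = \tfrac{1}{2}\mathbb E[\nabla_r^2 f]$ by a second application of Stein. To compare $\nabla h(B_1)$ and $\nabla h(B_2)$, I would introduce the coupling $r_i = B_i^{1/2}g$ with $g\sim\mathcal N(0,I_2)$ and split
\[
B_1^{-1}M_1 - B_2^{-1}M_2 = B_1^{-1}(M_1-M_2) + (B_1^{-1}-B_2^{-1})M_2,
\]
with $M_i = \mathrm{Sym}\,\mathbb E[r_i\nabla f(r_i)^T]$. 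The first piece produces the factor $\sqrt 2$ by bounding $\|M_1-M_2\|$ with $L$-smoothness of $f$ applied through $r_1 - r_2 = (B_1^{1/2}-B_2^{1/2})g$, together with Cauchy--Schwarz on the Gaussian expectation; the second piece produces the factor $1$ via $B_1^{-1}-B_2^{-1} = B_2^{-1}(B_2-B_1)B_1^{-1}$. Converting intermediate $\|B_1^{1/2}-B_2^{1/2}\|_F$ estimates into $\|B_1-B_2\|_F$ via the identity $B_1 - B_2 = B_1^{1/2}(B_1^{1/2}-B_2^{1/2}) + (B_1^{1/2}-B_2^{1/2})B_2^{1/2}$ produces the $\|B^{-1}\|_{op}$ prefactor, and the $\min$ over $B_1, B_2$ follows by symmetry of the roles.

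Part (2) is more elementary. Using the same coupling $r_i = B_i^{1/2}g$,
\[
I(B_1) - I(B_2) = \mathbb E_g\big[(f'(r_1) - f'(r_2))\,(f'(r_1) + f'(r_2))\big],
\]
and Cauchy--Schwarz separates the two factors. $L$-smoothness of $f$ bounds the first by $L(f)\|B_1^{1/2}-B_2^{1/2}\|_F$ (using $\mathbb E\|Ag\|^2 = \|A\|_F^2$), and $(a+b)^2 \le 2(a^2+b^2)$ bounds the second by a constant multiple of $\sqrt{I(B_1)+I(B_2)}$; the conversion from $\|B_1^{1/2}-B_2^{1/2}\|_F$ to $\|B^{-1}\|_{op}\|B_1-B_2\|_F$ then yields the claim.

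The main obstacle is twofold: first, the Stein manipulations formally require $f\in C^2$, whereas the hypothesis is only $L$-smoothness, so I would regularize by mollification to a smooth $f_\delta$ with the same Lipschitz constant of $\nabla f$, prove the estimates uniformly in $\delta$, and pass to the limit; second, matching the exact constant $(\sqrt 2 + 1)$ in part (1) requires careful bookkeeping in the split above, as suboptimal organizations of the two pieces easily yield constants like $2$ or $\sqrt 5$ instead.
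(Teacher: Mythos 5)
Your proposal follows essentially the same route as the paper's proof: the representation $r=\sqrt{B}\,z$, Stein's lemma to express $\nabla h$ as (an inverse square root of $B$) times a Gaussian moment, the same two-term splitting of the product difference (one term controlled by $L$-smoothness of $\nabla f$ giving the factor $\sqrt{2}$, the other by a resolvent-type identity together with boundedness of $\nabla h$ giving the factor $1$), mollification to reduce to $C^2$ data, and for part (2) the identical difference-of-squares plus Cauchy--Schwarz argument. The one execution point to watch is that your moment $M_i=\mathrm{Sym}\,\mathbb{E}[r_i\nabla f(r_i)^T]$ should first be rewritten as $B_i^{1/2}\,\mathbb{E}[g\,\nabla f(B_i^{1/2}g)^T]$ so that only the argument of $\nabla f$ varies between $B_1$ and $B_2$ (otherwise the cross term $\mathbb{E}[(r_1-r_2)\nabla f(r_1)^T]$ forces a bound on $\|\nabla f\|$ itself rather than on its Lipschitz constant); this is precisely the form the paper works with and is what makes the $(\sqrt{2}+1)$ bookkeeping come out.
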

\begin{proof}
    To derive the existence of $h$, note that
    \[
        \mathcal{R}(X)
        =
        \Exp 
        (\Exp(f(\ip{a,X}, \ip{a,X^{\star}}, \epsilon) \vert \epsilon ))
    \]
    is an expectation of a Gaussian vector $r = (\ip{a,X}, \ip{a,X^{\star}})$.  This vector can be expressed as an image of an iid Gaussian vector $z$ by representing $r=\sqrt{B}z$, and hence we have
    \[
    h(B) \defas 
        \Exp 
        (\Exp(f(\sqrt{B}z, \epsilon) \vert \epsilon )).
    \]
    
    As the function $f$ is absolutely continuous with a Lipschitz gradient, we can differentiate under the integral sign and conclude
    \[
    \nabla_X \mathcal{R}(X)
    =
    \nabla_X
    \Exp f(\ip{a,X}, \ip{a,X^{\star}}, \epsilon)
    =
    \Exp 
    \nabla_X f(\ip{a,X}, \ip{a,X^{\star}}, \epsilon).
    \]
    For the differentiability of $h$, suppose for the moment that $f$ is $C^2$ with bounded second derivatives.\footnote{This condition can be removed in a standard way: one creates an $f_\epsilon$ which is an approximation to $f$ formed by convolving with an isotropic Gaussian of variance $\epsilon$. This is $C^2$ and has bounded second derivatives (as $f$ was smooth). One then takes the limit as $\epsilon \to 0$.}
    Setting $Q = \sqrt{B}$ the positive semi-definite square root of $B$, we have 
    \[
    \partial_{Q_{ij}} h(Q^2)
    =
    \Exp 
        (\Exp(\partial_{Q_{ij}}f(Qz, \epsilon) \vert \epsilon )).
    \]

    
     Then using the chain rule, and setting $\partial_{i}f$ to be the $i$-th partial derivative of $f$,
    \[
    \partial_{Q_{ij}} h(Q^2)
    =
    \Exp 
    (\Exp(z_j \partial_i f(Qz, \epsilon) \vert \epsilon ))
    =
    \Exp 
    (\Exp( [Q_{ij} \partial_{i} + Q_{jj} \partial_j] \partial_{i} f(Qz, \epsilon) \vert \epsilon )),
    \]
    where we have applied Stein's Lemma.
    We conclude when $\det Q \neq 0$ by the implicit function theorem that $h$ is differentiable and we have
    \[
        \partial_{Q_{ij}} h(Q^2)
        = \sum \partial_{kl} h \partial_{Q_{ij}} (Q^2)_{kl}
        = 
        \sum_l (\partial_{il} h) Q_{jl}
        +
        \sum_k (\partial_{kj} h) Q_{ik}.
    \]
    As a matrix equation, this can be written as
    \[
    (D h) Q + Q (D h) = J Q
    \quad
    \text{where}
    \quad
    J_{kl}=\Exp 
    (\Exp( (\partial_k \partial_{l} f)(Qz, \epsilon) \vert \epsilon )).
    \]
    This is a linear equation in $D h$.  When $Q \succ 0$, we can define  
    \[
    A = \int_0^\infty e^{-t Q} (JQ) e^{-t Q} \dif t,
    \]
    and note 
    \[
    AQ + QA = -\int_0^\infty \frac{\dif}{\dif t}\left(e^{-t Q} (JQ) e^{-t Q} \right) \dif t
    = JQ.
    \]
    Moreover, the mapping $M \mapsto \int_0^\infty  e^{-t Q} M e^{-t Q} \dif t$ defines a two-sided inverse for $M \mapsto MQ + QM$, and so $Dh=A.$
    Note that by symmetry of $J$, $Q$, and $Dh$
    \[
        JQ = (D h) Q + Q (D h) = QJ,
    \]
    and therefore
    \[
        (Dh) Q + Q (Dh) = \frac{1}{2}( JQ + QJ),
    \]
    and so taking inverses on both sides, $Dh = J.$

    Undoing Stein's Lemma, we have
    \(
        Q(Dh) = (Dh)Q = M,
    \)
    where $M_{ij} = \Exp (\Exp(z_j \partial_i f(Qz, \epsilon) \vert \epsilon )).$  From $L$-smoothness of $f$
    \[
    \|M(Q_1) - M(Q_2)\|
    \leq L \Exp( \|z\| \|Q_1 z - Q_2 z\|)
    \leq \sqrt{2}L \|Q_1 - Q_2\|_F.
    \]
    Hence
    \[
        \begin{aligned}
        \|Dh(Q_1^2) - Dh(Q_2^2)\|
        &= 
        \|Q_1^{-1}M(Q_1) - Q_2^{-1}M(Q_2)\|\\
        &\leq
        \|Q_1^{-1}\|_{op}
        \|M(Q_1) - Q_1 Q_2^{-1}M(Q_2)\| \\
        &\leq 
        \|Q_1^{-1}\|_{op}
        \left(
        \|M(Q_1) - M(Q_2)\|
        +        
        \|(Q_2 - Q_1 )Q_2^{-1}M(Q_2)\|
        \right).
        \end{aligned}
    \]
    Note $Q_2^{-1}M(Q_2) = (Dh)(Q_2^2)$ is bounded by $L(f)$, and so we arrive at
    \[
        \begin{aligned}
            \|Dh(Q_1^2) - Dh(Q_2^2)\|
            &\leq 
            (\sqrt{2}+1)L(f)
            \|Q_1^{-1}\|_{op}
            \|Q_1-Q_2\|_{F} \\
            &\leq 
            (\sqrt{2}+1)L(f)
            \|Q_1^{-2}\|_{op}
            \|Q_1^2-Q_2^2\|_{F}.
        \end{aligned}
    \]
    We note the bound is symmetric in $Q_1$ and $Q_2$, and by density of $C^2$ in space of $C^{1,lip}$, this holds for $L$-smooth $f$.
    This concludes the estimates for the derivative of $h$.

    For the Fisher matrix, $I(B)$, from $L$-smoothness, we have again with $Q = \sqrt{B}$,
    \[
    I(Q^2)
    =
    \Exp 
    (\Exp(  (\partial_{1} f(Qz, \epsilon))^2 \vert \epsilon )).
    \]
    Then 
    \[
    |I(Q_1^2) - I(Q_2^2)|
    \leq 
    \left|\Exp 
    (\Exp(  (\partial_{1} f(Q_1z, \epsilon))^2-(\partial_{1} f(Q_2z, \epsilon))^2 \vert \epsilon ))\right|.
    \]
    Applying Cauchy-Schwarz and using the $L$-smoothness of $f$,
    \[
        |I(Q_1^2) - I(Q_2^2)|
        \leq \sqrt{ I(Q_1^2) + I(Q_2^2)}
        \times {L(f)\|Q_1-Q_2\|_F}.
    \]  
\end{proof}

This lemma shows that an $L$-smooth function nearly satisfies Assumption~\ref{assumption:risk} and \ref{assumption:fisher} provided that $\|B^{-1}\|_{\text{op}}$ is bounded. Therefore, our concentration result Theorem~\ref{thm:main_concentration_S_min} and its Corollaries will hold provided we add a stopping time. Fix $M > 0$ and let
\[
\hbar_{M}(B) \defas \inf \{ t > 0 \, : \,  \|B^{-1}\|_{\text{op}} > M \}.
\]
Then the concentration of the risk under SGD to a deterministic function, Theorem~\ref{thm:main_concentration_S_min}, holds with $t$ replaced with $t \wedge \hbar_{M}(B) \wedge \hbar_M(\CMscr{B})$. The corollaries of Theorem~\ref{thm:main_concentration_S_min} also follow under this added stopping time. 

In the next section, we prove this concentration theorem, Theorem~\ref{thm:main_concentration_S_min}. 

\subsection{Integro-differential equation for $\CMscr S(t, z)$}
A goal of this paper is to show that quadratic statistics $\varphi: \mathbb R^d \rightarrow \mathbb{R}$ applied to SGD converge to a deterministic function. This argument hinges on understanding the deterministic dynamics of one important statistic, defined as
\[ S(W, z)= W^\top R(z ; K) W, \]
applied to $W_{\lfloor t d\rfloor}$ (SGD updates). Here $W= 
[X | X^{\star}]$ and $R(z ; K)=\left(K-z I_d\right)^{-1}$ for $z \in \mathbb{C}$ is the resolvent of the matrix $K$.  The statistic $S(W,z)$ is valuable because it encodes many other important quantities including $W^\top q(K)W$ for all polynomials $q$. We show that  $S(W_{\lfloor t d\rfloor}, z)$, is close to a deterministic function $(t, z) \mapsto \CMscr{S}(t, z)$ which satisfies an integro-differential equation.

To introduce the integro-differential equation, recall by Assumptions \ref{assumption:risk} and \ref{assumption:fisher} 
\[
\mathcal{R}(X) = h \circ B(W) \quad \text{and} \quad \EE_{a, \epsilon}[f'(a^\top W)^2] = I \circ B(W) \quad \text{with} \quad \, B(W) = W^\top K W,
\] 
and $\alpha$-pseudo-Lipschitz functions $h \, : \, \mathbb R^{2 \times 2} \to \mathbb{R}$ differentiable and $I \, : \, 
 \mathbb R^{2 \times 2} \to \mathbb{R}$. It will be useful, throughout the remaining paper, to express $\nabla h$ explicitly as a $2 \times 2$ matrix, that is,
\begin{align*}
\nabla h 
&
\cong
\left[ \begin{array}{c|c} 
    \nabla h_{11} & \nabla h_{12}
    \\
    \hline
    \nabla h_{21} & \nabla h_{22}
    \end{array} \right ].
\end{align*}
With these recollections, the integro-differential equation is defined below. 
\begin{mdframed}[style=exampledefault]
\textbf{Integro-Differential Equation for $\CMscr{S}(t, z)$.} For any contour $\Omega \subset \mathbb{C}$ enclosing the eigenvalues of $K$, we have an expression for the derivative of $\CMscr{S}$:
\begin{equation}\label{eq:ODE_resolvent_2}
    \dif \CMscr{S}(t,\cdot) 
    = \mathscr{F}(z, \CMscr{S}(t, \cdot)) \, \dif t
\end{equation}
\begin{align}
    \text{where} \, \, 
    \mathscr{F}(z, \CMscr{S}(t, \cdot))
    &
    \defas
    - 2\gamma_t \bigg ( \bigg ( \frac{-1}{2\pi i} \oint_{\Omega} \CMscr{S}(t,z) \, \dif z \bigg ) H( \CMscr{B}(t)) \nonumber \\
    & \qquad + H^T(\CMscr{B}(t)) \bigg ( \frac{-1}{2\pi i} 
 \oint_{\Omega} \CMscr{S}(t,z) \, \dif z \bigg ) \bigg ) \,  \nonumber
 \\
    & \qquad
    + \frac{\gamma_t^2}{d}\left [ \begin{array}{c|c} 
    \tr(K R(z;K)) I(\CMscr{B}(t)) & 0\\ \hline 0 & 0
    \end{array} \right ]  \label{eq:F}\\
    & \qquad 
    - \gamma_t (\CMscr{S}(t,z) (2z H(\CMscr{B}(t)) ) + ( 2 z H^T( \CMscr{B}(t) )) \CMscr{S}(t,z)). \nonumber 
\end{align}
\begin{gather} \nonumber
\text{Here} \, \, \CMscr{B}(t) = \frac{-1}{2\pi i} \oint_{\Omega} z \CMscr{S}(t,z) \, \dif z, 
 \quad 
 H(\CMscr{B}) = \left [ \begin{array}{c|c} 
\nabla h_{11}(\CMscr{B}) & 0\\
 \hline
 \nabla h_{21}(\CMscr{B}) & 0
 \end{array} \right ],\\
 \label{eq:ODE_IC}
 \gamma_t \text{ is defined in } \eqref{eq:ODE}, \text{ and the initialization is} \quad \CMscr{S}(0,z) = W_0^\top R(z; K) W_0.
\end{gather}
The functions $h \, : \, \mathbb R^{2 \times 2} \to \mathbb{R}$ and $I \, : \, \mathbb R^{2 \times 2} \to \mathbb R$ are defined in Assumption~\ref{assumption:risk} and Assumption~\ref{assumption:fisher}, respectively. 
\end{mdframed}

We first note that there is an actual solution to the integro-differential equation. This solution is the same as the ODEs defined in the introduction (see \eqref{eq:ODE}) and proved in \cite[Lemma 4.1]{collinswoodfin2023hitting}.

\begin{lemma}[Equivalence to coupled ODEs.] The unique solution of \eqref{eq:F} with initial condition \eqref{eq:ODE_IC} is given by 
\[
\CMscr{S}(t,z) = \frac{1}{d} \sum_{i=1}^d \frac{1}{\lambda_i -z } \CMscr{V}_{i}(t).
\]
\end{lemma}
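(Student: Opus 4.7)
The plan is to verify directly that the proposed ansatz $\CMscr{S}(t,z) = \frac{1}{d} \sum_{i=1}^d \frac{1}{\lambda_i - z} \CMscr{V}_i(t)$, with $\CMscr{V}_i(t)$ solving the coupled ODE system \eqref{eq:ODE}, satisfies both the initial condition \eqref{eq:ODE_IC} and the integro-differential equation \eqref{eq:F}, then appeal to standard ODE uniqueness.

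First I would check the initial condition. Using the spectral decomposition $R(z;K) = \sum_{i=1}^d \frac{1}{\lambda_i - z}\, \omega_i \omega_i^T$, I get $W_0^T R(z;K) W_0 = \sum_{i=1}^d \frac{1}{\lambda_i - z}\, W_0^T \omega_i \omega_i^T W_0$. Since the definition $V_i(W_0) = d\, W_0^T \omega_i \omega_i^T W_0$ gives $\CMscr{V}_i(0) = V_i(W_0)$, the ansatz matches $\CMscr{S}(0,z)$ on the nose. Next I would simplify the contour integrals appearing in \eqref{eq:F}: since $\Omega$ encloses all eigenvalues of $K$, the residue theorem yields $\frac{-1}{2\pi i}\oint_\Omega \CMscr{S}(t,z)\,\dif z = \frac{1}{d}\sum_i \CMscr{V}_i(t) = \CMscr{N}(t)$ and $\CMscr{B}(t) = \frac{-1}{2\pi i}\oint_\Omega z\,\CMscr{S}(t,z)\,\dif z = \frac{1}{d}\sum_i \lambda_i \CMscr{V}_i(t)$, consistent with the definitions given in the introduction. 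Similarly $\tr(K R(z;K)) = \sum_i \tfrac{\lambda_i}{\lambda_i - z}$.

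The heart of the calculation is then matching the differential equation pole-by-pole. Differentiating the ansatz in $t$ gives $\partial_t \CMscr{S}(t,z) = \frac{1}{d}\sum_i \frac{1}{\lambda_i - z}\, \tfrac{\dif}{\dif t}\CMscr{V}_i(t)$. On the right-hand side of \eqref{eq:F}, the crucial algebraic trick is the partial-fraction identity $\frac{z}{\lambda_i - z} = -1 + \frac{\lambda_i}{\lambda_i - z}$. The $-1$ contributions from the third term of $\mathscr{F}$ combine with $\CMscr{N}(t)H + H^T \CMscr{N}(t)$ from the first term and exactly cancel, while the residual $\frac{\lambda_i}{\lambda_i - z}$ pieces combine with the $\tr(KR(z;K))$ factor in the Fisher term. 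Identifying the coefficient of $\frac{1}{\lambda_i - z}$ on each side yields
\als{
\tfrac{\dif}{\dif t}\CMscr{V}_i(t) = -2\lambda_i \gamma_t\big(\CMscr{V}_i(t) H(\CMscr{B}(t)) + H^T(\CMscr{B}(t))\CMscr{V}_i(t)\big) + \lambda_i \gamma_t^2 \begin{pmatrix} I(\CMscr{B}(t)) & 0 \\ 0 & 0 \end{pmatrix},
}
which, written out entry-wise using the symmetry of $\nabla h$ and the block structure of $H$, reproduces the two scalar ODEs of \eqref{eq:ODE} (and also confirms $\tfrac{\dif}{\dif t}\CMscr{V}_{22,i} \equiv 0$, consistent with $X^\star$ being fixed).

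For uniqueness I would invoke a Picard-Lindel\"of / contraction-mapping argument on the coupled ODE system for $\{\CMscr{V}_i(t)\}_{i=1}^d$. The right-hand side of \eqref{eq:ODE} is locally Lipschitz in the unknowns because $\nabla h$ and $I$ are $\alpha$-pseudo-Lipschitz by Assumptions~\ref{assumption:risk}--\ref{assumption:fisher}, and $\gamma_t$ is a pseudo-Lipschitz functional of $(\CMscr{N},\CMscr{I},\CMscr{R})$ by Assumption~\ref{assumption:stepsizes}. The main subtlety, and the expected obstacle, is that $\gamma_t$ depends on the \emph{entire past trajectory} $\{\CMscr{V}_i(s)\}_{s\le t}$ through $\gamma(t, \mathbf{1}_{\cdot\le t}\CMscr{N}(\cdot),\ldots)$, so the system is a functional (Volterra-type) ODE rather than a standard one. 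One handles this by working in $C([0,T])$ with the sup-norm, using the causality of the functional dependence to get a strict contraction on intervals $[0,\tau]$ for $\tau$ small, and then iterating; the boundedness of $\gamma$ from \eqref{stepsize:boundedness} together with the a-priori bound $\tfrac{\dif}{\dif t}\tr(\CMscr{V}_i) \le C(1+\tr(\CMscr{V}_i))$ prevents finite-time blow-up and extends the local solution to all of $[0,\infty)$, completing uniqueness.
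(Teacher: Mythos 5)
Your verification is correct and is exactly the computation one expects: the paper does not prove this lemma inline but defers to \cite[Lemma 4.1]{collinswoodfin2023hitting}, and that proof is precisely the residue-calculus/partial-fraction identification you carry out (the cancellation of the $z$-independent term against the $-1$ from $\tfrac{z}{\lambda_i-z}=-1+\tfrac{\lambda_i}{\lambda_i-z}$ is the key step, and your entry-wise match with \eqref{eq:ODE}, including $\tfrac{\dif}{\dif t}\CMscr{V}_{22,i}\equiv 0$, is right). The only soft spot is the global-existence step of your uniqueness argument: the a priori bound $\tfrac{\dif}{\dif t}\tr(\CMscr{V}_i)\le C(1+\tr(\CMscr{V}_i))$ presumes $H(\CMscr{B}(t))$ is bounded, which does not follow from $\nabla h$ being merely $\alpha$-pseudo-Lipschitz (it can grow with $\|\CMscr{B}\|$), and the Lipschitz constants also couple all $i$ through $\CMscr{B}$; this is why the paper works with the stopping time $\hat{\tau}_M$ throughout, and your local contraction argument should likewise be stated up to $\hat{\tau}_M$ rather than on all of $[0,\infty)$.
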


In this section, we will be working with approximate solutions to the integro-differential equation \eqref{eq:ODE_resolvent_2} (see below for specifics). For working with these solutions, we introduce some notation. We shall always work on a fixed contour $\Omega$ surrounding the spectrum of $K$, given by $\Omega \stackrel{\text { def }}{=}\left\{z:|z|=\max \left\{1,2\|K\|_{\text{op}}\right\}\right\}$. We note that this contour is always distance at least $\frac{1}{2}$ from the spectrum of $K$. We define a norm, $\|\cdot\|_{\Omega}$, on a continuous function $A: \mathbb{C} \to \mathbb R$ as  
\begin{equation}
\|A\|_{\Omega}=\max _{z \in \Omega}\|A(z)\|.
\end{equation}
\begin{definition}[$(\varepsilon, M, T)$-approximate solution to the integro-differential equation]  \label{def:integro_differential_equation} 
{\rm 
 For constants $M, T, \varepsilon > 0$, we call a continuous function $\mathscr {S} \, : \, [0, \infty)  \times \mathbb{C} \to \mathbb R^{2 \times 2}$ an \textit{$(\varepsilon, M, T)$-approximate solution} of \eqref{eq:ODE_resolvent_2} if with 
 \[
 \hat{\tau}_{M}(\mathscr{S}) \defas \inf \bigg \{ t \ge 0 \, : \, \|\mathscr{S}(t,\cdot)\|_{{\Omega}} > M \bigg \},
 \]
 then
\[
\sup_{0 \le t \le (\hat{\tau}_M \wedge T)} \big \| \mathscr{S}(t, \cdot) - {S}(0, \cdot) - \int_0^t \mathscr{F}(\cdot, \mathscr{S}(s, \cdot) ) \, \dif s \big \|_{{\Omega}} \le \varepsilon
\]
and $\mathscr{S}(0,\cdot) = W_0^\top R(\cdot, K) W_0$, where $W_0 = [X_0 | X^{\star}]$ is the initialization of SGD. 

We suppress the $\mathscr{S}$ in the notation for $\hat{\tau}_M$, that is, $\hat{\tau}_M = \hat{\tau}_M(\mathscr{S})$, when the function $\mathscr{S}$ is clear from the context. 
}
\end{definition}



We are now ready to state and prove one of our main results.

\begin{theorem}[Concentration of SGD and deterministic function $\CMscr{S}(t,z)$] \label{thm:main_concentration_S_min}
Suppose the risk function $\mathcal{R}(X)$ \eqref{eq:nlgc} satisfies Assumptions~\ref{assumption:pseudo_lipschitz}, \ref{assumption:risk}, and \ref{assumption:fisher}. Suppose the learning rate satisfies Assumption \ref{assumption:stepsizes}, and the initialization $X_0$ and hidden parameters $X^{\star}$ satisfy Assumption~\ref{assumption:scaling}. Moreover the data $a \sim \mathcal N(0,K)$ and label noise $\epsilon$ satisfy Assumption~\ref{assumption:data}. Let $\{W_{\lfloor td \rfloor}\}$ be generated from the iterates of SGD.  
  Then there is an $\varepsilon >0$ so that for any $T,M > 0$ and $d$ sufficiently large, with overwhelming probability
  \begin{equation}
  \begin{gathered}
    \sup_{0 \leq t \leq T \wedge \hat\tau_{M}(S(W,\cdot)) \wedge \hat\tau_{M}(\CMscr{S})} \! \! \! \! \!\! \! \! \! \| 
    S(W_{\lfloor td \rfloor}, \cdot)
    - 
    \CMscr{S}(t,\cdot) \|_{\Omega} \le d^{-\varepsilon},
    \\
    \end{gathered}
  \end{equation}
where the deterministic function $\CMscr{S}(t,z)$ solves the integro-differential equation \eqref{eq:ODE_resolvent_2}.
\end{theorem}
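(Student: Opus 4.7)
The plan is to establish a discrete Doob decomposition for $S(W_k,z) = W_k^\top R(z;K) W_k$ and compare it, via a Gr\"onwall-type stability argument on the contour $\Omega$, to the integro-differential equation \eqref{eq:ODE_resolvent_2} satisfied by $\CMscr{S}$. Writing the SGD update as $W_{k+1} = W_k - \tfrac{\mathfrak{g}_k}{d} g_{k+1} e_1^\top$ with $g_{k+1} \defas f'(\ip{a_{k+1},X_k}; \ip{a_{k+1},X^\star}, \epsilon_{k+1}) a_{k+1}$ and $e_1=(1,0)^\top$, the one-step increment expands as
\[
\Delta_k(z) \defas S(W_{k+1},z) - S(W_k,z) = -\tfrac{\mathfrak{g}_k}{d}\bigl(e_1 g_{k+1}^\top R(z;K) W_k + W_k^\top R(z;K) g_{k+1} e_1^\top\bigr) + \tfrac{\mathfrak{g}_k^2}{d^2}\bigl(g_{k+1}^\top R(z;K) g_{k+1}\bigr) e_1 e_1^\top.
\]
I split this as drift $D_k(z) = \EE[\Delta_k(z)\mid \mathcal F_k]$ plus martingale increment $M_k(z)$. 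Gaussian integration by parts yields $\EE[g_{k+1}\mid \mathcal F_k] = 2 K W_k H(B(W_k))^\top e_1$ and $\EE[g_{k+1}^\top R(z;K) g_{k+1}\mid \mathcal F_k] = \tr(K R(z;K))\, I(B(W_k)) + O(1)$ in $d$; combining this with Cauchy's formula $-\tfrac{1}{2\pi i}\oint_\Omega S(W_k,z)\,\dif z = W_k^\top W_k$ and $-\tfrac{1}{2\pi i}\oint_\Omega z\, S(W_k,z)\,\dif z = B(W_k)$, together with the identity $K R(z;K) = I + z R(z;K)$, one verifies $d\cdot D_k(z) = \mathscr{F}(z, S(W_k,\cdot)) + (\text{lower-order in $d$})$.

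\textbf{Martingale control and stepsize replacement.} For fixed $z\in\Omega$, the linear component of $M_k(z)$ is conditionally sub-Gaussian with variance $O(d^{-2})$, while the quadratic component concentrates around its conditional mean by Hanson--Wright with deviations $O(d^{-3/2})$; summing at most $\lfloor Td\rfloor$ steps and applying Doob's maximal inequality yields $\sup_k \|\sum_{j<k} M_j(z)\| \le d^{-1/2+o(1)}$ w.o.p. Uniformity in $z$ follows from analyticity: $S(W_k,z) - \CMscr{S}(t,z)$ is holomorphic on a neighborhood of $\Omega$ and, on $\{t \le \hat\tau_M\}$, uniformly bounded with Lipschitz constant in $z$ of order $M$, so a polynomial-size net plus a union bound promote pointwise bounds to control in $\|\cdot\|_{\Omega}$. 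Separately, the pseudo-Lipschitz property in Assumption~\ref{assumption:stepsizes}, together with the concentration inequality \eqref{stepsize:concentration}, allows replacing the stochastic $\mathfrak{g}_k$ by the deterministic-equivalent $\gamma_{k/d}$ at a cost controlled by $\sup_{j\le k}\|S(W_j,\cdot)-\CMscr{S}(j/d,\cdot)\|_{\Omega} + d^{-\delta}$.

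\textbf{Gr\"onwall closure.} Setting $E_k(z) \defas S(W_k,z) - \CMscr{S}(k/d,z)$, these ingredients produce, on $\{k/d \le T \wedge \hat\tau_M(S(W,\cdot)) \wedge \hat\tau_M(\CMscr{S})\}$, a discrete integral inequality of the form
\[
\|E_k\|_{\Omega} \le d^{-\varepsilon_0} + \tfrac{C}{d}\sum_{j<k} \sup_{l \le j}\|E_l\|_{\Omega} \quad \text{w.o.p.,}
\]
where $\varepsilon_0 \in (0,\tfrac{1}{2})$ absorbs the martingale fluctuations, the stepsize-replacement error, the pseudo-Lipschitz perturbation of $\mathscr{F}$ in its $S$-argument, and the $O(1/d)$ Riemann-sum-to-integral discretization error. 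A discrete Gr\"onwall bound then gives $\sup_{k \le \lfloor Td\rfloor \wedge \ldots} \|E_k\|_{\Omega} \le d^{-\varepsilon_0} e^{CT}$ w.o.p., and a union bound over a polynomial-size net in $t \in [0,T]$, combined with the fact that both $k \mapsto S(W_k,\cdot)$ and $t \mapsto \CMscr{S}(t,\cdot)$ have $\|\cdot\|_{\Omega}$-increments of size $O(1/d)$ on this window, upgrades this to the claimed uniform-in-$t$ statement.

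\textbf{Main obstacle.} The subtle point is the feedback through the stepsize: since $\gamma$ is pseudo-Lipschitz in the $L^\infty$ history of $(N_t, G_t, Q_t)$, an error in $S$ at time $s$ perturbs the drift at every later time $t > s$, and one must simultaneously close the martingale bound, the stepsize replacement, and the deterministic stability in a way that keeps the aggregated error Gr\"onwall-integrable. The stopping-time localization at $\hat\tau_M$ is essential here, since it keeps the pseudo-Lipschitz constants of $\nabla h$, $I$, and $\gamma$ (which a priori grow polynomially in $\|S(W_k,\cdot)\|_{\Omega}^{\alpha}$) bounded uniformly in $d$.
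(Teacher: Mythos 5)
Your proposal is correct and follows essentially the same route as the paper: a Doob decomposition of $S(W_k,z)$ whose conditional drift matches $\mathscr{F}(z,S(W_k,\cdot))$ up to $O(1/d)$, martingale and stepsize-replacement error bounds under the stopping-time localization, a polynomial net argument over $\Omega$, and a Gr\"onwall-type stability comparison of approximate solutions (which the paper imports as a cited stability proposition from prior work rather than re-deriving inline). The one technical quibble is that the linear martingale increment is not conditionally sub-Gaussian — it is a product of a Gaussian with $f'$ of a Gaussian, further multiplied by the random stepsize — so the $d^{-1/2+o(1)}$ bound requires a truncation-plus-Azuma (or Bernstein) argument as in the paper's gradient/Hessian martingale propositions, though this does not affect the conclusion.
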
  

\begin{proof} By Proposition~\ref{prop:SGD_approx_solution_new}, for any $M$ and $T$, we can find a $\tilde{\varepsilon} > 0$ such that  the function ${S}(W_{td },z)$ is an $(d^{-\tilde{\varepsilon}}, M, T)$-approximate solution. (For the deterministic function $\CMscr{S}$, it is an $(0, M, T)$-approximate solution by definition.) We now apply the stability result, \cite[Prop. 4.1]{collinswoodfin2023hitting}, to conclude that there exists a $\varepsilon > 0 $ such that
\begin{equation} \label{eq:A_event}
\sup_{0 \le t \le T \wedge \hat \tau_{M}} \|\CMscr S(t, z) - S(W_{ td },z)\|_{{\Omega}} \le d^{-\varepsilon}, \quad w.o.p,
\end{equation}
where $\hat \tau_M$ is shorthand for $\hat\tau_{M}(S(W,\cdot)) \wedge \hat\tau_{M}(\CMscr{S})$. The result immediately follows. 
\end{proof}
\begin{corollary} \label{cor:concentration_phi} Suppose the assumptions of Theorem~\ref{thm:main_concentration_S_min} hold. Let $f$ be an $\alpha$-pseudo-Lipschitz function with $\alpha\leq1$ and let $q$ be a polynomial. Set
\[ 
\varphi(X)\defas f(W^Tq(K)W),\quad
\phi(t) \defas f \left ( \frac{-1}{2 \pi i} \oint_{\Omega} q(z) \CMscr{S}(t,z) \, \dif z \right ), \quad \text{where $\CMscr{S}(t,z)$ solves \eqref{eq:ODE_resolvent_2}.}\]
Then there is an $\varepsilon > 0$ such that for $d$ sufficiently large, with overwhelming probability, 
\[ \sup _{0 \leq t \leq T}\left|\varphi\left(X_{t d}\right)-\phi(t)\right| \leq d^{-\varepsilon}.\]
\end{corollary}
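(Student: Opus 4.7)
The strategy is to express $W^T q(K) W$ as a contour integral of the resolvent statistic $S(W,\cdot)$ and then leverage Theorem \ref{thm:main_concentration_S_min} together with the pseudo-Lipschitz property of $f$. Concretely, since $\Omega$ encloses the spectrum of $K$, the Cauchy integral formula gives
\begin{equation*}
q(K) = \frac{-1}{2\pi i} \oint_{\Omega} q(z) R(z;K)\, \dif z,
\quad\text{hence}\quad
W^T q(K) W = \frac{-1}{2\pi i} \oint_{\Omega} q(z) S(W,z)\, \dif z.
\end{equation*}
The same identity with $\CMscr{S}(t,\cdot)$ in place of $S(W,\cdot)$ defines the argument of $f$ in $\phi(t)$. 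Thus
\begin{equation*}
\Bigl\| W_{\lfloor td\rfloor}^T q(K) W_{\lfloor td\rfloor} - \tfrac{-1}{2\pi i}\oint_{\Omega} q(z)\CMscr{S}(t,z)\dif z \Bigr\|
\le \tfrac{|\Omega|}{2\pi}\,\max_{z\in\Omega}|q(z)|\,\bigl\|S(W_{\lfloor td\rfloor},\cdot)-\CMscr{S}(t,\cdot)\bigr\|_{\Omega},
\end{equation*}
where $|\Omega|$ is the length of the contour; both $|\Omega|$ and $\max_{z\in\Omega}|q(z)|$ depend only on $\|K\|_{\mathrm{op}}$ and on $q$, both of which are bounded independent of $d$ by Assumption \ref{assumption:data}.

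Next I invoke Theorem \ref{thm:main_concentration_S_min}. First I need to dispose of the stopping time $\hat\tau_M$. Since $\CMscr{S}(t,\cdot)$ is deterministic, continuous in $t$, and $\|\CMscr{S}(0,\cdot)\|_{\Omega}$ is bounded by a constant depending only on $\|X_0\|,\|X^\star\|$ and $\mathrm{dist}(\Omega,\mathrm{spec}(K))\ge 1/2$ (Assumption \ref{assumption:scaling}), continuity on $[0,T]$ yields a finite $M_0 \defas \sup_{0\le t\le T}\|\CMscr{S}(t,\cdot)\|_{\Omega}+1$; choose $M=2M_0$, so that $\hat\tau_M(\CMscr{S})>T$. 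Theorem \ref{thm:main_concentration_S_min} then gives
\begin{equation*}
\sup_{0\le t\le T\wedge\hat\tau_M(S(W,\cdot))}\bigl\|S(W_{\lfloor td\rfloor},\cdot)-\CMscr{S}(t,\cdot)\bigr\|_{\Omega}\le d^{-\varepsilon}\quad\text{w.o.p.}
\end{equation*}
On this w.o.p.\ event and for large $d$, $\|S(W_{\lfloor td\rfloor},\cdot)\|_{\Omega}\le M_0+d^{-\varepsilon}<M$, so $\hat\tau_M(S(W,\cdot))>T$ and the supremum extends to all of $[0,T]$.

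Finally, combining the contour bound with this concentration shows that the $\mathbb{R}^{2\times 2}$ arguments of $f$ in $\varphi$ and $\phi$ differ by at most $C\,d^{-\varepsilon}$ w.o.p., and both arguments are bounded in norm by a constant $C'$ (the one by $M$, the other by $M_0$). The $\alpha$-pseudo-Lipschitz property of $f$ with $\alpha\le 1$ then yields
\begin{equation*}
|\varphi(X_{\lfloor td\rfloor})-\phi(t)|\le L(f)(1+2(C')^{\alpha})\,C\,d^{-\varepsilon}.
\end{equation*}
Shrinking $\varepsilon$ slightly absorbs the constant and gives the claim. The only mild subtlety is the removal of the stopping time, which is handled by the bootstrapping argument above; all other steps are a direct application of the Cauchy integral formula and the pseudo-Lipschitz hypothesis.
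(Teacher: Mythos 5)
Your proposal is correct in substance and follows the same underlying mechanism as the paper's proof; the difference is one of presentation. The paper disposes of this corollary in two sentences by citing the analogous corollary of \cite{collinswoodfin2023hitting} and noting that the required boundedness of $\CMscr{N}$ follows from their Proposition~1.2, whereas you reconstruct the full argument: Cauchy's formula to write $W^Tq(K)W$ as a contour integral of $S(W,\cdot)$, the operator bound $\frac{|\Omega|}{2\pi}\max_{z\in\Omega}|q(z)|\,\|S-\CMscr{S}\|_{\Omega}$, the bootstrap removal of the stopping time, and the $\alpha$-pseudo-Lipschitz estimate. This is exactly the content of the cited corollary, so your version is a welcome self-contained substitute.

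One step deserves a stronger justification. You set $M_0 = \sup_{0\le t\le T}\|\CMscr{S}(t,\cdot)\|_{\Omega}+1$ and justify its finiteness by ``continuity on $[0,T]$.'' That gives a finite $M_0$ for each fixed $d$, but $\CMscr{S}$ depends on $d$ (through $K$, $W_0$, and the learning rate), and Theorem~\ref{thm:main_concentration_S_min} requires $M$ to be fixed \emph{before} taking $d$ large; so you need $M_0$ bounded uniformly in $d$. This is true, but it does not follow from continuity alone: it requires an a priori Gronwall-type bound on the integro-differential equation, using the $\alpha$-pseudo-Lipschitz control of $\nabla h$ and $I$, the boundedness of $\gamma$ from \eqref{stepsize:boundedness}, and the $d$-independent initial data from Assumption~\ref{assumption:scaling}. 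This is precisely what the paper's citation of \cite[Proposition~1.2]{collinswoodfin2023hitting} supplies. With that one sentence added, your bootstrap argument (and hence the whole proof) goes through.
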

\begin{proof} This is basically equivalent to \cite[Corollary 4.2]{collinswoodfin2023hitting}. The only difference is that \cite[Corollary 4.2]{collinswoodfin2023hitting} requires the boundedness of $\CMscr N$; however, since our function $f$ is $\alpha$-pseudo-Lipschitz with $\alpha \le 1$, this boundedness follows from \cite[Proposition 1.2]{collinswoodfin2023hitting}, and the rest of the proof is identical to the one in \cite{collinswoodfin2023hitting}.
\end{proof}

\begin{remark} 
The learning rate $\mathfrak{g}_k$, technically, is not a function of $W^Tq(K)W$. However, Assumption~\ref{assumption:stepsizes} ensures that the learning rate concentrates around a function $W^Tq(K)W$. Therefore, Corollary~\ref{cor:concentration_phi} applies to the learning rate. 
\end{remark}

\section{SGD-AL is an approximate solution} \label{sec:main_SGD_hSGD_approximate_solutions}

We introduce a rescaling of time to relate the $k$-th iteration of SGD to the continuous time parameter $t$ in the differential equation through the relationship $k = \lfloor td \rfloor $. Thus, when $t = 1$, SGD has done exactly $d$ updates. Since the parameter $t$ is continuous and the iteration counter $k$ (integer) discrete, to simplify the discussion below, we \textit{extend} $k$ to continuous values through the floor operation, $X_k \defas X_{\lfloor k \rfloor }$. Using the continuous parameter $t$, the iterates are related by $X_{td} = X_{\lfloor td \rfloor}$.

%

The paper \cite{collinswoodfin2023hitting} provides a net argument showing that we do not need to work with every $z$ on the contour $\Omega$ defining the integro-differential equation, but only polynomially many in $d$. Recall that $\Omega = \{ z \, : \, |z| =\max \{2 \|K\|_{\text{op}}, 1\}\}$. For a fixed $\xi > 0$, we say that ${\Omega}_{\xi}$ is a \textit{$d^{-\xi}$-mesh of ${\Omega}$} if ${\Omega}_{\xi} \subset \Omega$ and for every $z \in {\Omega}$ there exists a $\bar{z} \in \Omega_{\xi}$ such that $|z-\bar{z}| < d^{-\xi}$. We can achieve this with $\Omega_{\xi}$ having cardinality, $|\Omega_{\xi}| = C(|\Omega|) d^{\xi}$. 

\begin{lemma}[Net argument, 
\cite{collinswoodfin2023hitting}, Lemma~5.1] \label{lem:net_argument} Fix $T, M > 0$ and let $\xi > 0$. Suppose ${\Omega}_{\xi}$ is a $d^{-\xi}$ mesh of ${\Omega}$ with $|{\Omega}_{\xi}| = C \cdot d^{\xi}$ and positive $C > 0$. Let the function $S(t,z) = S(W_{td}, z)$ satisfy
\begin{equation}
\label{eq:net_argument_gamma_delta}
\sup_{0 \le t \le (\hat{\tau}_M \wedge T)} \| S(t, \cdot) - S(0, \cdot) - \int_0^t \mathscr{F}(\cdot, S(s, \cdot) ) \, \dif s \|_{{\Omega}_\xi} \le \varepsilon
\end{equation}
with $\hat{\tau}_M = \inf \{ t \ge 0 \, : \, \|S(t, \cdot)\|_{{\Omega}} > M\}$. Then $S$ is a $(\varepsilon + C(M,T, \|K\|_{\text{op}})d^{-\xi}, M, T)$-approximate solution to the integro-differential equation, that is, 
\[
\sup_{0 \le t \le (\hat{\tau}_M \wedge T)} \|S(t, \cdot) - S(0, \cdot) - \int_0^t \mathscr{F}(\cdot, S(s, \cdot) ) \, \dif s \|_{{\Omega}} \le \varepsilon + C \cdot d^{-\xi},
\]
where $C = C(M,T, \|K\|_{\text{op}}, L(I), L(h))$ is a positive constant. 
\end{lemma}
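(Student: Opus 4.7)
The plan is to show that, on the event $\{t\le\hat\tau_M\wedge T\}$, the function $z\mapsto F_t(z) \defas S(t,z) - S(0,z) - \int_0^t \mathscr{F}(z, S(s, \cdot))\,\dif s$ is Lipschitz on the contour $\Omega$ with a constant independent of $d$. Once this is available, any $z\in\Omega$ admits a nearest mesh point $z'\in\Omega_\xi$ with $|z-z'|\le Cd^{-\xi}$ (by the cardinality hypothesis on $\Omega_\xi$), and the triangle inequality combined with the hypothesis at $z'$ yields $\|F_t(z)\| \le \varepsilon + C\cdot d^{-\xi}$. The argument has three stages: $d$-independent bounds on the inputs of $\mathscr{F}$; $z$-Lipschitz estimates; and a triangle-inequality assembly.

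First I would collect $d$-independent bounds on the quantities feeding into $\mathscr{F}$. The resolvent identities $\frac{-1}{2\pi i}\oint_\Omega R(z;K)\,\dif z = I$ and $\frac{-1}{2\pi i}\oint_\Omega z R(z;K)\,\dif z = K$ imply $\frac{-1}{2\pi i}\oint_\Omega S(t,z)\,\dif z = W_t^\top W_t$ and $\mathscr{B}(t) = \frac{-1}{2\pi i}\oint_\Omega z S(t,z)\,\dif z = W_t^\top K W_t$. Since $\|S(t,\cdot)\|_\Omega \le M$ on the stopping-time event and $|\Omega| = 2\pi\max(1,2\|K\|_{\text{op}})$, we obtain $\|W_t^\top W_t\|, \|\mathscr{B}(t)\| \le C(M,\|K\|_{\text{op}})$. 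The pseudo-Lipschitz assumptions on $\nabla h$ and $I$ (Assumptions~\ref{assumption:risk}, \ref{assumption:fisher}) then bound $\|H(\mathscr{B}(s))\|$ and $|I(\mathscr{B}(s))|$, while \eqref{stepsize:boundedness} bounds $\gamma_s$, all in terms of $M$, $T$, $\|K\|_{\text{op}}$, and the pseudo-Lipschitz constants.

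Second I would derive Lipschitz estimates in $z$. Because $\Omega$ stays at distance at least $\tfrac12$ from $\mathrm{spec}(K)$, both $\|R(z;K)\|_{\text{op}}$ and $\|R(z;K)R(z';K)\|_{\text{op}}$ are uniformly bounded. The resolvent identity $R(z;K)-R(z';K) = (z-z')R(z;K)R(z';K)$ gives $\|S(t,z) - S(t,z')\| \le C(\|K\|_{\text{op}})\|W_t\|^2|z-z'| \le C(M,\|K\|_{\text{op}})|z-z'|$, and analogously at $t=0$ using Assumption~\ref{assumption:scaling}. The only $z$-dependent pieces of $\mathscr{F}(z, S(s,\cdot))$ are the entry $\frac{\gamma_s^2}{d}\tr(KR(z;K))I(\mathscr{B}(s))$ and the linear terms $-2\gamma_s z(S(s,z)H(\mathscr{B}(s)) + H^\top(\mathscr{B}(s)) S(s,z))$; the lines involving $\oint_\Omega S(s,\cdot)$ are constant in the outer $z$. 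Using
\[
\tr(KR(z;K)) - \tr(KR(z';K)) = (z-z')\tr(KR(z;K)R(z';K)),
\]
a trace that is $O(d)$ and hence exactly absorbed by the $1/d$ prefactor, together with the previous step for the linear terms, I obtain $\|\mathscr{F}(z,S(s,\cdot))-\mathscr{F}(z',S(s,\cdot))\| \le C(M,\|K\|_{\text{op}})|z-z'|$ uniformly in $s\in[0,T]$.

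Finally, for any $z\in\Omega$ pick $z'\in\Omega_\xi$ with $|z-z'|\le Cd^{-\xi}$. By the triangle inequality,
\[
\|F_t(z)\| \le \|F_t(z')\| + \|S(t,z)-S(t,z')\| + \|S(0,z)-S(0,z')\| + \int_0^t \|\mathscr{F}(z,S(s,\cdot)) - \mathscr{F}(z',S(s,\cdot))\|\,\dif s.
\]
The first term is $\le \varepsilon$ by the mesh hypothesis, and by the two previous steps each of the remaining three is bounded by $C(M,T,\|K\|_{\text{op}})d^{-\xi}$. Taking $\sup$ over $z\in\Omega$ and $t\in[0,\hat\tau_M\wedge T]$ yields the stated bound. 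The main point of care is the $d$-independence of the Lipschitz constant for $\frac{1}{d}\tr(KR(\cdot;K))$: this holds only because the $1/d$ normalization exactly compensates the $O(d)$ size of the trace derivative, and without this normalization the argument would fail to deliver a $d$-uniform constant.
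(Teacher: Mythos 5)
Your proof is correct and is the standard net argument: a $d$-uniform Lipschitz-in-$z$ bound for $S(t,\cdot)$, $S(0,\cdot)$, and $\mathscr{F}(\cdot,S(s,\cdot))$ via the resolvent identity (with the $1/d$ normalization absorbing the $O(d)$ trace term, as you rightly emphasize), followed by the triangle inequality against the nearest mesh point. The paper does not reprove this lemma but imports it from \cite{collinswoodfin2023hitting} (Lemma~5.1), and your reconstruction matches that argument; the only cosmetic slip is attributing the existence of a nearby mesh point to the cardinality bound rather than to the defining property of a $d^{-\xi}$-mesh.
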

(We prove in Section~\ref{sec:SGD_approx_solution} that $S(t, z)$ does indeed satisfy inequality \eqref{eq:net_argument_gamma_delta}.)
We also cite the following lemma, which relates two stopping times used throughout this paper. 
\begin{lemma}[Stopping time, \cite{collinswoodfin2023hitting}, Lemma~4.2]\label{lem:normequivalence} For a constant $C$ depending on $\| K \|_{\text{op}}$, we have 
\[ C \le \frac{\| S(W_{td}, \cdot) \|_{\Omega}}{\| W_{td} \|^2} \le 2.\]
\end{lemma}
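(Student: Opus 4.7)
The statement is a deterministic two-sided norm comparison between $S(W,z) = W^{T}R(z;K)W$, viewed as a $2\times 2$-matrix valued function of $z$, and $\|W\|^{2}$. It should follow purely from the geometry of the contour $\Omega = \{z : |z| = \max(1, 2\|K\|_{\text{op}})\}$, which is designed precisely so that the resolvent $R(z;K)$ is uniformly well-conditioned on $\Omega$. The upper bound is essentially immediate from a resolvent estimate on $\Omega$, while the lower bound is obtained by evaluating $S(W,z)$ at a single well-chosen real point of $\Omega$ where the resolvent is positive definite.

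\textbf{Upper bound.} For any $z \in \Omega$ and eigenvalue $\lambda_{i}$ of $K$, the defining choice $|z| \geq 2\|K\|_{\text{op}}$ gives $|\lambda_{i} - z| \geq |z| - \|K\|_{\text{op}} \geq \tfrac{1}{2}|z|$, so $\|R(z;K)\|_{\text{op}} \leq 2/|z| \leq 2$. By submultiplicativity in the form $\|W^{T} A W\|_{F} \leq \|A\|_{\text{op}} \|W\|^{2}$, one gets $\|S(W,z)\|_{F} \leq 2\|W\|^{2}$; taking the supremum over $z \in \Omega$ yields the upper constant $2$.

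\textbf{Lower bound.} Evaluate at the point $z_{\star} = -|z_{0}|$ with $|z_{0}| = \max(1, 2\|K\|_{\text{op}})$, which lies on the negative real axis of $\Omega$. There $R(z_{\star};K) = (K + |z_{0}|I)^{-1}$ is positive definite with smallest eigenvalue at least $1/(|z_{0}| + \|K\|_{\text{op}}) \geq 1/(3\|K\|_{\text{op}} + 1)$. Consequently $S(W,z_{\star}) = W^{T}R(z_{\star};K)W$ is a positive semidefinite $2\times 2$ matrix, and
\[
\tr\,S(W,z_{\star}) = \tr(R(z_{\star};K)\,WW^{T}) \geq \frac{\|W\|^{2}}{3\|K\|_{\text{op}} + 1}.
\]
Using that $A \succeq 0$ on $\mathbb{R}^{2 \times 2}$ implies $\|A\|_{F} \geq \tr(A)/\sqrt{2}$, one concludes
\[
\|S(W,\cdot)\|_{\Omega} \;\geq\; \|S(W,z_{\star})\|_{F} \;\geq\; \frac{\|W\|^{2}}{\sqrt{2}\,(3\|K\|_{\text{op}} + 1)},
\]
which gives the desired lower constant $C = C(\|K\|_{\text{op}})$.

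\textbf{Main obstacle.} The only nontrivial point is the lower bound: a priori the maximum of $\|W^{T}R(z;K)W\|_{F}$ over $z \in \Omega$ could be attained at a complex $z$ for which positivity is unavailable, so one must identify a single real $z \in \Omega$ at which the resolvent is PSD and its smallest eigenvalue admits a quantitative lower bound depending only on $\|K\|_{\text{op}}$. Once that point is selected, the comparison with $\|W\|^{2}$ reduces to a trace computation and the Frobenius--trace inequality for $2\times 2$ PSD matrices.
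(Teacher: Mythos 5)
Your proof is correct: the upper bound follows from the resolvent estimate $\|R(z;K)\|_{\mathrm{op}}\le 2/|z|\le 2$ on $\Omega$ together with $\|W^{T}AW\|_{F}\le\|A\|_{\mathrm{op}}\|W\|_{F}^{2}$, and the lower bound from evaluating at the real point $z_{\star}=-\max(1,2\|K\|_{\mathrm{op}})$ where the resolvent is positive definite with smallest eigenvalue bounded below in terms of $\|K\|_{\mathrm{op}}$. The paper itself gives no proof here (it cites Lemma~4.2 of the reference), and your argument is exactly the standard one that result rests on, so there is nothing to flag.
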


\begin{remark} 
Fix $M > 0$ and define the stopping time on $\|W_{td}\|$, $\vartheta = \vartheta_M$, by
\[ \vartheta_{M}(W_{td}) \defas \inf \left \{ t \ge 0 : \| W_{td} \|^2 > M \right \}. \]
Due to the previous lemma, any stopping time $\hat \tau_M$ defined on $\| S(t, \cdot) \|_{\Omega}$ corresponds to a stopping time $\vartheta$ on $\| W_{td} \|$, that is, for $c = C^{-1}$, $\hat{\tau}_{M} \le \vartheta_{cM}.$
\end{remark}

\subsection{SGD-AL is an approximated solution}
\label{sec:SGD_approx_solution}
\begin{proposition}[SGD-AL is an approximate solution] \label{prop:SGD_approx_solution_new}
Fix a $T, M > 0$ and $0 < \varepsilon < \delta/8$, where $\delta$ is defined in Assumption~\ref{assumption:stepsizes}. Then $S(W_{td}, z)$ is a $(d^{-\varepsilon}, M, T)$-approximate solution w.o.p., that is, 
\begin{equation}
\sup_{0 \le t \le (T \wedge \tau_M)} \|S(W_{td},z)- S(W_0,z) - \int_0^t \mathscr{F}(z, S(W_{sd}, z)) \, \dif s \|_{{\Omega}} \le d^{-\varepsilon}.
\end{equation}
\end{proposition}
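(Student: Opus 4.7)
The plan is to establish the approximate-solution property via a Doob decomposition, combined with the net argument of Lemma~\ref{lem:net_argument} and the learning-rate concentration in Assumption~\ref{assumption:stepsizes}. By Lemma~\ref{lem:net_argument} it suffices to verify the estimate for each $z$ in a $d^{-\xi}$-mesh $\Omega_\xi\subset\Omega$ of cardinality $O(d^\xi)$ and then union bound, paying an $O(d^{-\xi})$ discretization price.

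Fix $z\in\Omega_\xi$, write $R=R(z;K)$ and $V_k=[f'(\ip{a_{k+1},X_k};\ip{a_{k+1},X^\star},\epsilon_{k+1})\,a_{k+1}\mid 0]$. Expanding the SGD update \eqref{eq:sgd+al} gives the one-step increment
\begin{align*}
\Delta_k S(z) &\defas S(W_{k+1},z)-S(W_k,z)\\
&= -\tfrac{\mathfrak g_k}{d}\bigl(W_k^T R V_k + V_k^T R W_k\bigr) + \tfrac{\mathfrak g_k^2}{d^2}\,V_k^T R V_k,
\end{align*}
which decomposes as $\Delta_k S(z)=\mathbb E[\Delta_k S(z)\mid\mathcal F_k]+\Delta M_k(z)$ (note $\mathfrak g_k$ is $\mathcal F_k$-measurable by \eqref{eq:lr}). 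Computing the conditional mean via Gaussian integration by parts for $a_{k+1}\sim\mathcal N(0,K)$, together with the identity $KR=I+zR$, turns the cross term into $2(W_k^TW_k+zS(W_k,z))H(B(W_k))+2H^T(B(W_k))(W_k^TW_k+zS(W_k,z))$, matching both the $H,H^T$-lines of $\mathscr F$ after using $N=-\tfrac{1}{2\pi i}\oint S\,\dif z$; the quadratic term gives $\mathbb E[V_k^T R V_k\mid\mathcal F_k]=\tfrac{1}{d}\tr(KR)\,I(B(W_k))\,e_1e_1^T$ up to $O(1/d)$ corrections from the coupling of $(f')^2$ with the Gaussian fluctuations of $a_{k+1}$ transverse to $W_k$, matching the Fisher-matrix line of \eqref{eq:F}.

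The drift still carries the random $\mathfrak g_k=\gamma(k,N_k,G_k,Q_k)$ rather than its $\mathcal F_k$-conditional version $\tilde{\mathfrak g}_k=\gamma(k,N_k,\mathscr G_k,Q_k)$; the inequality \eqref{stepsize:concentration} provides exactly this swap at cost $O(d^{-\delta})$ per step w.o.p., hence $O(d^{-\delta})$ in aggregate after the $1/d$ prefactor and summation over $\lfloor td\rfloor=O(d)$ steps. Because $\tilde{\mathfrak g}_k$ is then a pseudo-Lipschitz functional of $\mathcal F_k$-measurable statistics, the resulting Riemann sum matches $\int_0^t\mathscr F(z,S(W_{sd},z))\,\dif s$ up to time-discretization error $O(1/d)$. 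The martingale component is controlled by quadratic variation: on the event $\tau_M$, the boundedness of $\|X_k\|,\|X^\star\|$ and of $R$ on $\Omega$ forces $\|v_k^T R W_k\|=O(1)$ (not $O(\sqrt d)$) so that $\|\Delta M_k(z)\|=O(1/d)$; the summed conditional second moment is $O(1/d)$, and a Freedman/Burkholder--Davis--Gundy-type inequality with the pseudo-Lipschitz tail estimates on $(f')^2\|a\|^2$ (absorbed into the stopping at $\tau_M$) yields $\sup_{t\le T\wedge\tau_M}\|\sum_{k<td}\Delta M_k(z)\|\le d^{-1/2+o(1)}$ w.o.p.

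The main obstacle is the coupling between the adaptive learning rate and the stochastic gradient history: $\mathfrak g_k$ depends nonlinearly on the whole past via $G_k$, and a naive Lipschitz bound on $|\mathfrak g_k-\tilde{\mathfrak g}_k|$ would propagate through the integro-differential recursion and destabilize the comparison. Assumption~\ref{assumption:stepsizes} is engineered precisely so that the one-step conditional swap is $O(d^{-\delta})$, after which standard martingale and net arguments close. Collecting the error sources ($d^{-1/2}$ martingale, $d^{-\delta}$ stepsize swap, $d^{-\xi}$ mesh, $d^{-1}$ Riemann) and selecting $\xi$ slightly below $\delta$, the worst rate governs the conclusion; the margin $\varepsilon<\delta/8$ comfortably absorbs the remaining constants, $\alpha$-pseudo-Lipschitz polynomial growth factors (controlled via $\tau_M$), and the logarithmic overhead from union-bounding over $\Omega_\xi$ and over a fine temporal grid.
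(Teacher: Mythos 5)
Your proposal follows essentially the same route as the paper: a Doob decomposition of the one-step increment of $S(W_k,z)$ into drift, martingale, and error pieces, a swap of the random learning rate for its conditional version via Assumption~\ref{assumption:stepsizes}, martingale concentration, and the net argument of Lemma~\ref{lem:net_argument} over $\Omega_\xi$. The paper delegates the drift computation (your Gaussian integration by parts and $KR = I + zR$ matching) to the deterministic-stepsize analysis of the prior work and devotes its new effort entirely to the martingale and error bounds, which is where your sketch is too optimistic in two places.

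First, the stepsize swap does not cost $O(d^{-\delta})$ per step as you claim. The swap occurs inside a conditional expectation against a \emph{correlated} random factor, e.g.\ $\EE[(\gamma(G_k)-\gamma(\mathscr G_k))I_3(k)\mid\mathcal F_k]$, and \eqref{stepsize:concentration} only controls the first conditional moment of $|\gamma(G_k)-\gamma(\mathscr G_k)|$. One must Cauchy--Schwarz and then interpolate $|\gamma(G_k)-\gamma(\mathscr G_k)|^2\le|\gamma(G_k)+\gamma(\mathscr G_k)|^{3/2}|\gamma(G_k)-\gamma(\mathscr G_k)|^{1/2}$ to reduce back to the first moment, which degrades the rate to $d^{-\delta/4+\varepsilon}$ (see Propositions~\ref{lem:hessian_error_bound} and \ref{lem:gradient_error_bound}); this, combined with the mesh choice $\xi=\delta/8$, is precisely why the threshold is $\delta/8$ rather than a fraction of $\delta$ left to "constants." Second, the martingale increments are not deterministically $O(1/d)$ on the stopped event: $f'(r_k)\langle a_{k+1},RW_k\rangle/d$ has unbounded conditional tails even when $\|W_k\|$ is bounded, so Azuma/Freedman cannot be applied directly. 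The paper first truncates the increments at level $\beta=d^{-1+\zeta/2}$, applies Azuma to the truncated martingale, and shows the truncation error is superpolynomially small (Propositions~\ref{lem:gradient_bound} and \ref{lem:hessian_bound}). Both repairs are standard and do not change your architecture, but as written those two quantitative claims would not survive scrutiny.
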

Again, the proof is very similar to \cite[Prop. 5.2]{collinswoodfin2023hitting}. The one difference is that the martingales and error terms are slightly more involved, because of the non-deterministic stepsize we are using. The remainder of this section, along with section \ref{subsec:error_bounds}, fills in the details of bounding these lower-order terms, so that the proof can proceed as in \cite{collinswoodfin2023hitting}. 
\subsubsection{Shorthand notation}
In the following sections, we will be using various versions of the stepsize $\gamma$. In order to simplify notation, we set 
\begin{align*}
    \gamma(G_k) &= \gamma(k, N_k(d \times \cdot), G_k(d \times \cdot), Q_k(d \times \cdot)), \\ 
    \gamma(\mathscr G_k) &= \gamma(k, N_k(d \times \cdot), \mathscr G_k(d \times \cdot), Q_k(d \times \cdot )), \\
    \gamma(B_k) &= \gamma(k, N_k(d \times \cdot), \Tr(K) I(B_k(d \times \cdot))/d, Q_k(d \times \cdot)).
\end{align*}
Further, setting $\Delta_k \defas f'(r_k) a_{k+1}$, define 
\begin{align*}
    I_1(k) \stackrel{\text{def}}{=}  \Delta_k^\top \nabla^2 \varphi (X_k) \Delta_k/d , \,\, 
    I_2(k)  \stackrel{\text{def}}{=} \Tr(\nabla^2 \varphi(X_{k}) K) \EE[f'(r_{k})^2 \, | \, \mathcal{F}_{k} ] / d, \,\, 
    I_3(k) \stackrel{\text{def}}{=} \nabla \varphi (X_k)^\top \Delta_k.
\end{align*}
The normalization here (dividing by $d$) is chosen so that the $I$ terms are all $O(1)$; this is formally shown in Lemma~\ref{lem:I_bounds}.
\subsubsection{SGD-AL under the statistic}
We follow the approach in \cite[Section 5.3]{collinswoodfin2023hitting} to rewrite the SGD adaptive learning rate update rule as an integral equation. Considering a quadratic function $\varphi:\mathbb{R}^d \to \mathbb{R}$ and performing Taylor expansion, we obtain
\begin{align} 
\varphi(X_{k+1}) 
&
= \varphi(X_k) - \frac{\gamma(G_k)}{d} \nabla \varphi (X_k)^\top \Delta_k + \frac{\gamma(G_k)^2}{ 2 d^2} \Delta_k^\top \nabla^2 \varphi (X_k) \Delta_k.
\end{align}     
We will now relate this equation to its expectation by performing a Doob decomposition, involving the following martingale increments and error terms:
\begin{align}
\Delta \mathcal{M}_k^{\text{grad}}(\varphi) &\defas  \frac{1}{d} \left( - \gamma(G_k) I_3(k)
+  \EE \left [\gamma(G_k) I_3(k) \, \big | \, \mathcal{F}_k \right ]   \right), \\
\Delta \mathcal{M}_k^{\text{Hess}}(\varphi) &\defas \frac{1}{2d} \left( \gamma(G_k)^2 I_1(k)
-\EE \left [\gamma(G_k)^2 I_1(k) \, \big | \, \mathcal{F}_k\right ]\right) ,
\label{eq:Hessian_Martingale}\\
\EE [\mathcal{E}_{k}^{\text{Hess}}(\varphi) \, | \, \mathcal{F}_{k} ] &\defas \frac 1{2d} \left( \EE \left [\gamma(G_k)^2 I_1(k) \, \big | \, \mathcal{F}_k\right ] -  \gamma(B_k)^2 I_2(k) \right), \\ 
\EE [\mathcal{E}_{k}^{\text{grad}}(\varphi) \, | \, \mathcal{F}_{k} ] &\defas \frac 1d \left( - \EE \left [\gamma(G_k) I_3(k) \, \big | \, \mathcal{F}_k \right ] +  \gamma(B_k) \nabla \varphi (X_k)^\top \nabla \mathcal R(X_k) \right).
\end{align}
We can then write
\begin{align*}
    \varphi(X_{k+1}) &= \varphi(X_k) - \frac {\gamma(B_k)}{d} \nabla \varphi(X_{k})^\top \nabla \mathcal{R}(X_{k})  
    + \frac{\gamma(B_k)^2}{2d^2}  \Tr(\nabla^2 \varphi(X_{k}) K) \EE[ f'(r_{k})^{ 2} \, | \, \mathcal{F}_{k} ]  \\ 
    &\quad+ \Delta \mathcal{M}_k^{\text{grad}}(\varphi) 
    + \Delta \mathcal{M}_k^{\text{Hess}}(\varphi) 
    + \EE [\mathcal{E}_{k}^{\text{Hess}}(\varphi) \, | \, \mathcal{F}_{k} ] 
    + \EE [\mathcal{E}_{k}^{\text{grad}}(\varphi) \, | \, \mathcal{F}_{k} ]. \notag
\end{align*}
Extending $X_k$ into continuous time by defining $X_t = X_{\lfloor t \rfloor}$, we sum up (integrate). For this, we introduce the forward difference
\[
(\Delta \varphi)(X_{j}) \defas \varphi(X_{j+1}) - \varphi(X_{j}),\]
giving us 
\[\varphi(X_{td}) = \varphi(X_0) + \sum_{j=0}^{\lfloor td \rfloor-1} (\Delta \varphi)(X_j)  \defas \varphi(X_0) + \int_0^{t} d \cdot (\Delta \varphi)(X_{sd}) \, \dif s + \xi_{td},
\]
where $ \displaystyle |\xi_{td}|= \bigg | \int_{(\lfloor td\rfloor-1)/d}^t d \cdot \Delta \varphi(X_{sd})  \,\dif s \bigg | \le \max_{0 \le j \le \lceil td \rceil} \{ | \Delta \varphi(X_j) | \}.$ With this, we obtain the Doob decomposition for SGD-AL:
\begin{align}
\varphi(X_{td}) 
&= \varphi(X_0) - \int_0^{t} \gamma(B_{sd}) \nabla \varphi(X_{sd})^\top \nabla \mathcal{R}(X_{sd}) \, \dif s \label{eq:integral_1} \\
&
        + 
        \frac{1}{2d} \int_0^{t} \gamma(B_{sd})^2 \Tr(K\nabla^2 \varphi(X_{sd})) \EE[ f'(r_{sd})^{ 2} \, | \, \mathcal{F}_{sd} ] \, \dif s  \nonumber
        \\
        & +  \sum_{j=0}^{\lfloor td \rfloor-1} \mathcal E_j^{\text{all}}(\varphi),
        \nonumber 
\\
\text{with} \quad \mathcal E^{\text{all}}_j(\varphi) &= 
\Delta \mathcal{M}_{j}^{\text{grad}}(\varphi) 
+ \Delta \mathcal{M}_{j}^{\text{Hess}}(\varphi) \label{eq:error_terms_integrated} \\
&\qquad + \EE [\mathcal{E}_{j}^{\text{Hess}}(\varphi) \, | \, \mathcal{F}_{j} ]
+ \EE [\mathcal{E}_{j}^{\text{grad}}(\varphi) \, | \, \mathcal{F}_{j} ] \nonumber\\
&\qquad + \xi_{td}(\varphi). \nonumber
\end{align}
From here, we can proceed as in \cite[Section 5.3]{collinswoodfin2023hitting} to show that SGD-AL is an $(\varepsilon, M, T)$-approximated solution.
\subsubsection{$S(W_{td}, z)$ is an approximate solution} 
\begin{proof}[Proof of Proposition~\ref{prop:SGD_approx_solution_new}]
The appropriate stepsize, as a function of $W_{td}$, is 
\[ \gamma_t = \gamma( td , N_{ td }, \Tr(K) I(B_{  td }) /d, Q_{ td }).\]
(Note that $N$, $I$ and $Q$ can all be found as functions of $S(W_{td }, \cdot)$ using contour integration.) 
It is shown in the proof of \cite[Proposition 5.2]{collinswoodfin2023hitting} that given the analogue of \eqref{eq:integral_1} for deterministic stepsize, $S(W_{td}, \cdot)$ satisfies 
\[
S\left(W_{t d}, z\right)=S\left(W_0, z\right)+\int_0^t \mathscr{F}\left(z, S\left(W_{s d}, z\right)\right) \mathrm{d} s + \sum_{i=0}^{\lfloor td \rfloor - 1} \mathcal E_j^{\text{all}}(S).
\]
The only terms of $\eqref{eq:integral_1}$ that differ in our case are the martingale and error terms. Thus to show that $S(W_{td}, \cdot)$ is an approximate solution of the integro-differential equation \eqref{eq:ODE_resolvent_2} all we need is to bound the martingales and error terms contained in $\mathcal E_j^{\text{all}}$. Let ${\Omega} = \{ z \, : \, |z| = \max\{1, 2\|K\|_{\text{op}}\}\}$, as previously.  
We thus have that for all $z \in {\Omega}$,
\begin{equation}
\sup_{0 \le t \le T \wedge \hat{\tau}_{M} } \left| S(W_{td}, z) - S(W_0,z) - \int_0^t \mathscr{F}(z, S(W_{sd}, z )) \, \dif s \right| \\ \le 
\sup_{0 \le t \le T \wedge \hat{\tau}_{M} } \|\mathcal{E}_{td}^{\text{all}}(S(\cdot, z)) \|.
\end{equation}
Next, fix a constant $\xi > 0$. Let ${\Omega}_{\xi} \subset {\Omega}$ such that there exists a $\bar{z} \in {\Omega}_{\xi}$ such that $|z-\bar{z}| \le d^{-\xi}$ and the cardinality of ${\Omega}_{\xi}$, $|{\Omega}_{\xi}| = C d^{\xi}$ where $C > 0$ can depend on $\|K\|_{\text{op}}$. 
For all $z \in {\Omega}$, we note that $\hat{\tau}_{M} \le \vartheta_{cM}$ (see Lemma~\ref{lem:normequivalence}). Consequently, we evaluate the error with the stopped process $W_{td}^{\vartheta} \stackrel{\text{def}}{=} W_{d(t\wedge \vartheta)}$ instead of using $\hat{\tau}_M$.
By Proposition~\ref{lem:all_error_bound}, the proof of which we have deferred to Section~\ref{subsec:error_bounds}, we have, for any $\hat{\delta} > 0$ 
\begin{equation}
\sup_{z \in {\Omega}_\xi} \sup_{0 \leq t\leq T \wedge \vartheta_{cM}} \|\mathcal E_{dt}^{\text{all}}(S(\cdot, z))\|\le d^{-\delta/4+\hat{\delta}}\quad \text{w.o.p.}
\end{equation}
We deduce that 
\begin{align*}
\sup_{0 \le t \le T \wedge \hat{\tau}_{M} } \| S(W_{td}, z) - S(W_0,z) - \int_0^t \mathscr{F}(z, S(W_{sd}, z )) \, \dif s \|_{{\Omega}_{\xi}}  
\le d^{\hat{\delta} - \delta/4} \quad \text{w.o.p}.
\end{align*}
An application of the net argument, Lemma~\ref{lem:net_argument}, finishes the proof after setting $\hat{\delta} = \delta/8$ and $\xi = \delta/8$. 
\end{proof}
\subsection{Error bounds}\label{subsec:error_bounds}
All the martingale and error terms \eqref{eq:error_terms_integrated} go to $0$ as $d$ grows. Formally,
\begin{proposition}\label{lem:all_error_bound}
Let the function $f$ be defined as in Assumption~\ref{assumption:pseudo_lipschitz}. Let the statistic $S: [0, \infty) \times  \mathbb C \rightarrow \mathbb R^{2\times 2}$ be defined as 
\begin{equation}\label{definition:S}
    S(t, z)= W_{\lfloor t d \rfloor} ^\top R(z ; K)  W_{\lfloor t d \rfloor}, 
\end{equation}
where $W = [ X |  X^\star]$. 
Then, for any $z \in \Omega$ and $T, M, \zeta > 0$, with overwhelming probability, 
\[\sup _{0 \leq t \leq T \wedge \vartheta} \left\| \mathcal E^{\text{all}}_{dt}(S(\cdot, z)) \right\| \leq d^{-\delta/4 + \zeta},\]
where to suppress notation we use $\vartheta$ as shorthand for $\vartheta_{cM}$, and $c$ is the constant from Lemma~\ref{lem:normequivalence}.
\end{proposition}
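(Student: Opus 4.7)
Each of the five contributions to $\mathcal{E}^{\text{all}}_j(S(\cdot,z))$ in \eqref{eq:error_terms_integrated} is bounded separately, and then we union-bound over the polynomial-size net $\Omega_\xi$. The overarching strategy is: (i) convert stochastic-stepsize expressions $\gamma(G_k)$ to the $\mathcal{F}_k$-measurable proxy $\gamma(\mathscr{G}_k)$ at a $d^{-\delta}$ cost via the concentration assumption \eqref{stepsize:concentration}; (ii) then to the ``deterministic-in-$B_k$'' version $\gamma(B_k)$ using the $\alpha$-pseudo-Lipschitz property of $\gamma$ combined with Gaussian concentration of the conditional fourth moment of $\ip{a_{k+1},X_k}$ around $\tfrac{\Tr(K)}{d}I(B_k)$; (iii) treat the residual martingale pieces via a Freedman/BDG inequality; (iv) handle the rounding piece $\xi_{td}$ trivially since $|\Delta S(W_j,z)|=O(1/d)$ on $\{j/d\le\vartheta\}$ from $\|R(z;K)\|_{\mathrm{op}}\le 2$ on $\Omega$.

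\textbf{Error terms $\EE[\mathcal{E}_k^{\text{grad}}|\mathcal{F}_k]$ and $\EE[\mathcal{E}_k^{\text{Hess}}|\mathcal{F}_k]$.} For the gradient piece I decompose
\[
\EE[\gamma(G_k)I_3(k)\mid\mathcal{F}_k]-\gamma(B_k)\nabla\varphi(X_k)^T\nabla\mathcal{R}(X_k)
=\EE[(\gamma(G_k)-\gamma(\mathscr{G}_k))I_3(k)\mid\mathcal{F}_k]+\bigl(\gamma(\mathscr{G}_k)-\gamma(B_k)\bigr)\nabla\varphi(X_k)^T\nabla\mathcal{R}(X_k),
\]
where $\EE[I_3(k)\mid\mathcal{F}_k]=\nabla\varphi(X_k)^T\nabla\mathcal{R}(X_k)$ by Assumption~\ref{assumption:risk}. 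The first summand is controlled by Cauchy-Schwarz, \eqref{stepsize:concentration}, and a second-moment bound on $I_3(k)$ (guaranteed by Gaussian $a$, bounded $\|W_k\|$ on $\{t\le\vartheta\}$, and Assumption~\ref{assumption:pseudo_lipschitz}). The second summand uses the pseudo-Lipschitz property of $\gamma$ in its middle slot together with the fact that $\mathscr{G}_k$ is an explicit Gaussian expectation that is $\alpha$-pseudo-Lipschitz in $B_k$ (here Assumption~\ref{assumption:fisher} enters). The Hessian error is handled identically after using the boundedness \eqref{stepsize:boundedness} to dominate one of the two factors $\gamma(G_k)$ and then applying the same decomposition to the remaining factor.

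\textbf{Martingale terms.} For $\Delta\mathcal{M}_k^{\text{grad}}(S(\cdot,z))$ the increments are bounded in magnitude by $O(1/d)$ on $\{t\le\vartheta\}$ (using \eqref{stepsize:boundedness}, $\|R(z;K)\|_{\mathrm{op}}\le 2$, and $|\nabla\varphi\cdot\Delta_k|\le C\|a_{k+1}\||f'(r_k)|$ conditioned to a high-probability set $\|a_{k+1}\|\le C\sqrt{d\log d}$). The predictable quadratic variation is then $O(T/d)$, so Freedman's inequality gives a $d^{-1/2+o(1)}$ bound w.o.p., comfortably stronger than $d^{-\delta/4+\zeta}$. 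For $\Delta\mathcal{M}_k^{\text{Hess}}(S(\cdot,z))$, after subtracting off $\gamma(\mathscr{G}_k)^2$ at a $d^{-\delta}$ cost (again via \eqref{stepsize:concentration} plus \eqref{stepsize:boundedness} so the extra $\gamma$ factor stays uniformly bounded), what remains is the centered quadratic-form martingale studied in \cite{collinswoodfin2023hitting}; Hanson-Wright gives sub-exponential tails for each increment at scale $O(1/d)$, and Freedman again yields concentration at $d^{-1/2+o(1)}$.

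\textbf{Assembling and main obstacle.} Summing the four bounds gives $O(d^{-\delta+o(1)})$, and taking a union bound over the $O(d^\xi)$-sized net $\Omega_\xi$ still leaves $d^{-\delta/4+\zeta}$ for any $\zeta>0$, which is the target. The main technical obstacle is the Hessian martingale: because $I_1(k)$ is a Gaussian quadratic form, a naive operator-norm bound on the increment scales like $\|a_{k+1}\|^2/d=\Theta(1)$, which is useless. Two remedies must be combined — a high-probability truncation $\|a_{k+1}\|\le C\sqrt{d\log d}$ and Hanson-Wright sub-exponential tails — to recover the $O(1/d)$ increment scale needed for Freedman. The stochasticity of $\gamma(G_k)$ introduces a secondary difficulty, since $\gamma(G_k)^2$ is not $\mathcal{F}_k$-measurable; this is handled uniformly for all four martingale/error contributions by the single replacement $\gamma(G_k)\to\gamma(\mathscr{G}_k)$ justified by \eqref{stepsize:concentration}, paired with the boundedness \eqref{stepsize:boundedness} whenever a second factor of $\gamma$ must be absorbed.
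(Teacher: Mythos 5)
Your plan follows essentially the same route as the paper: the telescoping replacement $\gamma(G_k)\to\gamma(\mathscr G_k)\to\gamma(B_k)$ (using \eqref{stepsize:concentration}, then the Lipschitz property of $\gamma$ together with the conditional-Gaussian computation comparing $\mathscr G_k$ to $\tfrac{\Tr(K)}{d}I(B_k)$), the decoupling of the quadratic form $I_1$ into $I_2$, truncated-martingale concentration for the grad/Hess martingales (the paper truncates at $\beta=d^{-1+\zeta/2}$ and applies Azuma where you invoke Freedman --- equivalent here), and a trivial bound on the rounding term $\xi_{td}$. The only organizational difference is that the union bound over the net $\Omega_\xi$ belongs to Proposition~\ref{prop:SGD_approx_solution_new} rather than to this statement, which is for a fixed $z\in\Omega$; this does not affect correctness.
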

\begin{proof}
     This follows from combining Propositions~\ref{lem:hessian_error_bound}, \ref{lem:gradient_error_bound}, 
     \ref{lem:gradient_bound},
     \ref{lem:hessian_bound},   and \ref{lem:integral_error_bound}. 
\end{proof}
The remainder of this subsection is devoted to proving these supporting propositions; throughout these proofs we will work with the stopping time $\vartheta$ as defined in the proposition above.
\subsubsection{Bounds on the lower order terms in the gradient and hessian}
\begin{proposition}[Hessian error term]\label{lem:hessian_error_bound}
Let $f$ and $S$ be defined as in Assumption~\ref{assumption:pseudo_lipschitz} and \eqref{definition:S}. Then, for any $z \in \Omega$, $T>0$ and $\zeta > 0$, with overwhelming probability,
\[
\sup _{0 \leq t \leq T\wedge \vartheta} \sum_{k=0}^{\lfloor td \rfloor-1}\left\|\mathbb{E}\left[\mathcal{E}_k^{\text{Hess}}(S(\cdot, z)) \mid \mathcal{F}_k\right]\right\| \le d^{-\delta/4 + \zeta}.
\]
\end{proposition}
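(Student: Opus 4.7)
\textbf{Proof proposal for Proposition~\ref{lem:hessian_error_bound}.}
The plan is to reduce the bound to a difference of stepsizes evaluated at $G_k$ versus $B_k$, apply the two halves of Assumption~\ref{assumption:stepsizes}, and use the stopping time $\vartheta = \vartheta_{cM}$ to control all remaining polynomial factors. If each summand is bounded (w.o.p.) by $O(d^{-1-\delta})$, then the at most $Td$ non-zero terms give $O(d^{-\delta})$, which is well below $d^{-\delta/4+\zeta}$.

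First, decompose
\[
\EE[\mathcal{E}_k^{\text{Hess}}(S(\cdot,z)) \mid \mathcal F_k]
= \tfrac{1}{2d}\EE[(\gamma(G_k)^2-\gamma(B_k)^2) I_1(k) \mid \mathcal F_k]
+ \tfrac{\gamma(B_k)^2}{2d}\bigl(\EE[I_1(k) \mid \mathcal F_k]-I_2(k)\bigr).
\]
The second ``residual'' term is non-zero because $r_k=\langle a_{k+1},W_k\rangle$ and $a_{k+1}^\top\nabla^2\varphi(X_k)a_{k+1}$ are not independent under $a_{k+1}\sim \mathcal N(0,K)$. Splitting $a_{k+1}$ into its projection onto the (at most) two-dimensional span of the columns of $K^{1/2}W_k$ and its orthogonal complement, a direct Gaussian moment computation together with the bound $\|\nabla^2 S(\cdot,z)\|_{\rm op}\le 2\|R(z;K)\|_{\rm op}\le C$ for $z\in\Omega$ yields $|\EE[I_1(k)\mid\mathcal F_k]-I_2(k)|=O(1/d)$, so the contribution of this residual to the sum is $O(Td^{-1})$, much smaller than required.

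For the main term, factor $\gamma(G_k)^2-\gamma(B_k)^2 = (\gamma(G_k)+\gamma(B_k))(\gamma(G_k)-\gamma(B_k))$ and insert the conditional-mean stepsize $\gamma(\mathscr G_k)$:
\[
|\gamma(G_k)-\gamma(B_k)| \le |\gamma(G_k)-\gamma(\mathscr G_k)| + |\gamma(\mathscr G_k)-\gamma(B_k)|.
\]
The concentration hypothesis \eqref{stepsize:concentration} bounds the conditional $L^1$ norm of the first difference by $C d^{-\delta}(1+\|N_k\|_\infty^\alpha+\|Q_k\|_\infty^\alpha)$ w.o.p.; the polynomial factor is uniformly bounded on $\{t\le\vartheta\}$ since $\|W_k\|\le \sqrt{cM}$. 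The second difference is $\mathcal F_k$-measurable, so applying the pseudo-Lipschitz property of $\gamma$ reduces it to controlling $\|\mathscr G_k-\tfrac{\Tr K}{d}I(B_\cdot)\|_\infty$, which by the same orthogonal-complement computation (applied now to the scalar $a_{t+1}^\top a_{t+1}$ in place of $a_{t+1}^\top\nabla^2\varphi\,a_{t+1}$) is also $O(1/d)$.

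It remains to control the multiplicative factors $(\gamma(G_k)+\gamma(B_k))|I_1(k)|$ appearing inside the conditional expectation. Under $\vartheta$, $\gamma(B_k)$ and $\gamma(\mathscr G_k)$ are deterministically bounded via \eqref{stepsize:boundedness}; for the random factor $\gamma(G_k)$, I would argue separately that $\|G_k\|_\infty$ and $|I_1(k)|$ are $O(1)$ w.o.p.\ using standard $\chi^2$-type Gaussian concentration for $\|a_{k+1}\|^2$ together with the at-most-linear growth of $f'$ implied by Assumption~\ref{assumption:pseudo_lipschitz} (with $\alpha\le 1$), and then apply \eqref{stepsize:boundedness} to conclude $\gamma(G_k)=O(1)$ w.o.p. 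Combining these bounds yields $\|\EE[\mathcal E_k^{\text{Hess}}\mid\mathcal F_k]\|\le C(M,T)d^{-1-\delta}$ w.o.p.\ on $\{k/d\le T\wedge\vartheta\}$; the single summation over $k\le Td$, absorbed by a union bound (affordable against the super-polynomial w.o.p.\ decay), produces the claimed $d^{-\delta/4+\zeta}$ bound.

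The main obstacle is the $\mathscr G_k\approx\tfrac{\Tr K}{d}I(B_k)$ estimate: one must verify that, after conditioning on $\mathcal F_k$, the $O(1)$ discrepancy between $\EE[(f')^2\|a_{k+1}\|^2\mid\mathcal F_k]$ and $\Tr(K)\,I(B_k)$ comes entirely from the rank-two ``signal'' subspace, so that dividing by $d$ indeed produces the required $O(1/d)$ bound. Every other ingredient is a routine application of the stopping time and the hypotheses on $\gamma$.
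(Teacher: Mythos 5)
Your architecture is the same as the paper's: you split off the residual $\EE[I_1(k)\mid\mathcal F_k]-I_2(k)$ and handle it by the rank-two Gaussian conditioning computation (the paper's Lemma~\ref{lem:double_conditioning}, applied with $H=\nabla^2\varphi(X_k)$), you insert the conditional-mean stepsize $\gamma(\mathscr G_k)$ and control $|\gamma(\mathscr G_k)-\gamma(B_k)|$ by the Lipschitz property of $\gamma$ plus the same conditioning lemma with $q(a)=\|a\|^2$ (the paper's Lemma~\ref{lem:double_conditioning_stepsize}), and you invoke \eqref{stepsize:concentration} for $\gamma(G_k)-\gamma(\mathscr G_k)$, with the stopping time $\vartheta$ absorbing all pseudo-Lipschitz factors. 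All of that matches the paper's proof.

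The one genuine gap is the step where you claim the per-summand bound $C(M,T)d^{-1-\delta}$ for the main term. The quantity to control is $\tfrac{1}{2d}\EE\bigl[(\gamma(G_k)^2-\gamma(\mathscr G_k)^2)\,I_1(k)\mid\mathcal F_k\bigr]$, and the hypothesis \eqref{stepsize:concentration} only gives a conditional $L^1$ bound on $\gamma(G_k)-\gamma(\mathscr G_k)$. The factors $\gamma(G_k)+\gamma(\mathscr G_k)$ and $I_1(k)$ are \emph{not} $\mathcal F_k$-measurable and are only controlled in conditional moments or w.o.p., so you cannot simply multiply the $L^1$ bound by an a.s.\ bound on the other factors: you must decouple the product inside the conditional expectation. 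The paper does this with iterated Cauchy--Schwarz/H\"older, writing $\EE[|\gamma(G_k)-\gamma(\mathscr G_k)|^{1/2}|\gamma(G_k)+\gamma(\mathscr G_k)|^{7/2}\mid\mathcal F_k]$ and peeling off $\EE[|\gamma(G_k)-\gamma(\mathscr G_k)|\mid\mathcal F_k]^{1/4}\le d^{-\delta/4}$; this loss of a fourth root is exactly where the exponent $\delta/4$ in the statement comes from. Your asserted $d^{-\delta}$ rate would require an additional truncation argument (restrict to the event $\{|I_1(k)|\,|\gamma(G_k)+\gamma(\mathscr G_k)|\le d^{\varepsilon}\}$ and control the complement via conditional moments and Markov), which you do not supply. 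The gap is fixable — and the target bound $d^{-\delta/4+\zeta}$ is weak enough that even the lossier H\"older route succeeds — but as written the combination step is asserted rather than proved, and it is the only non-routine step in the argument.
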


\begin{proof}
For arbitrary $z \in \Omega$ and $k \le (T \wedge \vartheta)d - 1$, set $\varphi(X)=S_{i j}(W, z)$ to be the  $ij$-th entry of the matrix $S(W, z)$. Then 
\begin{align*}
    2d \E [\mathcal{E}_{k}^{\text{Hess}}(\varphi) \, | \, \mathcal{F}_{k} ] 
    &= \E \left [\gamma(G_k)^2 I_1(k) \, | \, \mathcal{F}_k\right ] 
    -  \gamma(B_k)^2 I_2(k)\\
    &= \E[ (\gamma(G_k)^2 - \gamma(\mathcal G_k)^2) I_1(k) \, | \, \mathcal F_k] \\ 
    &\qquad 
    + (\gamma(\mathcal G_k)^2 - \gamma(B_k)^2) \E[  I_1(k) \, | \, \mathcal F_k]
    + \gamma(B_k)^2 \E[ (I_1(k) - I_2(k) \, | \, \mathcal F_k ] \\ 
    &= \mathcal E_1 + \mathcal E_2 + \mathcal E_3.
\end{align*}
We look at $| \mathcal E_1 |$ first. 
\begin{align*}
      | \mathcal E_1|  &= \left | \EE \left[ (\gamma(G_k)^2 - \gamma(\mathcal G_k)^2) I_1(k) \, | \, \mathcal F_k \right] \right| \\ 
      &\le \EE\left[ \left | (\gamma(G_k)^2 - \gamma(\mathcal G_k)^2) \right|^2 \, | \, \mathcal F_k \right]^{\frac 12}
      \cdot 
      \EE\left[ \left| I_1(k) \, |^2 \, \mathcal F_k \right|\right]^{\frac 12} \\ 
      &\le \EE\left[ \left| \gamma(G_k) + \gamma(\mathcal G_k) \right|^{\frac 72} \left | \gamma(G_k) - \gamma(\mathcal G_k) \right|^{\frac 12} \, | \, \mathcal F_k \right]^{\frac 12}
      \cdot 
      \EE\left[ \left| I_1(k) \, |^2 \, \mathcal F_k \right|\right] ^{\frac 12}\\ 
      &\le \EE\left[ \left| \gamma(G_k) + \gamma(\mathcal G_k) \right|^7 \, | \, \mathcal F_k \right]^{\frac 14} 
      \cdot 
      \EE \left[ \left | \gamma(G_k) - \gamma(\mathcal G_k) \right| \, | \, \mathcal F_k \right]^{\frac 14}
      \cdot 
      \EE\left[ \left| I_1(k) \right|^2 \, | \, \mathcal F_k \right] ^{\frac 12}.
\end{align*}
For the first term, we use \eqref{stepsize:boundedness}. We have 
\[ \EE\left[ \left| \gamma(G_k) + \gamma(\mathcal G_k) \right|^7 \, | \, \mathcal F_k \right] \le 
 \hat C(\gamma) \cdot \EE\left[ \left| 2 + 2 \| N_k \|_\infty^\alpha + 2\| Q_k \|_\infty^\alpha + \| G_k \|_\infty^\alpha + \| \mathcal G_k \|_\infty^\alpha \right|^7 \, | \, \mathcal F_k \right].\]
All the terms inside the expectation, apart from $\| G_k \|^\alpha_\infty$, are deterministic with respect to $\mathcal F_k$ and bounded by a constant independent of $d$ (see Lemma~\ref{lem:infinity_norm}). Since we know from Lemma~\ref{lem:infinity_norm} that for any $\varepsilon > 0$, all moments of $\| G_k \|_\infty$ are bounded by $d^\varepsilon$ w.o.p., we conclude 
\[ \EE\left[ \left| \gamma(G_k) + \gamma(\mathcal G_k) \right|^7 \, | \, \mathcal F_k \right] \le d^{\varepsilon} \quad \text{w.o.p.} \]
For the second term, we use \eqref{stepsize:concentration}. Again, since $\| N_k \|_\infty$  and $\| Q_k \|_\infty$ are bounded due to our stopping time, we have 
\[ \EE \left[ \left | \gamma(G_k) - \gamma(\mathcal G_k) \right| \, | \, \mathcal F_k \right]^{\frac 14} \le d^{-\delta/4}. \]
The last term, $\EE\left[ \left| I_1(k) \right|^2 \, | \, \mathcal F_k \right] ^{\frac 12}$, is also bounded by a constant (see Lemma~\ref{lem:I_bounds}), and all together, we find that $| \mathcal E_1 | \le d^{\varepsilon - \delta/4}$ with overwhelming probability.
\newline 
\newline 
Now let us consider $| \mathcal E_2 |$:
\begin{align*}
    | \mathcal E_2 | &=  | (\gamma(\mathcal G_k)^2 - \gamma(B_k)^2) \E[  I_1(k) \, | \, \mathcal F_k] | =   | \gamma(\mathcal G_k) + \gamma(B_k)| \cdot | \gamma(\mathcal G_k) - \gamma(B_k) | \cdot | \E[  I_1(k) \, | \, \mathcal F_k] |. 
\end{align*}
The first term is bounded by \eqref{stepsize:boundedness}, since $\mathcal G_k$ and $\Tr(K) I(B_k)/d$ are bounded independent of $d$; the second term is bounded $Cd^{-1}$ by Lemma~\ref{lem:double_conditioning_stepsize}, and the last term is bounded by a constant by Lemma~\ref{lem:I_bounds}. 
\newline
\newline
Finally, consider $| \mathcal E_3 |$:
\[ | \mathcal E_3 | = \gamma(B_k)^2 \cdot \left| \EE[ (I_1(k) - I_2(k) \, | \, \mathcal F_k ] \right|. \]
By \eqref{stepsize:boundedness}, the first term is bounded by $ \hat{C}(\gamma)^2 ( 1 + \|N_k\|_{\infty}^{\alpha} + \|Q_k\|_{\infty}^{\alpha} + \|\Tr(K) I(B_k)/d \|_{\infty}^{\alpha})^2$. All of these terms are bounded by a constant independent of $d$ (because of the stopping time.) The second term satisfies the assumptions of Lemma~\ref{lem:double_conditioning} with $H = \nabla^2 \varphi(X_k)$, and is thus bounded by $Cd^{-1}$. All together, 
\[ 2d \E [\mathcal{E}_{k}^{\text{Hess}}(\varphi) \, | \, \mathcal{F}_{k} ] \le  d^{-\delta/4 + \varepsilon}. \]
Summing up to $k = Td$ and dividing through by $2d$, we obtain the desired bound. 
\end{proof}
\begin{proposition}[Gradient error term]\label{lem:gradient_error_bound}
   Let $f$ and $S$ be defined as in Assumption~\ref{assumption:pseudo_lipschitz} and \eqref{definition:S}. Then, for any $z \in \Omega$, $\zeta>0$ and $T>0$, with overwhelming probability,
\[ \sup _{0 \leq t \leq T \wedge \vartheta} \sum_{k=0}^{\lfloor td \rfloor -1}\left\|\mathbb{E}\left[\mathcal{E}_k^{\text{grad}}(S(\cdot, z)) \mid \mathcal{F}_k\right]\right\| \leq d^{-\delta/4 + \zeta}.\]
\end{proposition}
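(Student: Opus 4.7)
The plan mirrors the structure of the proof of Proposition~\ref{lem:hessian_error_bound}. Fix an arbitrary entry $\varphi(X) = S_{ij}(W,z)$ of the matrix-valued statistic and exploit two basic observations: by Assumption~\ref{assumption:risk}, $\nabla \mathcal{R}(X_k) = \mathbb{E}[\Delta_k \mid \mathcal{F}_k]$, and $\gamma(B_k)$ is $\mathcal{F}_k$-measurable. Together these give
\[
d\,\mathbb{E}[\mathcal{E}_k^{\text{grad}}(\varphi)\mid \mathcal{F}_k]
= -\mathbb{E}[\gamma(G_k)I_3(k)\mid \mathcal{F}_k] + \gamma(B_k)\,\mathbb{E}[I_3(k)\mid \mathcal{F}_k]
= \mathbb{E}\bigl[(\gamma(B_k)-\gamma(G_k))I_3(k)\mid \mathcal{F}_k\bigr].
\]
Inserting $\pm\,\gamma(\mathscr{G}_k)$ splits this into $T_1 + T_2$, where
\[
T_1 = (\gamma(B_k)-\gamma(\mathscr{G}_k))\,\nabla\varphi(X_k)^\top \nabla\mathcal{R}(X_k),
\qquad
T_2 = \mathbb{E}\bigl[(\gamma(\mathscr{G}_k)-\gamma(G_k))I_3(k)\mid \mathcal{F}_k\bigr];
\]
in $T_1$ both stepsizes are $\mathcal{F}_k$-measurable and factor out of the conditional expectation.

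For $T_1$, I would invoke Lemma~\ref{lem:double_conditioning_stepsize}, the same tool used in bounding $\mathcal{E}_2$ in Proposition~\ref{lem:hessian_error_bound}, to obtain $|\gamma(B_k)-\gamma(\mathscr{G}_k)| \le C d^{-1}$. Under the stopping time $\vartheta$, $|\nabla\varphi(X_k)^\top \nabla\mathcal{R}(X_k)|$ is bounded by a constant using the pseudo-Lipschitz property of $\nabla h$. Summing the resulting $O(d^{-1})$ estimate over $\lfloor td \rfloor \le Td$ indices and dividing by the outer factor of $d$ in $\mathcal{E}_k^{\text{grad}}$ yields $O(T/d)$, far below the target.

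For $T_2$, the factor $\gamma(G_k)$ depends on the fresh sample $a_{k+1}$ and is not $\mathcal{F}_k$-measurable. I would apply Hölder's inequality with exponents $(4,4,2)$ to the product $|\gamma(G_k)-\gamma(\mathscr{G}_k)|^{1/4}\cdot|\gamma(G_k)-\gamma(\mathscr{G}_k)|^{3/4}\cdot|I_3(k)|$, giving
\[
|T_2| \le \mathbb{E}\bigl[|\gamma(G_k)-\gamma(\mathscr{G}_k)| \,\big|\, \mathcal{F}_k\bigr]^{1/4}\cdot \mathbb{E}\bigl[|\gamma(G_k)-\gamma(\mathscr{G}_k)|^3 \,\big|\, \mathcal{F}_k\bigr]^{1/4}\cdot \mathbb{E}\bigl[I_3(k)^2 \,\big|\, \mathcal{F}_k\bigr]^{1/2}.
\]
The first factor contributes $d^{-\delta/4}$ by the concentration assumption \eqref{stepsize:concentration}, using that $\|N_k\|_\infty$ and $\|Q_k\|_\infty$ are bounded under the stopping time; the middle factor is $d^{\zeta}$ (for any $\zeta > 0$) by combining \eqref{stepsize:boundedness} with the high-moment bounds on $\|G_k\|_\infty$ from Lemma~\ref{lem:infinity_norm}; the last factor is $O(1)$ by Lemma~\ref{lem:I_bounds}. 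Summing the per-term bound $d^{-\delta/4+\zeta}$ over $k \le Td$ and dividing by $d$ gives the target w.o.p. Because $\|\mathbb{E}[\mathcal{E}_k^{\text{grad}}\mid \mathcal{F}_k]\|\ge 0$ the quantity under the supremum is monotone in $t$, so the sup is attained at $t = T \wedge \vartheta$ and no martingale concentration is needed. A final sum over the four entries $S_{ij}$ preserves the rate.

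The main obstacle is orchestrating the Hölder split in $T_2$ so that the raw $L^1$ concentration rate $d^{-\delta}$ from \eqref{stepsize:concentration} survives, modulo an arbitrarily small polynomial loss $d^\zeta$, after being combined with the second moment of $I_3(k)$ and a moment of order three of $|\gamma(G_k)-\gamma(\mathscr{G}_k)|$ (which is only controlled through the boundedness \eqref{stepsize:boundedness} and Lemma~\ref{lem:infinity_norm}, rather than through the sharper concentration). This is the same bootstrap device used for $\mathcal{E}_1$ in Proposition~\ref{lem:hessian_error_bound}; once it is in place, the rest of the estimate is a routine assembly of existing lemmas from the paper.
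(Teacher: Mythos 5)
Your proposal is correct and follows essentially the same route as the paper: the identical decomposition via inserting $\pm\gamma(\mathscr{G}_k)$, the bound $|\gamma(B_k)-\gamma(\mathscr{G}_k)|\le Cd^{-1}$ from Lemma~\ref{lem:double_conditioning_stepsize} for the first piece, and a H\"older bootstrap isolating the $L^1$ concentration rate $d^{-\delta/4}$ against bounded moments of $I_3(k)$ and of the stepsizes for the second. The only cosmetic difference is that you use a single three-exponent H\"older split where the paper iterates Cauchy--Schwarz twice; the resulting three factors are the same.
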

\begin{proof}
We have 
\begin{align*}
d \EE[\mathcal E_k^{\text{grad}} \, | \, \mathcal F_k ] &= -\EE [ \gamma(G_k) \ip{\nabla \varphi (X_k), \Delta_k} \, | \, \mathcal F_k ] + \gamma(B_k)  \ip{\nabla \varphi (X_{k}), \nabla R(X_{k})}  \\ 
&=  
-\EE [ (\gamma(G_k) - \gamma(\mathcal G_k))  I_3(k) \, | \, \mathcal F_k ] - (\gamma(\mathcal G_k) - \gamma(B_k) \EE [ I_3(k) \, | \, \mathcal F_k ]
\\
&=  \mathcal E_1 + \mathcal E_2.
\end{align*}
We then have 
\begin{align*}
| \mathcal E_1| &\le 
\EE \left[ \left | \gamma(G_k) - \gamma(\mathcal G_k) \right|^2    \, | \, \mathcal F_k \right ]^{\frac 12} 
\cdot \EE \left[ \left | I_3(k) \right|^2  \, | \, \mathcal F_k \right ]^{\frac 12}\\
&\le
\EE \left[ \left | \gamma(G_k) + \gamma(\mathcal G_k) \right|^3    \, | \, \mathcal F_k \right ]^{\frac 14}  
\cdot \EE \left[ \left | \gamma(G_k) - \gamma(\mathcal G_k) \right|    \, | \, \mathcal F_k \right ]^{\frac 14} 
\cdot \EE \left[ \left | I_3(k) \right|^2  \, | \, \mathcal F_k \right ]^{\frac 12}.
\end{align*}
Just as in the Hessian argument, \eqref{stepsize:boundedness} lets us bound $\EE \left[ \left | \gamma(G_k) + \gamma(\mathcal G_k) \right|^3    \, | \, \mathcal F_k \right ]^{\frac 14}$ by $d^\varepsilon$ w.o.p., \eqref{stepsize:concentration} lets us bound $\EE \left[ \left | \gamma(G_k) - \gamma(\mathcal G_k) \right|    \, | \, \mathcal F_k \right ]^{\frac 14}$ by $d^{-\delta/4}$ w.o.p., and Lemma~\ref{lem:I_bounds} lets us bound $\EE \left[ \left | I_3(k) \right|^2  \, | \, \mathcal F_k \right ]^{\frac 12}$ by a constant, giving an overall bound of $| \mathcal E_1 | \le d^{-\delta/4 + \varepsilon}$. 
\newline 
\newline 
By the same argument as in the Hessian case, $| \mathcal E_2 |$ is bounded by $Cd^{-1}$; in conclusion,
\[ d \EE[\mathcal E_k^{\text{grad}} \, | \, \mathcal F_k ] \le  d^{\varepsilon - \delta/4}. \]
Summing and dividing through by $d$, we obtain the desired result with $\zeta = \varepsilon$. 
\end{proof}
\begin{proposition}[Gradient martingale]\label{lem:gradient_bound}
   Let $f$ and $S$ be defined as in Assumption~\ref{assumption:pseudo_lipschitz} and \eqref{definition:S}. Then, for any $z \in \Omega$, $\zeta>0$ and $T>0$, with overwhelming probability,
\end{proposition}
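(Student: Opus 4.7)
The target inequality is a concentration bound of the form
\[
\sup_{z \in \Omega}\sup_{0 \le t \le T \wedge \vartheta} \Big\| \sum_{k=0}^{\lfloor td \rfloor - 1} \Delta \mathcal{M}_k^{\text{grad}}(S(\cdot,z)) \Big\| \le d^{-\delta/4 + \zeta} \quad \text{w.o.p.,}
\]
for the martingale built from the gradient increments. The plan is to treat $(\Delta \mathcal{M}_k^{\text{grad}}(S_{ij}(\cdot,z)))_{k \ge 0}$ as a scalar martingale difference sequence adapted to $\mathcal{F}_k$ (entry by entry in $i,j$), control its quadratic variation and its per-step size on the event $\{k < T d \wedge d\vartheta\}$, and apply a Freedman-type concentration inequality. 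A final net argument over $\Omega$ (via Lemma~\ref{lem:net_argument}) and union bound over the four entries of $S$ will upgrade the pointwise bound to the uniform-in-$z$ statement.

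First I would fix $z \in \Omega$ and $\varphi = S_{ij}(\cdot,z)$, and use the quadratic form structure $\varphi(X) = X^T R(z;K) X$ (plus lower-order terms in $X^\star$) together with $\|R(z;K)\|_{\text{op}} \le 2$ on $\Omega$ to get $\|\nabla\varphi(X_k)\| \le C \|W_k\|$. On the event $\{k < d\vartheta\}$, we have $\|W_k\| \le \sqrt{M}$, so $\|\nabla \varphi(X_k)\| \le C(M)$. Combined with Lemma~\ref{lem:I_bounds} and Assumption~\ref{assumption:stepsizes}\eqref{stepsize:boundedness}, this yields the per-step moment bound
\[
\EE\bigl[|\Delta \mathcal{M}_k^{\text{grad}}(\varphi)|^p \,\big|\, \mathcal{F}_k\bigr] \le d^{-p}\, C(M,T,p)\, \bigl(1 + \|G_k\|_\infty^{\alpha p} + \text{bounded terms}\bigr),
\]
which is $\le C(M,T,p)\, d^{-p+\zeta/2}$ with overwhelming probability by the $\|G_k\|_\infty$ control of Lemma~\ref{lem:infinity_norm}. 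In particular, the quadratic variation over $k \le Td$ is at most $T d \cdot C(M,T) d^{-2+\zeta} = C(M,T) d^{-1+\zeta}$ w.o.p., and the per-step increments are $O(d^{-1+\zeta/2})$ w.o.p.

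Next, I would apply Freedman's inequality (or equivalently Bernstein for martingales) on the stopped process: on the event that the predictable quadratic variation is $\le V_d := C(M,T)d^{-1+\zeta}$ and each increment is bounded by $B_d := C(M,T) d^{-1+\zeta/2}$, one obtains
\[
\Pr\Bigl(\sup_{k \le Td} \Big|\sum_{j < k} \Delta \mathcal{M}_j^{\text{grad}}(\varphi)\Big| > u\Bigr)
\le 2\exp\Bigl(-\tfrac{u^2/2}{V_d + B_d u/3}\Bigr).
\]
Choosing $u = d^{-\delta/4 + \zeta}$ gives the exponent of order $-d^{2\zeta - \delta/2}/d^{-1+\zeta} = -d^{1 + \zeta - \delta/2}$, which is superpolylogarithmic since we take $\zeta > \delta/2$ small relative to $1$; this yields overwhelming probability. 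The small technicality is that the high-probability bounds on $\|G_k\|_\infty$ and the stopping at $\vartheta$ must be folded into the event on which Freedman applies; I would do this by truncating each increment to its value on a good event $\Gamma_k := \{\|G_k\|_\infty \le d^{\zeta/8}\} \cap \{k < d\vartheta\}$ and showing $\Pr(\bigcap_{k \le Td} \Gamma_k) \ge 1 - e^{-\omega(d)}$.

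Finally, the uniform bound in $z$ comes from the net argument: restrict to a $d^{-\xi}$-mesh $\Omega_\xi \subset \Omega$ of polynomial size, union bound the Freedman estimate over the at most $4|\Omega_\xi|$ scalar martingales (four entries of $S$), and then invoke Lemma~\ref{lem:net_argument} together with the fact that $z \mapsto R(z;K)$ is Lipschitz on $\Omega$ (with constant $O(1)$ since $\Omega$ is a fixed positive distance from $\mathrm{spec}(K)$) to pass from $\Omega_\xi$ to $\Omega$ at the cost of $O(d^{-\xi})$, absorbed by $\zeta$. The main obstacle is the interplay between Assumption~\ref{assumption:stepsizes}\eqref{stepsize:boundedness}, which only bounds $\gamma$ by a polynomial in $\|G_k\|_\infty$, and the fact that $\|G_k\|_\infty$ is not deterministically bounded; this is resolved by the truncation step above, which is clean because the $\|G_k\|_\infty$ tail bounds of Lemma~\ref{lem:infinity_norm} already hold w.o.p.
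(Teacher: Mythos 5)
Your proposal is correct and follows essentially the same route as the paper: truncate the unbounded martingale increments (the paper projects $F_k=-\gamma(G_k)I_3(k)/d$ onto $[-\beta,\beta]$ with $\beta=d^{-1+\zeta/2}$), apply a bounded-increment martingale inequality (Azuma in the paper, Freedman in your sketch — interchangeable here), and control the truncation error via the moment bounds of Lemma~\ref{lem:I_bounds} and the boundedness condition \eqref{stepsize:boundedness}. The only cosmetic differences are that the paper's stated bound is the sharper $d^{-1/2+\zeta}$ (your argument delivers this too if you take $u=d^{-1/2+\zeta}$), and the net argument over $z$ is unnecessary since the proposition is pointwise in $z\in\Omega$, with uniformity handled later in Proposition~\ref{prop:SGD_approx_solution_new}.
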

\[
 \sup _{0 \leq t \leq T \wedge \vartheta}\left\|\mathcal{M}_{\lfloor dt \rfloor}^{\operatorname{grad}}(S(\cdot, z))\right\| \le d^{-1/2 +\zeta}.
 \]
\begin{proof}
For notational convenience, set $\Delta \mathcal M_{k} = \Delta \mathcal M^{\text{grad}}_{d(k/d \wedge \vartheta)}$, and $F_k = -\gamma(G_k) I_3(k)/d$, so that 
\[ \Delta \mathcal M_k = F_k - \E[F_k \, | \, \mathcal F_k]. \]
Set $F_k^\beta = \text{Proj}_\beta(F_k)$, that is, ensuring $F_k$ stays in $[-\beta, \beta]$. Then $F_k^\beta - \E[F_k^\beta \, | \, \mathcal F_k]$ is in $[-2\beta, 2\beta]$, and so for the martingale $\mathcal M_k^\beta$ with increments $\Delta \mathcal M_k^\beta = F_k^\beta - \E[F_k^\beta \, | \, \mathcal F_k]$, Azuma's inequality tells us that 
\[ \mathbb P \left( | \mathcal M_k^\beta | \ge t \right) \le 2\exp \left( \frac{-t^2}{2\sum_{i = 0}^k (2 \beta)^2} \right) \le  2\exp \left( \frac{-t^2}{ 2Td(2 \beta)^2} \right). \]
Set $\beta = d^{-1 + \zeta/2}$ and $t = d^{-1/2 + \zeta}$; this becomes
\[ \mathbb P \left( | \mathcal M_k^\beta | \ge d^{-1/2 + \zeta} \right) \le 2 \exp \left( \frac{-d^{\zeta}}{8T} \right).\]
However, $\mathcal M_k^\beta$ is not quite the martingale we started with: there is still an error term,  
\begin{align*}
|\mathcal M_k - \mathcal M_k^\beta | &= 
\left| \sum_{i=0}^k (F_k - \E[ F_k \, | \, \mathcal F_k]) 
- (F_k^\beta - \E[ F_k^\beta \, | \, \mathcal F_k]) 
\right| \\
&\le 
 \sum_{i=0}^k \left| F_k - F_k^\beta \right| +  \left| \E[ F_k - F_k^\beta \, | \, \mathcal F_k] \right|.
\end{align*}
We bound this term in overwhelming probability. We have 
\begin{align*}
    \mathbb P \left( F_k - F_k^\beta \neq 0 \right)
    &= \mathbb P \left( | F_k | > \beta \right)\\
    &= \mathbb P \left( | \gamma(G_k) I_3(k)/d |  > d^{-1 + \zeta/2} \right) \\
    &\le \mathbb P \left( \gamma(G_k)  \ge d^{\zeta/4} \right)  + \mathbb P \left( | I_3(k) | \ge d^{\zeta/4} \right).
\end{align*}
The second term is superpolynomially small by Lemma~\ref{lem:I_bounds}; the first term is superpolynomially small by \eqref{stepsize:boundedness} and \eqref{lem:infinity_norm}. 
\begin{align*}
    \left| \E[ F_k - F_k^\beta \, | \, \mathcal F_k] \right| 
    &= \left| \E[ (F_k - F_k^\beta) \mathbf 1_{\{|F_k| > \beta\}}   \, | \, \mathcal F_k] \right| \\
    &\le  \E[ (F_k - F_k^\beta)^2 \, | \, \mathcal F_k]^{\frac 12}
    \cdot \E[ \mathbf 1_{\{|F_k| > \beta\}}^2 \, | \, \mathcal F_k]^{\frac 12} \\ 
    &\le  4\E[ F_k^2 \, | \, \mathcal F_k]^{\frac 12}
    \cdot \E[ \mathbf 1_{\{|F_k| > \beta\}} \, | \, \mathcal F_k]^{\frac 12}\\
    &\le 4d^{-1} \E[ \gamma(G_k)^4 \, | \, \mathcal F_k]^{\frac 14}
    \cdot \E[ I_3(k)^4 \, | \, \mathcal F_k]^{\frac 14}
    \cdot \E[ \mathbf 1_{\{|F_k| > \beta\}} \, | \, \mathcal F_k]^{\frac 12}. 
\end{align*}
As before, the first and second expectations are bounded by constants, and the last expectation is just the probability that $|F_k | > \beta$, which we have already shown is superpolynomially small. So with overwhelming probability, we have 
\[ |\mathcal M_k - \mathcal M_k^\beta | = \left| \sum_{i=0}^k (F_k - \E[ F_k \, | \, \mathcal F_k]) 
- (F_k^\beta - \E[ F_k^\beta \, | \, \mathcal F_k]) 
\right| \le d^{-1/2 + \zeta}\]
(any power of $d$ would have worked).  Combining the error term and the projected martingale, we find that, with overwhelming probability,
\[ | \mathcal M_k | \le d^{-1/2 + \zeta}. \]
We can now take the maximum over $k$ from $0$ to $Td$ using a union bound; this does not affect the overwhelming probability statement. 
\end{proof}
\begin{proposition}[Hessian martingale]\label{lem:hessian_bound}
 Let $f$ and $S$ be defined as in Assumption~\ref{assumption:pseudo_lipschitz} and \eqref{definition:S}. Then, for any $z \in \Omega$,  $\zeta>0$ and $T>0$, with overwhelming probability,
 \[ 
\sup_{0 \leq t \leq T \wedge \vartheta}\left\|\mathcal{M}_{\lfloor t d \rfloor}^{\mathrm{Hess}}(S(\cdot, z))\right\| \le d^{-1/2+\zeta}. \]
\end{proposition}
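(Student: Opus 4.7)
The plan is to mimic the truncation-plus-Azuma strategy used in Proposition~\ref{lem:gradient_bound} for the gradient martingale, adapting it to the Hessian case. Fix $z \in \Omega$ and set $\varphi(X) = S_{ij}(W,z)$ for an entry of the matrix statistic. Write the increments as $\Delta\mathcal{M}_k^{\mathrm{Hess}} = F_k - \mathbb{E}[F_k \mid \mathcal{F}_k]$ where
\[
F_k \defas \frac{1}{2d}\,\gamma(G_k)^2 I_1(k),
\]
stopped at $k \le \lfloor t d \rfloor \wedge \vartheta d$ so that the relevant norms are bounded. The martingale $\mathcal{M}^{\mathrm{Hess}}_{\lfloor td\rfloor}$ is then a sum of at most $Td$ mean-zero increments, and the goal is to produce a high-probability upper bound that matches the $d^{-1/2+\zeta}$ rate claimed in the statement.

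First I would introduce a truncated version $F_k^\beta \defas \mathrm{Proj}_\beta(F_k)$ and the associated martingale $\mathcal{M}^{\mathrm{Hess},\beta}_k$ with increments $F_k^\beta - \mathbb{E}[F_k^\beta \mid \mathcal{F}_k]$, which are bounded in $[-2\beta,2\beta]$. Choosing $\beta = d^{-1+\zeta/2}$ and applying Azuma's inequality yields
\[
\mathbb{P}\!\left(|\mathcal{M}^{\mathrm{Hess},\beta}_{\lfloor td\rfloor}| \ge d^{-1/2+\zeta}\right)
\le 2\exp\!\left(\frac{-d^{-1+2\zeta}}{2\,Td\,(2\beta)^2}\right) = 2\exp\!\left(\frac{-d^{\zeta}}{8T}\right),
\]
which is superpolynomially small. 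A union bound over $k \in \{0,\ldots,\lfloor Td \rfloor\}$ costs only a polynomial factor and therefore preserves the w.o.p.\ bound.

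Next I would show the truncation is essentially invisible. The event $\{F_k \ne F_k^\beta\}$ requires $\gamma(G_k)^2 I_1(k) > d^{\zeta/2}$, which is controlled by a union bound against $\{\gamma(G_k) > d^{\zeta/8}\}$ and $\{|I_1(k)| > d^{\zeta/4}\}$. The former is superpolynomially rare by the growth bound~\eqref{stepsize:boundedness} together with the w.o.p.\ bounds on $\|N_k\|_\infty, \|Q_k\|_\infty, \|G_k\|_\infty$ from Lemma~\ref{lem:infinity_norm}, and the latter is superpolynomially rare since $I_1(k)$ is (up to a factor $1/d$) a Gaussian quadratic form whose moments are bounded in Lemma~\ref{lem:I_bounds}. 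Hence $\mathcal{M}^{\mathrm{Hess},\beta}_k = \mathcal{M}^{\mathrm{Hess}}_k$ w.o.p.\ on the realized sample-path side. For the conditional-mean correction, Cauchy--Schwarz gives
\[
|\mathbb{E}[F_k - F_k^\beta \mid \mathcal{F}_k]|
\le \mathbb{E}[F_k^2 \mid \mathcal{F}_k]^{1/2}\,\mathbb{P}(|F_k|>\beta \mid \mathcal{F}_k)^{1/2},
\]
where $\mathbb{E}[F_k^2 \mid \mathcal{F}_k]^{1/2}$ is bounded by $Cd^{-1+\varepsilon}$ (using \eqref{stepsize:boundedness} and the fourth-moment bound on $I_1(k)$ from Lemma~\ref{lem:I_bounds}), while the second factor is superpolynomially small by the same tail argument. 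Summing $Td$ such terms leaves a contribution of order $d^{-N}$ for any $N$, far below $d^{-1/2+\zeta}$.

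Combining the projected martingale bound with the truncation-error bound yields $\sup_{0 \le t \le T\wedge \vartheta}|\mathcal{M}^{\mathrm{Hess}}_{\lfloor td \rfloor}(\varphi)| \le d^{-1/2+\zeta}$ w.o.p., applied to each entry $S_{ij}(\cdot,z)$ and thus to $\|\mathcal{M}^{\mathrm{Hess}}_{\lfloor td \rfloor}(S(\cdot,z))\|$. The only delicate point, and the one I expect to need the most care, is the tail estimate on $I_1(k)$: unlike $I_3(k)$, which was linear in the fresh Gaussian $a_{k+1}$, $I_1(k)$ is a quadratic form in $a_{k+1}$, so its concentration is Hanson--Wright-type rather than sub-Gaussian. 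Provided Lemma~\ref{lem:I_bounds} already supplies sufficiently high-moment or w.o.p.\ bounds (which should be available since $\|\nabla^2\varphi(X_k) K\|_{\mathrm{op}}$ is bounded on the stopped trajectory), this is routine; otherwise a short supplementary argument using $\|K\|_{\mathrm{op}}\le C$, the stopping time $\vartheta$, and Gaussian hypercontractivity on $I_1(k)$ takes care of it.
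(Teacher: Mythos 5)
Your proposal is correct and follows essentially the same route as the paper: truncate $F_k=\gamma(G_k)^2 I_1(k)/(2d)$ at level $\beta=d^{-1+\zeta/2}$, apply Azuma to the projected martingale to get the $d^{-1/2+\zeta}$ bound, and control the truncation error via the superpolynomially small tail of $\{|F_k|>\beta\}$ (using \eqref{stepsize:boundedness}, Lemma~\ref{lem:infinity_norm}, and Lemma~\ref{lem:I_bounds}) together with a Cauchy--Schwarz bound on the conditional-mean correction. The ``delicate point'' you flag about $I_1(k)$ being a Gaussian quadratic form is indeed already handled by the Hanson--Wright argument in Lemma~\ref{lem:preliminary_bounds} feeding into Lemma~\ref{lem:I_bounds}, so no supplementary argument is needed.
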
 
\begin{proof} The proof here is basically identical to the previous one. Again, set $F_k = \gamma(G_k)^2 I_1(k)/d$ and $F_k^\beta = \text{Proj}_\beta (F_k)$, with their associated martingales being $\mathcal M_k = F_k - \E[ F_k \, | \, \mathcal F_k]$ and $\mathcal M_k^\beta = F_k^\beta - \E[ F_k^\beta \, | \, \mathcal F_k]$. As before, Azuma's inequality, with $\beta = d^{-1 + \zeta/2}$, gives us 
\[ \mathbb P(\mathcal M_k^\beta \ge d^{-1/2 + \zeta}) \le 2\exp\left(-\frac{d^{\zeta}}{8 T} \right). \]
The error term is also quite similar: 
\begin{align*}
    | \mathcal M_k - \mathcal M_k^\beta |  
    &\le \sum_{i=0}^k | F_k - F_k^\beta | + | \E[ F_k - F_k^\beta \, | \, \mathcal F_k ] |.
\end{align*}
We have 
\begin{align*}
     \mathbb P ( F_k - F_k^\beta \neq 0) 
     &\le \mathbb P(\gamma(G_k)^2 \le d^{\zeta/4})
     + \mathbb P(| I_2(k)| \le d^{\zeta/4}),
\end{align*}
both of which are superpolynomially small by \eqref{stepsize:boundedness} and Lemma~\ref{lem:I_bounds}. For the expectation, we have 
\begin{align*}
    | \E[ F_k - F_k^\beta \, | \, \mathcal F_k ] | 
    &\le  4d^{-1} \E[ \gamma(G_k)^8 \, | \, \mathcal F_k ]^{\frac 14}  
    \cdot \E[ I_1(k)^4 \, | \, \mathcal F_k]^{\frac 14}
    \cdot \E[ \mathbf 1_{\{|F_k| > \beta\}} \, | \, \mathcal F_k]^{\frac 12};
\end{align*}
this product is superpolynomially small by \eqref{stepsize:boundedness}, Lemma~\ref{lem:infinity_norm}, and Lemma~\ref{lem:I_bounds}. Overall, we have, with overwhelming probability,
\[ | \mathcal M_k | \le d^{-1/2 + \zeta}.\]
Taking the supremum, we obtain the desired result. 
\end{proof}
\begin{proposition}[Integral error term]\label{lem:integral_error_bound}
    Let $f$ and $S$ be defined as in Assumption~\ref{assumption:pseudo_lipschitz} and \eqref{definition:S}. Then, for $z \in \Omega$,
\[ 
\left | \xi_{td}(S(\cdot, z)) \right| \le d^{-1/2}.\]
\end{proposition}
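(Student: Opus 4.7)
Start from the pointwise bound already recorded in the derivation of the Doob decomposition,
\[
|\xi_{td}(\varphi)| \le \max_{0 \le j \le \lceil td \rceil} |\Delta \varphi(X_j)|, \qquad \Delta\varphi(X_j)\defas\varphi(X_{j+1})-\varphi(X_j),
\]
with $\varphi(X) = S_{ij}(W,z)$ for a fixed entry $(i,j)$ and $z\in\Omega$. So it suffices to show the \emph{single-step} increment of $\varphi$ is at most $d^{-1/2}$ w.o.p.\ uniformly in $0\le j\le Td$ (up to the stopping time $\vartheta=\vartheta_{cM}$), after which an absolute-constant prefactor is absorbed into the exponent by changing $\varepsilon$.

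The plan is to Taylor expand $\Delta\varphi(X_k)$ using the SGD-AL update $X_{k+1}-X_k = -(\mathfrak{g}_k/d)\Delta_k$ with $\Delta_k = f'(r_k)a_{k+1}$. Since $\varphi$ is quadratic in $W$, the expansion terminates:
\[
\Delta\varphi(X_k) \;=\; -\frac{\mathfrak{g}_k}{d}\,\nabla\varphi(X_k)^\top \Delta_k \;+\; \frac{\mathfrak{g}_k^2}{2d^2}\,\Delta_k^\top \nabla^2\varphi(X_k)\,\Delta_k.
\]
I will then bound each term separately. For the linear term: $\mathfrak{g}_k$ is bounded by a power of the $\infty$-norms of $N_k,Q_k,G_k$ via \eqref{stepsize:boundedness}, and these are bounded uniformly by constants (resp.\ by $d^{\zeta}$ for $G_k$) up to the stopping time $\vartheta$ and w.o.p.\ by Lemma~\ref{lem:infinity_norm}. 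Meanwhile $\nabla\varphi(X_k)^\top \Delta_k = I_3(k)$ in the notation of Section~\ref{subsec:error_bounds}, and Lemma~\ref{lem:I_bounds} together with Gaussian concentration of $a_{k+1}\sim\mathcal N(0,K)$ conditionally on $\mathcal F_k$ controls this by a (arbitrarily small) power of $d$ w.o.p. So this term is $d^{-1+\zeta}$. For the quadratic term: $|\Delta_k^\top \nabla^2\varphi(X_k)\Delta_k|/d = |I_1(k)|$, again controlled by a power $d^{\zeta}$ w.o.p.\ by Lemma~\ref{lem:I_bounds} and Hanson–Wright-type concentration for the quadratic form in $a_{k+1}$, so this term is $d^{-1+\zeta}$ as well.

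Combining and taking a union bound over the $O(d)$ indices $0\le j\le\lceil td\rceil$ preserves the overwhelming-probability statement (since the per-index exceptional probability decays faster than any polynomial in $d$), yielding
\[
\sup_{0\le t\le T\wedge\vartheta} |\xi_{td}(S(\cdot,z))| \;\le\; d^{-1+2\zeta} \;\le\; d^{-1/2}
\]
for $d$ large enough and $\zeta$ small enough. The main obstacle is a bookkeeping one: making sure the crude polynomial-in-$d$ envelopes for $\mathfrak{g}_k$, $I_1(k)$, and $I_3(k)$ are all simultaneously valid w.o.p.\ uniformly in $k\le Td$, which requires invoking Assumption~\ref{assumption:stepsizes} together with the stopping-time reduction from Lemma~\ref{lem:normequivalence} exactly as in the martingale propositions above; once these envelopes are in hand the bound $d^{-1/2}$ is loose and follows immediately.
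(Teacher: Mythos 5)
Your proposal is correct and follows essentially the same route as the paper: bound $|\xi_{td}|$ by the maximum single-step increment, expand that increment exactly (since $\varphi$ is quadratic), and control the two resulting terms via the boundedness of the stepsize and of $I_1$, $I_3$ (Lemmas~\ref{lem:infinity_norm} and \ref{lem:I_bounds}), with a union bound over the $O(d)$ steps. The paper's own proof is just a terser version of the same argument, citing the boundedness of $I_1$, $I_2$, $I_3$ and the stepsize without writing out the $d^{-1+O(\zeta)}$ bookkeeping you supply.
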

\begin{proof}
We have, as above, 
\begin{align*}
\displaystyle |\xi_{td}| &= \bigg | \int_{(\lfloor td\rfloor-1)/d}^t d \cdot \Delta \varphi(X_{sd})  \,\dif s \bigg | \\ 
&\le \max_{0 \le j \le \lceil td \rceil} \{ | \Delta \varphi(X_j) | \},
\end{align*} 
which is bounded by $d^{-1/2}$ w.o.p. by the boundedness of $I_1$, $I_2$, $I_3$, and $\gamma(B_k)$. 
\end{proof}
\subsubsection{General bounds}
In this section, we make use of the subgaussian norm $\| \cdot \|_{\psi_2}$ of a random variable (see \cite{vershynin2018high} for details.) When it exists, this norm is defined as 
\begin{equation}
    \|X\|_{\psi_2} \asymp \inf \left\{V>0: \forall t>0, \,\, \mathbb{P}(|X|>t) \leq 2 e^{-t^2 / V^2}\right\}.
\end{equation}
In particular, Gaussian random variables have a well-defined subgaussian norm. 
\begin{lemma}[\cite{collinswoodfin2023hitting}, Lemma~5.3]
\label{lem:S_derivative_bounds}
There exist constants $c, C > 0$ such that
\[
c \|W\|^2 \le \|S(W,z)\|_{\Omega} \le C \|W\|^2, \quad \|\nabla_X S(W,z)\|_{\Omega} \le C \|W\|, \quad 
\text{and} \quad \|\nabla^2_X S(W,z)\|_{\Omega} \le C. 
\]
\end{lemma}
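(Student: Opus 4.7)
The heart of the argument is a uniform bound on the resolvent over the contour: since every eigenvalue $\lambda_i$ of $K$ lies in $[0,\|K\|_{\text{op}}]$ and every $z \in \Omega$ satisfies $|z| = \max(1, 2\|K\|_{\text{op}})$, the reverse triangle inequality gives $|\lambda_i - z| \ge |z| - \|K\|_{\text{op}} \ge |z|/2 \ge 1/2$, so $\|R(z;K)\|_{\text{op}} \le 2/|z| \le 2$ uniformly on $\Omega$. Everything else will follow from this together with the fact that $S(W,z)$ is quadratic in $W$ with a Hessian that depends only on $R(z;K)$.

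The upper bound $\|S(W,z)\|_\Omega \le C\|W\|^2$ is immediate by submultiplicativity: $\|W^\top R(z;K) W\|_F \le \|R(z;K)\|_{\text{op}}\|W\|_F^2 \le 2\|W\|^2$. For the derivative bounds I would write $W = [X|X^\star]$ and differentiate entrywise, noting $\nabla_X S_{11} = 2R(z;K)X$, $\nabla_X S_{12} = R(z;K)X^\star$, $\nabla_X S_{22}=0$, and for the Hessian $\nabla_X^2 S_{11} = 2R(z;K)$ with the other entries vanishing. Each piece is then controlled by $\|R(z;K)\|_{\text{op}}$ times $\|W\|$ or a constant, yielding $\|\nabla_X S\|_\Omega \le C\|W\|$ and $\|\nabla_X^2 S\|_\Omega \le C$.

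The lower bound $\|S(W,z)\|_\Omega \ge c\|W\|^2$ is the step with actual content, since $\|S\|_\Omega$ is a maximum and one must locate a $z \in \Omega$ at which $S(W,z)$ is large. The cleanest route is to avoid pointwise arguments entirely and use the Cauchy integral theorem: since $\Omega$ encloses all eigenvalues of $K$,
\[
\frac{-1}{2\pi i}\oint_\Omega R(z;K)\,\dif z = \sum_i \omega_i \omega_i^\top = I_d,
\qquad\text{hence}\qquad
\frac{-1}{2\pi i}\oint_\Omega S(W,z)\,\dif z = W^\top W.
\]
Bounding the contour integral by length$(\Omega) \cdot \|S\|_\Omega = 2\pi|z|\cdot\|S\|_\Omega$ gives $\|W^\top W\|_F \le |z|\cdot\|S\|_\Omega$. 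Since $W^\top W \in \mathbb{R}^{2\times 2}$ is positive semidefinite, $\|W^\top W\|_F \ge \tr(W^\top W)/\sqrt{2} = \|W\|^2/\sqrt{2}$, so $\|S\|_\Omega \ge \|W\|^2/(\sqrt{2}|z|) = c\|W\|^2$.

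The only substantive obstacle is this lower bound; the rest is direct computation. The contour-integral identity is what converts a statement about the maximum of $S(W,z)$ on $\Omega$ into a statement about an average, which is then straightforwardly comparable to $\|W\|^2$.
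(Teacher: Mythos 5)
Your proof is correct. The paper itself gives no proof of this lemma (it is imported verbatim from \cite{collinswoodfin2023hitting}, Lemma~5.3), but your argument is complete and is essentially the standard one: the choice of contour forces $\mathrm{dist}(\Omega,\mathrm{spec}(K))\ge \tfrac12$ and hence $\|R(z;K)\|_{\mathrm{op}}\le 2$, which handles the upper bound and both derivative bounds by direct differentiation of the quadratic form, while the residue identity $\frac{-1}{2\pi i}\oint_\Omega S(W,z)\,\dif z = W^\top W$ converts the lower bound on the maximum over $\Omega$ into the trace comparison $\|W^\top W\|_F\ge \|W\|^2/\sqrt 2$. The resulting constant $c$ depends on $\|K\|_{\mathrm{op}}$ through the radius of $\Omega$, which is consistent with how the lemma is used elsewhere in the paper (cf.\ Lemma~\ref{lem:normequivalence}).
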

\begin{lemma}[Preliminary bounds]\label{lem:preliminary_bounds} With $f$ and $\Delta_k$ defined as above, for $\varepsilon > 0$ and $\lambda \ge 0$, we have 
    \begin{align} 
        f'(r_k) &\le d^\varepsilon \quad \text{w.o.p. and} \quad  
        \E [  |f'(r_k)|^\lambda \, | \, \mathcal F_k ] \le C(\lambda)
        \label{eq:f_prime_bound}, \\ 
        \frac{\|\Delta_k\|^2}{d} &\le d^\varepsilon  \quad \text{w.o.p. and} \quad  
        \E \left[  \left(  \frac{\|\Delta_k\|^2}{d} \right)^\lambda \, | \, \mathcal F_k \right] \le C(\lambda). \label{eq:delta_bound}
    \end{align}    
\end{lemma}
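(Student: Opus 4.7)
The plan is to reduce both bounds to standard Gaussian tail and moment estimates, using the $\alpha$-pseudo-Lipschitz property of $f$ to control $f'$.

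\textbf{Step 1 (Polynomial envelope on $f'$).} From Assumption~\ref{assumption:pseudo_lipschitz}, applied to pairs that differ only in the first coordinate and dividing by $|h|$ before sending $h \to 0$, one obtains the a.e.\ bound
\[
|f'(x; x^{\star}, \epsilon)| \;\le\; C\bigl(1 + |x|^{\alpha} + |x^{\star}|^{\alpha} + |\epsilon|^{\alpha}\bigr),
\]
for some $C$ depending only on $L(f)$ and the initial value $f(0,0,0)$. Since $\alpha \le 1$, this is at worst linear growth.

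\textbf{Step 2 (w.o.p.\ bound for $f'(r_k)$).} Under the stopping time $\vartheta$ that governs the surrounding proofs, $\|W_k\|$ is bounded by a constant independent of $d$, and $W_k$ is $\mathcal{F}_k$-measurable. Because $a_{k+1} \sim \mathcal N(0,K)$ is independent of $\mathcal F_k$ with $\|K\|_{\mathrm{op}} \le C$, conditional on $\mathcal F_k$ each coordinate of $r_k = (\ip{a_{k+1},X_k},\ip{a_{k+1},X^\star})$ is centered Gaussian with variance at most $C\|W_k\|^2$; similarly $\epsilon_{k+1} \sim \mathcal N(0,\omega^2)$. Hence the conditional subgaussian norms of $r_k$ and $\epsilon_{k+1}$ are bounded by a constant, and standard Gaussian tail bounds give $|r_k| \vee |\epsilon_{k+1}| \le C\sqrt{\log d}$ w.o.p. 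Combining with Step 1,
\[
|f'(r_k)| \;\le\; C\bigl(1 + (\log d)^{\alpha/2}\bigr) \;\le\; d^{\varepsilon} \quad \text{w.o.p.}
\]

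\textbf{Step 3 (Moment bound for $f'(r_k)$ and the bounds for $\|\Delta_k\|^2/d$).} Gaussian moments of all orders of $r_k$ and $\epsilon_{k+1}$ are bounded conditionally on $\mathcal F_k$, so Step 1 yields
\[
\E\bigl[|f'(r_k)|^{\lambda} \,\big|\, \mathcal F_k\bigr] \;\le\; C\,\E\bigl[(1+|r_k|^{\alpha}+|\epsilon_{k+1}|^{\alpha})^{\lambda}\,\big|\, \mathcal F_k\bigr] \;\le\; C(\lambda).
\]
For $\|\Delta_k\|^2/d = |f'(r_k)|^2\,\|a_{k+1}\|^2/d$, Hanson–Wright concentration (or a direct $\chi^2$-type estimate) gives $\|a_{k+1}\|^2/d \le 2\,\tr(K)/d \le 2\|K\|_{\mathrm{op}}$ w.o.p., which together with Step 2 delivers the w.o.p.\ bound $\|\Delta_k\|^2/d \le d^\varepsilon$. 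The conditional moment bound follows from Cauchy–Schwarz,
\[
\E\!\left[\bigl(\tfrac{\|\Delta_k\|^2}{d}\bigr)^{\lambda}\!\,\Big|\, \mathcal F_k\right] \;\le\; \E\bigl[|f'(r_k)|^{4\lambda}\,\big|\,\mathcal F_k\bigr]^{1/2}\, \E\bigl[(\|a_{k+1}\|^2/d)^{2\lambda}\,\big|\,\mathcal F_k\bigr]^{1/2},
\]
with the first factor controlled by the $\lambda \mapsto 4\lambda$ version of the conditional moment estimate above and the second by standard moment bounds for Gaussian quadratic forms with bounded covariance.

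\textbf{Main obstacle.} There is no real technical difficulty; the proof is essentially a bookkeeping exercise combining polynomial growth of $f'$ with Gaussian concentration. The only subtle point is that $f'(r_k)$ and $a_{k+1}$ are correlated through $r_k$, but this is circumvented by the Cauchy–Schwarz split above, which decouples the Gaussian quadratic form $\|a_{k+1}\|^2/d$ from the polynomial function $|f'(r_k)|^{4\lambda}$ of a bounded-variance Gaussian. If a uniform-in-$k$ version (over $k \le Td$) is needed, a polynomial union bound suffices since the individual w.o.p.\ statements have superpolynomial decay.
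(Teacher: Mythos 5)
Your proposal is correct and follows essentially the same route as the paper's proof: a polynomial-growth envelope on $f'$ (which the paper imports from \cite[Lemma 3.4]{collinswoodfin2023hitting} rather than rederiving from pseudo-Lipschitzness), conditional Gaussianity of $r_k$ with variance bounded via the stopping time, Hanson--Wright for $\|a_{k+1}\|^2$, and the identical Cauchy--Schwarz split decoupling $|f'(r_k)|^{4\lambda}$ from the quadratic form in the moment bound. The only cosmetic imprecision is stating the w.o.p.\ bound as $\|a_{k+1}\|^2/d \le 2\tr(K)/d$; the robust version of that Hanson--Wright estimate is $\|a_{k+1}\|^2 \le \tr(K)+d \le Cd$ w.o.p., which is what the argument actually needs and what the paper uses.
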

\begin{proof}[Proof of \eqref{eq:f_prime_bound} in Lemma~\ref{lem:preliminary_bounds}]
By \cite[Lemma 3.4]{collinswoodfin2023hitting}, if function $f$ is $\alpha$-pseudo-Lipschitz with Lipschitz constant $L(f)$ (as in \eqref{assumption:pseudo_lipschitz}) and the noise $\epsilon$ is independent of $a$, then 
\[ \left|f'(r)\right| \leq C(\alpha)(L(f))(1+|r|+|\epsilon|)^{\max \{1, \alpha\}}. \]
Then
\begin{align}
    \left|f'(r_k)\right| &\le
    C(\alpha)(L(f))(1+|r_k |+|\epsilon|)^{\max \{1, \alpha\}} \nonumber \\ 
    &\le C(\alpha)(L(f))(1+| X_k^\top a_{k+1}|+|\epsilon|)^{\max \{1, \alpha\}} \label{eq:f_prime_intermediate_bound}.
\end{align}
Now, since $a_{k+1}$ is Gaussian, we can write $a_{k+1} = \sqrt{K} v_{k}$, for a standard normal $v_{k}$. Then we see that $X_k^\top a_{k+1} = X_k^\top \sqrt{K} v_{k}$ is a single-variable Gaussian, with variance $| X_k^\top K X_k | \le \| X_k \|^2 \cdot \| K \|_{\text{op}}$ (bounded independently of $d$ because of the stopping time on $X_k$). Similarly, $\epsilon$ is Gaussian and independent of $a_{k+1}$, so the expression \eqref{eq:f_prime_intermediate_bound} is bounded w.o.p. by $d^\varepsilon$, and  
\begin{equation*}
    \E \left[ \left(C(\alpha)(L(f))(1+| X_k^\top a_{k+1}|+|\epsilon|)^{\max \{1, \alpha\}} \right)^\lambda \, \big | \, \mathcal F_k \right] \le C(\lambda)
\end{equation*}
for some constant $C(\lambda)$.
\end{proof}
\begin{proof}[Proof of \eqref{eq:delta_bound} in Lemma~\ref{lem:preliminary_bounds}]
We can write $a_{k+1} = \sqrt{K} v_{k}$, where $v_{k}$ is a standard $d$-dimensional normal vector. Then, by Hanson-Wright, we have
\begin{align*}
    \mathbb P \left( \left| \| a_{k+1} \|^2 - \E[  \| a_{k+1} \|^2 \, | \, \mathcal F_k] \right|  \ge d
    \right) &= 
    \mathbb P \left( \left| v_{k}^\top K v_{k}  - \E[  v_{k}^\top K v_{k} \, | \, \mathcal F_k] \right| \ge d \right) \\ 
    &\le 2 \exp \left( 
    -\frac{cd^2}{\| K \|^2_{F} + \| K \|_{\text{op}} d}
    \right) \\
    &\le 2 \exp \left( 
    -\frac{cd^2}{d(\| K \|_{\text{op}} + \| K \|_{\text{op}}^2}
    \right) \\
    &\le 2 \exp \left( 
    - Cd
    \right). 
\end{align*}
Now, note that $\E[v_{k}^\top K v_{k} \, | \, \mathcal F_k] = \Tr(K) \le d \| K \|_{\text{op}}$. Together, we get that $\| a_{k+1} \|^2 \le d^{1 + \epsilon}$ with overwhelming probability. Then 
\begin{align*}
      \frac{\| \Delta_k \|^2}{d} &= \frac{\| f'(r_k) a_{k+1}\|^2}{d} = \frac{\| a_{k+1} \|^2 f'(r_k)^2}{d},
\end{align*}
which is bounded by $d^{2 \varepsilon}$ w.o.p. Now for the expectation: 
\begin{align}
    \E \left[ \left( \frac{\| \Delta_k \|^2}{d} \right)^\lambda \, | \, \mathcal F_k \right] 
    &\le 
    \E \left[ \left( \frac{\| \sqrt{K} v_{k} \|^2}{d} \right)^{2\lambda} \, | \, \mathcal F_k \right]^{\frac 12} 
    \cdot \E \left[ f'(r_k)^{4\lambda} \, | \, \mathcal F_k \right]^{\frac 12} \nonumber \\ 
    &\le 
    \E \left[ \left( \frac{\| K \|_{\text{op}} \cdot \| v_{k} \|^2}{d} \right)^{2\lambda} \, | \, \mathcal F_k \right]^{\frac 12} 
    \cdot \E \left[ f'(r_k)^{4\lambda} \, | \, \mathcal F_k \right]^{\frac 12} \label{eq:temp_ref_123}
\end{align}
For the first term, we have 
\begin{align*}
   \E \left[ \left( \frac{\| K \|_{\text{op}} \cdot \| v_{k} \|^2}{d} \right)^{2\lambda} \, | \, \mathcal F_k \right]  
    &=  
   \| K \|_{\text{op}}^{2 \lambda} \cdot \E \left[ \left( \frac{ \| v_{k} \|^2}{d} \right)^{2\lambda} \, | \, \mathcal F_k \right] \\ 
    &\le   
   \| K \|_{\text{op}}^{2 \lambda} \cdot \frac 1d \sum_{i = 0}^{d-1} \E \left[ \left(\| v_{k}^i \|^2 \right)^{2\lambda} \, | \, \mathcal F_k \right] \tag{Jensen's inequality}\\ 
    &=
   \| K \|_{\text{op}}^{2 \lambda} \cdot \E \left[\| v_{k}^0 \|^{4\lambda} \, | \, \mathcal F_k \right] \tag{i.i.d. assumption},
\end{align*}
where we are using the notation $v_k^i$ to refer to the $i$th component of the vector $v_k$. Now, since $v_k^0$ is just a standard Gaussian, all of its moments are bounded. The second term in \eqref{eq:temp_ref_123} is bounded by a constant by \eqref{eq:f_prime_bound}, as desired.
\end{proof}
\begin{lemma}[Gradient and Hessian bounds]\label{lem:I_bounds}
Setting 
\begin{gather*}
    I_1(k) \stackrel{\text{def}}{=}  \Delta_k^\top \nabla^2 \varphi (X_k) \Delta_k/d, \quad
    I_2(k) \stackrel{\text{def}}{=}  \Tr(\nabla^2 \varphi(X_{k}) K) \EE[f'(r_{k})^2 \, | \, \mathcal{F}_{k} ]/d, \\ 
     I_3(k) \stackrel{\text{def}}{=} \nabla\varphi (X_k)^\top \Delta_k,
\end{gather*}
for any $\varepsilon > 0$ and $\lambda \ge 0$, we have 
\begin{align}
|I_1(k)| &\le d^\varepsilon \quad \text{w.o.p. and} \quad  \E \left[ |I_1(k)|^\lambda \, | \, \mathcal F_k \right]  \le C(\lambda), \label{eq:I1_bound}  \\
|I_2(k)| &\le C, \label{eq:I2_bound}  \\
|I_3(k)|  &\le d^\varepsilon \quad \text{w.o.p. and} \quad  \E \left[|I_3(k)|^\lambda \, | \, \mathcal F_k \right]  \le C(\lambda). \label{eq:I3_bound}
\end{align}
\end{lemma}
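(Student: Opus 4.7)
All three bounds follow routinely by combining the operator-norm bounds on derivatives of $\varphi = S_{ij}(W,z)$ supplied by Lemma~\ref{lem:S_derivative_bounds} with the preliminary estimates on $|f'(r_k)|$ and $\|\Delta_k\|^2/d$ from Lemma~\ref{lem:preliminary_bounds}, while using that we work under the stopping time $\vartheta = \vartheta_{cM}$ which keeps $\|W_k\|^2 \le M$. The key quantitative inputs are that $\|\nabla\varphi(X_k)\| \le C\|W_k\| \le C$ and $\|\nabla^2\varphi(X_k)\|_{\text{op}} \le C$, the latter because for $\varphi = S_{ij}$ the Hessian equals (up to a factor of two) either $R(z;K)$ or $0$, and $\|R(z;K)\|_{\text{op}} \le 2$ for $z \in \Omega$ by construction of the contour.

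\textbf{Bounding $I_1$ and $I_2$.} For $I_1(k)$, use the quadratic-form inequality
\[
|I_1(k)| \le \frac{\|\nabla^2\varphi(X_k)\|_{\text{op}} \, \|\Delta_k\|^2}{d} \le C \cdot \frac{\|\Delta_k\|^2}{d},
\]
and then invoke \eqref{eq:delta_bound}: the w.o.p.\ part gives $|I_1(k)| \le Cd^\varepsilon$, and the moment part gives $\E[|I_1(k)|^\lambda \mid \mathcal F_k] \le C(\lambda)$. For $I_2(k)$, observe that since $K \succeq 0$ with $\|K\|_{\text{op}} \le C$,
\[
|\Tr(\nabla^2\varphi(X_k) K)| \le \|\nabla^2\varphi(X_k)\|_{\text{op}} \Tr(K) \le C d,
\]
while $\E[f'(r_k)^2 \mid \mathcal F_k] \le C$ by \eqref{eq:f_prime_bound}. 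The factor of $d$ cancels the $1/d$ in the definition, giving $|I_2(k)| \le C$ deterministically.

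\textbf{Bounding $I_3$.} Decompose $I_3(k) = f'(r_k) Y_k$ where $Y_k \defas \nabla\varphi(X_k)^\top a_{k+1}$. Since $\nabla\varphi(X_k)$ is $\mathcal F_k$-measurable with $\|\nabla\varphi(X_k)\| \le C$, conditionally on $\mathcal F_k$ the random variable $Y_k$ is a centered Gaussian with variance
\[
\nabla\varphi(X_k)^\top K \nabla\varphi(X_k) \le \|K\|_{\text{op}} \|\nabla\varphi(X_k)\|^2 \le C.
\]
All conditional moments of $Y_k$ are therefore bounded uniformly in $d$, and the standard Gaussian tail bound gives $|Y_k| \le d^\varepsilon$ w.o.p. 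The moment estimate then follows from Cauchy–Schwarz,
\[
\E[|I_3(k)|^\lambda \mid \mathcal F_k] \le \E[|f'(r_k)|^{2\lambda} \mid \mathcal F_k]^{1/2} \E[|Y_k|^{2\lambda} \mid \mathcal F_k]^{1/2} \le C(\lambda),
\]
using \eqref{eq:f_prime_bound}; the w.o.p.\ estimate follows from the same Cauchy–Schwarz split together with a union bound.

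\textbf{Main obstacle.} There is no substantive obstacle; the proof is bookkeeping. The one point that requires a moment of care is that $f'(r_k)$ and $Y_k$ both depend on $a_{k+1}$, so one cannot factor expectations directly. Cauchy–Schwarz side-steps this at the cost of doubling the moment exponent, which is harmless since \eqref{eq:f_prime_bound} and \eqref{eq:delta_bound} hold for every $\lambda \ge 0$.
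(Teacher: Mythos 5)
Your proof is correct and follows essentially the same route as the paper's: bound $I_1$ by the operator norm of the Hessian times $\|\Delta_k\|^2/d$ and invoke \eqref{eq:delta_bound}, bound $I_2$ deterministically via a trace--operator-norm inequality and \eqref{eq:f_prime_bound}, and split $I_3 = f'(r_k)\,\nabla\varphi(X_k)^\top a_{k+1}$ with Cauchy--Schwarz to decouple the two $a_{k+1}$-dependent factors. The only cosmetic differences are that the paper routes the derivative bounds through Lemma~\ref{lem:S_derivative_bounds} (via $\|S(W_k,\cdot)\|_\Omega \le C\|W_k\|^2$ and the stopping time) rather than arguing directly about $R(z;K)$, and uses $|\Tr(A)|\le d\|A\|_{\text{op}}$ in place of your $|\Tr(AB)|\le\|A\|_{\text{op}}\Tr(B)$; both are equivalent for the purpose at hand.
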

\begin{proof}[Proof of \eqref{eq:I1_bound} in Lemma~\ref{lem:I_bounds}]
Using the fact that $\| \nabla^2 \varphi(X_k) \|_{\text{op}} \le \| S(W_k, \cdot) \|_{\Omega}$, 
\begin{align*}
    \frac{| \Delta_k^\top \nabla^2 \varphi (X_k) \Delta_k |}{d} 
    &\le \frac{\| S(W_k, \cdot) \|_{\Omega} \| \Delta_k \|^2}{d}  \\ 
    &\le \frac{C\| W_k \|^2 \| \Delta_k \|^2}{d} \tag{Lemma~\ref{lem:S_derivative_bounds}}.
\end{align*}
Now, $\| W_k\|$ is bounded by the stopping time. From Lemma~\ref{lem:preliminary_bounds}, $\frac{\|\Delta_k\|^2}{d}$ is bounded by $d^\varepsilon$ w.o.p., and every moment of this expression is bounded independent of $d$, as desired.
\end{proof}
\begin{proof}[Proof of\eqref{eq:I2_bound} in Lemma~\ref{lem:I_bounds}] We have
\begin{align*}
    \frac{\left | \Tr(\nabla^2 \varphi(X_{k}) K) \EE[f'(r_{k})^2 \, | \, \mathcal{F}_{k} ] \right | }{d}
     &\le 
     \frac{d \| \nabla^2 \varphi(X_{k})  K \|_{\text{op}} \cdot \E[f'(r_k)^2\, | \, \mathcal F_k]}{d} \\ 
     &\le 
     \| \nabla^2 \varphi(X_{k}) \|_{\text{op}}  \cdot \| K \|_{\text{op}} \cdot  \E[f'(r_k)^2\, | \, \mathcal F_k]\\
     &\le CM^2 \E[f'(r_k)^2\, | \, \mathcal F_k]. \tag{Lemma~\ref{lem:S_derivative_bounds}}
\end{align*}    
From Lemma~\ref{lem:preliminary_bounds}, $\E[f'(r_k)^2\, | \, \mathcal F_k]$ is bounded by a constant independent of $d$, as desired. 
\end{proof}
\begin{proof}[Proof of \eqref{eq:I3_bound} in Lemma~\ref{lem:I_bounds}]
We have 
\begin{align*}
    | \nabla \varphi (X_k)^\top \Delta_k |
    &\le | \nabla \varphi (X_k)^\top a_{k+1}| \cdot | f'(r_k) |.
\end{align*}
By Lemma~\ref{lem:S_derivative_bounds}, $\| \nabla \varphi (X_k)\| \le C \| W_k \| \le CM$  (since we are working under a stopping time), and so $\nabla \varphi (X_k)^\top a_{k+1}$ is subgaussian (and thus bounded by $d^\varepsilon$ w.o.p.). By \eqref{eq:f_prime_bound}, $f'(r_k)$ is bounded by $d^\varepsilon$ w.o.p., and so their product is bounded by $d^{2 \varepsilon}$ w.o.p., as desired. Now for the expectation:
\begin{align*}
    \E \left[ | \nabla \varphi (X_k)^\top \Delta_k | \, | \, \mathcal F_k \right] &\le \E \left[ | \nabla \varphi (X_k)^\top a_{k+1}| \cdot | f'(r_k) | \, | \, \mathcal F_k \right] \\ 
    &\le \E \left[ | \nabla \varphi (X_k)^\top a_{k+1}|^2 \, | \, \mathcal F_k \right]^{\frac 12 } 
    \cdot  \E \left[ f'(r_k)^2   \, | \, \mathcal F_k \right]^{\frac 12}
\end{align*}
The first term is bounded by a constant independent of $d$, since subgaussian moments are bounded. The second term is bounded by Lemma~\ref{lem:preliminary_bounds}, completing the proof.
\end{proof}
\begin{lemma}[Infinity norm bounds]\label{lem:infinity_norm}
For $G_k$, $N_k$, $Q_k$ as defined in \ref{section:learning_rate}, we have, for any $\varepsilon, \lambda > 0$, there exists $C > 0$ such that, 
\begin{align}
   \| G_k \|_{\infty} &\le d^\varepsilon \quad \text{w.o.p. and} \quad  \E[ \| G_k \|_{\infty}^\lambda \, | \, \mathcal F_k ] \le d^\varepsilon \quad \text{w.o.p.,}\label{eq:gradient_norm_bound} \\ 
   \| N_k \|_{\infty} &\le C, \quad \| Q_k \|_{\infty} \le C, \quad \| \mathcal G_k\|_{\infty} \le C.
   \label{eq:deterministic_norm_bounds}
\end{align}
\end{lemma}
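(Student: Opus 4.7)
The plan is to dispose of the four quantities separately, leveraging the stopping time $\vartheta_{cM}$ from Lemma~\ref{lem:normequivalence} (which controls $\|W_t\|$) together with the moment estimates of Lemma~\ref{lem:preliminary_bounds}. The bounds for $N_k$ and $Q_k$ are essentially deterministic under the stopping time. Since $\sup_{t < k}\|W_t\|^2 \le M'$ for some $M' = M'(M)$, we immediately get $\|N_k(t)\|_F = \|W_t^\top W_t\|_F \le \|W_t\|^2 \le M'$, so $\|N_k\|_\infty \le M'$. For $Q_k$, $\mathcal{R}(X_t) = h(B(W_t))$ with $\|B(W_t)\|_F \le \|K\|_{\text{op}}\|W_t\|^2 \le C$, and combining this with the pseudo-Lipschitz gradient of $h$ (Assumption~\ref{assumption:risk}) via the fundamental theorem of calculus $|h(B)| \le |h(0)| + \int_0^1 |\langle \nabla h(sB), B\rangle|\,\dif s$ yields $\|Q_k\|_\infty \le C$.

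For $\mathcal{G}_k$, I write $\mathcal{G}_k(t) = \tfrac{1}{d}\mathbb{E}[f'(r_t)^2 \|a_{t+1}\|^2 \mid \mathcal{F}_t]$ and apply Cauchy--Schwarz to decouple the two factors:
\[
\mathcal{G}_k(t) \le \tfrac{1}{d}\,\mathbb{E}\bigl[f'(r_t)^4 \mid \mathcal{F}_t\bigr]^{1/2}\,\mathbb{E}\bigl[\|a_{t+1}\|^4\bigr]^{1/2}.
\]
Lemma~\ref{lem:preliminary_bounds} bounds the first factor by a constant; for the second, writing $a_{t+1} = \sqrt{K}\,v$ with $v$ standard normal gives $\mathbb{E}[\|a_{t+1}\|^4] = (\Tr K)^2 + 2\Tr(K^2) \le Cd^2$ using $\|K\|_{\text{op}} \le C$, and the $1/d$ prefactor yields $\mathcal{G}_k(t) \le C$ uniformly in $t$.

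For $G_k$, the pointwise bound $G_k(t) = \tfrac{1}{d}\|\Delta_t\|^2 \le d^\varepsilon$ w.o.p.\ is exactly what Lemma~\ref{lem:preliminary_bounds} delivers, and a union bound over the at most $Td$ integer values of $t < k$ preserves overwhelming probability (since $Td \cdot e^{-\omega(d)} = e^{-(\omega(d) - \log(Td))}$ remains w.o.p.), proving $\|G_k\|_\infty \le d^\varepsilon$ w.o.p. The conditional-expectation claim then reduces to the same bound because $\|G_k\|_\infty^\lambda$ is measurable with respect to the natural filtration (implicitly enlarged with the noise $(a_i,\epsilon_i)_{i \le k}$, as is used elsewhere in the proof), giving $\mathbb{E}[\|G_k\|_\infty^\lambda \mid \mathcal{F}_k] = \|G_k\|_\infty^\lambda \le d^{\lambda\varepsilon}$ w.o.p., and replacing $\varepsilon$ by $\varepsilon/\lambda$ yields the stated bound. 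The only mild subtlety, not really an obstacle, is this measurability observation; if one insists on the literal $\sigma(X^\star, X_0,\ldots,X_k)$ without the noise, one instead uses the embedding $\ell^\infty \subseteq \ell^{\lambda'}$ with $\lambda' > \lambda$, applies Jensen's inequality to pull the conditional expectation inside the sum, and controls $\mathbb{E}[G_k(t)^{\lambda'} \mid \mathcal{F}_k]$ via the tower property together with Lemma~\ref{lem:preliminary_bounds}, producing a $(Td)^{\lambda/\lambda'} \le d^\varepsilon$ bound upon choosing $\lambda' = \lambda/\varepsilon$.
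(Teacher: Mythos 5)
Your proof is correct and follows essentially the same route as the paper's: bound $N_k$, $Q_k$, $\mathscr{G}_k$ deterministically using the stopping time, the pseudo-Lipschitz control on $h$, and the conditional moment bounds of Lemma~\ref{lem:preliminary_bounds}, then handle $G_k$ by a union bound over the $O(Td)$ indices and dispose of the conditional expectation by (effectively) noting the past gradient norms are measurable given the history. The paper's only cosmetic difference is that it splits off the terminal increment and bounds its conditional moment separately rather than invoking measurability for the whole maximum, which amounts to the same thing.
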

\begin{proof} The first line, \eqref{eq:gradient_norm_bound}, follows from \eqref{eq:delta_bound}. For the first inequality, $\| G_k \|_{\infty} = \max_{0 \le j \le k} \frac{\| \Delta_j \|^2}{d}$, 
which are all bounded by $d^\varepsilon$ with overwhelming probability. A union bound tells us that the maximum is also bounded by $d^\varepsilon$ w.o.p.. For the second inequality,
    \begin{align*}
    \EE[| G_k |_{\infty}^\lambda \, | \, \mathcal F_k ] 
    &\le  \EE \left[ \left( \frac{\| \Delta_k \|^2}{d} \right)^\lambda \, | \, \mathcal F_k \right] + \EE \left[ \max_{0 \le j \le k-1} \left( \frac{\| \Delta_j \|^2}{d} \right)^\lambda \, | \, \mathcal F_k \right] \\
    &\le  \EE \left[ \left( \frac{\| \Delta_k \|^2}{d} \right)^\lambda \, | \, \mathcal F_k \right] + \max_{0 \le j \le k-1} \left( \frac{\| \Delta_j \|^2}{d} \right)^\lambda  \\
    &\le d^\varepsilon, \tag{w.o.p.}
\end{align*} 
as desired. The second line is more straightforward: 
\[ \| N_k \|_{\infty} = \max_{0 \le j \le k} \| (W_j^+)^\top W_j^+ \|.\]
Now, $\| X^\star\|$ and $\| X_0 \|$ are bounded independent of $d$, and $\| X_j \|$ is bounded by $cM$ (because of the stopping time we are using.) Thus the maximum over $j$ of their inner products are bounded by a constant. The same thing holds for $\| Q_k \|_\infty$:
\begin{align*}
    \| Q_k \|_\infty &=  \max_{0 \le j \le k} \mathcal R(X_j) \\ 
    &= \max_{0 \le j \le k} h(W_j^\top K W_j).
\end{align*}
Since the derivative of $h$ is pseudo-Lipschitz, $h$ is continuous, and thus bounded for bounded arguments. And indeed, the argument to $h$ is bounded: 
\[ \| W_j^\top K W_j \| \le \| W_j \|^2 \| K \|_{\text{op}}, \]
both of which are bounded independent of $d$. Finally, a similar argument applies to $\mathcal G_k$:
\begin{align*}
    \| \mathcal G_k \|_{\infty}
    &= \max_{0 \le j \le k} 
    \E \left[ \frac{\| \Delta_j \|^2}{d} \, | \, \mathcal F_j \right]  \le \max_{0 \le j \le k} C 
    = C 
\end{align*}
by Lemma~\ref{lem:preliminary_bounds}.
\end{proof}
We now prove a concentration result that closely follows \cite[Proposition 5.6]{collinswoodfin2023hitting}.
\begin{lemma}[\cite{collinswoodfin2023hitting}, Lemma~5.2]\label{lem:conditioning}
Suppose $v \in \mathbb R^d$ is distributed $\mathcal N(0, I_d)$ and $U \in R^{d \times 2}$ has orthonormal columns. Then 
\begin{equation}
    v \, | \, U^\top v  \sim v - U(U^\top v) + UU^\top v ,
\end{equation}
where $v-U\left(U^T v\right) \sim N\left(0, I_d-U U^T\right)$ and $U U^T v \sim N\left(0, U U^T\right)$ with $v-U\left(U^T v\right)$ independent of $U U^T v$. 
\end{lemma}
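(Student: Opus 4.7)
The plan is to treat this as a standard Gaussian conditioning argument based on the orthogonal decomposition $v = (I_d - UU^T)v + UU^T v$ induced by the column space of $U$.

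First, I would show that the two components $(I_d - UU^T)v$ and $UU^T v$ are independent. Since $v \sim \mathcal N(0,I_d)$, the pair $((I_d - UU^T)v, UU^T v)$ is jointly Gaussian, so independence follows from computing the cross-covariance. Using $U^T U = I_2$ (orthonormal columns), I get
\[
    \mathrm{Cov}\bigl((I_d-UU^T)v,\, UU^T v\bigr) = (I_d - UU^T)(UU^T) = UU^T - U(U^T U)U^T = 0.
\]
Simultaneously, $\mathrm{Cov}((I_d-UU^T)v) = (I_d-UU^T)^2 = I_d - UU^T$ and $\mathrm{Cov}(UU^T v) = UU^T$ by the same projection identity, which matches the distributions claimed in the lemma.

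Second, I would observe that conditioning on $U^T v$ is equivalent to conditioning on $UU^T v$: one direction is the identity $UU^T v = U(U^T v)$, and the reverse uses $U^T(UU^T v) = (U^T U)(U^T v) = U^T v$. Hence the $\sigma$-algebras generated by $U^T v$ and $UU^T v$ coincide, and conditioning on $U^T v$ fixes $UU^T v = U(U^T v)$.

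Finally, combining the two points: $(I_d - UU^T)v$ is independent of $UU^T v$, hence independent of $U^T v$, so its conditional distribution given $U^T v$ is the same as its marginal $\mathcal N(0, I_d - UU^T)$. Writing $v = (I_d - UU^T)v + U(U^T v)$ and substituting the conditional law of the first summand yields the stated representation (where the $v$ appearing in $v - U(U^T v)$ on the right-hand side is understood as an independent fresh copy, which has the same distribution $\mathcal N(0, I_d - UU^T)$ as $(I_d - UU^T)v$). There is no real obstacle here: the only step requiring care is being explicit that the $v$ on the right is a fresh independent sample, so that the equality in distribution makes sense rather than being a tautology.
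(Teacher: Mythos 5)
Your argument is correct and is the standard proof of this fact: decompose $v$ into its projections onto the column space of $U$ and its orthogonal complement, check that the cross-covariance $(I_d-UU^T)UU^T=0$ so the jointly Gaussian components are independent, note that conditioning on $U^Tv$ is equivalent to conditioning on $UU^Tv$, and conclude. The paper itself states this lemma by citation without reproving it, and your proof matches the intended argument, including the correct reading of the right-hand side as a fresh independent copy in its first summand.
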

\begin{lemma}\label{lem:double_conditioning} For a matrix $H = H_k$ with bounded operator norm, or $\| H \|_{\text{op}} < C$ and $\E[H_k \, | \, \mathcal F_k] = H_k$, set $q(a) = a^\top H a.$ Then 
\[ \left| \E[q(a_{k + 1}) f'(r_k)^2 \, | \, \mathcal F_k ] - \Tr(KH) \E[ f'(r_k)^2 \, | \, \mathcal F_k ] \right| \le C(H). \]
Note that the $H$ used here is not the same as the matrix used in the integro-differential equation. 
\end{lemma}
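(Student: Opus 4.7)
The plan is to reduce to the standard Gaussian by writing $a_{k+1} = \sqrt{K} v$ with $v \sim \mathcal{N}(0, I_d)$, and then project $v$ onto the low-dimensional subspace that carries all the dependence on $r_k$. Since $r_k = W_k^\top a_{k+1} = (\sqrt{K}W_k)^\top v$, the conditioning information $r_k$ lies in the rank-$2$ column span of $\sqrt{K}W_k$. Let $U \in \mathbb{R}^{d \times 2}$ be an $\mathcal{F}_k$-measurable orthonormal basis of that span. Then $r_k$ is a deterministic function of $U^\top v$ (given $\mathcal F_k$), and in the quadratic form
\[
q(a_{k+1}) = v^\top (\sqrt{K} H \sqrt{K}) v \eqdef v^\top M v, \qquad M \defas \sqrt{K} H \sqrt{K},
\]
the matrix $M$ is $\mathcal F_k$-measurable (using the assumption $\E[H_k \mid \mathcal F_k] = H_k$).

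Next I would apply Lemma~\ref{lem:conditioning} to decompose $v = (I - UU^\top)v + UU^\top v$, where $w \defas (I-UU^\top)v \sim \mathcal N(0, I - UU^\top)$ is independent of $U U^\top v$ (and hence of $r_k$). Expanding
\[
v^\top M v = w^\top M w + 2 w^\top M (UU^\top v) + (UU^\top v)^\top M (UU^\top v),
\]
and using independence of $w$ from $r_k$ together with $\E[w \mid \mathcal F_k] = 0$, I can take $\E[\cdot \mid \mathcal F_k, r_k]$ to obtain
\[
\E[q(a_{k+1}) \mid \mathcal F_k, r_k] = \Tr\!\big((I - UU^\top) M\big) + (U^\top v)^\top (U^\top M U) (U^\top v).
\]
Since $\Tr((I - UU^\top)M) = \Tr(M) - \Tr(U^\top M U) = \Tr(KH) - \Tr(U^\top M U)$, multiplying by $f'(r_k)^2$ and taking $\E[\cdot \mid \mathcal F_k]$ gives
\[
\E[q(a_{k+1}) f'(r_k)^2 \mid \mathcal F_k] - \Tr(KH)\,\E[f'(r_k)^2 \mid \mathcal F_k]
= \E\!\left[ \big( (U^\top v)^\top (U^\top M U)(U^\top v) - \Tr(U^\top M U)\big) f'(r_k)^2 \,\big|\, \mathcal F_k\right].
\]

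Finally, I would bound this remainder by an absolute constant depending only on $\|H\|_{\mathrm{op}}$ and $\|K\|_{\mathrm{op}}$. The key point is that $U^\top M U \in \mathbb{R}^{2 \times 2}$, so its trace and the quadratic form above are $O(1)$ quantities: $\|U^\top M U\|_{\mathrm{op}} \le \|K\|_{\mathrm{op}}\|H\|_{\mathrm{op}}$ and $|\Tr(U^\top MU)| \le 2\|K\|_{\mathrm{op}}\|H\|_{\mathrm{op}}$. Combined with Cauchy--Schwarz and the uniform bound $\E[f'(r_k)^4 \mid \mathcal F_k] \le C$ (Lemma~\ref{lem:preliminary_bounds}) together with the Gaussian moments of the two-dimensional vector $U^\top v$, the right-hand side is bounded by a constant $C(H) = C(\|H\|_{\mathrm{op}}, \|K\|_{\mathrm{op}})$, as required.

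The main obstacle is not any single estimate but rather the correct bookkeeping: one must be careful that $U$ is $\mathcal F_k$-measurable (so that $U^\top v$ is genuinely a standard Gaussian in $\mathbb R^2$ conditional on $\mathcal F_k$), and one must exploit the rank-$2$ structure of the conditioning to avoid an $O(d)$ error from the full trace. Once this decomposition is in place, the remainder is a $2 \times 2$ quadratic form whose correlation with $f'(r_k)^2$ is controlled by elementary Gaussian moment bounds.
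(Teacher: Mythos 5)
Your proposal is correct and follows essentially the same route as the paper: both reduce to a standard Gaussian via $a_{k+1}=\sqrt{K}v$, use Lemma~\ref{lem:conditioning} to split $v$ into its projection onto the (rank-$\le 2$) column span of $\sqrt{K}W_k$ and an independent complement, identify the deviation from $\Tr(KH)\E[f'(r_k)^2\mid\mathcal F_k]$ as a $2\times 2$ quadratic-form remainder, and bound it by Cauchy--Schwarz together with the moment bounds of Lemma~\ref{lem:preliminary_bounds}. Your version is, if anything, slightly cleaner in working directly with an orthonormal basis of the span (avoiding the invertibility of the QR factor $R_k$ that the paper implicitly uses), but the decomposition and the resulting error terms are identical.
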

\begin{proof} Many of the computations in this proof are taken directly from \cite{collinswoodfin2023hitting}, but we repeat them here for completeness. We have $\mathcal F_k = \sigma( \{ W_i \}_{i=0}^k)$; set $\hat{\mathcal F_k} = \sigma( \{ W_i \}_{i=0}^k, \{ r_i \}_{i=0}^k)$. A simple calculation shows that 
\begin{align}
    \E[q(a_{k+1}) f'(r_k)^2 \, | \, \hat{\mathcal F_k}] &= \E[q(a_{k + 1} - \E[a_{k + 1} \, | \, \hat{\mathcal F_k}] ) \, | \, \hat{\mathcal F_k}] \E_{\epsilon}[ f'(r_k)^2] \nonumber \\ 
    &\quad + q(\E[a_{k + 1} \, | \, \hat{\mathcal F_k}]) \E_{\epsilon} [ f'(r_k)^2 ] \label{eq:hessian_1}.
\end{align}
To compute the conditional mean $\E[ a_{k + 1} \, | \, \hat{\mathcal F_k}]$ and covariance $(a_{k+1} -  \E[ a_{k + 1} \, | \, \hat{\mathcal F_k}]) (a_{k+1} -  \E[ a_{k + 1} \, | \, \hat{\mathcal F_k}])^\top$, we use Lemma~\ref{lem:conditioning}.
By Assumption~\ref{assumption:data}, we can write $a_{k+1} = \sqrt{K}v_{k}$, for $v_{k} \sim \mathcal N(0, I_d)$.
\newline 
\newline 
Now we perform a QR-decomposition on $\sqrt{K}W_k \defas Q_k R_k$ where $Q_k \in \mathbb R^{d \times 2}$ with orthonormal columns and $R_k \in \mathbb R^{2 \times 2}$ is upper triangular (and invertible). Set $\Pi_k \defas Q_k Q_k^T$. In distribution, 
\[ 
a_{k+1} \, | \, a_{k+1}^\top W_k \overset{\text{d}}{=} \sqrt{K} v_{k} \, | \, R_k^T Q_k^T v_{k}. 
\]
As $R_k$ is invertible, by Lemma~\ref{lem:conditioning}, 
\begin{equation} \label{eq:blah_1}
	a_{k+1} \, | \, a_{k+1}^\top W_k \overset{\text{d}}{=} \sqrt{K} v_{k} \, | \,  Q_k^T v_{k} \overset{\text{d}}{=} \sqrt{K} \big ( v_{k} - \Pi_k v_{k} \big ) + \sqrt{K} \Pi_k v_{k}.
\end{equation}
We note that $(I_d - \Pi_k) v_{k} \sim N(0, I_{d}  - \Pi_k )$ and $\Pi_k v_{k} \sim N(0, \Pi_k)$ with $(I_d - \Pi_k)v_{k}$ independent of $\Pi_k v_{k}$. From this, we have that
\begin{equation} \label{eq:mean_a}
\EE[ a_{k+1} \, | \, \hat{\mathcal{F}}_k ] = \sqrt{K} \Pi_k v_k, \quad \text{where $v_k \sim N(0, I_d)$.}
\end{equation}
 Moreover the conditional covariance of $a_{k+1}$ is precisely 
\begin{align}
       (\EE[ (a_{k+1} - \EE[a_{k+1} \, | \, \hat{\mathcal{F}_k} ])(a_{k+1} - \EE[a_{k+1} \, | \, \hat{\mathcal{F}_k} ])^\top \, | \hat{\mathcal F_k}]) \\ 
       \quad = \sqrt{K} (I_d - \Pi_k) \sqrt{K}, \quad \text{where $\Pi_k = Q_k Q_k^T$}. \nonumber
\end{align}
Next, using that $\E[H_k \, | \, \mathcal F_k] = H_k$, we expand \eqref{eq:hessian_1} to get the leading order behavior
\begin{equation} \label{eq:hessian_2}
\begin{aligned}
    \EE[ q(a_{k+1}) f'(r_k)^2 \, | \, \hat{\mathcal{F}}_k ]
    &= \Tr(HK) \E_{\epsilon}[f'(r_k)^2 ] \\ 
    &- \Tr(H \sqrt{K} \Pi_k \sqrt{K}) \E_{\epsilon}[f'(r_k)^2 ]
    \\ 
    &+ q(\sqrt{K}\Pi_k v_k) \E_{\epsilon}[f'(r_k)^2 ].
\end{aligned}
\end{equation}
Taking the expectation with respect to $\mathcal F_k$, we obtain 
\begin{equation}
    \EE[ q(a_{k+1}) f'(r_k)^2 \, | \, \mathcal{F}_k ] 
    - \Tr(HK) \E[f'(r_k)^2 \, | \, \mathcal F_k  ] = \E[\mathcal E_k \, | \, \mathcal F_k ],
\end{equation} 
where the error $\mathcal E_k$ is defined as 
\begin{align}
\mathcal E_k  =  &- \Tr(H \sqrt{K} \Pi_k \sqrt{K}) \E_{\epsilon}[f'(r_k)^2 ]\\
    &+ q(\sqrt{K}\Pi_k v_k) \E_{\epsilon}[f'(r_k)^2 ].
\end{align} 
The proof now turns to bounding the expectation of this error quantity.
\begin{align*}
| \Tr(H \sqrt{K} \Pi_k \sqrt{K}) \E[f'(r_k)^2 \, | \, \mathcal F_k ] |   
&= | \Tr(H \sqrt{K} \Pi_k \sqrt{K}) | \cdot \E[f'(r_k)^2 \, | \, \mathcal F_k ]  \\
&\le \| H \|_{\text{op}}  \| K \|_{\text{op}}   |\Tr(\Pi_k) | \cdot \E[f'(r_k)^2 \, | \, \mathcal F_k ] \\
&\le \| H \|_{\text{op}}  \| K \|_{\text{op}}  \cdot \text{rank}(Q_k)  \E[f'(r_k)^2 \, | \, \mathcal F_k ] \\
&\le 2 \| H \|_{\text{op}}  \| K \|_{\text{op}}  \E[f'(r_k)^2 \, | \, \mathcal F_k ].
\end{align*}
By \eqref{eq:f_prime_bound}, the expectation is bounded by a constant, so this term is overall bounded by a constant. We move on to the next term in the error: 
\begin{align*}
   q(\sqrt{K}\Pi_k v_k) f'(r_k)^2 &\le  
   \| H \|_{\text{op}}  \| K \|_{\text{op}}  \| \Pi_k v_k \|^2 f'(r_k)^2.
\end{align*}
Taking expectations and using Cauchy Schwarz, we obtain
\begin{align*}
    \E [ q(\sqrt{K}\Pi_k v_k)  f'(r_k)^2 \, | \, \mathcal F_k ] &\le  
   \| H \|_{\text{op}}  \| K \|_{\text{op}}  \cdot \sqrt{\E[ \| \Pi_k v_k \|^4 \, | \, \mathcal F_k ]}  \cdot \sqrt{\E[ f'(r_k)^4 \, | \, \mathcal F_k ]}.
\end{align*}
The first expectation is $\E[ \| \Pi_k v_k \|^2 \, | \, \mathcal F_k ] = \| \Pi_k \|_{\text F}^4 = 8$, and the second is bounded by \eqref{eq:f_prime_bound} as before. We thus conclude that $\E[\mathcal E_k \, | \, \mathcal F_k]$ is bounded by a constant depending on $\| H \|_{\text{op}}$, completing the proof.
\end{proof}
\begin{lemma}\label{lem:double_conditioning_stepsize} There is a constant $C$ such that 
\[ | \gamma(\mathcal G_k) - \gamma(B_k) | \le Cd^{-1}. \]
\end{lemma}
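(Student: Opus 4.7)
The two quantities $\gamma(\mathcal{G}_k)$ and $\gamma(B_k)$ differ only in their middle functional argument: one has the function $t \mapsto \mathcal{G}_k(dt)$ and the other $t \mapsto \tfrac{\Tr(K)}{d} I(B_k(dt))$, with identical first, third and fourth arguments. The plan is to use the $\alpha$-pseudo-Lipschitz property of $\gamma$ (Assumption~\ref{assumption:stepsizes}) to reduce the claim to the pointwise estimate
\[
\sup_{t \ge 0} \bigl| \mathcal{G}_k(dt) - \tfrac{\Tr(K)}{d} I(B_k(dt)) \bigr| \le C/d,
\]
since under the stopping time $\vartheta$ the ``bounded stuff'' factor $1 + \|\mathcal{G}_k\|_\infty^\alpha + \|\tfrac{\Tr(K)}{d}I(B_k)\|_\infty^\alpha + \|N_k\|_\infty^\alpha + \|Q_k\|_\infty^\alpha$ is $O(1)$ by Lemma~\ref{lem:infinity_norm}.

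The heart of the proof is therefore a one-step computation at a fixed $t < k/d$. Writing $a \sim N(0,K)$ as $a = \sqrt{K}v$ with $v \sim N(0, I_d)$ and letting $\sqrt{K}W_{dt} = Q_t R_t$ be the QR-decomposition with $\Pi_t = Q_t Q_t^T$, the same conditioning argument as in the proof of Lemma~\ref{lem:double_conditioning} (via Lemma~\ref{lem:conditioning}) yields
\[
\EE\bigl[\|a\|^2 \,\big|\, r, \mathcal{F}_{dt}\bigr]
 = \Tr(K) - \Tr(K\Pi_t) + \|\sqrt{K}\,\Pi_t v\|^2,
\]
where $r = W_{dt}^\top a$. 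Since $\EE[\|\nabla_X\Psi\|^2 \mid \mathcal{F}_{dt}] = \EE[f'(r;\epsilon)^2 \|a\|^2 \mid \mathcal{F}_{dt}]$ and $I(B_{dt}) = \EE[f'(r;\epsilon)^2 \mid \mathcal{F}_{dt}]$, taking the outer expectation over $(r, \epsilon)$ gives
\[
d\,\mathcal{G}_k(dt) - \Tr(K)\, I(B_{dt})
 = \EE\bigl[ f'(r;\epsilon)^2 \,\bigl(\|\sqrt{K}\,\Pi_t v\|^2 - \Tr(K\Pi_t)\bigr) \,\big|\, \mathcal{F}_{dt}\bigr].
\]

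The remainder is then controlled by Cauchy--Schwarz:
\[
\Bigl|d\,\mathcal{G}_k(dt) - \Tr(K)\,I(B_{dt})\Bigr|
 \le \sqrt{\EE[f'(r;\epsilon)^4 \mid \mathcal F_{dt}]} \cdot \sqrt{\mathrm{Var}\bigl(\|\sqrt{K}\Pi_t v\|^2 \bigm| \mathcal F_{dt}\bigr)}.
\]
The first factor is $O(1)$ by Lemma~\ref{lem:preliminary_bounds}. The second factor is $O(1)$ because $\Pi_t$ is an orthogonal projection onto a subspace of dimension at most $2$, so $\Pi_t v$ lives in a two-dimensional Gaussian subspace and $\|\sqrt{K}\Pi_t v\|^2$ is a quadratic form whose variance is bounded by $C\|K\|_{\mathrm{op}}^2 \cdot \mathrm{rank}(\Pi_t)^2 \le 4 C$. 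Dividing by $d$ gives the desired $O(1/d)$ pointwise bound on $\mathcal{G}_k - \tfrac{\Tr(K)}{d}I(B_k)$, and combining with the pseudo-Lipschitz reduction from the first paragraph completes the proof. The only mildly subtle step is the rank-$2$ observation, which is what saves the factor of $d$; without it, the variance of $\|\sqrt{K}v\|^2$ would itself be of order $d$ and the bound would be trivial.
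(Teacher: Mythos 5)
Your proof is correct and takes essentially the same route as the paper: a pseudo-Lipschitz reduction to $\|\mathcal G_k - \Tr(K)I(B_k)/d\|_\infty$ (with the prefactor controlled by Lemma~\ref{lem:infinity_norm}), followed by the Gaussian conditioning/QR argument exploiting that $\Pi_t$ has rank at most $2$ — which is exactly the content of the paper's Lemma~\ref{lem:double_conditioning} applied with $H = I_d$. The only cosmetic difference is that you center the fluctuation $\|\sqrt{K}\Pi_t v\|^2 - \Tr(K\Pi_t)$ and apply Cauchy--Schwarz once, whereas the paper bounds the two residual terms separately; both hinge on the same rank-$2$ observation.
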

\begin{proof}
Using the Lipschitz condition on the stepsize, we have 
    \begin{align*}
        &| \gamma(\mathcal G_k)
        - \gamma(B_k) | \\
        &\qquad \le 
        \|  \mathcal G_k - \Tr(K) I(B_k)/d \|_{\infty} \times
        (1 
        + 2\|N_k\|_\infty^\alpha 
        + \| \mathcal G_k \|_\infty^\alpha 
        + \|  \Tr(K) I(B_k)/d \|_\infty^\alpha 
        + 2\| Q_k \|_\infty^\alpha 
        )\\
        &\qquad \le C \|  \mathcal G_k - \Tr(K) I(B_k)/d \|_{\infty} \tag{Lemma~\ref{lem:infinity_norm}}\\ 
        &\qquad \le Cd^{-1} \max_{0 \le j \le k} \left \| \E[a_{j + 1}^\top a_{j+1} f'(r_j)^2 \, | \, \mathcal F_j ] - \Tr(K) \E[ f'(r_j)^2 \, | \, \mathcal F_j ] \right \| \\   
        &\qquad \le Cd^{-1} \tag{Lemma~\ref{lem:double_conditioning}}, 
    \end{align*}
as desired.
\end{proof}
\subsection{Specific learning rates} \label{appsubsec:specific_step_size}

In this section, we confirm that AdaGrad-Norm satisfies Assumption~\ref{assumption:stepsizes}. In the notation of Assumption~\ref{assumption:stepsizes}, we have, for AdaGrad-Norm, 
\[ \gamma(td, f, g, q) = \frac{\eta}{\sqrt{b^2 + \int_0^\infty g(s) \, \dif s}}. \]
Note that this reduces to the discrete stepsize if we plug in $g = G_k$: 
\begin{align*}
    \gamma(td, f, G_k(d \times \cdot), q)  
    &= \frac{\eta}{\sqrt{b^2 + \int_0^\infty G_k(ds) \, \dif s}} \\ 
    &= \frac{\eta}{\sqrt{b^2 + \int_0^\infty  \left( 1_{ \{ds \le k \}} \frac{1}{d} \sum_{i=0}^k \|\nabla_X \Psi(X_i; a_{i+1}, \epsilon_{i+1}) \|^2 1_{[i,i+1)}(ds) \right) \,  \dif s}} \\ 
    &= \frac{\eta}{\sqrt{b^2 + \int_0^\infty  \left( 1_{ \{u \le k \}} \frac{1}{d^2} \sum_{i=0}^k \|\nabla_X \Psi(X_i; a_{i+1}, \epsilon_{i+1}) \|^2 1_{[i,i+1)}(u) \right) \, \dif u}} \\ 
    &= \frac{\eta}{\sqrt{b^2 + \frac{1}{d^2} \sum_{i=0}^k \|\nabla_X \Psi(X_i; a_{i+1}, \epsilon_{i+1}) \|^2 }},
\end{align*}
which is exactly the discrete version of the AdaGrad-Norm stepsize.
\begin{proposition}[Lipschitz]
For functions $f, g, q$ such that $f(ds) = g(ds) = q(ds) = 0$ for $s > t$, the AdaGrad stepsize $\gamma$ is Lipschitz. That is,  
\[ | \gamma(td, f(d \times \cdot), g(d \times \cdot), q(d \times \cdot)) - \gamma(td, \hat f(d \times \cdot), \hat g(d \times \cdot), \hat q(d \times \cdot)) | \le C(t, \gamma)(\| g - \hat g\|_{\infty}). \]
\end{proposition}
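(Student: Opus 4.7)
The plan is to exploit the fact that the AdaGrad-Norm stepsize
\[
\gamma(td, f, g, q) = \frac{\eta}{\sqrt{b^2 + \int_0^\infty g(s)\,\dif s}}
\]
depends only on its third argument, so the Lipschitz inequality reduces to a one-variable analytic estimate. In particular, the inputs $f, \hat f$ and $q, \hat q$ play no role and can be ignored entirely.

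\textbf{Step 1: Rescale and truncate the integral.} First I would apply the substitution $u = ds$ to obtain
\[
\int_0^\infty g(d \cdot s)\,\dif s = \frac{1}{d}\int_0^\infty g(u)\,\dif u,
\]
and invoke the support hypothesis $g(ds) = \hat g(ds) = 0$ for $s > t$, equivalently $g(u) = \hat g(u) = 0$ for $u > td$, to truncate both integrals at $u = td$. Setting
\[
A \defas b^2 + \int_0^\infty g(d \cdot s)\,\dif s, \qquad B \defas b^2 + \int_0^\infty \hat g(d \cdot s)\,\dif s,
\]
the difference $A - B$ then equals $\tfrac{1}{d}\int_0^{td}(g(u)-\hat g(u))\,\dif u$.

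\textbf{Step 2: Bound the integral difference by the sup norm.} By the triangle inequality applied on the compact interval $[0, td]$,
\[
|A - B| \le \frac{1}{d}\int_0^{td}|g(u)-\hat g(u)|\,\dif u \le \frac{td}{d}\,\|g - \hat g\|_\infty = t\,\|g - \hat g\|_\infty.
\]

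\textbf{Step 3: Apply an elementary analytic bound to $x \mapsto 1/\sqrt{x}$.} Since $A, B \ge b^2$, the mean value theorem gives $|A^{-1/2} - B^{-1/2}| \le |A-B|/(2b^3)$, from which
\[
|\gamma(td, f, g, q) - \gamma(td, \hat f, \hat g, \hat q)| = \eta\,|A^{-1/2} - B^{-1/2}| \le \frac{\eta t}{2 b^3}\,\|g - \hat g\|_\infty,
\]
establishing the bound with $C(t, \gamma) = \eta t / (2 b^3)$.

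The argument is essentially routine calculus, and I do not expect a genuine obstacle. The only mildly delicate point is interpreting the scaled-input notation $g(d \times \cdot)$ correctly and verifying that the support hypothesis on $g, \hat g$ makes the semi-infinite integrals reduce to integrals over a compact set of length $td$; once that is clarified, the three steps above give the Lipschitz constant directly, and the constant $C(t, \gamma)$ depends on $t$ only linearly and on the fixed AdaGrad parameters $\eta, b$.
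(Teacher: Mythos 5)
Your proof is correct and follows essentially the same route as the paper's: both reduce to the derivative bound $|\tfrac{\dif}{\dif x}\,\eta(b^2+x)^{-1/2}| \le \eta/(2b^3)$, use the support hypothesis to truncate the integral to an interval of effective length $t$, and arrive at the same constant $C(t,\gamma) = \eta t/(2b^3)$. The only cosmetic difference is that you perform the change of variables $u = ds$ explicitly, whereas the paper integrates directly in the $s$ variable over $[0,t]$; the two are equivalent.
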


\begin{remark}
This is a stronger condition than the $\alpha$-pseudo Lipschitz one in Assumption~\ref{assumption:stepsizes}.
\end{remark}

\begin{proof} To show this, we look at the derivative of the AdaGrad stepsize function. Setting $F(x) = \frac{\eta}{\sqrt{b^2 + x}}$, we have 
\[ |F'(x)| = \frac{\eta}{2(b^2 + x)^{3/2}} \le \frac{\eta}{2b^3} \]
for $x \in [0, \infty)$. We thus have 
\begin{align*}
&\,\,| \gamma(td, f(d \times \cdot), g(d \times \cdot), q(d \times \cdot)) - \gamma(td, \hat f(d \times \cdot), \hat g(d \times \cdot), \hat q(d \times \cdot)) |\\
=&\,\, \left| 
\frac{\eta}{\sqrt{b^2 + \int_0^\infty g(ds)\, \dif s }}
- \frac{\eta}{\sqrt{b^2 +  \int_0^\infty \hat g(ds)\, \dif s }}
\right| \\
=&\,\, \left| 
F \left( \int_0^\infty g(ds)\, \dif s \right)
- 
F \left( \int_0^\infty \hat g(ds)\, \dif s  \right)
\right| \\ 
\le&\,\, \frac{\eta}{2b^3} \left| \int_0^\infty g(ds)\, \dif s - \int_0^\infty \hat g(ds)\, \dif s \right| \\ 
\le&\,\, \frac{\eta}{2b^3} \left| \int_0^t g(ds)\, \dif s - \int_0^t \hat g(ds)\, \dif s \right| \\ 
\le&\,\, \frac{\eta}{2b^3} \left( t \cdot \| g - \hat g \|_{\infty} \right)\\
\le& \,\, \frac{\eta t}{2b^3} \cdot \| g - \hat g \|_{\infty},
\end{align*}
where we were able to replace the $\infty$ with a $t$ because $g(ds) = 0$ for $s > t$.   
We have thus obtained a Lipschitz constant $\frac{\eta t}{2b^3}$ depending only on $t$.
\end{proof}

Next we show that the AdaGrad-Norm is bounded. 

\begin{proposition}[Boundedness]
   Suppose $\gamma$ is AdaGrad-Norm. Then \eqref{stepsize:boundedness}, as part of Assumption~\ref{assumption:stepsizes}, holds.
\end{proposition}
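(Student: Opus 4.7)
The plan is to observe that the AdaGrad-Norm stepsize function is in fact uniformly bounded by a constant depending only on $b$ and $\eta$, so verifying \eqref{stepsize:boundedness} reduces to picking $\hat C = \eta/b$ and using that the right-hand side always exceeds $1$.

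First I would recall the AdaGrad-Norm expression from the proposition just above, namely
\[
\gamma(td, f, g, q) = \frac{\eta}{\sqrt{b^2 + \int_0^\infty g(s)\,\dif s}}.
\]
The essential observation is that $\gamma$ is only evaluated on the third argument $g$ that arises from \eqref{eq:lr}, i.e.\ on functions of the form $G_k(d \times \cdot)$, which are indicator-weighted sums of squared gradient norms and thus pointwise non-negative. In particular $\int_0^\infty g(s)\,\dif s \geq 0$, so the denominator satisfies $\sqrt{b^2 + \int g} \geq b > 0$. Consequently
\[
\gamma(td, f, g, q) \le \frac{\eta}{b}
\]
uniformly in $(td, f, g, q)$, independent of $d$.

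Setting $\hat C = \eta/b$, and using $\alpha \geq 0$ so that $1 + \|f\|_\infty^\alpha + \|q\|_\infty^\alpha + \|g\|_\infty^\alpha \geq 1$, we obtain
\[
\gamma(td, f, g, q) \le \hat C \le \hat C\bigl(1 + \|f\|_\infty^\alpha + \|q\|_\infty^\alpha + \|g\|_\infty^\alpha\bigr),
\]
which is exactly \eqref{stepsize:boundedness}.

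There is no real obstacle here: unlike the Lipschitz statement, where one needed to control fluctuations of the denominator via the $1/(2b^3)$ derivative bound, the boundedness follows purely from the sign of the integrand together with the additive $b^2$ in the denominator. The only mild subtlety to flag is that the argument uses non-negativity of $g$; this is automatic for the $G_k$ actually produced by SGD, so no extension of $\gamma$ off of $\{g \ge 0\}$ is needed for Assumption~\ref{assumption:stepsizes} to apply in our analysis.
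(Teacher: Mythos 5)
Your proof is correct and matches the paper's, which simply notes $\gamma(td,f,g,q) = \eta/\sqrt{b^2+\int_0^t g(s)\,\dif s} \le \eta/b$. Your extra remarks on the non-negativity of $g$ and on the right-hand side of \eqref{stepsize:boundedness} being at least $\hat{C}$ make explicit what the paper leaves implicit, but the argument is essentially identical.
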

\begin{proof}
This is immediate: 
\[ \gamma(td, f, g, q) = \frac{\eta}{\sqrt{b^2 + \int_0^t g(s) \, ds }} \le \frac{\eta}{b}. \]
\end{proof}

It remains to show that AdaGrad-Norm satisfies \eqref{stepsize:concentration} in Assumption~\ref{assumption:stepsizes}. 

\begin{proposition}[Concentration]
   Suppose $\gamma$ is AdaGrad-Norm, with $G_k$ and $\mathcal G_k$ being defined as before. Then Equation \eqref{stepsize:concentration}, as part of Assumption~\ref{assumption:stepsizes}, holds: 
    \[ 
    \E[ | \gamma(G_k) - \gamma(\mathcal G_k) | \, | \, \mathcal F_k] \le Cd^{-\delta}(1 + \| f \|_{\infty}^\alpha + \| q \|_{\infty}^\alpha). 
    \] 
\end{proposition}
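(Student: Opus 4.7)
The plan is to combine the Lipschitz bound established in the previous proposition with a martingale concentration argument for the partial sum of centered squared stochastic gradients. From the intermediate step of the Lipschitz computation (before passing to the sup-norm), we obtain
\[
|\gamma(k, f, G_k(d\times\cdot), q) - \gamma(k, f, \mathscr{G}_k(d\times\cdot), q)|
\le \frac{\eta}{2b^3}\bigg|\int_0^\infty\!\!\bigl(G_k(d\times s) - \mathscr{G}_k(d\times s)\bigr)\dif s\bigg|
= \frac{\eta}{2b^3 d^2}|\Sigma_k|,
\]
where $\Sigma_k \defas \sum_{i=0}^{k-1}\bigl(\|\nabla_X\Psi(X_i;a_{i+1},\epsilon_{i+1})\|^2 - \mathbb{E}[\|\nabla_X\Psi(X_i;a_{i+1},\epsilon_{i+1})\|^2\mid\mathcal{F}_i]\bigr)$. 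Since the indicators $\bm{1}_{\{\cdot<k\}}$ restrict $G_k$ and $\mathscr{G}_k$ to the history strictly before time $k$, the left-hand side is already $\mathcal{F}_k$-measurable, so the outer conditional expectation in \eqref{stepsize:concentration} is trivial and it suffices to bound $|\Sigma_k|/d^2$ with overwhelming probability. Moreover, the AdaGrad-Norm stepsize does not depend on its $f$ or $q$ arguments at all, so the $(1+\|f\|_\infty^\alpha+\|q\|_\infty^\alpha)$ factor in the target bound is automatically available.

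The sequence $\Sigma_k$ is a martingale with respect to $\{\mathcal{F}_i\}$, and I will use the truncation-plus-Azuma strategy already employed in Propositions~\ref{lem:gradient_bound} and \ref{lem:hessian_bound}. Under the stopping time $\vartheta_M$ used throughout Section~\ref{sec:SGD_approx_solution}, Lemma~\ref{lem:preliminary_bounds} gives $\|\Delta_i\|^2 \le d^{1+\varepsilon}$ with overwhelming probability and bounded conditional moments of $\|\Delta_i\|^2/d$. Setting the truncation level $M = d^{1+\varepsilon}$, I define the bounded martingale differences $\tilde Y_i \defas (\|\Delta_i\|^2\wedge M) - \mathbb{E}[\|\Delta_i\|^2\wedge M\mid\mathcal{F}_i]$, each bounded by $2M$. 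Azuma's inequality, over at most $k\le Td$ increments, then yields
\[
\Pr\bigl(|\tilde\Sigma_k|\ge d^{2-\delta}\bigr) \le 2\exp\Bigl(-\tfrac{d^{1-2\delta-2\varepsilon}}{8T}\Bigr),
\]
which is superpolynomially small provided $\delta<\tfrac{1}{2}-\varepsilon$.

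Finally, I would control the truncation error $\Sigma_k-\tilde\Sigma_k$ exactly as in the proofs of Propositions~\ref{lem:gradient_bound} and \ref{lem:hessian_bound}: the event $\{\|\Delta_i\|^2>M\}$ is superpolynomially rare uniformly in $i\le Td$ by a union bound, so the raw truncations coincide with $\|\Delta_i\|^2$ with overwhelming probability, while the centering correction $|\mathbb{E}[\|\Delta_i\|^2\bm{1}_{\{\|\Delta_i\|^2>M\}}\mid\mathcal{F}_i]|$ is bounded via Cauchy--Schwarz by $\mathbb{E}[\|\Delta_i\|^4\mid\mathcal{F}_i]^{1/2}\Pr(\|\Delta_i\|^2>M\mid\mathcal{F}_i)^{1/2}$, which is superpolynomially small by Lemma~\ref{lem:preliminary_bounds}. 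Summing, the deterministic bias contribution from centering is still superpolynomially small, so $|\Sigma_k|\le d^{2-\delta}$ with overwhelming probability and hence \eqref{stepsize:concentration} holds for any $\delta<\tfrac{1}{2}$. The main conceptual subtlety is not in the martingale argument itself (which is routine) but rather in the setup: recognizing that both $G_k$ and $\mathscr{G}_k$ are $\mathcal{F}_k$-measurable eliminates the outer conditional expectation, and that the Lipschitz proof's intermediate step already produces the tight integrated bound on $\Sigma_k/d^2$ that a sup-norm bound would be too lossy to provide.
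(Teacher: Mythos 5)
Your proof is correct, and the martingale half of it (truncation at level $d^{1+\varepsilon}$, Azuma, and control of the truncation bias via Cauchy--Schwarz and Lemma~\ref{lem:preliminary_bounds}) is essentially the same as the paper's. Where you genuinely diverge is in the first reduction. The paper writes $|\gamma(G_k)-\gamma(\mathscr G_k)| \le |\gamma(G_k)^2-\gamma(\mathscr G_k)^2|^{1/2}$, bounds the difference of squares by $\frac{\eta^2}{d^2b^4}|\Sigma_k|$, gets $|\Sigma_k|\le d^{3/2+\varepsilon}$ from the martingale bound, and only then takes the square root, landing at $d^{-1/4+\varepsilon/2}$ and hence $\delta=1/8$. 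You instead apply the derivative bound $|F'(x)|\le \eta/(2b^3)$ for $F(x)=\eta/\sqrt{b^2+x}$ directly (the same estimate the paper establishes in its Lipschitz proposition), giving $|\gamma(G_k)-\gamma(\mathscr G_k)|\le \frac{\eta}{2b^3d^2}|\Sigma_k|\le Cd^{-1/2+\varepsilon}$; this is both shorter and quantitatively sharper, yielding any $\delta<1/2$ where the paper's square-root detour halves the exponent for no gain. Your observation that $G_k$ and $\mathscr G_k$ are $\mathcal F_k$-measurable, so the outer conditional expectation in \eqref{stepsize:concentration} reduces to the quantity itself, is also a cleaner closing step than the paper's one-line appeal to boundedness of $\gamma$ --- though for $G_k$ this measurability deserves a sentence, since it rests on $\|\nabla_X\Psi(X_j;a_{j+1},\epsilon_{j+1})\|$ being recoverable from $\|X_{j+1}-X_j\|$ and the AdaGrad-Norm recursion for $b_j$. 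Finally, as in the paper, the moment bounds you invoke from Lemma~\ref{lem:preliminary_bounds} are only available under the stopping time $\vartheta$, so the statement you actually prove is for the stopped process; this matches how \eqref{stepsize:concentration} is used downstream.
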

\begin{proof}
Looking to remove the square roots, we have
\begin{align*}
    | \gamma(G_k) - \gamma(\mathcal G_k) | &\le 
    | \gamma(G_k)^2 - \gamma(\mathcal G_k )^2 |^{\frac 12}.
\end{align*}
For AdaGrad-Norm, we have 
\begin{align}
     \left | \gamma(G_k)^2 - \gamma(\mathcal G_k)^2  \right| 
    &= \eta^2 \left | \frac{1}{b^2 + \frac 1{d^2} \sum_{j=0}^k \| \Delta_j \|^2 } - \frac{1}{b^2 + \frac 1{d^2} \sum_{j=0}^k \EE [ \| \Delta_j \|^2 \, | \, \mathcal F_j ]   }  \right | \nonumber \\
    &\le \frac{\eta^2}{d^2 b^4} \cdot \left |  \sum_{j=0}^k ( \EE [ \| \Delta_j \|^2 \, | \, \mathcal F_j ] - \| \Delta_j \|^2)  
    \right | \label{eq:concentration_intermediate}.
\end{align}
We now bound the sum above. 
Set $F_i = \| \Delta_i \|^2/ d$, $F_i^\beta = \text{Proj}_\beta(F_i)$, $\Delta \mathcal M_i = F_i - \E[ F_i \, | \, \mathcal F_i]$, and $\Delta \mathcal M_i^\beta = F_i^\beta - \E[ F_i^\beta \, | \, \mathcal F_i]$. Then $| \Delta \mathcal M_i^\beta | \in [-2\beta, 2 \beta]$, so Azuma's inequality gives us 
\begin{align*}
    \mathbb P \left( | \mathcal M_k^\beta | \ge t \right) 
    &\le 2\exp \left( -\frac{-t^2}{2\sum_{i=0}^k (2 \beta)^2} \right),\\ 
    \mathbb P \left( | \mathcal M_k^\beta | \ge d^{1/2 + \varepsilon} \right) 
    &\le 2\exp \left( -\frac{-d^{1 + 2 \varepsilon}}{2Td (2 d^{\varepsilon/2})^2} \right) = \exp \left( -\frac{d^\varepsilon}{8T} \right).
\end{align*}
where we set $\beta = d^{\varepsilon/2}$. This is close to the bound we want: the error is 
\begin{align*}
    | \mathcal M_k - \mathcal M_k^\beta | &\le 
    \sum_{i=0}^k | F_i - F_i^\beta |  + | \E[F_i - F_i^\beta \, | \, \mathcal F_i] |.
\end{align*}
We have
\begin{align*}
    \mathbb P(F_i - F_i^\beta \neq 0) 
    = \mathbb P(|F_i| > \beta) = \mathbb P\left(\frac{\| \Delta_i \|^2}{d} > d^{\varepsilon/2} \right),
\end{align*}
which superpolynomially small by \eqref{eq:delta_bound}. The expectation is similar: 
\begin{align*}
    | \E[F_i - F_i^\beta \, | \, \mathcal F_i] |  &=  
    | \E[(F_i - F_i^\beta) \mathbf 1_{\{ | F_i | > \beta \} } \, | \, \mathcal F_i] | \\
    &\le  \E[|F_i - F_i^\beta|^2\, | \, \mathcal F_i]^{\frac 12} 
    \cdot \E[  \mathbf 1_{\{ | F_i | > \beta \} } \, | \, \mathcal F_i]^{\frac 12} \\ 
    &\le  4\E[|F_i|^2\, | \, \mathcal F_i]^{\frac 12} 
    \cdot \E[  \mathbf 1_{\{ | F_i | > \beta \} } \, | \, \mathcal F_i]^{\frac 12}.
\end{align*}
The first expectation is bounded by a constant independent of $d$ by \eqref{eq:delta_bound}, and the second expectation is superpolynomially small by the same argument as above. We then have 
\begin{align*}
    | \mathcal M_k - \mathcal M_k^\beta | &\le d^{1/2 + \varepsilon}
\end{align*}
with overwhelming probability (note that this would be true for any power of $d$, by the definition of superpolynomially small.) We thus conclude that 
\[ | \mathcal M_k | \le d^{1/2 + \varepsilon} \]
with overwhelming probability. Multiplying by $d$, we find that 
\[ \left |  \sum_{j=0}^k ( \EE [ \| \Delta_j \|^2 \, | \, \mathcal F_j ] - \| \Delta_j \|^2)  
    \right| \le d^{3/2 + \varepsilon} \quad \text{w.o.p.}\]
Plugging this back into \eqref{eq:concentration_intermediate}, we find that 
\begin{align*}
     \left | \gamma(G_k)^2 - \gamma(\mathcal G_k)^2  \right|  &\le \frac{\eta^2}{d^2 b^4} d^{3/2 + \varepsilon} \\
    &\le C d^{-1/2 + \varepsilon}
\end{align*}
with overwhelming probability, and so, taking the square root, 
\[ | \gamma(G_k) - \gamma(\mathcal G_k) | \le Cd^{-1/4 + \varepsilon/2} \quad \text{w.o.p}, \]
which is less than $d^{-1/4 + \varepsilon}$ as $d$ grows (we replaced the constant with an extra factor of $d^{\varepsilon/2}$.) Controlling the expectation via the boundedness of $\gamma$, we find that with $\delta = 1/8$,
\[ \E[ | \gamma(G_k) - \gamma(\mathcal G_k) | \, | \, \mathcal F_k ] \le d^{-\delta} \quad \text{w.o.p.,}\]
as desired.
\end{proof}

\section{Proofs for AdaGrad-Norm analysis \label{app:AdaGrad_analysis}}
In this section we provide proofs of the propositions related to AdaGrad-Norm in the least squares setting as well as the more general strongly convex setting.  Statements of the propositions for least squares examples are found in Section \ref{sec:AdaGradAnalysis}.

\subsection{Strongly convex setting} \label{sec:sconvex}
In order to derive the limiting learning rate in this case, we need the following assumption and some standard definitions of strong convexity.   
\begin{assumption}[Risk and loss minimizer] \label{assumption:risk_loss_minimizer_1_main} Suppose that
  \[
    X^{\star} \in \argmin_{X} \big \{ \mathcal{R}(X) = \EE_{a, \epsilon}[f(\ip{X,a}, \ip{ X^{\star},a}),\epsilon] \big \}
  \]
  exists and has norm bounded independent of $d.$
  Then one has, 
  \[ 
    \ip{X^{\star},a} \in \argmin_x 
    \{
    f(x , \ip{X^{\star},a},\epsilon) \}, \qquad \text{for almost surely $a \sim \mathcal{N}(0, K)$ and $\epsilon$.}
  \]
\end{assumption}
\noindent While at first, this assumption seems quite strong, in fact, in a typical student-teacher setup when label noise is $0$ (i.e., $\epsilon = 0$), where the targets have the same model as the outputs, the assumption is satisfied.   Our goal here is not to be exhaustive, but simply to illustrate that our framework admits a nontrivial and useful analysis and which gives nontrivial conclusions for the optimization theory of these problems.
\begin{definition}[$\hat{L}$-smoothness of outer function $f$] A function $f \, : \, \mathbb{R}^3 \to \mathbb{R}$ that is $C^1$-smooth (in the first variable) is called \textit{$\hat{L}(f)$-smooth} if the following quadratic upper bound holds for any $x,\hat{x},y,z\in\mathbb{R}$
\begin{equation} \label{eq:quadratic_upper_bound_1_main}
     f(\hat{x}, y, z) \le   f(x, y, z) + \ip{ f'(x, y, z), \hat{x} - x} + \tfrac{\hat{L}(f)}{2} |\hat{x}-x|^2. 
\end{equation}
\end{definition}
\noindent Note that if $f' = \frac{\partial}{\partial x} f(x,y,z) $ is $\hat{L}(f)$-Lipschitz, i.e., $|f'(x, y,z) - f'(\hat{x}, y,z)| \le \hat{L}(f) | x - \hat{x}|$, then the inequality \eqref{eq:quadratic_upper_bound_1_main} holds with constant $\hat{L}$. Suppose $\displaystyle x^{\star} \in \argmin_x \{f(x, y, z)\}$ exists. An immediate consequence of \eqref{eq:quadratic_upper_bound_1_main} is that 
\begin{equation} \label{eq:L_smoothness_bound_main}
\frac{1}{2\hat{L}(f)}| f'(x, y, z) |^2 \leq  f(x, y, z) -f(x^{\star}, y, z)\leq  \frac{\hat{L}(f)}{2}|x-x^\star|^2.
\end{equation}

\begin{definition}[Restricted Secant Inequality] A function $f \, : \, \mathbb{R}^3 \to \mathbb{R}$ that is $C^1$-smooth (in the first variable) satisfies the $(\mu,\theta)$--\textit{restricted secant inequality (RSI)} if, for any $x \in \mathbb{R}$ and $x^{\star} \in \argmin_x \{ f(x)\}$, 
\[
\ip{x-x^{\star}, f'(x)} \ge 
\begin{cases} 
  \mu |x-x^{\star}|^2, & \text{if } \max\{|x^{\star}|^2,|x-x^{\star}|^2\} \leq \theta,\\
0, &\text{otherwise}.
\end{cases}
\]
If $f$ satisfies the above for $\theta = \infty$, then we say $f$ satisfies the $\mu$--RSI.
\end{definition}

\begin{proposition} \label{prop:adagrad_integrable_risk_convex}

Let the outer function $f \, : \, \mathbb{R}^3 \to \mathbb{R}$ be a $\hat{L}(f)$-smooth function satisfying the RSI condition with $\hat{\mu}(f)$ with respect to $x \in \mathbb{R}$. Suppose $X^{\star} \in argmin_{X} \{\mathcal{R}(X)\}$ exists bounded, independent of $d$ and Assumption~\ref{assumption:risk_loss_minimizer_1_main} holds and that $\gamma_0 = \frac{\eta}{b} = \frac{2\hat{\mu}(f) }{(\hat{L}(f))^2 \tfrac{1}{d} \Tr(K)} \zeta$, for some $\zeta\in (0,1)$, and that $\int_0^{\infty} \CMscr{R}(s) \gamma_s \dif s<\infty$ with $\gamma_s$ as in Table \ref{table:learning_rate_expressions} (AdaGrad-Norm, general formula), then $$\gamma_\infty \geq 
\frac{\gamma_0 \eta^2}{1+\frac{\zeta}{1-\zeta}\CMscr{D}^2(0)}.$$
 
\end{proposition}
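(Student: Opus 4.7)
The plan is to carry out the classical AdaGrad-Norm energy argument at the level of the deterministic ODE limit, by combining three ingredients: (i) the ODE for $\CMscr{D}^2(t)$ derived from system \eqref{eq:ODE} and the representation \eqref{eq:distance_optimality_equivalent}; (ii) a chained use of RSI with the two consequences of $\hat{L}(f)$-smoothness from \eqref{eq:L_smoothness_bound_main} to lower-bound the drift term by a multiple of $\CMscr{I}(t)$; and (iii) monotonicity of AdaGrad-Norm, $\gamma_\infty\le\gamma_t\le\gamma_0$, which together with the prescribed value of $\gamma_0$ makes the ODE bracket strictly negative, giving a descent inequality whose integration bounds $\int_0^\infty \gamma_t \CMscr{I}(t)\,\dif t$ and from which $\gamma_\infty$ can then be isolated.

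First I would sum the ODEs \eqref{eq:ODE} against $(1,-2,1)$, using $\dot{\CMscr{V}}_{22,i}\equiv 0$, to arrive at
\begin{equation*}
\tfrac{\dif}{\dif t}\CMscr{D}^2(t)=-2\gamma_t\mathcal{A}(t)+\gamma_t^2\tfrac{\tr(K)}{d}\CMscr{I}(t), \quad \mathcal{A}(t)\defas 2H_{1,t}(\CMscr{B}_{11}-\CMscr{B}_{12})+2H_{2,t}(\CMscr{B}_{12}-\CMscr{B}_{22}),
\end{equation*}
where $\mathcal{A}(t)$ is the deterministic equivalent of $\langle X-X^\star,\nabla\CMscr{R}(X)\rangle$. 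Applying the RSI pointwise in $(a,\epsilon)$---using Assumption~\ref{assumption:risk_loss_minimizer_1_main} to identify $\langle a,X^\star\rangle$ as the a.s.\ minimizer of $f(\cdot,\langle a,X^\star\rangle,\epsilon)$---and averaging gives $\mathcal{A}(t)\ge \hat\mu(f)(\CMscr{B}_{11}-2\CMscr{B}_{12}+\CMscr{B}_{22})$; chaining $|f'|^2\le 2\hat L(f)(f-f^\star)$ with $f-f^\star\le \tfrac{\hat L(f)}{2}(x-x^\star)^2$ from \eqref{eq:L_smoothness_bound_main} and averaging yields $\CMscr{I}(t)\le \hat L(f)^2(\CMscr{B}_{11}-2\CMscr{B}_{12}+\CMscr{B}_{22})$. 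Together, $\mathcal{A}(t)\ge (\hat\mu(f)/\hat L(f)^2)\CMscr{I}(t)$. Since $\gamma_t\le\gamma_0$ and $\gamma_0\tfrac{\tr(K)}{d}=\tfrac{2\hat\mu(f)\zeta}{\hat L(f)^2}$, the bracket becomes at most $-\tfrac{2\hat\mu(f)(1-\zeta)}{\hat L(f)^2}$, so $\tfrac{\dif}{\dif t}\CMscr{D}^2(t)\le -\tfrac{2\hat\mu(f)(1-\zeta)}{\hat L(f)^2}\gamma_t\CMscr{I}(t)$.

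Integrating on $[0,\infty)$ and using $\CMscr{D}^2(\infty)\ge 0$ (finiteness of the right-hand side being guaranteed by the hypothesis $\int_0^\infty \CMscr{R}(s)\gamma_s\,\dif s<\infty$ together with $\CMscr{I}\le 2\hat L(f)(\CMscr{R}-\CMscr{R}^\star)$), I get $\int_0^\infty \gamma_t\CMscr{I}(t)\,\dif t\le \tfrac{\hat L(f)^2\CMscr{D}^2(0)}{2\hat\mu(f)(1-\zeta)}$. Monotonicity $\gamma_t\ge\gamma_\infty$ then pulls out $\gamma_\infty$: $\int_0^\infty \CMscr{I}(t)\,\dif t\le \tfrac{\hat L(f)^2\CMscr{D}^2(0)}{2\hat\mu(f)(1-\zeta)\gamma_\infty}$. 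Inserting this into $\gamma_\infty^{-2}=\eta^{-2}(b^2+\tfrac{\tr(K)}{d}\int_0^\infty\CMscr{I}\,\dif t)$ and substituting $b/\eta=1/\gamma_0$ and $\tr(K)/d=2\hat\mu(f)\zeta/(\hat L(f)^2\gamma_0)$ produces, with $u=1/\gamma_\infty$, $v=1/\gamma_0$, and $c=\tfrac{\zeta\CMscr{D}^2(0)}{\eta^2(1-\zeta)}$, the quadratic inequality $u^2-cvu-v^2\le 0$. Solving via $\sqrt{c^2+4}\le c+2$ gives $u\le v(1+c)$, which rearranges to the claimed lower bound.

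The main obstacle is cleanly chaining the RSI with both the gradient and function-value forms of smoothness \emph{inside the Gaussian expectation}: each pointwise inequality on $f$ must survive averaging over $(a,\epsilon)$ to land as an inequality among the matrix entries of $\CMscr{B}(t)$, and it is precisely Assumption~\ref{assumption:risk_loss_minimizer_1_main} that makes this transfer possible by forcing the conditional minimizer of $f(\cdot,\langle a,X^\star\rangle,\epsilon)$ to coincide with $\langle a,X^\star\rangle$. Once that inequality is secured, the choice of $\gamma_0$ controls the sign of the ODE bracket, and the remaining steps---integration, exploitation of AdaGrad-Norm monotonicity, and solving a quadratic for $1/\gamma_\infty$---are routine.
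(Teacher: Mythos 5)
Your argument is sound and follows the same backbone as the paper's proof: the ODE for $\CMscr{D}^2(t)$, the chaining of the RSI with the two consequences of $\hat L(f)$-smoothness to obtain $\mathcal{A}(t)\ge(\hat\mu(f)/\hat L(f)^2)\CMscr{I}(t)$ (these are exactly \eqref{eq:RSI_1}--\eqref{eq:RSI_2}), the use of $\gamma_t\le\gamma_0$ with the prescribed $\gamma_0$ to make the bracket negative, and integration to bound $\int_0^\infty\gamma_t\CMscr{I}(t)\,\dif t$ by $\hat L(f)^2\CMscr{D}^2(0)/(2\hat\mu(f)(1-\zeta))$. (Your bound here is in fact a factor of $2$ tighter than the paper's, which detours through $\int\CMscr{R}\gamma_s\,\dif s$ and the extra inequality $\CMscr{I}\le 2\hat L(f)^2\CMscr{R}$.) Where you genuinely diverge is the final step. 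The paper reparametrizes time by $u=\Gamma(t)$ and invokes Lemma \ref{lem:g(u)_agagrad}, which converts the AdaGrad square-root formula into an \emph{exact affine} relation between $1/\gamma_\infty$ and the weighted integral $\int_0^\infty\gamma_t\CMscr{I}(t)\,\dif t$, so the integral bound plugs in directly. You instead keep the raw formula $\gamma_\infty^{-2}=\eta^{-2}\bigl(b^2+\tfrac{\tr(K)}{d}\int_0^\infty\CMscr{I}\,\dif t\bigr)$, pay for the unweighted integral with the extra step $\int\CMscr{I}\le\gamma_\infty^{-1}\int\gamma\CMscr{I}$, and close via a quadratic inequality in $1/\gamma_\infty$. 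Both routes work (for your quadratic step you should formally run the argument at finite horizon $T$ with $\gamma_t\ge\gamma_T$ and then let $T\to\infty$, to avoid dividing by a possibly vanishing $\gamma_\infty$), but the reparametrization is cleaner and lossless, whereas your route loses a little in the inequality $\sqrt{c^2+4}\le c+2$.

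One concrete issue: your final rearrangement does \emph{not} yield the stated bound. From $u\le v(1+c)$ with $c=\zeta\CMscr{D}^2(0)/(\eta^2(1-\zeta))$ you get $\gamma_\infty\ge\gamma_0\eta^2/\bigl(\eta^2+\tfrac{\zeta}{1-\zeta}\CMscr{D}^2(0)\bigr)$, i.e.\ $\eta^2$ where the proposition has $1$ in the denominator; you should flag this rather than assert agreement. That said, the discrepancy is not your error: the stated bound fails a sanity check (at $\CMscr{D}^2(0)=0$ it would give $\gamma_\infty\ge\gamma_0\eta^2>\gamma_0$ when $\eta>1$, impossible for a nonincreasing learning rate), and it traces to Lemma \ref{lem:g(u)_agagrad}, whose displayed formula is inconsistent at $u=0$ (it returns $g(0)=\eta^3/b$ rather than $\gamma_0=\eta/b$) unless $\eta=1$. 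Your constant is the internally consistent one.
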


\begin{proof}
Given the Eq. \eqref{eq:DODE} for the distance to optimality, with $(x , x^{\star}) \sim \mathcal{N}(0, \CMscr{B})$, 
\begin{align*}
\frac{\dif}{\dif t} \CMscr{D}^2(t) 
&
=
- 2\gamma_t \Exp_{a,\epsilon} [\ip { x-x^\star, f'(x, x^\star)}] + \frac{\gamma_t^2}{d} \tr(K) \EE_{a,\epsilon} [(f'(x , x^{\star}))^2]
\end{align*}
By the RSI (with constant $\hat{\mu}(f)$) condition on $f$, we have that
\begin{equation}
\begin{aligned} \label{eq:RSI_1}
\EE_{a, \epsilon} \big [ \ip{x-x^{\star}, f'(x , x^{\star}) } \big ]
    &
    \ge 
    \hat{\mu}(f) \EE_{a, \epsilon} [ (x-x^{\star})^2] = 2\hat{\mu}(f) \CMscr{R}(t),
\end{aligned}
\end{equation}
where $x= \ip{ X,a}$ and $x^{\star} = \ip{X^{\star},a}$ and we note that $x$ has $t$-dependence due to the $t$-dependence in $\CMscr{B}$. 
By $\hat{L}(f)$-smoothness, 
\[
\frac{1}{2\hat{L}(f)}(f'(x))^2 \le \frac{\hat{L}(f)}{2}(x-x^{\star})^2.
\]
This implies that 
\begin{equation} \label{eq:RSI_2}
\frac{1}{2(\hat{L}(f))^2} \EE_{a,\epsilon} \big [ (f'(x, x^\star)^2 \big ] \le \frac{1}{2}\EE_{a, \epsilon} \big [(x-x^{\star})^2\big ] = \CMscr{R}(t) .
\end{equation}
Thus by \eqref{eq:RSI_1} and $\eqref{eq:RSI_2}$, we have that 
\begin{align*}
\frac{\dif}{\dif t} \CMscr{D}^2(t) 
&
\leq -\gamma_t\left(4\hat{\mu}(f)-2(\hat{L}(f))^2\frac{1}{d}\Tr(K)\gamma_t\right)\CMscr{R}(t)
\end{align*}
Which then yield: 
\begin{align*}
\CMscr{D}^2(t) 
&
\leq \CMscr{D}^2(0) -2\left(2\hat{\mu}(f)-(\hat{L}(f))^2\frac{1}{d}\Tr(K)\gamma_0\right)\int_0^ t\CMscr{R}(s)\gamma_s\dif s. 
\end{align*}
Changing variables $u = \Gamma(t) = \int_0^t\gamma_s \dif s$, we have that $\int_0^ \infty \CMscr{R}(t)\gamma_t\dif t = \int_0^\infty r(u)\dif u = \|r\|_1$. Rearranging the term in the above equation and taking $t\to \infty$. We obtain: 
$\|r\|_1\le \frac{\CMscr{D}^2(0)}{\left(2\hat{\mu}(f)-(\hat{L}(f))^2\frac{1}{d}\Tr(K)\gamma_0\right)}$, given that $\frac{\hat{2\mu}(f)}{(\hat{L}(f))^2\frac{1}{d}\Tr(K)}>\gamma_0$. 
Using Lemma \ref{lem:g(u)_agagrad}, 
with $i(v) = I(\CMscr{B}(\Gamma^{-1}(v))) =  \EE_{a,\epsilon} \big [ (f'(x, x^\star)^2 \big ]$ instead of the risk 
\begin{align}
\gamma_{\infty}  &= \frac{\eta^2}{\frac{b}{\eta}+\frac{1}{2d}\Tr(K)\int_0^\infty i(v) \dif v} \ge \frac{\eta^2}{\frac{b}{\eta}+\frac{1}{d}\Tr(K) (\hat{L}(f))^2\int_0^\infty r(v) \dif v} \\\nonumber &\ge \frac{\eta^2}{\frac{b}{\eta}+\frac{1}{d}\Tr(K)\frac{(\hat{L}(f))^2\CMscr{D}^2(0)}{\left(2\hat{\mu}(f)-(\hat{L}(f))^2\frac{1}{d}\Tr(K)\gamma_0\right)}} = \frac{\eta^2}{\frac{b}{\eta}+\frac{\frac{1}{d}\Tr(K)(\hat{L}(f))^2}{2\hat{\mu}(f)(1-\zeta)}{\CMscr{D}^2(0)}}.   
\end{align}
where the first inequality is by Eq. \ref{eq:RSI_2}, and the last transition is by taking the initial learning rate to be 
$\gamma_0 = \frac{2\hat{\mu}(f) }{(\hat{L}(f))^2 \tfrac{1}{d} \tr(K)} \zeta,$ for $\zeta\in (0,1)$. 
\end{proof}
\begin{lemma}\label{lem:g(u)_agagrad}
 Given $\gamma_t$ as in Table \ref{table:learning_rate_expressions} (AdaGrad-Norm), defining $g(u) = \gamma(\Gamma^{-1}(u))$, with $\Gamma(t) = \int_0^t\gamma_s \dif s$, then $g(u) = \frac{\eta^2}{\frac{b}{\eta}+\frac{1}{2d}\Tr(K)\int_0^u i(v) \dif v} $ with $i(v) = I(\CMscr{B}(\Gamma^{-1}(v)))
 .$    
\end{lemma}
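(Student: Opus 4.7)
\textbf{Proof plan for Lemma \ref{lem:g(u)_agagrad}.}

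The plan is to differentiate in the time-changed variable and integrate back, using the chain rule together with the defining ODE of $\Gamma$. Set $F(t) = b^2 + \tfrac{\Tr(K)}{d} \int_0^t I(\CMscr{B}(s))\,\dif s$, so that by Table~\ref{table:learning_rate_expressions} we have $\gamma_t = \eta / \sqrt{F(t)}$. Define $\tilde F(u) \defas F(\Gamma^{-1}(u))$; then by construction $g(u) = \gamma_{\Gamma^{-1}(u)} = \eta/\sqrt{\tilde F(u)}$, so it suffices to compute $\sqrt{\tilde F(u)}$ in closed form.

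First, I would compute $\tfrac{\dif}{\dif u}\sqrt{\tilde F(u)}$ directly. By the chain rule and the inverse function rule,
\[
\frac{\dif t}{\dif u} = \frac{1}{\Gamma'(t)} = \frac{1}{\gamma_t} = \frac{\sqrt{F(t)}}{\eta},
\quad \text{where $t = \Gamma^{-1}(u)$.}
\]
Combining with $F'(t) = \tfrac{\Tr(K)}{d} I(\CMscr{B}(t))$ and recalling $i(u) = I(\CMscr{B}(\Gamma^{-1}(u)))$, this gives
\[
\frac{\dif}{\dif u}\sqrt{\tilde F(u)}
= \frac{F'(t)}{2\sqrt{F(t)}}\cdot\frac{\dif t}{\dif u}
= \frac{1}{2\sqrt{F(t)}}\cdot\frac{\Tr(K)}{d} I(\CMscr{B}(t))\cdot \frac{\sqrt{F(t)}}{\eta}
= \frac{\Tr(K)}{2 d \eta}\, i(u).
\]

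Next, I would integrate from $0$ to $u$, using the initial condition $\tilde F(0) = F(0) = b^2$, which yields
\[
\sqrt{\tilde F(u)} = b + \frac{\Tr(K)}{2 d\eta}\int_0^u i(v)\,\dif v.
\]
Substituting into $g(u) = \eta/\sqrt{\tilde F(u)}$ and multiplying numerator and denominator by $\eta$ gives the claimed identity (up to the factor of $\eta$ appearing in the denominator, which I expect reflects the convention used in the lemma statement):
\[
g(u) = \frac{\eta^2}{\eta b + \tfrac{1}{2d}\Tr(K)\int_0^u i(v)\,\dif v}.
\]

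There is no real obstacle here; the argument is a direct application of the chain rule, the inverse function theorem, and the fundamental theorem of calculus. The only points requiring care are verifying that $\Gamma$ is strictly increasing (hence invertible) — which follows from $\gamma_t>0$, itself immediate from $F(t)\geq b^2>0$ — and that $I(\CMscr{B}(\cdot))$ is sufficiently regular for the integrals to make sense, which is guaranteed by Assumption~\ref{assumption:fisher} together with the existence of the ODE solution $\CMscr{B}(t)$.
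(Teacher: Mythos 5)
Your proof is correct and follows essentially the same route as the paper's: both change variables to $u=\Gamma(t)$ in the AdaGrad-Norm formula, differentiate in $u$, observe that the $\sqrt{F}$ factors cancel so that $\tfrac{\dif}{\dif u}\bigl(\eta/g(u)\bigr)$ is proportional to $i(u)$, and integrate back from the initial condition $g(0)=\eta/b$. Your final formula $g(u)=\eta^2/\bigl(\eta b+\tfrac{1}{2d}\Tr(K)\int_0^u i(v)\,\dif v\bigr)$ is in fact the correct one; the lemma's stated denominator term $\tfrac{b}{\eta}$ is a typo for $\eta b$ (the two agree only when $\eta=1$), so you were right to flag the discrepancy rather than force your computation to match the printed statement.
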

\begin{proof} 
Taking the square of both sides of the $\gamma_t$ equation in Table \ref{table:learning_rate_expressions} (AdaGrad-Norm), changing variables to $u = \Gamma(t)$ and rearranging the terms: 
\begin{equation}
b^2 + \tfrac{\tr(K)}{d} \int_0^{u} \frac{i(v)}{g(v)} \, \dif v =
\frac{\eta^2}{g(u)^2} ,
\end{equation}
such that $i(v) = I(\CMscr{B}(\Gamma^{-1}(v)))$. Taking derivative with respect to $u$, rearranging terms and integrating leads to the desired result.  
\end{proof}

\subsection{Least squares setting}
To study the effect of the structured covariance matrix and cases in which the problem is not strongly convex, we will focus on the linear least square problem. In this setting, the continuum limit of the risk for the AdaGrad-Norm algorithm has the form of a convolutional integral Volterra equation, 
\begin{align}\label{eq:R_K_F_Volterra}
\CMscr{R}(t)=F(\Gamma(t))+\int_0^t\gamma_s^2\mathcal{K}(\Gamma(t)-\Gamma(s))\CMscr{R}(s)\dif s
\end{align}
where $\Gamma(t):=\int_0^t\gamma_s \dif s$ with, 
\begin{align}\label{eq:F_def}
F(x) &\defas  \frac{1}{2{d}}\sum_{i=1}^d\lambda_i\CMscr{D}^2_i(0)e^{-2\lambda_i x},
\\\label{eq:K_def}
\mathcal{K}(x) & \defas {\frac{1}{d}}\sum_{i=1}^d\lambda_i^2e^{-2\lambda_i x}.
\end{align}

In the following we consider three cases, a strongly convex risk in which the spectrum of the eigenvalues is bounded from below (section \ref{subsec:storongly_convex}). A case in which the spectrum is not bounded from below as $d\to\infty$, but the number of eigenvalues below some fixed threshold is $o(d)$ (section \ref{subsec:finite_lambda_min0}). Finally, power law spectrum supported on $[0,1]$ with $d\to \infty$ (section \ref{subsec:infinite_lambda_min0}).

\subsubsection{Proofs for case of fixed $d$ 
\label{subsec:storongly_convex}}

\begin{proof}[Proof of Proposition \ref{prop:adagrad_integrable_risk}]
Define the composite functions $r(u) = \CMscr{R}(\Gamma^{-1}(u))$, and $g(u) = \gamma(\Gamma^{-1}(u))$. Integrating the formula for the risk: 

\begin{align*}
\int_0^t r(u) \dif u &= \int_0^t F(u)\dif u +\int_0^t \int_0^{\Gamma^{-1}(u)}\gamma_s^2\mathcal{K}(u-\Gamma(s))\CMscr{R}(s)\dif s \dif u
\\ & =
\int_0^t F(u)\dif u + \int_0^t \int_0^u\mathcal{K}(u-x)r(x)g(x)\dif x \dif u
\\ 
&\leq\int_0^t F(u)\dif u + \gamma_0\int_0^t r(x)\int_x^t\mathcal{K}(u-x)\dif u\dif x
\end{align*}
Taking $t\to \infty$, we get 
\[
\|r\|_1\leq \|F\|_1+\gamma_0\|\mathcal{K}\|_1\|r\|_1.
\]
Using $\|\cK\|_1 = \int_0^\infty \cK(x) \dif x < \gamma_0^{-1}$, and
noting that by Eq. \eqref{eq:K_def}, and Eq. \eqref{eq:F_def}, we have that $\|F\|_1 = \frac{1}{4}\CMscr{D}^2(0)$, and $\|\cK\|_1 =\frac{1}{2d}\Tr(K)$, 
\begin{align*}
\|r\|_1 \leq \frac{\|F\|_1}{1-\gamma_0\|\cK\|_1} = \frac{\frac{1}{4}\CMscr{D}^2(0)}{1-\frac{\gamma_0 }{2d}\Tr(K)}. 
\end{align*}
On the hand following Lemma \ref{lem:R_up_low_bound}, $\frac{1}{4}\CMscr{D}^2(0)(1+\frac{\gamma_0 }{2d}\Tr(K))\le \|r\|_1$. Therefore, $\|r\|_1\asymp \frac{1}{4}\CMscr{D}^2(0)$. 

Next, 
rewriting the $\gamma_t$ equation in Table \ref{table:learning_rate_expressions} (AdaGrad-Norm for least squares) in terms of $g(u)$ (Lemma \ref{lem:g(u)_agagrad}), we obtain 
\begin{align}\label{eq:g(u)}
g(u)  = \frac{\eta^2}{\frac{b}{\eta}+\frac{1}{d}\Tr(K)\int_0^u r(x) \dif x} 
\end{align}
Taking $u\to \infty$, and  using $\|r\|_1\asymp \frac{1}{4}\CMscr{D}^2(0)$, 
\begin{align}
\gamma_{\infty}=g(\infty)  = \frac{\eta^2}{\frac{b}{\eta}+\frac{1}{ d}\Tr(K)\|r\|_1}\asymp  \frac{\eta^2}{\frac{b}{\eta}+\frac{1}{4 d}\Tr(K)\CMscr{D}^2(0)}. 
\end{align}
This then completes the proof. 
\end{proof}
\begin{remark}
We note that, on the Least square problem $\hat{L}(f) = \hat{\mu}(f) = 1$, therefore, the bound in Proposition \ref{prop:adagrad_integrable_risk_convex} yields $\frac{\eta^2}{\frac{b}{\eta}+\frac{1}{2(1-\zeta)}\frac{1}{ d}\Tr(K)\CMscr{D}^2(0)}$.
\end{remark}
\begin{proof}[Proof of Proposition \ref{prop:Adagrad_identitycov}]
Using the equation for the distance to optimality (Eq. \ref{eq:dDi}), we can derive an equation for the integral of the risk (with no target noise) which we denote by $g(t) = \int_0^t\CMscr{R}(s)\dif s$:

\begin{equation}\label{eq:g''non-idLS}
g''(t) 
= -\gamma_t \sum_i \lambda_i^2 \CMscr{D}_i^2(t) + \gamma_t^2 \frac{\tr(K^2)}{d} g'(t). 
\end{equation}
For $K=I_d$, this equation simplifies,
\begin{equation}
g''(t) 
= -2\gamma_t g'(t) + \gamma_t^2 \frac{\tr(K^2)}{d} g'(t). 
\end{equation}
Plugging in the equation for the AdaGrad-Norm learning rate (Table \ref{table:learning_rate_expressions}) leads to the desired result. We note that by using the equation for the learning rate, one can also derive a close equation for the learning rate itself. 
\end{proof}

\subsubsection{Vanishingly few eigenvalues near 0 as $d\to \infty$ \label{subsec:finite_lambda_min0}} 

We now consider the case where, as $d\to\infty$, there are eigenvalues of $K$ arbitrarily close to 0.  
In Proposition \ref{prop:adagrad_integrable_risk} we saw a constant lower bound on $\gamma_t$ when $d$ is fixed (and thus there are finitely many eigenvalues within any fixed distance of 0). This can be extended to the case where we have some $C>0$ such that the number of eigenvalues of $K$ below $C$ is $o(d)$ (see Proposition \ref{prop:feweigsbelowC}).
\begin{proof}[Proof of Proposition \ref{prop:feweigsbelowC}] 
Following the structure of the loss, after some time the risk starts to decrease, and therefore $\CMscr{R}(t)\le R_0$ for and $t\ge 0$. Using these observations, we obtain a preliminary lower bound of $\gamma_t>C_1t^{-1/2}$ (for $t>0$), which enables us to deduce that $\CMscr{R}(t)$ is integrable and finally obtain a constant lower bound for $\gamma_t$.  The details of this are below. 

For $t\geq0$ and some $C_1>0$,
\begin{equation}\label{eq:gamma_LBt1}
\gamma_t=\frac{\eta}{\sqrt{b^2+\frac2d\Tr(K)\int_0^t\CMscr{R}(s)ds}}\geq
\frac{\eta}{\sqrt{b^2+\frac2d\Tr(K)R_0t}} \geq C_1t^{-1/2}.
\end{equation}
Next, to show that the risk is integrable, we divide the matrix $K$ into two parts $K_+$, and $K_{-}$, such that the eigenvalues of $K_{+}$ are greater than some $\alpha_s>0$ and the eigenvalues of $K_{-}$ are smaller than $\alpha_s$ where $\alpha_s$ is a decreasing function of $s$ to be determined later.  
We then have that, following Eq. \eqref{eq:dDi}, and the definition of the risk $\CMscr{R}(t) = \tfrac{1}{2d}\sum_{i=1}^d \lambda_i \CMscr{D}_i^2(t)$,
\begin{align}    
\CMscr{R}(t)&=\CMscr{R}(0)-{\frac{1}{d}}\sum_{i=1}^d\lambda_i^2\int_0^t\gamma_s\CMscr{D}_i(s)\mathrm{d} s
+\frac{1}{d}\int_0^t\gamma_s^2\Tr(K^2)\cdot\CMscr{R}(s)\mathrm{d} s
\\ \nonumber
&\leq \CMscr{R}(0)
-\int_0^t\gamma_s(2\alpha_s-\gamma_s\frac{1}{d}\Tr(K^2))\cdot\CMscr{R}(s)\mathrm{d} s + 2\int_0^t\gamma_s \CMscr{R}_2(s) \dif s
\end{align}
with $\CMscr{R}_2(s) 
= {\frac{1}{2d}}\sum_{i: \lambda_i\le \alpha_s}\lambda_i\CMscr{D}_i^2(s) $.
Next, choosing $\alpha_s=\gamma_s \frac{1}{d}\Tr(K^2)$, we show that the last term is of order $o_d(1)$. By Lemma \ref{lem:D_i_bound}   $\forall i$, $\CMscr{D}^2_i(t) \leq \max\left({\gamma_{t_1} \CMscr{R}(t_1)},\CMscr{D}_i^2(0)\right) = {c_0}$ where the bound ${c_0}$ comes from the assumption $\ip{X^\star,\omega_i}=O(d^{-1/2})$ and the initialization $X_0=0$. 
Therefore, 
\begin{equation}\label{eq:R_2_rs_def}
2\int_0^t\gamma_s \CMscr{R}_2(s) \dif s\leq  \frac{1}{d^2}\Tr(K^2)c_{0}\int_0^t\gamma_sN_s\dif s. 
\end{equation}
where $N_s = \sum_{i=1}^d 1_{\lambda_i\le {\gamma_s\frac1d\Tr(K^2)}}$. This implies that, if $\gamma_s N_s=o(d)$, then $2\int_0^t\gamma_s \CMscr{R}_2(s) \dif s=o_d(1)$, provided that $d$ is taken to be large before $t$.  

We then have that up to $o_d(1)$ constant, 
\begin{align}    
\CMscr{R}(t)
&\leq \CMscr{R}(0)
-\frac{1}{d}\Tr(K^2)\int_0^t\gamma_s^2\cdot\CMscr{R}(s)\mathrm{d} s.
\end{align}
Using Gronwall's inequality, 
\begin{align}    
\CMscr{R}(t)
&\leq \CMscr{R}(0)e^{-\frac{1}{d}\Tr(K^2)\int_0^t\gamma_s^2\dif s}
\le \CMscr{R}({0})e^{-\frac{1}{d}\Tr(K^2)C_1^2 t}
\end{align}
where in the last transition we used the lower bound on the learning rate derived in Eq. \eqref{eq:gamma_LBt1}. Thus, the risk is integrable, i.e. there is some $C_3$ such that $$\int_0^t\CMscr{R}(s)\dif s\leq  \frac{\CMscr{R}(0)}{\frac{1}{d}\Tr(K^2)C_1^2}$$ for all $t>0$.  Finally, we plug this into the formula for $\gamma_t$ and conclude that, for all $t>0$,
\begin{equation}
\gamma_t\geq\frac{\eta}{\sqrt{b^2+\frac{\frac1d\Tr(K)\CMscr{R}(0)}{\frac{1}{d}\Tr(K^2)C_1^2}}}.
\end{equation}
\end{proof}

\begin{lemma}\label{lem:D_i_bound}
    Assume that the risk is bounded and attains its maximum at time $t_1$.  Then, for each $i$, we have $\CMscr{D}^2_i(t)\leq\max({\gamma_{t_1}\CMscr{R}(t_1)},\CMscr{D}^2_i(0))$ for all $t\geq0$.
\end{lemma}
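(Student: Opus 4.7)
The ODE \eqref{eq:dDi} takes the relaxation form $\tfrac{\dif}{\dif t}\CMscr{D}_i^2(t) = 2\lambda_i\gamma_t\bigl(\gamma_t\CMscr{R}(t) - \CMscr{D}_i^2(t)\bigr)$, which pulls $\CMscr{D}_i^2(t)$ toward the time-varying target $\gamma_t\CMscr{R}(t)$. The plan is to integrate this linear first-order ODE with the integrating factor $e^{2\lambda_i\Gamma(t)}$, where $\Gamma(t) \defas \int_0^t \gamma_s \, \dif s$, to obtain the Duhamel representation
\[
\CMscr{D}_i^2(t) = e^{-2\lambda_i\Gamma(t)}\CMscr{D}_i^2(0) + \int_0^t 2\lambda_i\gamma_s \bigl[\gamma_s\CMscr{R}(s)\bigr] e^{-2\lambda_i(\Gamma(t)-\Gamma(s))} \, \dif s.
\]
The weight $2\lambda_i\gamma_s e^{-2\lambda_i(\Gamma(t)-\Gamma(s))}\,\dif s$ is a nonnegative measure on $[0,t]$ whose total mass equals $1 - e^{-2\lambda_i\Gamma(t)} \le 1$, so the right-hand side is a convex combination of $\CMscr{D}_i^2(0)$ and the time values $\{\gamma_s \CMscr{R}(s)\}_{s\in[0,t]}$. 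This immediately yields $\CMscr{D}_i^2(t) \le \max\bigl(\CMscr{D}_i^2(0),\, \sup_{s\in[0,t]} \gamma_s\CMscr{R}(s)\bigr)$.

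To finish, I would show $\sup_{s\ge 0}\gamma_s\CMscr{R}(s) \le \gamma_{t_1}\CMscr{R}(t_1)$. Two structural ingredients drive this: (i) $\CMscr{R}(s) \le \CMscr{R}(t_1)$ for every $s \ge 0$, by definition of $t_1$ as the location of the global maximum of the risk; and (ii) the AdaGrad-Norm learning rate $\gamma_s = \eta/\sqrt{b^2 + \tfrac{2\Tr(K)}{d}\int_0^s\CMscr{R}(u)\,\dif u}$ is non-increasing in $s$, since the denominator is non-decreasing. For $s \ge t_1$ the two facts combine to give $\gamma_s\CMscr{R}(s) \le \gamma_{t_1}\CMscr{R}(t_1)$ directly.

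The main obstacle is handling $s \in [0, t_1]$, where $\gamma_s \ge \gamma_{t_1}$ is balanced against $\CMscr{R}(s) \le \CMscr{R}(t_1)$, so the product inequality is not automatic. I would dispatch this by a maximum-principle contradiction: set $M \defas \max(\CMscr{D}_i^2(0), \gamma_{t_1}\CMscr{R}(t_1))$ and, if the claim failed, define $T \defas \inf\{t \ge 0 : \CMscr{D}_i^2(t) > M\}$. By continuity $\CMscr{D}_i^2(T) = M$ and $\tfrac{\dif}{\dif t}\CMscr{D}_i^2(T) \ge 0$, which forces $\gamma_T\CMscr{R}(T) \ge M \ge \gamma_{t_1}\CMscr{R}(t_1)$; combined with the envelope analysis of $s\mapsto\gamma_s\CMscr{R}(s)$ based on the explicit identity $\gamma_s' = -(\gamma_s^3/d\eta^2)\Tr(K)\CMscr{R}(s)$ and the first-order condition $\CMscr{R}'(t_1) = 0$ along trajectories of the coupled system, this rules out $T < t_1$. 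Should this envelope step prove stubborn, the strictly weaker constant $\max(\CMscr{D}_i^2(0),\gamma_0\CMscr{R}(t_1))$ follows immediately from monotonicity alone and is already $O(1)$, which is all that Proposition \ref{prop:feweigsbelowC} ultimately requires of the bound.
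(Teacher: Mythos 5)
Your core mechanism is the same as the paper's, but executed more rigorously. The paper argues qualitatively from the sign of $\tfrac{\dif}{\dif t}\CMscr{D}_i^2$ (two cases: either $\CMscr{D}_i^2$ is decreasing, or it has crossed below the barrier $\gamma_t\CMscr{R}(t)$ and then, by continuity, stays trapped by the running values of that barrier), which amounts to exactly your bound $\CMscr{D}_i^2(t)\le\max\bigl(\CMscr{D}_i^2(0),\,\sup_{s\le t}\gamma_s\CMscr{R}(s)\bigr)$; your Duhamel/convex-combination derivation establishes this cleanly and is the nicer argument. The obstacle you flag — that for $s<t_1$ one has $\gamma_s\ge\gamma_{t_1}$ fighting against $\CMscr{R}(s)\le\CMscr{R}(t_1)$, so $\gamma_s\CMscr{R}(s)\le\gamma_{t_1}\CMscr{R}(t_1)$ is not automatic — is a genuine issue, but it is present in the paper's proof as well: Case 1 there passes directly from $\CMscr{D}_i^2(t)\le\gamma_t\CMscr{R}(t)$ to $\le\gamma_{t_1}\CMscr{R}(t_1)$, which is only valid if $t_1$ is read as the maximizer of the product $\gamma_t\CMscr{R}(t)$ rather than of $\CMscr{R}(t)$ alone. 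So your proposed maximum-principle patch is attempting to repair something the paper simply asserts. The honest resolutions are either to restate the bound with $\sup_{s}\gamma_s\CMscr{R}(s)$ (equivalently, take $t_1$ to maximize $\gamma\CMscr{R}$), or to use your fallback constant $\max\bigl(\CMscr{D}_i^2(0),\gamma_0\CMscr{R}(t_1)\bigr)$, which follows immediately from $\gamma_s\le\gamma_0$ and $\CMscr{R}(s)\le\CMscr{R}(t_1)$ and is all that the downstream use in Proposition~\ref{prop:feweigsbelowC} (an $O(1)$ constant $c_0$) requires. Modulo that shared caveat about the exact constant, your argument is complete and, in the Duhamel step, strictly more careful than the paper's.
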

\begin{proof}
    \underline{Case 1:} Suppose that $\CMscr{D}^2_i(0)\leq {\gamma_0\CMscr{R}(0)}$.  Then, by equation \eqref{eq:dDi}, $\frac{\dif}{\dif t}\CMscr{D}^2_i(0)\geq0$.  However, since $\CMscr{D}^2_i(t),\CMscr{R}(t)$ are continuous, this equation implies that $\CMscr{D}^2_i(t)\leq {\gamma_t\CMscr{R}(t)}$ for all $t$ and thus $\CMscr{D}^2_i(t)\leq {\gamma_{t_1}\CMscr{R}(t_1)}$ for all $t$.

    \underline{Case 2:} Suppose that $\CMscr{D}^2_i(0)> {\gamma_0\CMscr{R}(0)}$.  Then, by equation \eqref{eq:dDi}, $\frac{\dif}{\dif t}\CMscr{D}^2_i(0)<0$.  If $\frac{\dif}{\dif t}\CMscr{D}^2_i(t)<0$ for all $t$, then $\CMscr{D}^2_i(t)\leq\CMscr{D}^2_i(0)$ for all $t$.  If at some point $\frac{\dif}{\dif t}\CMscr{D}^2_i(t)>0$, this implies $\CMscr{D}^2_i(t)\leq {\gamma_t\CMscr{R}(t)}$ and we are in Case 1. 
\end{proof}


In the next section, we consider cases in which the risk is not integrable, an example of such case is when the spectrum of $K$ is supported on the interval $[0,1]$ or has power-law behavior near 0.

\subsubsection{Power law behavior at $d\to\infty$\label{subsec:infinite_lambda_min0}} 

\paragraph{Non-asymptotic bound for the Convolutional Volterra}
In this section, we use the convolutional Volterra structure of the risk (Eq. \eqref{eq:R_K_F_Volterra}) to derive non-asymptotic bounds on the risk, which will be useful in Section \ref{subsubsec:powerLawN0} to derive the asymptotic behavior of the risk and the learning rate under power law assumption on the spectrum of the covariance matrix and the discrepancy from the target at initialization.   

\begin{lemma}\label{lem:R_up_low_bound}
Let $\Gamma(t):=\int_0^t\gamma_s \dif s$ and let
\[
\CMscr{R}(t)=F(\Gamma(t))+\int_0^t\gamma_s^2\mathcal{K}(\Gamma(t)-\Gamma(s))\CMscr{R}(s)\dif s
\]
where $\gamma_t,\mathcal{K}$ are monotonically decreasing, with $\|\cK\|_1<\infty$.  Then  all $t$, 
\begin{align*}
\CMscr{R}(t)&\geq F(\Gamma(t))+\int_0^t\gamma_s^2\mathcal{K}(\Gamma(t)-\Gamma(s))F(\Gamma(s))\dif s
\end{align*}
If in addition, there exist $\epsilon >0$ and $T>0$ such that, for all $t>T$,
\[
\int_0^t\mathcal{K}(s)\mathcal{K}(t-s)\dif s\leq2(1+\epsilon)\|\mathcal{K}\|_1\mathcal{K}(t)\quad\text{and}\quad 2\|\mathcal{K}\|_1(1+\epsilon)
\gamma_0
<1
\]
then for all $t$
\begin{align*}
\CMscr{R}(t)&\leq F(\Gamma(t))+C\int_0^t\gamma_s^2\mathcal{K}(\Gamma(t)-\Gamma(s))F(\Gamma(s))\dif s
\end{align*}
for 
\[
C = \left(\frac{\cK(0)}{\cK(T)(2\epsilon +1)}+2\right)\frac{1}{1-2\gamma(0)\|\mathcal{K}\|_1(1+\epsilon)
}.
\]
\end{lemma}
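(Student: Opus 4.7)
The lower bound is essentially trivial: since $F, \mathcal{K}, \gamma \geq 0$, the Volterra equation immediately gives $\CMscr{R}(s) \geq F(\Gamma(s))$ for every $s$. Substituting this pointwise lower bound into the integrand on the right-hand side of the defining equation produces the claimed lower bound in a single step. No monotonicity or integrability hypotheses are needed for this part.

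\textbf{Upper bound: reduction.} First I would change variables $u = \Gamma(t)$, $v = \Gamma(s)$ to eliminate the $\gamma_s^2$ factors. Writing $r(u) = \CMscr{R}(\Gamma^{-1}(u))$, $g(v) = \gamma_{\Gamma^{-1}(v)}$, and $f = F$, the equation becomes the standard-form Volterra equation
\[
    r(u) = f(u) + \int_0^u g(v)\,\mathcal{K}(u-v)\,r(v)\,\dif v,
\]
where $g$ inherits monotone decreasingness from $\gamma$, so $g(v) \leq \gamma_0$. The claim in these coordinates becomes $r(u) \leq f(u) + C\int_0^u g(v)\mathcal{K}(u-v) f(v)\,\dif v$.

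\textbf{Upper bound: iteration and sub-convolution.} The plan is to iterate the Volterra equation once to expose a $\mathcal{K}^{*2}$ kernel and then close the estimate on $r$ itself. After one substitution,
\[
    r(u) = f(u) + (g\mathcal{K}) * f(u) + (g\mathcal{K}) * (g\mathcal{K}) * r(u),
\]
and Fubini on the double integral together with the pointwise bound $g \leq \gamma_0$ give
\[
    (g\mathcal{K})*(g\mathcal{K})*r(u) \leq \gamma_0 \int_0^u g(v) r(v)\,\mathcal{K}^{*2}(u-v)\,\dif v.
\]
Now I would split $\mathcal{K}^{*2}(u-v)$ into the two regimes. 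For $u-v > T$ the sub-convolution hypothesis yields $\mathcal{K}^{*2}(u-v) \leq B\mathcal{K}(u-v)$ with $B = 2(1+\epsilon)\|\mathcal{K}\|_1$. For $u-v \leq T$, the crude bound $\mathcal{K}^{*2}(u-v) \leq \mathcal{K}(0)\|\mathcal{K}\|_1$ combined with monotonicity $\mathcal{K}(u-v) \geq \mathcal{K}(T)$ gives $\mathcal{K}^{*2}(u-v) \leq \bigl(\mathcal{K}(0)\|\mathcal{K}\|_1/\mathcal{K}(T)\bigr)\mathcal{K}(u-v)$. Recognizing each of the two resulting pieces as at most a constant multiple of $(g\mathcal{K})*r = r - f$, I obtain
\[
    r \leq f + (g\mathcal{K})*f + \gamma_0\,A\,(r-f)
\]
for an explicit constant $A$ blending $B$ with the short-time correction factor $\mathcal{K}(0)/\mathcal{K}(T)$. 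Solving for $r$ under the smallness condition $\gamma_0 B < 1$ (which controls the geometric series once the short-time terms are isolated) yields exactly the stated bound with $C$ of the stated form.

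\textbf{Main obstacle.} The only real difficulty is bookkeeping the constant $C$ so that the short-time regime $u-v \leq T$, where the sub-convolution hypothesis does not apply, produces precisely the additive $\tfrac{\mathcal{K}(0)}{\mathcal{K}(T)(2\epsilon+1)} + 2$ factor rather than being absorbed into the denominator. I would handle this by splitting the second iterate into short-time and long-time pieces separately (rather than combining them into a single $A$) so that only the long-time piece enters the geometric series denominator $1 - \gamma_0 B$, while the short-time piece contributes as an additive correction to the numerator.
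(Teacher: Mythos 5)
Your lower bound is exactly the paper's (substitute $\CMscr{R}(s)\geq F(\Gamma(s))$ into the integrand), and your change of variables and the bound $(g\mathcal{K})*(g\mathcal{K})*r(u)\leq\gamma_0\int_0^u g(v)r(v)\mathcal{K}^{*2}(u-v)\,\dif v$ are both correct and match the paper's first step. The gap is in how you close the upper bound. After one iteration and your regime split, \emph{both} pieces of the remainder are multiples of $\int_0^u g(v)r(v)\mathcal{K}(u-v)\,\dif v=(r-f)(u)$: the long-time piece carries the coefficient $\gamma_0\cdot 2(1+\epsilon)\|\mathcal{K}\|_1$, and the short-time piece carries $\gamma_0\cdot\mathcal{K}(0)\|\mathcal{K}\|_1/\mathcal{K}(T)$. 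To solve for $r$ you must move the entire multiple of $(r-f)$ to the left-hand side, so you need $\gamma_0\bigl(2(1+\epsilon)\|\mathcal{K}\|_1+\mathcal{K}(0)\|\mathcal{K}\|_1/\mathcal{K}(T)\bigr)<1$. The hypothesis only gives $2(1+\epsilon)\|\mathcal{K}\|_1\gamma_0<1$, and $\mathcal{K}(0)/\mathcal{K}(T)$ can be arbitrarily large. Your proposed remedy --- keeping the short-time piece as an ``additive correction to the numerator'' --- does not work as stated, because that piece is a multiple of $r$, not of $f$; you cannot selectively leave part of the unknown on the right while solving for the rest, and you have no a priori bound $r\lesssim f$ to convert it (that is the conclusion, not a hypothesis).

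The paper escapes this by expanding the full Neumann series $r=f+\sum_{j\geq1}\mathcal{G}^j(f)$ and invoking a Kesten-type renewal estimate (their Lemma on $\mathcal{K}^{*n}$): the point is that in the $n$-th iterate the short-time regime is bounded through $\|\mathcal{K}^{*n}\|_1=\|\mathcal{K}\|_1^n$ measured against the normalization $(2\|\mathcal{K}\|_1)^n$, so it contributes $\mathcal{K}(0)/(\mathcal{K}(T)2^n)$ --- geometrically small in $n$ --- rather than a factor $(\mathcal{K}(0)/\mathcal{K}(T))^n$. Summing the resulting recursion, the factor $\mathcal{K}(0)/\mathcal{K}(T)$ appears only once as a prefactor, while the ratio of the geometric series is exactly $q=2(1+\epsilon)\|\mathcal{K}\|_1\gamma_0<1$. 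To repair your argument you would need to iterate indefinitely (or induct on the iterate) and track the short-time contribution via the $L^1$ norm of $\mathcal{K}^{*n}$ in this way, which is essentially the paper's proof; a single iteration cannot produce the stated constant under the stated hypotheses.
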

\begin{proof}
    The lower bound holds trivially, using $\CMscr{R}(s)\geq F(\Gamma(s))$. For the upper bound,
we start with the following change of variables: 
\[
\CMscr{R}(t)=F(\Gamma(t))+\int_0^{\Gamma(t)}g(u)\mathcal{K}(\Gamma(t)-u))\CMscr{R}(u)\dif u,
\]
with $g(u) =\gamma_{\Gamma^{-1}(u)}$. Let us define the convolution map $$\mathcal{G}(f)(\Gamma) = \cK*(g f)(\Gamma) = \int_0^\Gamma \cK(\Gamma-u) g(u)f(u) \dif u. $$ Next we show that this map is contracting and in particular, 
\begin{align}\label{eq:G_G_f}  
\mathcal{G}^2(f) = \mathcal{G}(\mathcal{G}(f))(t) &=  \int_0^t \cK(t-s) \mathcal{G}(f)(s) g(s)\dif s
\\\nonumber &= \int_0^t \cK(t-s) \int_0^s \cK(s-u) g(u)f(u) \dif u g(s)\dif s
\\\nonumber &= 
\int_0^t \left(\int_{u}^t \cK(t-s) \cK(s-u) g(s)\dif s \right)g(u)f(u) \dif u 
\\\nonumber &\leq 
\int_0^t \cK^{*2}(t-u) g(u)^2 f(u) \dif u 
\end{align}
where the third transition is since $u<s<t$. The last transition is by change of variables and the assumption that $\gamma_t$ is a monotone decreasing function.
Consecutive application of the convolution map will then yield by induction, $$\mathcal{G}^j(f)(t)
\leq \int_{0}^t \cK^{*(j)}(t-u) g(u)^{j} f(u) \dif u.$$  Therefore, expanding the loss and using the above upper bound, and denote by $q = 2(1+\varepsilon) \|\cK\|_1\gamma_0$ such that $q<1$, 
\begin{align}
\CMscr{R}(t)  &= F(t) + \sum_{j=1}^\infty \mathcal{G}^j(F)(t)
\\\nonumber &\leq F(t) + \sum_{j=1}^\infty\int_0^t \cK^{*(j)}(t-u) g(u)^{j} F(u) \dif u
\\\nonumber &\leq F(t)+ \left(\sum_{j=0}^\infty(2\|\cK\|_1\gamma_0(1+\varepsilon))^j - 1\right) C_1 \int_0^t \cK(t-u) g(u)F(u) \dif u
\\ &\le F(t)+ \frac{q}{1-q}C_1({\cK}* (gF))(t)    
\end{align}
where the third transition is by Lemma \ref{lem:Kesten}, with $C_1 = \frac{\cK(0)}{\cK(T)(2\epsilon +1)}+1,$ which then completes the proof. 
\end{proof}

\begin{lemma}[Lemma IV.4.7 in \cite{athreya2004branching}]\label{lem:Kesten}
Suppose $\cK$ is  monotonically decreasing, with $\|\cK\|_1<\infty$, and that there exists $T>0$ such that $\forall t\geq T$, and $\epsilon \ge 0$, 
\begin{align}\label{eq:power_law_assmp_K}    
\int_0^t\mathcal{K}(s)\mathcal{K}(t-s)\dif s\leq2(1+\epsilon)\|\mathcal{K}\|_1\mathcal{K}(t).
\end{align}
Then, 
\begin{align}
\sup_{t\ge 0} \frac{\cK^{*n}(t)}{\cK(t)} 
\leq 
 (2\|\cK\|_1(1+\epsilon))^{n-1}\left(\frac{\cK(0)}{\cK(T)(2\epsilon +1)}+1\right)
\end{align}
\end{lemma}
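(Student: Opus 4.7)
The plan is to proceed by induction on $n$, setting $Q_n := \sup_{t \geq 0} \mathcal{K}^{*n}(t)/\mathcal{K}(t)$ and $M_n := (2\|\mathcal{K}\|_1(1+\epsilon))^{n-1}\bigl(\tfrac{\mathcal{K}(0)}{\mathcal{K}(T)(2\epsilon+1)}+1\bigr)$, and showing $Q_n \leq M_n$. The base case $n=1$ is immediate since $Q_1 = 1$ and $M_1 \geq 1$.

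For the inductive step, assume $Q_n \leq M_n$ and split the supremum defining $Q_{n+1}$ into the two regimes $t\geq T$ and $t<T$. In the "large $t$" regime, write
\[
\mathcal{K}^{*(n+1)}(t) = \int_0^t \mathcal{K}(t-s)\,\mathcal{K}^{*n}(s)\,ds \leq Q_n \int_0^t \mathcal{K}(t-s)\mathcal{K}(s)\,ds \leq 2(1+\epsilon)\|\mathcal{K}\|_1\,Q_n\,\mathcal{K}(t),
\]
using the inductive hypothesis pointwise and then invoking the standing assumption \eqref{eq:power_law_assmp_K}. Dividing by $\mathcal{K}(t)$ gives the desired bound $2(1+\epsilon)\|\mathcal{K}\|_1 M_n = M_{n+1}$ on this regime.

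For the "small $t$" regime $t<T$, the subconvolutive hypothesis is not available, so I would instead use two crude monotonicity estimates: $\mathcal{K}(t-s)\leq \mathcal{K}(0)$ yields $\mathcal{K}^{*(n+1)}(t) \leq \mathcal{K}(0)\int_0^t\mathcal{K}^{*n}(s)\,ds \leq \mathcal{K}(0)\|\mathcal{K}\|_1^n$ by Young's inequality, and monotonicity of $\mathcal{K}$ gives $\mathcal{K}(t)\geq\mathcal{K}(T)$. Hence $\mathcal{K}^{*(n+1)}(t)/\mathcal{K}(t) \leq \mathcal{K}(0)\|\mathcal{K}\|_1^n/\mathcal{K}(T)$. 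To match this against $M_{n+1}$ it suffices to verify
\[
\tfrac{\mathcal{K}(0)}{\mathcal{K}(T)} \leq (2(1+\epsilon))^n\Bigl(\tfrac{\mathcal{K}(0)}{\mathcal{K}(T)(2\epsilon+1)}+1\Bigr),
\]
which is equivalent to the elementary inequality $(2(1+\epsilon))^n\geq 2\epsilon+1$ for $n\geq 1$, $\epsilon\geq 0$; this follows from $(2(1+\epsilon))^n\geq 2(1+\epsilon)\geq 1+2\epsilon$.

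The main obstacle is pinning down the precise constant $C = \mathcal{K}(0)/(\mathcal{K}(T)(2\epsilon+1))+1$, and in particular seeing why the "$+1$" is necessary: it is exactly what is needed so that the small-$t$ bound from the crude estimate fits inside $M_{n+1}$ after the factor of $(2(1+\epsilon))^n$ has been peeled off, while simultaneously ensuring $M_1\geq 1$ so the base case holds. Once this bookkeeping is done correctly, both regimes collapse into the single bound $Q_{n+1}\leq M_{n+1}$, closing the induction and giving the stated estimate.
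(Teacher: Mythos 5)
Your proof is correct and follows essentially the same route as the paper's: both split the supremum at $T$, apply the subconvolutivity hypothesis \eqref{eq:power_law_assmp_K} on $t\geq T$, and on $t<T$ use the crude bound $\mathcal{K}^{*(n+1)}(t)\leq \mathcal{K}(0)\|\mathcal{K}\|_1^{n}$ together with $\mathcal{K}(t)\geq\mathcal{K}(T)$. The only cosmetic difference is that the paper combines the two regimes additively into the recursion $\alpha_{n+1}\leq \mathcal{K}(0)/(\mathcal{K}(T)2^{n})+(1+\epsilon)\alpha_n$ and solves it to extract the constant, whereas you verify directly that the small-$t$ bound already sits below $M_{n+1}$ (via $(2(1+\epsilon))^{n}\geq 2\epsilon+1$, which is a sufficient rather than equivalent condition, but that is all you need); both yield the stated estimate.
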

\begin{proof}
    Define $\alpha_n = \sup_{t\ge 0} \frac{\cK^{*n}(t)}{\cK(t)(2\|\cK\|_1)^{n-1}}$, trivially $\alpha_1=1$. Consider the $n+1$ convolution,  
\begin{align}\label{eq:K_n+1}
 \frac{\cK^{*(n+1)}(t)}{\cK(t)(2\|\cK\|_1)^{n}}   &= \frac{1}{\cK(t)} \int_0^t \frac{\cK(s)\cK^{*n}(t-s)}{(2\|\cK\|_1)^{n}} \dif s 
\end{align}
By the assumption of the Lemma, we know that there exists some $T>0$ such that for $\forall t\ge T$
\begin{align}
\int_0^{t}\frac{\cK(s)\cK(t-s)}{2\|\cK\|_1}  \dif s \le (1+\epsilon)\cK(t).
\end{align}
Therefore, if $t\geq T$, we have
\begin{align}\label{eq:tgeT}
  &\frac{1}{\cK(t)} \int_0^{t} \frac{\cK(s)\cK^{*n}(t-s)}{(2\|\cK\|_1)^{n}} \dif s
 \\\nonumber &= \int_0^{t}\frac{\cK(s)\cK(t-s)}{2\|\cK\|_1} \frac{\cK^{*n}(t-s)}{\cK(t-s)(2\|\cK\|_1)^{n-1}} \dif s
 \leq \alpha_n (1+\epsilon)
\end{align}
On the other hand, if $t<T$, 
\begin{align}\label{eq:tleT}
\frac{1}{\cK(t)} \int_0^t \frac{\cK(s)\cK^{*n}(t-s)}{(2\|\cK\|_1)^{n}} \dif s \le \frac{\cK(0)}{\cK(T)} \frac{\|\cK^{*n}(t)\|_1}{(2\|\cK\|_1)^{n}}\le \frac{\cK(0)}{\cK(T)2^n}
\end{align}
Taking supremum in Eq. \eqref{eq:K_n+1}, and combining the results of Eq. \eqref{eq:tleT}, and Eq. \eqref{eq:tgeT}, we obtain that, 
\begin{align*}
\alpha_{n+1} \le \frac{\cK(0)}{\cK(T)2^n} + \alpha_n (1+\epsilon)     
\end{align*}
Solving the above recursion equation, 
\begin{align*}\alpha_n&\le \frac{\cK(0)}{\cK(T)}\sum_{k=0}^{n-2}\frac{1}{2^{n-k-1}}(1+\epsilon)^k+(1+\epsilon)^{n-1}= \frac{\cK(0)}{\cK(T)2^{n-1}}\frac{1-(2(1+\epsilon))^{n-1}}{1-2(1+\epsilon)}+(1+\epsilon)^{n-1}
\\ \nonumber &
\le (1+\epsilon)^{n-1}\left(\frac{\cK(0)}{\cK(T)(2\epsilon +1)}+1\right), 
\end{align*}
rearranging the terms we arrived at the required result.
\end{proof}

\paragraph{Asymptotic analysis of the risk \label{subsubsec:powerLawN0}}

Here, we consider a family of models with $d\to\infty$, for which the following power law asymptotics assumption is satisfied:
\begin{assumption}\label{ass:F_K_power_law_assmp}
$F(x)\asymp x^{-\kappa_1}$ and $\mathcal{K}(x)\asymp x^{-\kappa_2}$ for $x\geq1$ with $\kappa_1\geq0,$  $\kappa_2> 1$     
\end{assumption}

Corollary \ref{cor:powerlaw_Rasymp} apply Lemma \ref{lem:R_up_low_bound} in the setting for which $F$, and $\cK$ has a power law behavior asymptotically.  It shows that the risk will then be dominated by $F$ only. Corollary \ref{cor:gamma_asymp} shows the behavior of the learning rate in this setting. Finally, Lemma \ref{lem:K_2K1_beta_delta} shows that Assumption \ref{ass:F_K_power_law_assmp} is a consequence of a power law spectrum near zero on the eigenvalues of the covariance matrix and a power law assumption on the projected discrepancy at initialization.

\begin{corollary}\label{cor:powerlaw_Rasymp}
Suppose Assumption \ref{ass:F_K_power_law_assmp} is satisfied, then
$\CMscr{R}(t)\asymp F(\Gamma(t)).$
\end{corollary}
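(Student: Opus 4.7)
The strategy is to apply Lemma \ref{lem:R_up_low_bound} to sandwich $\CMscr{R}(t)$ between $F(\Gamma(t))$ and $F(\Gamma(t)) + C(\mathcal{K}*F)(\Gamma(t))$ (after a change of variables), and then use a standard power-law convolution estimate to show $(\mathcal{K}*F)(\Gamma(t)) \lesssim F(\Gamma(t))$.

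First, I would verify the hypotheses of Lemma \ref{lem:R_up_low_bound}. Monotonicity of $\mathcal{K}$ and $\gamma_s$ is built into the AdaGrad-Norm setup. The Kesten-type condition
\[
\int_0^t\mathcal{K}(s)\mathcal{K}(t-s)\dif s \leq 2(1+\epsilon)\|\mathcal{K}\|_1\mathcal{K}(t)
\]
follows for large $t$ from $\mathcal{K}(x)\asymp x^{-\kappa_2}$ with $\kappa_2>1$: split the convolution at $t/2$; by symmetry and monotonicity each half is bounded by $\mathcal{K}(t/2)\|\mathcal{K}\|_1$, and $\mathcal{K}(t/2)\asymp \mathcal{K}(t)$ by the power-law. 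The smallness condition $2\|\mathcal{K}\|_1(1+\epsilon)\gamma_0<1$ is a mild restriction on the initial learning rate (small $\eta/b$), which I would either impose as an additional assumption or recover by noting that Assumption \ref{assumption:stepsizes} controls $\gamma_0$.

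The lower bound from Lemma \ref{lem:R_up_low_bound} gives $\CMscr{R}(t)\geq F(\Gamma(t))$ immediately, since the remaining convolution term is nonnegative. For the upper bound, I must show that
\[
\int_0^t\gamma_s^2\mathcal{K}(\Gamma(t)-\Gamma(s))F(\Gamma(s))\dif s \;\lesssim\; F(\Gamma(t)).
\]
Changing variables $u=\Gamma(s)$ with $\dif u=\gamma_s\dif s$, and using $\gamma_{\Gamma^{-1}(u)}\leq\gamma_0$, this is bounded by $\gamma_0(\mathcal{K}*F)(\Gamma(t))$. So the task reduces to proving $(\mathcal{K}*F)(T)\lesssim F(T)$ for $T=\Gamma(t)$ large.

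The core of the argument is a standard splitting: write
\[
(\mathcal{K}*F)(T) = \int_0^{T/2}\mathcal{K}(T-u)F(u)\dif u + \int_{T/2}^T\mathcal{K}(T-u)F(u)\dif u.
\]
On $[T/2,T]$, bound $F(u)\leq F(T/2)\asymp T^{-\kappa_1}$ and estimate $\int_0^{T/2}\mathcal{K}(v)\dif v\leq\|\mathcal{K}\|_1$ to get $\lesssim T^{-\kappa_1}\asymp F(T)$. On $[0,T/2]$, bound $\mathcal{K}(T-u)\leq \mathcal{K}(T/2)\asymp T^{-\kappa_2}$ and the remaining factor is $\int_0^{T/2}F(u)\dif u$, which behaves like $T^{1-\kappa_1}$ when $\kappa_1<1$, $\log T$ when $\kappa_1=1$, and $O(1)$ when $\kappa_1>1$; combining gives a bound of order $T^{-\kappa_2}, T^{-\kappa_2}\log T,$ or $T^{1-\kappa_1-\kappa_2}$, each of which is $\lesssim T^{-\kappa_1}$ because $\kappa_2>1$ (and, in the regime $\kappa_1>\kappa_2$, because the residual still dominates once combined with the second piece).

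The main obstacle is the bookkeeping in the case analysis on $\kappa_1$ versus $\kappa_2$: the clean estimate $(\mathcal{K}*F)(T)\lesssim F(T)$ uses $\kappa_2>1$ to make $\|\mathcal{K}\|_1$ finite and to outweigh the polynomial growth of $\int_0^{T/2}F$, but if $\kappa_1>\kappa_2$ one must check that the $T^{-\kappa_2}$ term does not destroy the asymptotic $\asymp F(T)$; there, the lower bound $\CMscr{R}(t)\geq F(\Gamma(t))$ suffices to pin down the equivalence in the direction $\CMscr{R}\gtrsim F$, while for the reverse direction one uses the fact that the convolution term contributes at most a constant multiple of what a single application of $\mathcal{K}$ against the dominant mass of $F$ can produce. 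Everything else is routine.
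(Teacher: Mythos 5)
Your proposal is correct and follows essentially the same route as the paper: apply Lemma \ref{lem:R_up_low_bound} for the two-sided bound, change variables to $u=\Gamma(t)$, split the convolution at $u/2$, and use $\mathcal{K}(u-v)\le\mathcal{K}(u/2)$ on the first half and $F(v)\le F(u/2)$ together with $\|\mathcal{K}\|_1<\infty$ on the second, with $\kappa_2>1$ closing the estimate. The hypothesis checks you mention (the Kesten-type condition on $\mathcal{K}$ and the smallness of $\gamma_0$) are handled by the paper in Lemma \ref{lem:K_2K1_beta_delta} and left implicit in the corollary's proof, and your remark about the $\kappa_1>\kappa_2$ bookkeeping is a fair point that the paper's own display glosses over (it is moot in the intended application, where $\kappa_2=\kappa_1+1+\delta$).
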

\begin{proof} 
Define $g(u)=\gamma_{\Gamma^{-1}(u)}$ and $r(u)=\CMscr{R}(\Gamma^{-1}(u))$ and observe that $g(u)$ is a decreasing function. Then,
from the upper bound in Lemma \ref{lem:R_up_low_bound}, we have
\begin{equation}
    \begin{split}
        r(u)&\leq F(u)+C\int_0^ug(v)\cK(u-v)F(v)\dif v\\
        &=F(u)+C\left(\int_0^{u/2}g(v)\cK(u-v)F(v)\dif v+\int_{u/2}^ug(v)\cK(u-v)F(v)\dif v\right)\\
        &\leq F(u)+C_1g(0)\left(\left(\frac{u}{2}\right)^{-\kappa_2}\int_0^{u/2}F(v)\dif v+\left(\frac{u}{2}\right)^{-\kappa_1}\int_{u/2}^u\cK(u-v)\dif v\right)\\
        &\leq F(u)+C_2(u^{-\kappa_2+1-\kappa_1}+u^{-\kappa_1}\|\cK\|)\\
        &=O(F(u)).
    \end{split}
\end{equation}
Combining this upper bound with the lower bound from Lemma \ref{lem:R_up_low_bound} and that $\kappa_2>1$, we conclude that $r(u)\asymp F(u)$ and $\CMscr{R}(t)\asymp F(\Gamma(t)).$
\end{proof}
Next, we derive the asymptotics of $\gamma_t$.  There are three different cases, depending on whether the risk is integrable, which translates to a threshold with respect to the parameter $\kappa_1$.
\begin{corollary}\label{cor:gamma_asymp}
Suppose Assumption \ref{ass:F_K_power_law_assmp} then
    the following asymptotics for the learning rate hold:
    \begin{itemize}
        \item For $\kappa_1>1,$ there exists $\tilde{\gamma}$ such that $\gamma_t\geq\tilde{\gamma}$ {and $\CMscr{R}(t)\asymp t^{-\kappa_1}$} for all $t\geq0.$ 
        \item For $\kappa_1<1$,
        $\gamma_t\asymp t^{-(1-\kappa_1)/(2-\kappa_1)}$ {and $\CMscr{R}(t)\asymp t^{-\frac{\kappa_1}{2-\kappa_1}}$} for all $t\geq1$. 
        \item For $\kappa_1=1$, $\gamma_t\asymp \frac{1}{\log (t+1)}$ {and $\CMscr{R}(t)\asymp (\frac{t}{\log(t+1)})^{-\kappa_1}$} for all $t\geq1.$ 
    \end{itemize}
\end{corollary}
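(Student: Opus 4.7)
The plan is to convert the Corollary \ref{cor:powerlaw_Rasymp} estimate $\CMscr{R}(t)\asymp F(\Gamma(t))\asymp \Gamma(t)^{-\kappa_1}$ (valid once $\Gamma(t)\ge 1$) together with the AdaGrad-Norm formula into an autonomous relation between $\gamma_t$ and $\Gamma(t)$ that can be integrated up to constants. Writing $G(t)\defas \int_0^t \CMscr{R}(s)\,\dif s$, the AdaGrad-Norm rate for least squares satisfies $\gamma_t=\eta/\sqrt{b^2+cG(t)}$ with $c=2\tr(K)/d$; squaring and differentiating yields $\gamma_t'=-\tfrac{c}{2\eta^2}\gamma_t^3\CMscr{R}(t)$. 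Inserting $\CMscr{R}(t)\asymp\Gamma(t)^{-\kappa_1}$, and using $\dif\gamma/\dif\Gamma=\gamma_t'/\gamma_t$, I get $\dif\gamma/\gamma^2\asymp -\Gamma^{-\kappa_1}\,\dif\Gamma$. Integrating gives the single-variable relation
\[
\tfrac{1}{\gamma_t}\asymp C_0+\tfrac{\Gamma(t)^{1-\kappa_1}}{1-\kappa_1}\quad(\kappa_1\neq 1),\qquad \tfrac{1}{\gamma_t}\asymp C_0+\log\Gamma(t)\quad(\kappa_1=1).
\]

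For $\kappa_1<1$, this asymptotically forces $\gamma_t\asymp \Gamma(t)^{\kappa_1-1}$. Plugging into $\Gamma'=\gamma_t$ and separating variables gives $\Gamma^{1-\kappa_1}\,\dif\Gamma\asymp \dif t$, so $\Gamma(t)\asymp t^{1/(2-\kappa_1)}$; hence $\gamma_t\asymp t^{-(1-\kappa_1)/(2-\kappa_1)}$ and $\CMscr{R}(t)\asymp \Gamma(t)^{-\kappa_1}\asymp t^{-\kappa_1/(2-\kappa_1)}$. For $\kappa_1=1$, $\gamma_t\asymp 1/\log\Gamma$, and the ODE $\Gamma'\asymp 1/\log\Gamma$ integrates to $\Gamma\log\Gamma\asymp t$, so $\Gamma\asymp t/\log t$. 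Since $\log(t/\log t)\sim\log t$, this gives $\gamma_t\asymp 1/\log t$ and $\CMscr{R}(t)\asymp(\log t)/t$.

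The case $\kappa_1>1$ requires an a priori bootstrap to activate the asymptotic form of Corollary \ref{cor:powerlaw_Rasymp}. Since $G(t)$ is increasing, $\gamma_t$ is monotone decreasing and $\gamma_\infty\defas\lim_{t\to\infty}\gamma_t$ exists; I plan to assume for contradiction that $\gamma_\infty=0$ (so $G(\infty)=\infty$) and split on whether $\Gamma(\infty)$ is finite. If $\Gamma(\infty)<\infty$, continuity of $F$ gives $\CMscr{R}(t)\asymp F(\Gamma(t))$ bounded below by a positive constant, hence $G(t)\asymp t$ and $\gamma_t\asymp t^{-1/2}$, forcing $\Gamma(t)\asymp t^{1/2}\to\infty$, a contradiction. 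If $\Gamma(\infty)=\infty$, the separated ODE gives $1/\gamma_t\asymp C_0-\Gamma^{1-\kappa_1}/(\kappa_1-1)$, which is bounded as $\Gamma\to\infty$, contradicting $\gamma_\infty=0$. Therefore $\gamma_\infty>0$, so $\Gamma(t)\asymp t$ and $\CMscr{R}(t)\asymp t^{-\kappa_1}$.

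The main obstacle will be promoting the formal $\asymp$-ODE manipulations into rigorous upper and lower comparison arguments: $\CMscr{R}(t)\asymp\Gamma(t)^{-\kappa_1}$ only holds up to two different multiplicative constants, so the separated ODE must be split into two differential inequalities whose envelope solutions must be shown to pin $\gamma_t$ and $\Gamma(t)$ within a constant factor of the ansatz. Concretely, I would let $c_{\pm}$ denote the constants from Corollary \ref{cor:powerlaw_Rasymp} bounding $\CMscr{R}(t)$ between $c_-\Gamma^{-\kappa_1}$ and $c_+\Gamma^{-\kappa_1}$, and derive matching power-law (resp.\ logarithmic) bounds on $\Gamma(t)$ from the two resulting one-sided ODEs via standard Grönwall/comparison arguments, absorbing transient effects on the region $\Gamma(t)\le 1$ into constants. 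The other subtle point is the bootstrap in case $\kappa_1>1$ ensuring $\Gamma(t)\to\infty$ before invoking the power-law form of $F$.
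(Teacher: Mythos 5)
Your proposal is correct and follows essentially the same route as the paper: both invoke Corollary~\ref{cor:powerlaw_Rasymp} to get $\CMscr{R}\asymp F(\Gamma(t))\asymp\Gamma(t)^{-\kappa_1}$, change variables to $u=\Gamma(t)$ so that the AdaGrad-Norm formula becomes the separable relation $1/\gamma\asymp \mathrm{const}+\int_0^u(1+v)^{-\kappa_1}\dif v$ (your $\dif\gamma/\gamma^2\asymp-\Gamma^{-\kappa_1}\dif\Gamma$ is exactly the paper's $I'(u)/\sqrt{I(u)}\asymp(1+u)^{-\kappa_1}$), and then integrate $\Gamma'=\gamma$ back to obtain $t\asymp(1+\Gamma)^{2-\kappa_1}$, resp.\ $t\asymp\Gamma\log\Gamma$. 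Your contradiction bootstrap for $\kappa_1>1$ is a slightly more explicit version of the paper's one-line observation that $\sqrt{I(u)}$ stays bounded, but it is not a different argument.
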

\begin{proof}
    Using the notations $g(u)$ and $r(u)$ defined above along with the change of variable $u=\Gamma(t)$, we get $\int_0^t\CMscr{R}(s)\dif s=\int_0^u\frac{r(v)}{g(v)}\dif v$.  Combining this with Corollary \ref{cor:powerlaw_Rasymp} and the formula for $\gamma_t$ we get
    \[g(u)
    \asymp\frac{\eta}{\sqrt{b^2+\frac2d\tr(K)\int_0^u\frac{(1+v)^{-\kappa_1}}{g(v)}\dif v}}.\]
    Let $I(u)=b^2+\frac2d\tr(K)\int_0^u\frac{(1+v)^{-\kappa_1}}{g(v)}\dif v$ and observe that $g(u)\asymp\frac{1}{\sqrt{I(u)}}$ and $I'(u)=\frac2d\tr(K)\frac{(1+u)^{-\kappa_1}}{g(u)}$.  Thus, $I(u)$ satisfies $\frac{I'(u)}{\sqrt{I(u)}}\asymp(1+u)^{-\kappa_1}$ so we have
    \[\sqrt{I(u)}-\sqrt{I(0)}\asymp\int_0^u(1+v)^{-\kappa_1}\dif v.\]
    In the case of $\kappa_1>1$, this implies $\sqrt{I(u)}\leq \sqrt{I(0)}+C\int(1+v)^{-\kappa_1}\dif v$.  This upper bound on $I(u)$ gives a corresponding lower bound on $g(u)$ and thus a lower bound on $\gamma_t$.

    In the case of $\kappa_1<1$, we have $\sqrt{I(u)}- \sqrt{I(0)}\asymp (1+v)^{1-\kappa_1}$ so, for $u$ sufficiently large, $g(u)\asymp(1+u)^{\kappa_1-1}$.  To recover the asymptotic for $\gamma_t$, we observe that $\frac{\dif}{\dif u}\Gamma^{-1}(u)=\frac{1}{g(u)}\asymp(1+u)^{1-\kappa_1}$.  Integrating both sides and changing back to $t$ variables, we get $t\asymp(1+\Gamma(t))^{2-\kappa_1}$ (or equivalently $1+\Gamma(t)\asymp t^{1/(2-\kappa_1)}$).  Finally, plugging this into the formula for $\gamma_t$ and applying Corollary \ref{cor:powerlaw_Rasymp}, we get
    \[\gamma_t\asymp\frac{\eta}{\sqrt{b^2+\frac2d\tr(K)\int_0^tF(\Gamma(s))\dif s}}\asymp(1+t)^{-(1-\kappa_1)/(2-\kappa_1)}.\]

    In the case of $\kappa_1=1$, we follow a similar procedure as for $\kappa_1<1$ to show that $t\asymp \Gamma(t)\log(\Gamma(t))$ for sufficiently large $t$.  This implies $\Gamma(t)\asymp t/\log(t)$ which gives the desired result after integration. {The decay rate of the risk is then immediate using Corollary \ref{cor:powerlaw_Rasymp}}.
\end{proof}

\begin{lemma}\label{lem:K_2K1_beta_delta}
Let $K$ have a spectrum that converges as $d\to\infty$ to the power law measure $\rho(\lambda)= C\lambda^{-\beta}\mathbf{1}_{(0,\lambda_{\max})}$, with $C^{-1} = \frac{\lambda_{\max}^{1-\beta}}{1-\beta}$ for some $\beta <1$, and $\lambda_{\max}>0$, and suppose that $\CMscr{D}^2_i(0)\sim \lambda_i^{-\delta} $, then $F(t)\asymp t^{-\kappa_1}$, and $\cK(t)\asymp t^{-\kappa_2}$, with $\kappa_1 = 2-\beta -\delta $, and $\kappa_2=3-\beta$. In addition, $\cK(t)\asymp t^{-\kappa_2}$,  satisfies Eq. \eqref{eq:power_law_assmp_K}. 
\end{lemma}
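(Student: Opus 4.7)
\emph{Step 1 (limiting integral representations).} In the $d\to\infty$ limit, the empirical measure of the $\lambda_i$'s converges to $\rho(\lambda)\dif\lambda$, and the ansatz $\CMscr{D}_i^2(0)\sim\lambda_i^{-\delta}$ replaces the discrete weights. Thus the finite sums defining $F$ and $\cK$ become
\[
F(x)\;\asymp\;\tfrac{C}{2}\int_0^{\lambda_{\max}}\lambda^{\,1-\beta-\delta}e^{-2\lambda x}\,\dif\lambda,
\qquad
\cK(x)\;\asymp\;C\int_0^{\lambda_{\max}}\lambda^{\,2-\beta}e^{-2\lambda x}\,\dif\lambda.
\]
Both integrals converge near $0$ because $\beta<1$ (giving exponent $>-1$ in $\cK$) and because we are in the regime $\beta+\delta<2$ (giving exponent $>-1$ in $F$).

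\emph{Step 2 (power-law decay of $F$).} I change variables $u=2\lambda x$ in the integral for $F$ to factor out the dependence on $x$:
\[
F(x)\;\asymp\;\tfrac{C}{2}(2x)^{-(2-\beta-\delta)}\int_0^{2\lambda_{\max}x} u^{\,1-\beta-\delta}e^{-u}\,\dif u.
\]
For $x\ge 1$, the truncated integral $\int_0^{2\lambda_{\max}x}u^{1-\beta-\delta}e^{-u}\dif u$ is pinched between its value at $x=1$ (a positive constant, since the integrand is positive and locally integrable) and $\Gamma(2-\beta-\delta)<\infty$. Hence $F(x)\asymp x^{-\kappa_1}$ with $\kappa_1=2-\beta-\delta$, as claimed.

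\emph{Step 3 (power-law decay of $\cK$).} The same substitution gives
\[
\cK(x)\;\asymp\;C(2x)^{-(3-\beta)}\int_0^{2\lambda_{\max}x}u^{\,2-\beta}e^{-u}\,\dif u,
\]
and the same pinching argument yields $\cK(x)\asymp x^{-(3-\beta)}$ for $x\ge 1$, giving $\kappa_2=3-\beta$. Since $\beta<1$, we have $\kappa_2>2$, so in particular $\|\cK\|_1<\infty$ and $\cK$ is monotonically decreasing (clear from its integral representation).

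\emph{Step 4 (verifying the convolution bound \eqref{eq:power_law_assmp_K}).} I use the standard subexponential-type split, exploiting symmetry $s\leftrightarrow t-s$:
\[
\int_0^t\cK(s)\cK(t-s)\,\dif s
\;=\;2\int_0^{t/2}\cK(s)\cK(t-s)\,\dif s
\;\le\;2\,\cK(t/2)\int_0^{t/2}\cK(s)\,\dif s
\;\le\;2\,\cK(t/2)\,\|\cK\|_1.
\]
From Step 3, $\cK(t/2)/\cK(t)\to 2^{\kappa_2}$ as $t\to\infty$, so there exist constants $c$ and $T$ with $\cK(t/2)\le c\,\cK(t)$ for all $t\ge T$. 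Choosing any $\epsilon$ with $1+\epsilon\ge c$ then gives
\[
\int_0^t\cK(s)\cK(t-s)\,\dif s\;\le\;2(1+\epsilon)\|\cK\|_1\cK(t)\qquad(t\ge T),
\]
which is exactly \eqref{eq:power_law_assmp_K}.

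\emph{Main obstacle.} The only subtlety is being careful that the $\asymp$ in Steps 2--3 is genuinely two-sided and uniform in $x\ge 1$: the upper bound is immediate from $\Gamma(2-\beta-\delta)$, but the lower bound requires noting that the truncated integral is increasing in its upper limit, so at $x=1$ it takes a strictly positive value that lower-bounds it for all $x\ge 1$. Everything else is a direct change of variables and the standard subexponential splitting argument; no deeper input is needed.
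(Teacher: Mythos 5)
Your proposal is correct and follows essentially the same route as the paper: both pass to the integral against the limiting power-law density, substitute $u=2\lambda x$ to recognize an incomplete gamma function (your "pinching" of the truncated integral is exactly the paper's observation that $\gamma(s,z)\asymp\Gamma(s)$ for large $z$), and both verify \eqref{eq:power_law_assmp_K} by bounding the convolution by $2\,\cK(t/2)\,\|\cK\|_1$ and using $\cK(t/2)\asymp\cK(t)$. The only cosmetic difference is that you exploit the symmetry $s\leftrightarrow t-s$ to reduce to one half-integral where the paper splits into two pieces and bounds each by monotonicity.
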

\begin{proof}
Following the definition in Eq. \eqref{eq:K_def}, and Eq. \eqref{eq:F_def} \begin{align*}
F(x) 
&=\frac{1-\beta}{2\lambda_{\max}^{1-\beta}}\int_0^{\lambda_{\max}}\lambda^{1-\beta-\delta}e^{-2\lambda x} \dif \lambda 
\\&=\frac{1-\beta}{2\lambda_{\max}^{1-\beta}(2x)^{2-\beta-\delta}}\int_0^{2\lambda_{\max}x}y^{1-\beta-\delta}e^{-y} \dif y = \frac{1-\beta}{\lambda_{\max}^{1-\beta}2^{3-\beta-\delta}}\frac{\gamma(2-\beta-\delta,2\lambda_{\max}x)}{x^{2-\beta-\delta}}. 
\end{align*} 
Similarly for $\cK$, 
\begin{align*}  
\mathcal{K}(x) 
&=\frac{1-\beta}{\lambda_{\max}^{1-\beta}}\int_0^{\lambda_{\max}}\lambda^{2-\beta}e^{-2\lambda x} \dif \lambda =\frac{1-\beta}{\lambda_{\max}^{1-\beta}2^{3-\beta}} \frac{\gamma(3-\beta,2\lambda_{\max}x)}{x^{3-\beta}}.
\end{align*}   
with $\gamma(s,z)=\int_0^z x^{s-1}e^{-x}\dif x$ is the incomplete gamma function. For large $z$, $\gamma(s,z)\asymp \Gamma(s)$,  the complete gamma function. We therefore obtain $\kappa_1 = 2-\beta-\delta$, and $\kappa_2=3-\beta$. 
Next, we show that $\cK(x)\asymp x^{-{\kappa_2}}$ satisfies Eq. \eqref{eq:power_law_assmp_K}, 
\begin{align*}    
\int_0^t\mathcal{K}(s)\mathcal{K}(t-s)\dif s &\leq 
\int_0^{t/2}\cK(t)\mathcal{K}(t-s)\dif s+ 
\int_{t/2}^{t}\cK(t)\mathcal{K}(t-s)\dif s
 \\&\leq 
\cK(t/2)\left(\int_0^{t/2}\cK(s)\dif s+ 
\int_{t/2}^{t}\mathcal{K}(t-s)\dif s \right) \le 2 \cK(t/2)\|\cK\|_1
\end{align*}
by the power-law assumption for $t>T$, $\cK(t/2)\asymp \cK(t)$ which then complete the proof. 
\end{proof}
\begin{proof}[Proof of Proposition \ref{prop:gamma_asymp}] The proof is an immediate application of Corollary \ref{cor:gamma_asymp} with, $\kappa_1 = 2-\beta -\delta$ as implied by Lemma \ref{lem:K_2K1_beta_delta}. 
\end{proof}
\begin{remark}
This includes the case $\beta=0$, which is the uniform measure on $[0,\lambda_{\max}]$.    
\end{remark}

\section{Polyak Stepsize} \label{sec:polyak_appendix}

The distance to optimality of SGD is measured say by $D^2(X) = \|X-X^{\star}\|^2$. Let us consider the deterministic equivalent for the distance to optimality $\CMscr{D}^2(t)$ in \eqref{eq:distance_optimality_equivalent}. Fixing $T > 0$ and any $\varepsilon \in (0, 1/2)$, we have by Theorem~\ref{thm:risk_concentration} (see also corollary \ref{cor:concentration_phi} which show concentration for large class of statistics) that $\sup_{0 \le t \le T} | \|X_{\lfloor td \rfloor} - X^{\star}\|^2 - \CMscr{D}^2(t) | \le d^{-\varepsilon}$, w.o.p. In this way, if we want to guarantee that the distance to optimality of SGD decreases, we need $\dif \CMscr{D}^2(t) < 0$ with the maximum decrease being $\min_{\gamma_t} \dif \CMscr{D}^2(t)$. 

As it turns out, the evolution of $\CMscr{D}^2$ is particular simple, as it solves the differential equation (derived from the ODE in \eqref{eq:ODE})
\begin{equation}\label{eq:DODE}
\frac{\dif}{\dif t} \CMscr{D}^2(t) 
= -2 \gamma_t A( \CMscr{B}(t)) + \frac{ \gamma^2_t}{d} \tr(K) I(\CMscr{B}(t)),
\quad
\left\{
\begin{aligned}
&A( \CMscr{B}) = \Exp_{a,\epsilon} [\ip { x-x^\star,  f'(x\oplus x^\star)}], \\
&I( \CMscr{B}) = \Exp_{a,\epsilon} [f'( x \oplus x^\star)^2], \quad \text{where} \\
&(x \oplus x^\star) \sim N(0, \CMscr{B}).
\end{aligned}
\right.
\end{equation}
The distance to optimality threshold, $\bar{\gamma}_t^{\CMscr{D}}$, occurs precisely when $\dif \CMscr{D}^2 < 0$. This choice of $\gamma$ makes the ODE for the distance to optimality stable. By translating the relevant deterministic quantities in $\bar{\gamma}_t^{\CMscr{D}}$ back to SGD quantities, we get
\begin{equation} \label{eq:distance_threshold}
\bar{\mathfrak{g}}_k^{\CMscr{D}} \defas \frac{2\ip{X_k-X^{\star}, \nabla \mathcal{R}(X_k)}}{\frac{\tr(K)}{d} \EE_{a,\epsilon}[f'(\ip{X_k,a}; \ip{X^{\star},a}, \epsilon)^2] } \, \, \text{with the deterministic equiv.} \, \,  \bar{\gamma}_t^{\CMscr{D}} = \frac{2A(\CMscr{B}(t))}{\tfrac{\tr(K)}{d} I(\CMscr{B}(t)) }.
\end{equation}
A greedy learning rate that maximizes the decrease at each iteration is simply given by $\mathfrak{g}_t^{\text{Polyak}} \in \argmin \dif \CMscr{D}^2(t)$. This has a closed form and we call this \textit{Polyak stepsize}\footnote{This is the idea of Polyak stepsize when the problem is deterministic.}.  Again translating this back to SGD, we have 
\begin{equation} \label{eq:polyak_general}
\text{Polyak learning rate} \quad \mathfrak{g}_k^{\text{Polyak}} = \tfrac{1}{2} \bar{\mathfrak{g}}_k^{\CMscr{D}} \quad \text{and} \quad \text{deterministic equivalent} \quad \gamma_t^{\text{Polyak}} = \tfrac{1}{2} \bar{\gamma}_t^{\CMscr{D}}.
\end{equation}
In this context, the Polyak learning rate is impractical because we do not known $X^{\star}$. In spite of this, we can learn some things about this learning rate as it is the natural extension of Polyak learning rate to SGD. 

The quantities $A(\CMscr{B})$ and $I(\CMscr{B})$ in \eqref{eq:distance_threshold} and \eqref{eq:polyak_general} only depend on the low-dimensional function $f$ and thus do not carry any covariance $K$ or $d$ dependence. Moreover, under additional assumptions on the function such as (strong) convexity, we can bound from below $A(\CMscr{B}) / I(\CMscr{B})$. Thus, in terms covariance $K$ and $d$, the Polyak stepsize $\mathfrak{g}_k^{\text{Polyak}} \asymp \frac{1}{\Tr(K) /(d)} = \frac{1}{\text{avg. eig of $K$}}$.

In the case of least squares (see \eqref{eq:lsq}), we get 
\[
\mathfrak{g}_k^{\text{Polyak}} = \frac{2\mathcal{R}(X_k) - \omega^2}{\tfrac{2\tr(K)}{d} \mathcal{R}(X_k)} \, \, \text{and} \, \, \text{on a noiseless least squares,} \quad \mathfrak{g}_k^{\text{Polyak}} = \frac{1}{\tfrac{\tr(K)}{d}}.
\]
The latter gives the best fixed learning rate for a noiseless target on a LS problem (as noted in \cite{loizou2021stochastic, paquetteSGD2021}). 


\begin{figure}[t!]
\centering
\includegraphics[scale = 0.201]{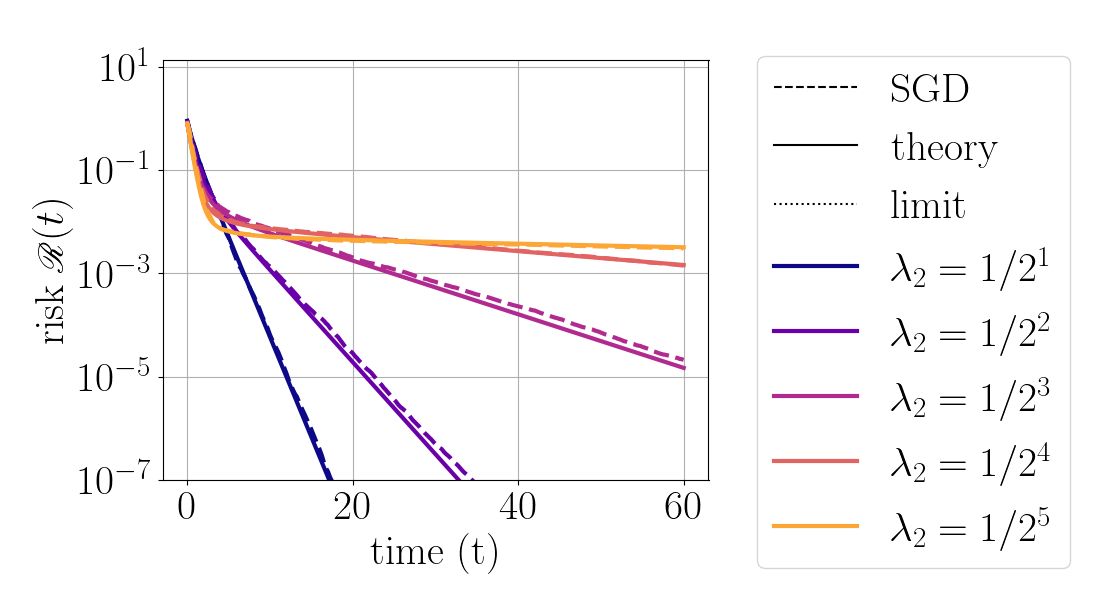}
\includegraphics[scale = 0.201]{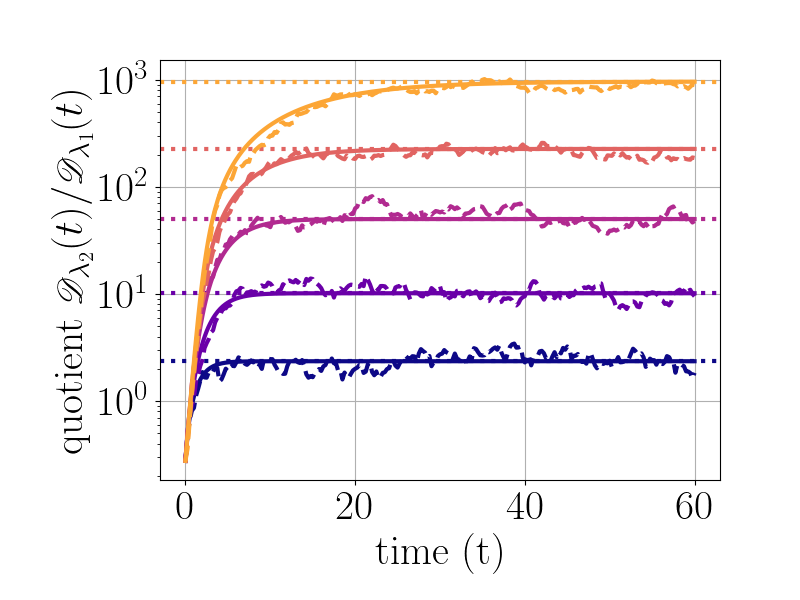}
\includegraphics[scale = 0.201]{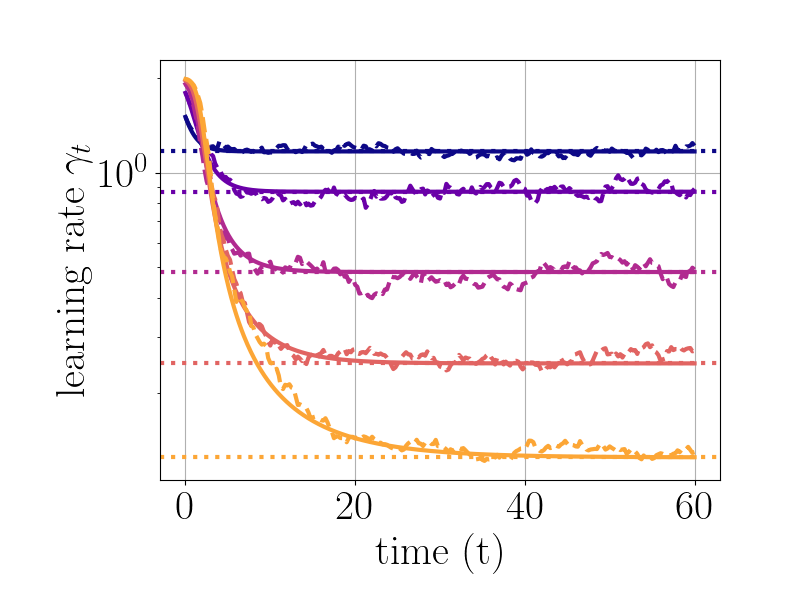}
\caption{\textbf{Convergence in Exact Line Search} on a noiseless least squares problem. The plot on the left illustrates the convergence of the risk function, while the center and right plots depict the convergence of the quotient $\frac{\CMscr{D}_{\lambda_2}(t)}{\CMscr{D}_{\lambda_1}(t)}$ and the learning rate $\gamma_t$, respectively. Further details and formulas for the limiting behavior can be found in the Appendix \ref{subsec:ProofLineSearch_twoEigenvalue}. See Appendix \ref{app:captions} for simulation details.}
\label{fig:lineSearchConvergence_figures}
\end{figure}

\section{Line Search \label{app:Line_Search}}

\subsection{General Line Search\label{app:General_Line_Search}}
Naturally, one can ask a similar question as in Polyak in the context of line search (i.e., decreasing risk at each iteration of SGD). First, by the structure of the risk (Assumption~\ref{assumption:risk} and \ref{assumption:fisher}), 
\begin{equation}
\begin{aligned}
\|\nabla \mathcal{R}(X)\|^2 = m(W^T K^2 W) \quad \text{and} \quad \tr( \nabla^2 \mathcal{R}(X) K) = v(K).
\end{aligned}
\end{equation}
Therefore using \eqref{eq:ODE}, we have that the deterministic equivalent for $\|\nabla \mathcal{R}(X)\|^2$ is $\CMscr{M}(t) = \tfrac{1}{2} \sum_{i=1}^d m(\CMscr{V}_i(t) \lambda_i^2)$. In this case, the deterministic equivalent for the risk $\CMscr{R}$ satisfies the following ODE 
\begin{equation}
\dif \CMscr{R} = -\gamma_t \CMscr{M}(t) \dif t + \frac{\gamma_t^2}{d} v(K) I(\CMscr{B}(t)).
\end{equation}
From this, we get an immediate learning rate (stability) threshold for the risk, that is, $\bar{\mathfrak{g}}_k^{\CMscr{R}}$ is the largest learning rate for which SGD is guaranteed to decrease at each iteration, i.e., when the deterministic equivalent of $\CMscr{R}$ satisfies $\dif \CMscr{R} < 0$ or equivalently after translating relevant terms into SGD quantities 
\begin{equation}
\begin{aligned}
\text{risk threshold} \quad \bar{\mathfrak{g}}_k^{\CMscr{R}} = \frac{\|\nabla \mathcal{R}(X_k)\|^2}{ \tfrac{\tr(K \nabla^2 \mathcal{R}(X_k))}{d} I(W_k^T K W_k) } \, \, \text{and} \, \text{deterministic equiv} \quad \bar{\gamma}_t^{\CMscr{R}} = \frac{\CMscr{M}(t)}{ \frac{v(K)}{d} I(\CMscr{B}(t))}. 
\end{aligned}
\end{equation}
The greediest approach, which we call \textit{exact line search}, would choose the learning rate such that $\gamma_t^{\text{line}} \in \argmin_{\gamma} \dif \CMscr{R}$. In this case, we get
\[
\mathfrak{g}_k^{\text{line}} = \tfrac{1}{2} \mathfrak{g}_k^{\CMscr{R}} \quad \text{and} \quad \text{deterministic equiv} \quad \gamma_t^{\text{line}} = \tfrac{1}{2} \gamma_t^{\CMscr{R}}.
\]

\subsection{Line Search on least squares}\label{subsec:ProofLineSearch_twoEigenvalue}
In this section, we provide a proof of Proposition~\ref{prop:LineSearch_twoEigenvalue}, but, we show more than this including the exact limiting value for $\gamma_t$. 

\begin{proposition}Consider
    the noiseless ($\omega = 0$) least squares problem \eqref{eq:lsq} . Then the learning rate is always lower bounded by
\[
\tfrac{\lambda_{\text{\rm min}}(K)}{\tfrac{1}{d} \Tr(K^2) } \le 
\gamma_t^{\mathrm{line}} 
\quad \text{for all $t\geq 0.$}
\]
Moreover, suppose $K$ has only two distinct eigenvalues $\lambda_1 > \lambda_2 > 0$, i.e., $K$ has $d/2$ eigenvalues equal to $\lambda_1$ eigenvalues and $d/2$ eigenvalues equal to $\lambda_2$. 
In this context, the exact limiting value of $\gamma_t^{\mathrm{line}}$ is given by
\begin{equation}
\lim_{k \to \infty} \gamma_t^{\mathrm{line}} = \frac{2 \left( \lambda_1^2  + \lambda_2^2 x  \right)}{  \left( \lambda_1  + \lambda_2 x  \right) \left( \lambda_1^2  + \lambda_2^2 \right)},   
\end{equation} 
where $x$ is the positive real root of the second-degree polynomial
\begin{equation}
\mathscr{P}(x) = \lambda_1 \lambda_2 (x+1) (\lambda_2 x - \lambda_1) + (\lambda_2 - \lambda_1)^3 x.
\end{equation}
This leads to 
\begin{equation} 
\tfrac{\lambda_{\min}(K)}{\tfrac{1}{d} \Tr(K^2) } \le \lim_{t \to \infty}  
\gamma_t^{\mathrm{line}} 
\le \tfrac{2 \lambda_{\min}(K)}{\tfrac{1}{d} \Tr(K^2)}. 
\end{equation}
\end{proposition}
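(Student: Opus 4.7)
The plan is to convert the line-search stepsize into a weighted average of eigenvalues (yielding the lower bound), and then in the two-eigenvalue case reduce the dynamics to a scalar ODE whose equilibrium is exactly $\mathscr{P}(x)=0$. First I would start from the closed form $\gamma_t^{\text{line}} = \sum_i \lambda_i^2 \CMscr{D}_i^2(t)/(2\Tr(K^2)\CMscr{R}(t))$ in Table~\ref{table:learning_rate_expressions} and substitute the noiseless identity $\CMscr{R}(t) = \tfrac{1}{2d}\sum_i \lambda_i \CMscr{D}_i^2(t)$ to get
\[
\gamma_t^{\text{line}} = \frac{1}{\Tr(K^2)/d} \sum_{i=1}^d \lambda_i\, w_i(t), \qquad w_i(t) \defas \frac{\lambda_i \CMscr{D}_i^2(t)}{\sum_j \lambda_j \CMscr{D}_j^2(t)},
\]
a convex combination of the eigenvalues. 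Bounding $\sum_i \lambda_i w_i(t) \ge \lambda_{\min}(K)$ immediately delivers the universal lower bound $\gamma_t^{\text{line}} \ge \lambda_{\min}/(\Tr(K^2)/d)$.

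Next, for the two-eigenvalue case, I would aggregate coordinates by eigenvalue block. Setting $A_j(t) \defas \sum_{i:\lambda_i=\lambda_j} \CMscr{D}_i^2(t)$ and summing \eqref{eq:dDi} gives $\dot A_j = -2\gamma_t \lambda_j A_j + \gamma_t^2 d \lambda_j \CMscr{R}(t)$ with $\CMscr{R}(t) = (\lambda_1 A_1 + \lambda_2 A_2)/(2d)$. Introducing the ratio $x(t)\defas A_2(t)/A_1(t)$, a short computation of $\dot x = \dot A_2/A_1 - x\dot A_1/A_1$ yields the autonomous scalar ODE
\[
\dot x = 2\gamma_t(\lambda_1-\lambda_2)x + \tfrac{\gamma_t^2}{2}(\lambda_1+\lambda_2 x)(\lambda_2-\lambda_1 x),\qquad \gamma_t^{\text{line}} = \frac{2(\lambda_1^2+\lambda_2^2 x(t))}{(\lambda_1^2+\lambda_2^2)(\lambda_1+\lambda_2 x(t))}.
\]
Substituting $\gamma_t$ from the second equation into $\dot x = 0$ and cancelling the nonvanishing factor $(\lambda_1+\lambda_2 x)$ collapses the equilibrium equation exactly to the quadratic $\mathscr{P}(x)=0$. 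Convergence $x(t)\to x^\star$ (the unique positive root of $\mathscr{P}$, which exists because the product of the two roots is $-\lambda_1/\lambda_2<0$) then follows from sign analysis of the scalar autonomous flow on $(0,\infty)$: the right-hand side is positive near $x=0^+$ (where $A_2$ is pushed away from $0$) and eventually negative for large $x$ (where the contractive $-\gamma_t^2\lambda_1\lambda_2 x^2/2$ term dominates), so $x(t)\to x^\star$. This identifies $\gamma_\infty^{\text{line}}$ as the formula stated.

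To deduce \eqref{eq:ls_bound}, the lower inequality is inherited from the universal bound of the first paragraph. For the upper half, I would evaluate the rescaled quadratic $\hat{\mathscr{P}}(x) \defas \mathscr{P}(x)/\lambda_2^3$ at the candidate point $x_0 \defas \lambda_1(\lambda_1-2\lambda_2)/\lambda_2^2$: writing $r \defas \lambda_1/\lambda_2$, a direct expansion of the two polynomial products collapses to the clean identity $\hat{\mathscr{P}}(x_0) = -2r(r-1)^2 < 0$. Since $\mathscr{P}$ opens upward with one positive and one negative root, $\hat{\mathscr{P}}(x_0) < 0$ forces $x_0 < x^\star$, i.e.\ $\lambda_2^2 x^\star \ge \lambda_1(\lambda_1-2\lambda_2)$; rearranging the explicit formula for $\gamma_\infty^{\text{line}}$ this is equivalent to $\gamma_\infty^{\text{line}} \le 2\lambda_{\min}/(\Tr(K^2)/d)$, with the inequality trivial when $\lambda_1 \le 2\lambda_2$. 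The main obstacle is the rigorous convergence argument for $x(t)$: making the sign analysis airtight (ruling out stalling, escape to $0$, or escape to $\infty$) and confirming that the line-search construction remains well defined along the trajectory so that the substitution of $\gamma_t = \gamma(x(t))$ into the $x$-ODE is legitimate.
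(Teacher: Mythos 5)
Your proposal is correct, and it diverges from the paper's proof in two substantive ways. The lower bound is identical in substance (both rewrite $\gamma_t^{\mathrm{line}}$ so that $\tfrac1d\sum_i\lambda_i^2\CMscr{D}_i^2 \ge 2\lambda_{\min}\CMscr{R}$ does the work). For the two-eigenvalue limit, the paper aggregates into $\CMscr{D}_{\lambda_1},\CMscr{D}_{\lambda_2}$ exactly as you do, but then applies L'H\^opital to $\CMscr{D}_{\lambda_2}/\CMscr{D}_{\lambda_1}$ and solves the resulting self-consistency equation for the limit, which tacitly presupposes that the limit of the ratio exists; your reduction to the autonomous scalar flow $\dot x = -\tfrac{\gamma(x)}{\lambda_1^2+\lambda_2^2}\mathscr{P}(x)$ on $(0,\infty)$, with $\mathscr{P}$ an upward parabola having $\mathscr{P}(0)=-\lambda_1^2\lambda_2<0$ and hence a unique positive, globally attracting zero, actually \emph{establishes} that existence, so on this point your argument is the more complete one (the remaining book-keeping — $A_1,A_2$ stay strictly positive so $\gamma(x(t))$ is well defined, and a scalar monotone flow cannot stall away from its equilibrium — is routine). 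For the final inequality $\gamma_\infty^{\mathrm{line}}\le 2\lambda_{\min}/(\Tr(K^2)/d)$, the paper first extracts the explicit closed form of $x^\star$ (and of $\gamma_\infty^{\mathrm{line}}$) via the quadratic formula and then asserts the bound by "direct calculation" on the radical expression; your route — observe the bound is equivalent to $x^\star\ge x_0\defas\lambda_1(\lambda_1-2\lambda_2)/\lambda_2^2$ and verify $\mathscr{P}(x_0)/\lambda_2^3=-2r(r-1)^2<0$ with $r=\lambda_1/\lambda_2$ (I checked this identity; it holds) — avoids the radical entirely and is cleaner. What the paper's approach buys in exchange is the explicit formulas \eqref{eqn: D2overD1_formula} for $x^\star$ and for $\lim_t\gamma_t^{\mathrm{line}}$, which your test-point argument does not produce but which the proposition statement does not require.
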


\begin{proof} We establish the inequality 
\begin{equation*}
\frac{\lambda_{\text{\rm min}}(K)}{\tfrac{1}{d} \Tr(K^2) } \le \gamma_t^{\mathrm{line}} \quad \text{for all $t\geq 0$}   \end{equation*}
by observing 
\begin{equation*}
\frac{1}{d} \sum_{i=1}^d \lambda_i^2 \CMscr{D}_i^2(t) \geq 2 \lambda_{\min}(K) \frac{1}{2d} \sum_{i=1}^d \lambda_i \CMscr{D}_i^2(t) = 2 \lambda_{\min}(K) \CMscr{R}(t).
\end{equation*}

Now let us consider $K \sim \frac{1}{2} \lambda_1 + \frac{1}{2}\lambda_2$ for $\lambda_1 >\lambda_2 >0$. 

We define $\CMscr{D}_{\lambda}(t) \defas \sum_{\lambda_i = \lambda}^d \CMscr{D}_i^2(t)$. Utilizing the ODEs in \eqref{eq:ODE}, we derive
\begin{equation*}
\frac{\dif}{\dif t} \CMscr{D}_\lambda(t) 
= -2\gamma_t \lambda \CMscr{D}_\lambda(t) + 2\gamma_t^2 \lambda \times \abs{\{\lambda = \lambda_i\}_{i=1}^d} \times \CMscr{R}(t)
\end{equation*}
for each distinct eigenvalue $\lambda$ of $K$. Here $\abs{\{\lambda = \lambda_i\}_{i=1}^d}$ is the number of eigenvalues of $K$ that are equal to $\lambda$. It immediately follows by our construction of $K$ that $\abs{\{\lambda = \lambda_i\}_{i=1}^d} = \frac{d}{2}$. Thus, we establish the following system of ODEs  
\begin{equation}
\begin{cases}\label{odeSystem}
    \frac{\dif}{\dif t} \CMscr{D}_{\lambda_1}(t) 
    = -2\gamma_t \lambda_1 \CMscr{D}_{\lambda_1}(t) + d \gamma_t^2 \lambda_1 \CMscr{R}(t)\\
    \frac{\dif}{\dif t} \CMscr{D}_{\lambda_2}(t) 
    = -2\gamma_t \lambda_2 \CMscr{D}_{\lambda_2}(t) + d \gamma_t^2 \lambda_2 \CMscr{R}(t) 
\end{cases}
\end{equation}
where $\CMscr{R}(t)= \frac{1}{2d} \left( \lambda_1 \CMscr{D}_{\lambda_1}(t) + \lambda_2 \CMscr{D}_{\lambda_2}(t)\right)$ and $\gamma_t^{\mathrm{line}} = \frac{2 \left( \lambda_1^2 \CMscr{D}_{\lambda_1}(t) + \lambda_2^2 \CMscr{D}_{\lambda_2}(t)  \right)}{  \left( \lambda_1 \CMscr{D}_{\lambda_1}(t) + \lambda_2 \CMscr{D}_{\lambda_2}(t)  \right) \left( \lambda_1^2  + \lambda_2^2 \right)}$.

Since $\CMscr{D}_{\lambda_2}(t)\geq 0$ and $\lambda_1>\lambda_2>0$, we infer that $\CMscr{R}(t) = \frac{1}{2d} \left( \lambda_1 \CMscr{D}_{\lambda_1}(t) + \lambda_2 \CMscr{D}_{\lambda_2}(t)\right) \geq \frac{1}{2d} \lambda_1 \CMscr{D}_{\lambda_1}(t)\geq 0$. The structure of the exact line search algorithm ensures $\lim_{t\to \infty} \CMscr{R}(t) = 0$, hence $\lim_{t\to \infty} \CMscr{D}_{\lambda_1}(t) = 0$. Similarly, we deduce $\lim_{t\to \infty} \CMscr{D}_{\lambda_2}(t) = 0$.

By applying L'Hôpital's rule and substituting the expressions for $\gamma_t^{\mathrm{line}}$ and $\CMscr{R}(t)$ in terms of $\CMscr{D}_{\lambda_1}(t) $ and $ \CMscr{D}_{\lambda_2}(t)$, we derive
\begin{align*}
\lim_{t\to \infty} & \frac{\CMscr{D}_{\lambda_2}(t)}{\CMscr{D}_{\lambda_1}(t)} = \lim_{t\to \infty} \frac{\mathrm{d}\CMscr{D}_{\lambda_2}(t)}{\mathrm{d}\CMscr{D}_{\lambda_1}(t)}\\
&= \lim_{t\to \infty} \frac{- 2\gamma_t \lambda_2 \,  \CMscr{D}_{\lambda_2}(t) + d \gamma_t^2 \lambda_2 \CMscr{R}(t) }{- 2\gamma_t \lambda_1 \,  \CMscr{D}_{\lambda_1}(t) + d \gamma_t^2 \lambda_1 \CMscr{R}(t) }\\
&= \lim_{t\to \infty} \frac{- 2 \lambda_2 \,  \CMscr{D}_{\lambda_2}(t) +d  \gamma_t \lambda_2 \CMscr{R}(t) }{- 2 \lambda_1 \,  \CMscr{D}_{\lambda_1}(t) +d  \gamma_t \lambda_1 \CMscr{R}(t) }\\
&= \lim_{t\to \infty} \frac{\gamma_t \frac{\lambda_1 \lambda_2}{2}\CMscr{D}_{\lambda_1}(t) + \lambda_2 \CMscr{D}_{\lambda_2}(t)\left(\gamma_t\frac{\lambda_2}{2}-2\right)}{\gamma_t \frac{\lambda_1 \lambda_2}{2}\CMscr{D}_{\lambda_2}(t) + \lambda_1 \CMscr{D}_{\lambda_1}(t)\left(\gamma_t\frac{\lambda_1}{2}-2\right)}\\
&= \lim_{t\to \infty} \frac{\CMscr{D}_{\lambda_1}(t)^2 \lambda_1^3 \lambda_2 + \CMscr{D}_{\lambda_1}(t) \CMscr{D}_{\lambda_2}(t)(-\lambda_1 \lambda_2^3 +\lambda_1^2 \lambda_2^2 -2\lambda_1^3 \lambda_2) + \CMscr{D}_{\lambda_2}(t)^2(-\lambda_2^4 -2\lambda_1^2 \lambda_2^2)}{\CMscr{D}_{\lambda_1}(t)^2 (-\lambda_1^4 -2\lambda_1^2 \lambda_2^2) + \CMscr{D}_{\lambda_1}(t) \CMscr{D}_{\lambda_2}(t)(-\lambda_1^3 \lambda_2 +\lambda_1^2 \lambda_2^2 -2\lambda_1 \lambda_2^3) + \CMscr{D}_{\lambda_2}(t)^2\lambda_1 \lambda_2^3}\\
&=  \frac{ \lambda_1^3 \lambda_2 + \lim_{t\to \infty}\frac{ \CMscr{D}_{\lambda_2}(t)}{\CMscr{D}_{\lambda_1}(t)}(-\lambda_1 \lambda_2^3 +\lambda_1^2 \lambda_2^2 -2\lambda_1^3 \lambda_2) + \left(\lim_{t\to \infty}\frac{ \CMscr{D}_{\lambda_2}(t)}{\CMscr{D}_{\lambda_1}(t)}\right)^2(-\lambda_2^4 -2\lambda_1^2 \lambda_2^2)}{(-\lambda_1^4 -2\lambda_1^2 \lambda_2^2) + \lim_{t\to \infty}\frac{ \CMscr{D}_{\lambda_2}(t)}{\CMscr{D}_{\lambda_1}(t)}(-\lambda_1^3 \lambda_2 +\lambda_1^2 \lambda_2^2 -2\lambda_1 \lambda_2^3) + \left(\lim_{t\to \infty}\frac{ \CMscr{D}_{\lambda_2}(t)}{\CMscr{D}_{\lambda_1}(t)}\right)^2\lambda_1 \lambda_2^3}.
\end{align*}

Therefore, $\lim_{t\to \infty}\frac{ \CMscr{D}_{\lambda_2}(t)}{\CMscr{D}_{\lambda_1}(t)}$ is the positive real root of the second-degree polynomial
\begin{equation}
\mathscr{P}(x) = \lambda_1 \lambda_2 (x+1) (\lambda_2 x - \lambda_1) + (\lambda_2 - \lambda_1)^3 x.
\end{equation}

Solving for $x > 0$, we derive the explicit formula
\begin{equation}\label{eqn: D2overD1_formula}
\begin{aligned}
& \lim_{t\to \infty}\frac{ \CMscr{D}_{\lambda_2}(t)}{\CMscr{D}_{\lambda_1}(t)} 
\\
&
= \frac{\lambda_1^3 - 2\lambda_1^2 \lambda_2 + 2\lambda_1 \lambda_2^2 - \lambda_2^3 + \sqrt{\lambda_1^6 -4\lambda_1^5 \lambda_2 + 8 \lambda_1^4 \lambda_2^2 -6\lambda_1^3 \lambda_2^3 +8\lambda_1^2 \lambda_2^4 - 4\lambda_1 \lambda_2^5 + \lambda_2^6}}{2\lambda_1 \lambda_2^2}.
\end{aligned}
\end{equation}

Given
\begin{equation}
\gamma_t^{\mathrm{line}} = \frac{2 \left( \lambda_1^2 \CMscr{D}_{\lambda_1}(t) + \lambda_2^2 \CMscr{D}_{\lambda_2}(t)  \right)}{  \left( \lambda_1 \CMscr{D}_{\lambda_1}(t) + \lambda_2 \CMscr{D}_{\lambda_2}(t)  \right) \left( \lambda_1^2  + \lambda_2^2 \right)}  =  \frac{2 \left( \lambda_1^2  + \lambda_2^2 \frac{\CMscr{D}_{\lambda_2}(t)}{\CMscr{D}_{\lambda_1}(t)}  \right)}{  \left( \lambda_1  + \lambda_2 \frac{\CMscr{D}_{\lambda_2}(t)}{\CMscr{D}_{\lambda_1}(t)}  \right) \left( \lambda_1^2  + \lambda_2^2 \right)},
\end{equation}
we have
\begin{equation}
\lim_{t\to \infty}\gamma_t^{\mathrm{line}} =  \frac{2 \left( \lambda_1^2  + \lambda_2^2 \lim_{t\to \infty}\frac{\CMscr{D}_{\lambda_2}(t)}{\CMscr{D}_{\lambda_1}(t)}  \right)}{  \left( \lambda_1  + \lambda_2 \lim_{t\to \infty}\frac{\CMscr{D}_{\lambda_2}(t)}{\CMscr{D}_{\lambda_1}(t)}  \right) \left( \lambda_1^2  + \lambda_2^2 \right)}.
\end{equation}
By substituting \eqref{eqn: D2overD1_formula}, we get
\begin{equation}
\begin{aligned}
&\lim_{t\to \infty} \gamma_t^{\mathrm{line}} \\
&
= \frac{\lambda_1^3 + 2\lambda_1^2 \lambda_2 + 2\lambda_1 \lambda_2^2 + \lambda_2^3 - \sqrt{\lambda_1^6 -4\lambda_1^5 \lambda_2 + 8 \lambda_1^4 \lambda_2^2 -6\lambda_1^3 \lambda_2^3 +8\lambda_1^2 \lambda_2^4 - 4\lambda_1 \lambda_2^5 + \lambda_2^6}}{\left(\lambda_1^2 + \lambda_2^2\right)^2}.  
\end{aligned}
\end{equation}
A direct calculation reveals that $\lambda_1>\lambda_2>0$ implies $\lim_{t\to \infty} \gamma_t^{\mathrm{line}}\leq \frac{2 \lambda_{\text{\rm min}}(K)}{\frac{1}{d}   \tr(K^2)}$.
\end{proof}

\begin{remark}
For the scenario where $K$ has an arbitrary number $n$ of distinct eigenvalues, equation \eqref{eq:ls_bound} remains valid. The proof parallels the one outlined above. However, in this case, the expression for $\lim_{k \to \infty} \mathfrak{g}_k$ is given by
\begin{equation}
\lim_{k \to \infty} \mathfrak{g}_k = \frac{n \left( \lambda_1^2  + \lambda_2^2 x_1 + \dots + \lambda_n^2 x_{n-1}  \right)}{  \left( \lambda_1  + \lambda_2 x_1 + \dots \lambda_n x_{n-1}  \right) \left( \lambda_1^2  + \dots + \lambda_n^2 \right)},    
\end{equation} 
where $x_1, \dots, x_{n-1}>0$ satisfy a more intricate coupled system of $n-1$ equations.
\end{remark}


\section{Examples} \label{sec:examples}
Any single index model with $\alpha$-pseudo Lipschitz ($\alpha\le 1$) activation function is covered by our SGD+AL theory. In this section, we provide  key learning problems within this family of models.

\subsection{Binary logistic regression}
We consider a binary logistic regression problem with $\epsilon=0$ where we are trying to classify two classes. We will follow a Student-Teacher model, in which there exists a true vector $X^{\star}$ to be the true direction such that possible labels are, 
$ y  
    = \frac{\exp( \ip{X^{\star}, a})}{\exp(\ip{X^{\star}, a}) + 1}.
$
or $1-y$. In order to classify the data we minimize the KL-divergence between the label $y$ and our estimate defined by the above formula, 
\begin{equation} \label{eq:logistic_bineary_expected_loss}
    \cR({X}) = \EE_a \bigg [ - \ip{X,a} \cdot \frac{\exp( \ip{X^{\star}, a})}{\exp(\ip{X^{\star}, a}) + 1} + \log \left ( \exp( \ip{X,a} ) + 1 \right ) \bigg ]. 
\end{equation}
To study the ODE dynamics of SGD in Eq. \eqref{eq:ODE} one needs the deterministic risk $h(B)$, and $I(B) = \EE_a[f'(\ip{X,a},\ip{X^\star,a})^2]$, with $B=W^TKW$. Following the computation in Appendix D example D.4 in \cite{collinswoodfin2023hitting} we obtain that  
\begin{equation}
    \begin{aligned}
    h(B) = 
    -B_{21}  \EE_z \bigg [\frac{\exp(\sqrt{B_{22}} \cdot z)}{(1 + \exp(\sqrt{B_{22}} \cdot z) )^2} \bigg ] + \EE_w \big [\log (\exp(w\sqrt{B_{11}})  + 1 ) \big ],
    \end{aligned}
\end{equation}
where $z,w \sim \mathcal{N}(0,1)$. The $I$ function can also be computed explicitly by solving the following Gaussian integral,  where we define $g(x) \defas \frac{\exp(x)}{1 + \exp(y)}$
\begin{equation}
    \begin{aligned}
        I(B)
        &
        = 
        \frac{1}{2 \pi \sqrt{\text{det}(B)} }
        \int_{\mathbb{R}^2} (g(x)-g(y))^2 \exp \bigg ( -\frac{1}{2} \begin{pmatrix} x \\ y \end{pmatrix}^T B^{-1} \begin{pmatrix} x \\ y \end{pmatrix} \bigg )  \, \dif x \dif y.
    \end{aligned}
\end{equation}
 
{We note that, the logistic regression is $(\mu,\theta)$--RSI with
$
\mu = \tfrac{1}{\ell e^{\sqrt{4\theta}}} 
$ see section 2.2 in \cite{collinswoodfin2023hitting}.  Its Lipschitz constant is $\hat{L}(f) = 1$. Using Proposition \ref{prop:adagrad_integrable_risk_convex} one can derive a lower bound on the limiting learning of AdaGrad Norm.}

For more details and more examples, see \cite{collinswoodfin2023hitting}. 

\subsection{CIFAR 5m}\label{subsec:cifar}

Finally, we include an example that uses real-world data, that is, the \href{https://github.com/preetum/cifar5m}{CIFAR 5m dataset} \cite{nakkiran2021bootstrap}. Our theory does not explicitly deal with non-Gaussian distributions, but we find that the theoretical risk curves generalize cleanly to that case. 

As we are now working with discrete data points rather than a distribution, the learning setup, while closely analogous to what was presented earlier, has some slight differences. 

We start with a subset of the data consisting of $n$ grayscale images, each of which is $32 \times 32$ pixels, that is, $A \in \mathbb R^{n \times 1024}.$ We fill a vector $b \in \mathbb R^{n}$ with the corresponding labels ($0$ for an image of a plane, $1$ for an image of a car.) We then randomly choose a matrix $W \in \mathbb R^{1024 \times d}$ with i.i.d. Gaussian entries to generate the features $F = \text{relu}(AW)$. We want to use least squares to predict the label from the features, i.e., find 
\begin{equation}
    \argmin _{X \in \mathbb{R}^d}\left\{\cR(X):=\frac{1}{2 n}\left\| FX -b\right\|^2=\frac{1}{2 n} \sum_{i=1}^n\left(f_i \cdot X -b_i\right)^2\right\},
\end{equation}
where $f_i$ is the $i$th row of $F$. The SGD we now consider is
\begin{equation}
X_{k+1}=X_k-\gamma_k\left( f_{i_{k+1}} \cdot X - b_{i_{k+1}}\right) f_{i_{k+1}}, \quad\left\{i_k\right\} \text { iid } \operatorname{Unif}(\{1,2, \cdots, n\}),
\end{equation}
where $\gamma_k$ is the usual AdaGrad-Norm stepsize, as in \eqref{eq:AdaGrad_stepsize_appendix}. Our empirical covariance matrix $K$ (remembering that $f_i$ is a row vector) is then
\begin{align}
    K = \EE_{i \in [n], j \in [n]} \left[ f_i^\top f_j \right] = \frac 1n F^\top F.
\end{align}
We now use $\eqref{eq:R_K_F_Volterra}$, with the AdaGrad-Norm stepsize, to numerically simulate the SGD loss, which we then compare to the actual loss. Our theory matches empirical results very closely.

\begin{figure}[t!]
\centering
    \includegraphics[height=0.35\textwidth]{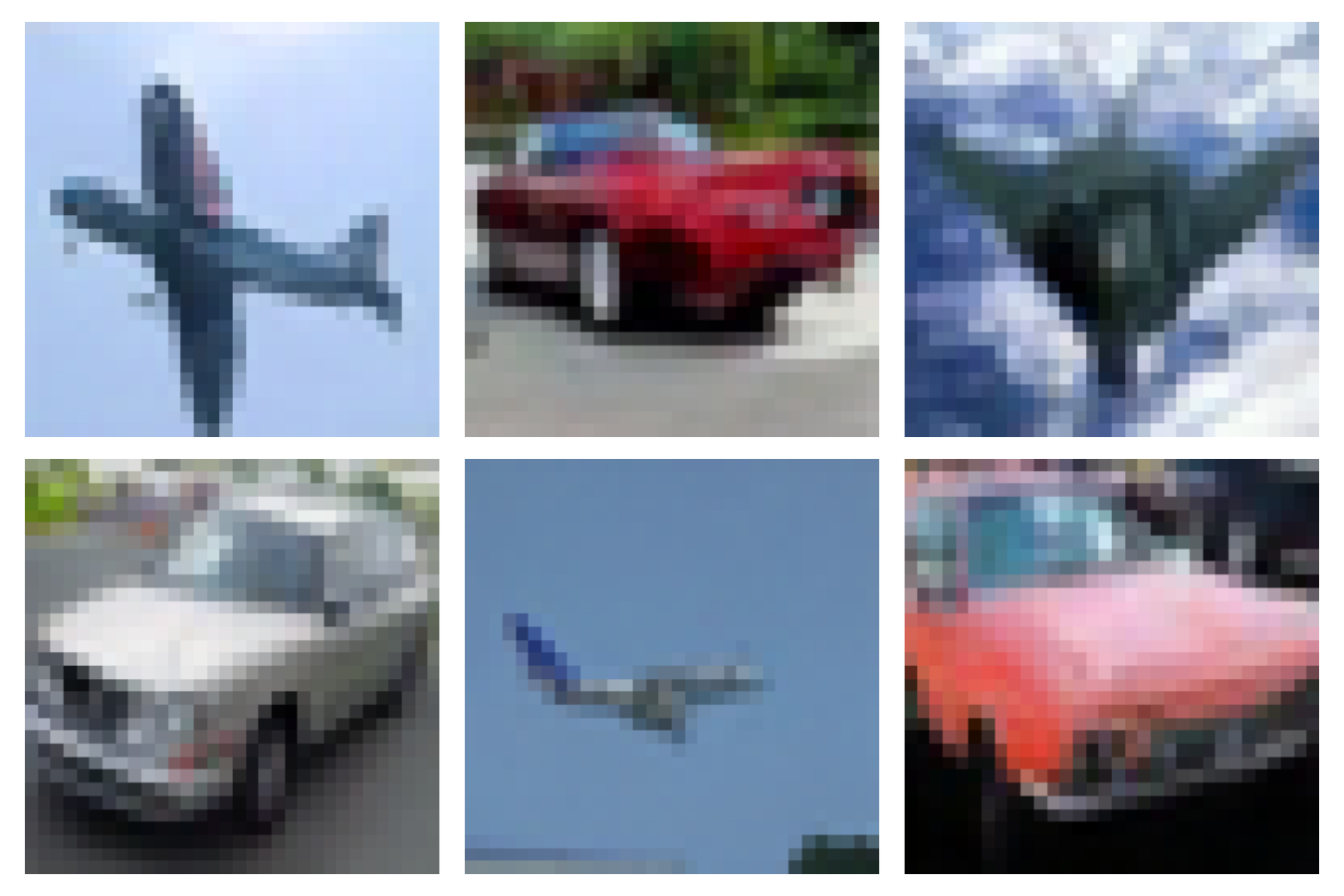}
    \includegraphics[height=0.35\textwidth]{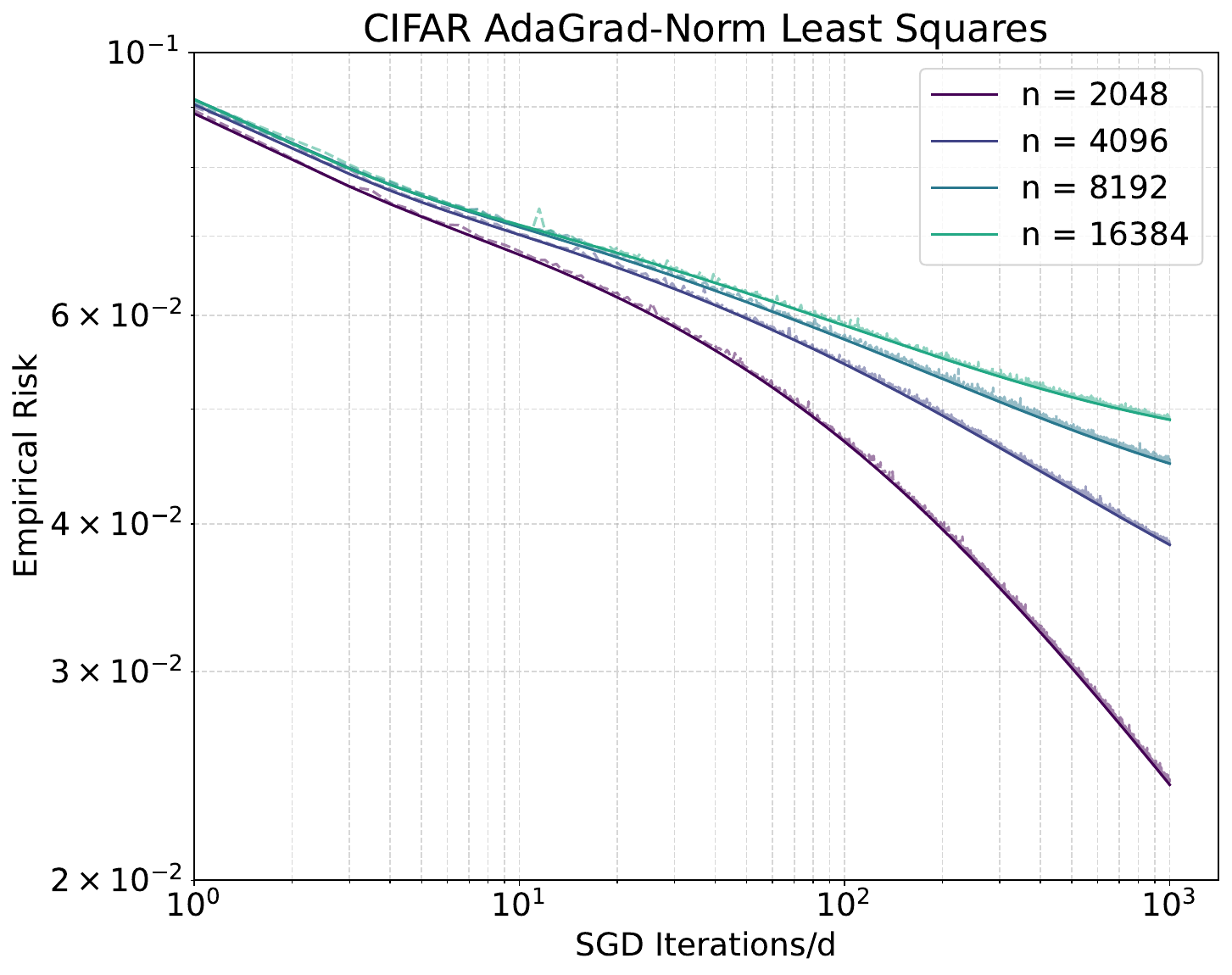}
\caption{Predicting the training dynamics on a real dataset, CIFAR-5m \cite{nakkiran2021bootstrap}, using multi-pass AdaGrad-Norm. This suggests the theory extends beyond Gaussian data and one-pass. Note that the curves look significantly different for different $n$; smaller values of $n$ lead to an overparametrized problem, allowing least squares to memorize datapoints, whereas for larger $n$, least squares must learn a general function mapping images of cars and airplanes to their respective labels.}
\label{fig:cifar_figures}
\end{figure}

\section{Numerical simulation details} \label{app:captions}

Here we provide more details for the figures that appear in the main paper.

\paragraph{Figure \ref{fig:concentration}:} \textbf{Concentration learning rate and risk for AdaGrad-Norm} on a least squares
      problem with label noise $\omega = 1$ (left) and on a logistic
      regression problem with no label noise (right). For logistic, see Section~\ref{sec:examples}. 30 runs of
      AdaGrad-Norm with parameters $b= 1$ and $\eta = 1$ for each $d$; $X^\star
      \sim \mathcal N(0, I_d / d)$, $X_0 = 0$, and $K = \Id_d$. The
      shaded region represents a 90\% confidence interval for the SGD
      runs.  As the dimension increases, the risk and stepsize both
      concentrate around a deterministic limit
      \textcolor{red}{(red)}. The deterministic limit is described by
      an ODE in Theorem~\ref{thm:risk_concentration}. The initial loss
    increase in the least squares problem suggesting that the learning
  rate was initially too high, but AdaGrad-Norm naturally adapts and
  still the loss converges. Our ODEs predict this behavior. 

  \paragraph{Figure \ref{fig:lineSearchComparison}:} \textbf{Comparison for Exact Line Search and Polyak Stepsize} on a noiseless least squares problem. The left plot illustrates the convergence of the risk function, while the right plot depicts the convergence of the quotient $\gamma_t / \frac{\lambda_{\min}(K)}{\frac{1}{d} \Tr(K^2)}$ for Polyak stepsize and exact line search. Both ODE theory and SGD results are presented, showing a close agreement between the two approaches. The covariance matrix $K$ is generated such that the eigenvalues follow the expression  $\lambda_i(K) = \sqrt{ \frac{d}{\sum_{i=1}^d \left ( \frac{i}{d+1} \right )^{-2/s} } } \cdot \left ( \frac{i}{d+1} \right )^{-1/s}, \quad i = 1, \hdots, d,$
where $s > 2$ is a constant. As $s$ approaches $2$, the spectrum becomes more spread out, resulting in larger values of $\frac{1}{d} \Tr(K^2)$. Larger values of $s$ correspond to smaller spreads in the spectrum. Additionally, $\mathrm{Tr}(K)/d = 1$ for all $s$. Both plots highlight the implication of equation \eqref{eq:ls_bound} in high-dimensional settings, where a broader spectrum of $K$ results in $\frac{\lambda_{\min}(K)}{\tfrac{1}{d} \mathrm{Tr}(K^2)} \ll \frac{1}{\tfrac{1}{d} \mathrm{Tr}(K)}$, indicating slower risk convergence and poorer performance of exact line search (unmarked) as it deviates from the Polyak stepsize (circle markers). The gray shaded region demonstrates that equation \eqref{eq:ls_bound} is satisfied.

  \paragraph{Figure \ref{fig:adaGrad_norm_const_lr}:} \textbf{Quantities effecting AdaGrad-Norm learning rate.} \textit{(left):} The effect of adding noise to the targets ($\omega = 1.0$) to the risk (left axis) and learning rate (right axis). Ran AdaGrad-Norm($b = 1.0, \eta = 2.5$) on least squares problem with $d = 500$. $X_0$, $X^{\star} \sim \mathcal{N}(0, \Id_d/d)$. A single run of the SGD (solid line purple) matches exactly the prediction (ODE, teal). The shaded region represents $10$ runs of SGD with 90\% confidence interval. The learning rate decays at the exact predicted rate of $\tfrac{\eta}{\sqrt{b^2 + \frac{\tr{K}\omega^2}{d} t }}$. Depicted is $\tfrac{\text{learning rate}}{\text{asymptotic}}$ so it approaches $1$. \textit{(center, right)}: Noiseless least squares setting $(\omega = 0)$. \textit{(center)}: Prop.~\ref{prop:adagrad_integrable_risk} predicts the avg. eig of $K$ ($\tr(K)/d$) as compared with $\lambda_{\max}$ effects the $\lim_{k \to \infty} \mathfrak{g}_k$. Indeed, this is true. We varied the $\kappa = \lambda_{\max}/\lambda_{\min}$ while keeping the $\tr(K)/d$ and all other parameters fixed. All the learning rates behave identically verifying our theory about the effect of $\tr(K)/d$ on learning rates. \textit{(right)}: Varying the learning rate of AdaGrad norm by $\|X_0-X^{\star}\|^2$; our predictions (dashed) match and we see the inverse relationship predicted by Prop.~\ref{prop:adagrad_integrable_risk}. See Appendix~\ref{app:AdaGrad_analysis} for details. Additionally, we did the following.
\begin{itemize}
    \item \textbf{Center plot}: AdaGrad with $b = 0.5$, $\eta = 2.5$ is run on the least squares problem with $d = 1000$ and $X_0, X^{\star} \sim \frac{1}{\sqrt{d}}\mathcal{N}(0, \Id)$. The covariance matrix $K$ is generated so that the eigenvalues are 
    \[
    \lambda_i(K) = \sqrt{ \frac{d}{\sum_{i=1}^d \left ( \frac{i}{d+1} \right )^{-2/s} } } \cdot \left ( \frac{i}{d+1} \right )^{-1/s}, \quad i = 1, \hdots, d.
    \]
    The constant $s > 2$. When $s$ is near $2$, the spectrum is more spread out, i.e., $\kappa = \frac{\lambda_{\max}}{\lambda_{\min}}$ is large. Larger values of $s$ mean smaller the spreads. Moreover $\tr(K)/d = 1$ for all $s$. In the simulations, we used $s \in \{2.1, 3.0, 3.5, 4.0, 5.5\}$ and recorded the condition number $\kappa$.  
    \item \textbf{Right plot}: Ran AdaGrad with $b = 0.5$, $\eta = 2.5$ on the least squares problem with $d = 1000$. $X^{\star} = 0$ and $X_0 \sim \sqrt{\frac{p}{d}} \mathcal{N}(0, \Id)$ where $p \in \{1, 2, 4, 8, 16\}$. In this way, $\|X_0-X^{\star}\|^2 = p$. 
\end{itemize}

  \paragraph{Figure \ref{fig:power_law_adagrad}:} \textbf{Power law covariance in AdaGrad Norm} on a least squares problem.  
Generated covariance $K$ such that the density of eigenvalues are $(1-\beta)\lambda^{-\beta}$ where $\beta = 0.2$ and set $X_0 =  0$. Choose  $(X^{\star}_i)_{i=1}^d = (\lambda^{-\delta /2}_i)_{i=1}^d$ where $\lambda_i$ is the $i$-th eigenvalue of $K$ and we vary  $\delta \in (0, 1.8)$ so that $0 < \delta + \beta \le 2$. Setting of Prop.~\ref{prop:gamma_asymp}.

  \paragraph{Figure \ref{fig:lineSearchConvergence_figures}:} \textbf{Convergence in Exact Line Search} on a noiseless least squares problem. The plot on the left illustrates the convergence of the risk function, while the center and right plots depict the convergence of the quotient $\frac{\CMscr{D}_{\lambda_2}(t)}{\CMscr{D}_{\lambda_1}(t)}$ and the learning rate $\gamma_t$, respectively. Predictions from ODE theory are compared with results obtained from SGD, demonstrating close agreement between the two approaches. Initialization was performed randomly, with $X_0 \sim \mathcal{N}(0, I_d/d)$ and $X^{\star} \sim \tfrac{1}{\sqrt{d}} \mathbf{1}$, where $d = 400$. The covariance matrix $K$ has two distinct eigenvalues $\lambda_1 = 1 > \lambda_2 > 0$, and was constructed by specifying the spectrum, with $\lambda_i$ sampled from a discrete uniform distribution $\mathcal{U}\{1, \lambda_2\}$ for $i = 1, \ldots, d = 400$, and setting $K = \text{diag}( \lambda_i : i = 1, \ldots, 400)$. Further details and formulas for the limiting behavior can be found in the Appendix \ref{subsec:ProofLineSearch_twoEigenvalue}.

\paragraph{Figure \ref{fig:cifar_figures}} \textbf{Convergence on CIFAR 5m \cite{nakkiran2021bootstrap}.}
We train a classifier to distinguish between images of airplanes and cars. Fix $d = 2000$. Then for multiple values of $n$, we run AdaGrad-Norm with initialization $X_0 = 0$, $b = 0.1$ and  $\eta = 5$, randomly sampling a datapoint from $F$ at every step. Details of the setup can be found in Appendix \ref{subsec:cifar}.

\end{document}